\documentclass[11pt,oneside]{amsart}

\usepackage{amssymb,amsmath,amsthm}
\usepackage{hyperref}
\usepackage{tikz}
\usepackage{enumitem}
\usepackage[margin = 25mm]{geometry}

\setlist[enumerate,1]{label=(\arabic*)}

\setlist[enumerate,2]{label=(\theenumi.\arabic*), ref=\theenumi.\arabic*}

\theoremstyle{definition}
\newtheorem{theorem}{Theorem}[section]
\newtheorem{remark}[theorem]{Remark}
\newtheorem{lemma}[theorem]{Lemma}
\newtheorem{definition}[theorem]{Definition}
\newtheorem{corollary}[theorem]{Corollary}

\newcommand{\TR}{\mathsf{TR}}
\newcommand{\Rule}{\mathrm{rule}}
\newcommand{\FV}{\mathrm{FV}}
\newcommand{\V}{\mathsf{V}}
\newcommand{\U}{\mathsf{U}}

\newcommand{\RCAo}{\mathsf{RCA}_0}

\newcommand{\ACAo}{\mathsf{ACA}_0}
\newcommand{\CAo}{\mathsf{CA}_0}
\newcommand{\CA}{\mathsf{CA}}
\newcommand{\HJ}{\mathrm{HJ}}

\newcommand{\hyp}{\mathchar`-}

\newcommand{\ATRo}{\mathsf{ATR}_0}
\newcommand{\ATR}{\mathsf{ATR}}
\newcommand{\TI}{\mathsf{TI}}

\newcommand{\RFN}{\mathsf{RFN}}

\newcommand{\DC}{\mathsf{DC}}
\newcommand{\SDC}{\mathsf{SDC}}
\newcommand{\M}{\mathcal{M}}
\newcommand{\K}{\mathcal{K}}
\renewcommand{\L}{\mathcal{L}}
\newcommand{\N}{\mathbb{N}}

\newcommand{\WO}{\mathrm{WO}}
\newcommand{\NN}{\mathcal{N}}
\newcommand{\ACA}{\mathsf{ACA}}

\newcommand{\GN}[1]{\ulcorner #1 \urcorner}

\DeclareMathOperator{\Con}{\mathsf{Con}}

\DeclareMathOperator{\Th}{\mathrm{Th}}

\newcommand{\Hier}{\mathrm{Hier}}
\newcommand{\B}{\mathsf{B}}
\newcommand{\ON}{\mathrm{ON}}
\newcommand{\TDC}{\mathsf{TDC}}

\newcommand{\PW}{\mathrm{PW}}
\newcommand{\TRFN}{\mathsf{TRFN}}

\newcommand{\A}{\mathsf{A}}

\title{Dependent choice and iterated $\beta_n$-model reflection}
\author{YUDAI SUZUKI}
\address{National~Institute~of~Technology, Oyama~College, Nakakuki~771, Oyama, Tochigi, Japan}
\email{yudai.suzuki.research@gmail.com}
\keywords{Second-order arithmetic, dependent choice, $\beta_n$-model reflection}

\begin{document}

\begin{abstract}
In this paper, we consider conservation results for iterated $\beta_n$-model reflection principles.
Using these principles, 
we characterize the $\Pi^1_{m+2}$-, $\Sigma^1_{m+2}$- and $\B(\Pi^1_{m+2})$-consequences of $\Sigma^1_{n+1}\hyp\DC_0$ and $\Sigma^1_{n+1}\hyp\TDC_0$ for $m \leq n$. Here, $\B(\Pi^1_{m+2})$
denotes the set of Boolean combinations of $\Pi^1_{m+2}$-formulas, and
$\TDC$ stands for transfinite $\DC$.
We also show that,
in each case, the stronger theory has a super-exponential speed-up over the corresponding conservative subtheory.
\end{abstract}

\maketitle

\section{Introduction}
Reflection principles play a central role in the study of formal systems of arithmetic.

In first-order arithmetic, it is known that induction principles are characterized by reflection principles. Together with general properties of reflection principles, these characterizations allow us to analyze subsystems of Peano arithmetic $\mathsf{PA}$.
For the details, see, for example \cite{beklemishev2005reflection}.
In \cite{Emanuele}, Frittaion extended this result to second-order arithmetic.

Another form of reflection in second-order arithmetic is
coded $\omega$-model reflection.
Here, coded $\omega$-model reflection is a principle of the form `a formula that is true in the ground model is realized in a coded model'.
In particular, dependent choice ($\DC$) principles have this kind of characterization. Studying $\DC$ is useful for studying comprehension axioms, which are central to second-order arithmetic.
Indeed, $\Pi^1_{n+1}$-comprehension and strong $\DC$ for $\Sigma^1_{n+1}$-formulas have the same $\Pi^1_{4}$-consequences for $n  \in \omega$.

As in first-order arithmetic, there are studies of partial conservativity for $\omega$-model reflection principles.
In \cite{Cantini}, Cantini studied conservativity for ordinary $\DC$. 
The proof given in \cite{Cantini} may be seen as a hybrid of model-theoretic and proof-theoretic techniques. His proof combines partial cut elimination with asymmetric interpretation.
In \cite{RuedeThesis}, R\"{u}ede studied conservativity for transfinite $\DC$ using a method similar to Cantini's.
In \cite{ASY}, Aguilera, the author and Yokoyama studied partial conservativity for strong $\DC$ using a purely model-theoretic method. They studied not only uniform reflections but also local reflections.
We note that the work in \cite{ASY} builds on earlier work by the author
and Yokoyama \cite{suzuki2025pi}
(see also \cite{suzuki2025studies}).

In this paper, we extend the conservation results in \cite{Cantini} and \cite{RuedeThesis}.
In Section 2,
using the method of Beklemishev and Pakhomov \cite{BP}, we give a purely proof-theoretic proof that reflection principles are conservative over the corresponding reflection rules.
This includes the results in \cite{Cantini} and \cite{RuedeThesis}. We also show that local reflection principles are useful for studying the $\B(\Pi^1_{n})$-sentences provable from a reflection principle. Here, $\B(\Gamma)$ denotes the set of Boolean combinations of formulas in $\Gamma$.
In Section 3, we give another characterization of Rüede's transfinite dependent choice in terms of reflection principles.
In Section 4, we give a purely model-theoretic proof of the main conservation results in Sections 2 and 3.
We note that all conservation results given in the present paper have super-exponential speed-up.
We will discuss the details in Section 5.

\subsection*{Acknowledgements}
The author would like to thank Keita Yokoyama for his helpful comments on this paper.
The work of the author is partially supported by JSPS KAKENHI grant number JP26K17028.

\subsection*{AI Disclosure}
In preparing this paper, the author used ChatGPT 6 Astra and ChatGPT 6.1 Sol for assistance with mathematical discussions, literature searches, and the wording of the manuscript. The author independently verified all content developed with their assistance.

\section{Beklemishev-Pakhomov conservation and Cantini's result}
Throughout this paper, we write $T \equiv T'$ for theories $T$ and $T'$ if they prove the same sentences.

In \cite{Cantini}, Cantini proved the following conservation results.
\begin{itemize}
  \item $\Th_{\Pi^1_2}(\Sigma^1_1\hyp\DC_0) \equiv (\Pi^0_1\hyp\CA_0)_{<\omega^\omega}$.
  \item $\Th_{\Pi^1_3}(\Sigma^1_2\hyp\DC_0) \equiv (\Pi^1_1\hyp\CA_0)_{<\omega^\omega}$.
  \item $\Sigma^1_{n+3}\hyp\DC_0$ is $\Pi^1_4$-conservative over $(\Pi^1_{n+2}\hyp\CA_0)_{<\omega^\omega}$.
\end{itemize}
Reading his proof carefully, we can rephrase his result in terms of $\beta_n$-reflection principles.
We first present this reformulation.

\begin{definition}[{$\beta_n$-model, $\beta_n$-model reflection}]
  A (coded) $\beta_0$-model is a (coded) $\omega$-model of $\ACAo$.
  For $n > 0$, (coded) $\beta_n$-models are defined as usual. In this paper, we omit the word \textit{coded} if it is clear from the context.
  
For each $n \in \omega$, the condition [$\M$ is a (coded) $\beta_n$-model] is $\Pi^1_{n}$. We fix a $\Pi^1_n$-formula $\beta_n(\V)$ meaning that $\V$ is a $\beta_n$-model.
For a formula $\sigma(\V)$, we define the $\beta_n$-model reflection of $\sigma$ as follows.
  \begin{align*}
    \beta_n\RFN(\sigma(\V)) \equiv \forall \U \exists \M(\U,\V \in \M \land \beta_n(\M) \land \M \models \sigma(\V)).
  \end{align*}
  For $r >0$, we define the $r$-fold iterated $\beta_n$-model reflection of $\sigma$ as follows.
  \begin{align*}
    \beta_n\RFN^1(\sigma) \equiv \beta_n\RFN(\sigma),\quad \beta_n\RFN^{r+1}(\sigma) \equiv \beta_n\RFN(\sigma + \beta_n\RFN^{r}(\sigma)).
  \end{align*}
  Finally, we define $\beta_n\RFN^{\omega}(\sigma)$ as the schema $\{\beta_n\RFN^r(\sigma) : r > 0\}$.
\end{definition}
\begin{remark}
  We give a few comments on $\beta_n\RFN$.
  \begin{itemize}
    \item If $\sigma$ is a sentence, then so is $\beta_n\RFN(\sigma)$.
    \item Let $0 < r' < r$. Then, 
    $\ACAo \vdash \beta_n\RFN^{r}(\sigma) \to \beta_n\RFN^{r'}(\sigma)$. This implication is strict when $\ACAo + \beta_n\RFN^r(\sigma)$ is consistent
    because $\ACAo + \beta_n\RFN^{r}(\sigma) \vdash \Con(\beta_n\RFN^{r'}(\sigma))$.
    \item For $n > 0$, we have
$\ACAo + \beta_n\RFN(\ACAo)
  \equiv \Sigma^1_n\hyp\SDC_0$,
where $\SDC$ stands for strong dependent choice
(see \cite{Simpson,ASY}).
\end{itemize}
\end{remark}

From Cantini's proof, we obtain the following conservation results.
We note that Cantini's original formulation follows easily from our formulation.
\begin{theorem}\label{Thm Cantini rephrase}
  $\Th_{\Pi^1_{n+2}}(\Sigma^1_{n+1}\hyp\DC_0) \equiv \ACAo + \beta_n\RFN^{\omega}(\ACAo)$.
\end{theorem}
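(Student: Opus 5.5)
We prove the two inclusions separately. For the easy direction, $\ACAo + \beta_n\RFN^{\omega}(\ACAo) \subseteq \Th_{\Pi^1_{n+2}}(\Sigma^1_{n+1}\hyp\DC_0)$, we first note that each $\beta_n\RFN^r(\ACAo)$ is $\Pi^1_{n+2}$: the matrix $\beta_n(\M)$ is $\Pi^1_n$, and $\M \models \cdots$ is arithmetical in $\M$, so the whole sentence has the form $\forall\U\exists\M(\Pi^1_n)$. It therefore suffices to show $\Sigma^1_{n+1}\hyp\DC_0 \vdash \beta_n\RFN^r(\ACAo)$ for every $r$, by induction on $r$. The base case is the standard construction of a $\beta_n$-model containing a given $\U$ from $\Sigma^1_{n+1}\hyp\DC$. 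We iterate the $\Sigma^1_{n+1}$ step ``given $X$, choose $Y$ coding $X$ together with witnesses for all true $\Sigma^1_n$-facts with parameters in $X$'' along $\omega$ and take the union. For $n=0$ the step is just the Turing jump. For $n\ge 1$ we use that $\Sigma^1_{n+1}\hyp\DC_0$ proves $\Sigma^1_{n+1}\hyp\mathsf{AC}_0$ and hence has enough choice to pick witnesses.

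For the step $r \to r+1$, we strengthen the inner induction hypothesis. By a standard formalized argument, the model built above can be made to satisfy $\beta_n\RFN^{r}(\ACAo)$ internally, provided we make the chosen successor sets themselves closed under the $r$-fold construction. Concretely, we prove by metainduction on $r$ that $\Sigma^1_{n+1}\hyp\DC_0 \vdash \forall \U\exists\M(\U\in\M\land\beta_n(\M)\land \M\models \ACAo+\beta_n\RFN^{r}(\ACAo))$. At each stage of the dependent-choice sequence we additionally require that every set in the next element lies in a coded $\beta_n$-model of $\beta_n\RFN^{r}(\ACAo)$ that is itself included in the next element. Since $\beta_n\RFN^r(\ACAo)$ is $\Pi^1_{n+2}$ and the union model is a $\beta_n$-model, the existential witnesses $\M'$ are present in the union. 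The remaining $\Pi^1_n$ part $\beta_n(\M')$ is downward absolute to $\beta_n$-models, and this absoluteness is what makes the union satisfy $\beta_n\RFN^{r}(\ACAo)$. The hypothesis ``exists such $\M'$'' is $\Sigma^1_{n+1}$, so the step relation remains $\Sigma^1_{n+1}$ and $\DC$ applies.

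The hard direction is conservativity: every $\Pi^1_{n+2}$ theorem of $\Sigma^1_{n+1}\hyp\DC_0$ is provable in $\ACAo+\beta_n\RFN^\omega(\ACAo)$. Following the paper's announced route via Beklemishev--Pakhomov, we compare reflection principles with reflection rules. First, $\Sigma^1_{n+1}\hyp\DC_0$ is equivalent to $\ACAo$ plus the \emph{uniform} $\beta_n$-reflection schema $\{\beta_n\RFN(\sigma) : \sigma\}$, taken over all sentences or formulas $\sigma$. Here $\DC$ gives $\beta_n$-models reflecting any $\sigma$, and conversely, reflecting instances of $\DC$ into a $\beta_n$-model gives $\DC$. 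Next we would show the Beklemishev--Pakhomov style lemma: the theory $\ACAo + \{\beta_n\RFN(\sigma)\}$ is $\Pi^1_{n+2}$-conservative over the closure of $\ACAo$ under the rule ``from $\sigma$ infer $\beta_n\RFN(\sigma)$''. We would prove this by induction on proofs, using the fact that the reflection principle applied to a $\Pi^1_{n+2}$ conclusion can be pushed through cut. Finally, we identify the rule closure with $\beta_n\RFN^{\omega}(\ACAo)$. Each application of the rule to a consequence of $\ACAo + \beta_n\RFN^{r}(\ACAo)$ is implied by $\beta_n\RFN^{r+1}(\ACAo)$, using monotonicity of $\beta_n\RFN$ in $\sigma$ (provable in $\ACAo$ via the soundness of $\ACAo$-proofs in coded $\omega$-models).

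I expect the main obstacle to be the conservation lemma for reflection principles over reflection rules. The difficulty is handling nested and parameterized instances, where $\sigma(\V)$ has free set variables. This is where the partial cut elimination or asymmetric interpretation of Cantini is replaced by the Beklemishev--Pakhomov argument. I would try first to reduce to a single sentence by considering finitely many instances, then use a formalized truth-definition trick: inside $\beta_n$-models, $\Pi^1_{n+2}$ statements can be evaluated by a partial satisfaction predicate, and the reflection of a conjunction can be bounded by iterating reflection of its components.
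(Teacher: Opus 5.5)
Your easy inclusion is fine, as is your identification of the rule closure with $\ACAo+\beta_n\RFN^{\omega}(\ACAo)$. For the easy inclusion you build the chain by $\Sigma^1_{n+1}\hyp\DC$ directly instead of quoting the equivalence with $\Pi^1_{n+2}\hyp\beta_n\RFN_0$. Fix the indexing there: to get models of $\beta_n\RFN^{r}(\ACAo)$, collect the models of $\beta_n\RFN^{r-1}(\ACAo)$ supplied by the induction hypothesis. The gap is in the hard inclusion, which is the real content of the theorem.

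First, the reflection characterization you start from is wrong as stated. Read literally, without an antecedent, $\{\beta_n\RFN(\sigma)\}$ is inconsistent; take $\sigma=\bot$. Read as $\forall\V(\sigma(\V)\to\beta_n\RFN(\sigma(\V)))$ for $\sigma$ of arbitrary complexity, it is strictly stronger than $\Sigma^1_{n+1}\hyp\DC_0$. Via a universal formula, that theory is axiomatized by a single $\Pi^1_{n+3}$ sentence $\delta$, and the instance $\delta\to\beta_n\RFN(\delta)$ yields $\Con(\delta)$. Your justification (``DC gives $\beta_n$-models reflecting any $\sigma$'') works only for $\sigma\in\Pi^1_{n+2}$, where the relation fed to DC is $\Pi^1_n$. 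So the restriction to $\Pi^1_{n+2}$ is essential. Your conservation lemma is false for the unrestricted schema, because by your own monotonicity argument its rule closure is still just $\ACAo+\beta_n\RFN^{\omega}(\ACAo)$.

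Second, and more seriously, you give no actual proof that $\Pi^1_{n+2}\hyp\beta_n\RFN_0$ is $\Pi^1_{n+2}$-conservative over $\ACAo$ plus the rule. Reflection is not ``pushed through cut''. Merging finitely many instances via a partial truth predicate just leaves one instance with a free set variable, and its conservativity is exactly the problem. Iterating reflection of components handles only parameter-free instances (cf.\ Lemma~\ref{lem:local-reflection-finite-stage}), and even that needs the ingredient you are missing.

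The missing idea is the lemma $\ACAo\vdash\beta_n\RFN(\sigma\lor\tau)\to\sigma\lor\beta_n\RFN(\tau)$ for $\sigma\in\Pi^1_{n+2}$. To prove it, write $\lnot\sigma(X)$ as $\theta(X,Z)$ with $\theta\in\Pi^1_{n+1}$ and put $Z$ into the reflecting $\beta_n$-model; there $\theta(X,Z)$ persists by downward absoluteness. You also need that $\beta_n\RFN(\sigma)$ is itself $\Pi^1_{n+2}$.

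These two facts drive the argument of Theorem~\ref{BPconservation}:
\begin{enumerate}
\item Take a cut-free pure-logic derivation of the sequent $\lnot\bigwedge T_0$, $\exists\V_0\exists\V_1(\sigma_i(\V_0)\land\varphi_{\sigma_i}(\V_0,\V_1))$ (for each $i$), $\gamma$.
\item Prune it so that only $\Pi^1_{n+2}$ formulas survive.
\item Eliminate each $\exists$-introduction of a negated instance by applying the rule to the open $\Pi^1_{n+2}$ formula $\Delta'\lor\sigma_i(\U_0)$ and then the lemma. This requires the rule to apply to formulas with free set variables.
\end{enumerate}
Alternatively, use the model-theoretic construction of Section~4, which builds the required $\beta_n$-submodel inside a recursively saturated model. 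Without one of these arguments the hard inclusion is unproved.
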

\begin{remark}
  This formulation is more informative than Cantini's original formulation in two senses.
  
  Firstly, from this formulation, we obtain $\Th_{\Pi^1_{n+2}}(\Sigma^1_{n+1}\hyp\DC_0)$ rather than merely a $\Pi^1_4$-conservative subtheory of $\Sigma^1_{n+1}\hyp\DC_0$ for $n > 2$.

  Secondly, this formulation gives a conservation result relating reflection axioms and reflection rules. It is well-known that 
  $\Sigma^1_{n+1}\hyp\DC_0$ is equivalent to the $\beta_n$-model reflection principle for $\Pi^1_{n+2}$-formulas.
  On the other hand, we can show that $\ACAo + \beta_n\RFN^{\omega}(\ACAo)$ proves the same sentences as $\ACAo$ under the $\beta_n$-model reflection rule for $\Pi^1_{n+2}$-formulas. 
  Therefore, this formulation may be seen as a conservation between them.
  We will see the details later.
\end{remark}

In this section, we give a short proof of Theorem~\ref{Thm Cantini rephrase} based on a proof given by Beklemishev and Pakhomov.
In \cite{BP}, they gave a conservation theorem for reflection axioms and reflection rules based on provability predicates and truth predicates.
We will prove that their proof can be applied to more general reflections including $\beta_n$-model reflection. As a consequence, we obtain a new proof of Cantini's conservation.

\begin{definition}
Let $R$ be an operator on $L_2$-formulas. 
We say $R$ is a generalized reflection operator if for any formula $\sigma(\vec{\V})$
\begin{itemize}
  \item $\FV(R\sigma(\vec{\V})) = \{\vec{\V}\}$,
  \item $R(\sigma(\vec{\U})) = (R\sigma)(\vec{\U})$.
\end{itemize}
In what follows, we sometimes write $\sigma(\V)$ instead of $\sigma(\vec{\V})$ for simplicity.

Let $\Gamma$ be a class of formulas. 
For a generalized reflection operator $R$, we define $\Gamma\hyp\RFN^R$, the $R$-reflection schema for $\Gamma$-formulas, as $\{\forall \V(\sigma(\V ) \to R(\sigma(\V))) : \sigma \in \Gamma\}$. We also define the $R$-reflection rule for $\Gamma$-formulas $\Gamma\hyp\RFN^{R}\hyp\Rule$ as $\displaystyle\frac{\sigma(\V)}{R(\sigma(\V))}$ for $\sigma \in \Gamma$.
\end{definition}

\begin{theorem}[{Essentially Beklemishev-Pakhomov \cite[Theorem 2]{BP}}]\label{BPconservation}

  Let $R$ be a generalized reflection operator, $\Gamma = \Pi^1_{n}$ for some $n > 0$, and let $T \subseteq \Pi^1_{n+1}$ be a theory including $\ACAo$.
  Assume that 
  \begin{enumerate}
    \item $T \vdash R(\sigma \lor \tau) \to \sigma \lor R(\tau)$ for $\sigma \in \Gamma$,
    \item for any $\sigma$, there exists $\varphi_{\sigma} \in \Gamma$ such that $T \vdash \lnot(R\sigma(\V_0)) \leftrightarrow \exists \V_1 \varphi_{\sigma}(\V_0,\V_1)$.
  \end{enumerate}
  Then, $T + \Gamma\hyp\RFN^R$ is $\Gamma$-conservative over $T + \Gamma\hyp\RFN^R\hyp\Rule$.
\end{theorem}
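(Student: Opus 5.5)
The plan is to follow the Beklemishev--Pakhomov argument and use only hypotheses (1) and (2), with the specific form of $R$ playing no role. Suppose $\pi \in \Gamma = \Pi^1_n$ is a sentence with $T + \Gamma\hyp\RFN^R \vdash \pi$. The goal is $T + \Gamma\hyp\RFN^R\hyp\Rule \vdash \pi$.

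\emph{Step 1: reduce to one instance.} Only finitely many instances $\forall \V(\sigma_i(\V) \to R\sigma_i(\V))$, $i<k$, are used. I want to replace them by a single one. Since $\Pi^1_n$ is closed under bounded-number tagging, I will take the $\Gamma$-formula
\[
\sigma(\V) :\equiv \bigvee_{i<k} \bigl(\V_{\mathrm{tag}} = i \land \sigma_i(\V_{\mathrm{body}})\bigr),
\]
where the tag is a number coded into $\V$. Because $R$ commutes with substitution, each $\sigma_i$-instance should follow from the single $\sigma$-instance in $\ACAo$. If this coding is awkward, I will instead treat the $k$ instances one after another. By the deduction theorem,
\[
T \vdash \lnot\pi \to \exists \V\bigl(\sigma(\V) \land \lnot R\sigma(\V)\bigr).
\]

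\emph{Step 2: normalize.} Hypothesis (2) rewrites $\lnot R\sigma(\V)$ as $\exists \V_1 \varphi_\sigma(\V,\V_1)$ with $\varphi_\sigma \in \Gamma$. This gives
\[
T \vdash \lnot \pi \to \exists \V \exists \V_1\bigl(\sigma(\V)\land\varphi_\sigma(\V,\V_1)\bigr).
\]
The bracketed formula is in $\Gamma$, so it can be fed to the rule.

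\emph{Step 3: apply the rule, then hypothesis (1).} Here $\pi$ is a sentence in $\Gamma$, so (1) lets me pull $\pi$ out of the scope of $R$: from $R(\pi \lor \tau)$ we get $\pi \lor R\tau$. The plan is to find a $\Gamma$-formula $\tau$ such that:
\begin{itemize}
\item $T + \Gamma\hyp\RFN^R\hyp\Rule$ proves $\pi \lor \tau$, so that one rule application yields $R(\pi \lor \tau)$ and hence $\pi \lor R\tau$;
\item $R\tau$ together with the $\Sigma^1_{n+1}$-witness $\sigma(\V) \land \varphi_\sigma(\V,\V_1)$ from Step 2 is refutable in $T$.
\end{itemize}
Granting these, $\lnot\pi$ implies both $R\tau$ and the existence of that witness, which is contradictory. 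Hence $T + \Gamma\hyp\RFN^R\hyp\Rule \vdash \pi$. The first candidate I would try is $\tau = \sigma$, substituted at the witnessing parameters. After that I would try $\tau$ built from $\sigma$ and $\varphi_\sigma$, so that $\varphi_\sigma$, which says ``$R\sigma$ fails'', collides with $R$ of a formula implying $\sigma$.

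\emph{Main obstacle.} The hardest point is making the premise $\pi \lor \tau$ of the rule actually provable, since $T$ by itself gives only the $\Sigma^1_{n+1}$-statement of Step 2. Two features are available to handle this.
\begin{itemize}
\item The assumption $T \subseteq \Pi^1_{n+1}$ should let me work inside $T$ with the $\V$ that witnesses failure of reflection, treated as a free parameter. The $\Sigma^1_{n+1}$-statement is then used only through $\Gamma$-formulas, which are the ones the rule accepts.
\item Since the rule applies to open formulas, $R$ commutes with substitution, so no uniformity over parameters is lost.
\end{itemize}
I would first check the argument in the truth-predicate case treated in \cite{BP}, and then verify that each step used only (1), (2), and $T \subseteq \Pi^1_{n+1}$.
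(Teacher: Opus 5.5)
Steps 1 and 2 are routine, but the content of the theorem lies in Step 3, and there the proposal stops at a wish. You never obtain a $\Gamma$-formula $\tau$ with $T+\Gamma\hyp\RFN^R\hyp\Rule\vdash\pi\lor\tau$. The witness in $T\vdash\lnot\pi\to\exists\V\exists\V_1(\sigma(\V)\land\varphi_\sigma(\V,\V_1))$ exists only under $\lnot\pi$, whereas a premise of the rule with $\V$ free must be provable outright, for every $\V$. So taking $\tau=\sigma$ ``at the witnessing parameters'' would need $\forall\V(\pi\lor\sigma(\V))$, which nothing provides. Likewise, ``treating the witness as a free parameter'' is exactly what the rule does not allow.

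A one-shot reduction of this kind is also implausible. Take $T=\ACAo$ and $R=\beta_n\RFN$. A fixed finite set of instances of $\Pi^1_{n+2}\hyp\beta_n\RFN$ proves every $\beta_n\RFN^r(\ACAo)$, via $\forall x(\beta_n\RFN^x(\ACAo)\to\beta_n\RFN^{x+1}(\ACAo))$. Yet derivations in $\ACAo$ plus the rule, with rule applications nested at most $k$ deep, stay inside $\ACAo+\beta_n\RFN^k(\ACAo)$, which cannot prove $\beta_n\RFN^{k+1}(\ACAo)$. So the number of rule applications is not determined by the instances used, nor by the $\Sigma^1_{n+1}$-statement of Step 2. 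You need a well-founded structure to recurse along, and the proposal names none. A smaller point: the tagging in Step 1 tacitly assumes that $R$ respects provable equivalence or substitution of terms, which (1) and (2) do not give. This does not matter much, since several instances can be handled at once.

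The missing idea is to recurse along a cut-free proof.
\begin{itemize}
\item Using $T\subseteq\Pi^1_{n+1}$, write the negation of the finitely many axioms of $T$ used as $\exists\V\rho$ with $\rho\in\Gamma$. This gives a purely logical sequent $\exists\V\rho,\ \exists\V_0\exists\V_1(\sigma_i(\V_0)\land\varphi_{\sigma_i}(\V_0,\V_1))\ (i\le k),\ \gamma$.
\item Take a cut-free proof of it. Delete every occurrence of these existential formulas, keeping instead the instances $\rho(\U)$ and $\sigma_i(\U_0)\land\varphi_{\sigma_i}(\U_0,\U_1)$ produced at the corresponding $\exists$-inferences. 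By the subformula property, every remaining formula is in $\Gamma$.
\item Show, by induction from the leaves, that every pruned sequent $\Delta'$ is provable in $T+\Gamma\hyp\RFN^R\hyp\Rule$.
\item At an $\exists$-inference on $\rho$, use $T\vdash\forall\V\lnot\rho(\V)$.
\item At an $\exists$-inference introducing a reflection witness, the induction hypothesis gives $\bigvee\Delta'\lor(\sigma_i(\U_0)\land\varphi_{\sigma_i}(\U_0,\U_1))$ as a provable formula with $\U_0,\U_1$ free. So the rule applies to the $\Gamma$-formula $\bigvee\Delta'\lor\sigma_i(\U_0)$. Hypothesis (1) is then used with the whole open context $\bigvee\Delta'$ (not just $\pi$) as the formula pulled out. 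Hypothesis (2) turns $R\sigma_i(\U_0)$ into $\lnot\varphi_{\sigma_i}(\U_0,\U_1)$, leaving $\bigvee\Delta'$.
\end{itemize}
Your ``collision'' idea is therefore the right local step, with $\tau=\sigma_i(\U_0)$. But it must be performed once per witness inference of the cut-free proof, with the context folded into the rule's premise. Applied globally to $\pi$, it has no way to obtain its premise.
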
 
\begin{proof}

  Let $\gamma \in \Gamma$ be such that 
  $T + \Gamma\hyp\RFN^R \vdash \gamma$. Then, there exist $\sigma_0,\ldots,\sigma_k \in \Gamma$ such that 
  $$T\vdash  \bigl( \forall \V(\sigma_0(\V) \to R(\sigma_0(\V))) \land \cdots \land  \forall \V(\sigma_k(\V) \to R(\sigma_k(\V))) \bigr) \to \gamma. $$
  Thus, in terms of Tait calculus, we have
  $$
  T \vdash \exists \V_0 \exists \V_1(\sigma_0(\V_0) \land \varphi_{\sigma_0}(\V_0,\V_1)),\ldots,\exists \V_0 \exists \V_1(\sigma_k(\V_0) \land \varphi_{\sigma_k}(\V_0,\V_1)), \gamma.
  $$
In particular, there are $\tau_0,\ldots,\tau_m \in T$ such that 
  $$
  \tau_0,\ldots,\tau_m \vdash \exists \V_0 \exists \V_1(\sigma_0(\V_0) \land \varphi_{\sigma_0}(\V_0,\V_1)),\ldots,\exists \V_0 \exists \V_1(\sigma_k(\V_0) \land \varphi_{\sigma_k}(\V_0,\V_1)), \gamma.
  $$
  Let $\rho \in \Pi^1_{n}$ be such that $\lnot(\tau_0 \land \cdots \land \tau_m)$ is equivalent to $\exists \V \rho$ in pure logic.
Then, 
  $$
  \vdash \exists \V \rho, \exists \V_0 \exists \V_1(\sigma_0(\V_0) \land \varphi_{\sigma_0}(\V_0,\V_1)),\ldots,\exists \V_0 \exists \V_1(\sigma_k(\V_0) \land \varphi_{\sigma_k}(\V_0,\V_1)), \gamma.
  $$
  Since this sequent is derived in pure logic, we have a cut-free proof.
  We fix such a proof tree. Then it is a sequent-labeled tree.

  We make a new labeling for this tree by removing some formulas from each node.
  The new labeling is defined by induction from the root to the leaves.
  First, we remove all formulas but $\gamma$ from the root.
  For the induction step, assume that the new labeling is defined for a node $x$.
  If $x$ is a leaf, then we do nothing. 
  Otherwise, let $x_0,\ldots,x_l$ be immediate successor nodes of $x$. That is, $x$ is inferred from those.
  We define the new labeling for those by deciding whether to delete or retain each formula occurrence.

  \vspace*{0.5em}
  [Case 1] If an occurrence is not a target of the inference, then it occurs in $x$. If this occurrence is removed from $x$, then we remove it from $x_i$. If it is kept in $x$, then we keep it in $x_i$.
  
  [Case 2] The inference between $x$ and $x_0$ is of the form $\displaystyle{\frac{\Delta,\rho(\U)}{\Delta,\exists \V \rho(\V)}}$.
  In this case, we keep $\rho(\U)$ in $x_0$.

  [Case 3] The inference between $x$ and $x_0$ is of the form $\displaystyle{\frac{\Delta,\sigma_i(\U_0) \land \varphi_{\sigma_i}(\U_0,\U_1)}{\Delta,\exists \V(\sigma_i(\U_0) \land \varphi_{\sigma_i}(\U_0,\V))}}$.
  In this case, we keep $\sigma_i(\U_0) \land \varphi_{\sigma_i}(\U_0,\U_1)$ in $x_0$.

  [Case 4] The inference is not of the form as given in Cases 2 or 3. In this case, 
  we retain each auxiliary formula if and only if we retain the corresponding principal formula. For example, assume that the inference is of the form $\displaystyle{\frac{\theta_0 \quad \theta_1}{\theta_0 \land \theta_1}}$. In this case, if $\theta_0 \land \theta_1$ is removed, then so are $\theta_0$ and $\theta_1$, and if $\theta_0 \land \theta_1$ is kept, then so are $\theta_0$ and $\theta_1$.

\vspace*{0.5em}
  Since we consider a cut-free proof, this process determines whether each formula occurrence will be removed or kept. 
  In addition, we can show that the following hold by induction.
  \begin{itemize}
    \item Every formula occurring in the new labeling is in $\Gamma$.
    \item If a formula is removed, then it is of the form $\exists \V \varphi$ for some $\varphi$.
  \end{itemize}
  
  From now on, we will show that the sequent assigned to each node by the new labeling is provable in $T + \Gamma\hyp\RFN^R\hyp\Rule$ by induction from the leaves to the root. Here, saying that a sequent $\Delta$ is provable means that the disjunction $\bigvee \Delta$ is provable.
  Since the new labeling assigns $\gamma$ to the root, this induction shows that $\gamma$ is provable in $T + \Gamma\hyp\RFN^R\hyp\Rule$. In what follows, we write $\Delta'$ for the new labeling corresponding to the original labeling $\Delta$.

Consider a leaf. Then, the original labeling for it is of the form $\Delta,P,\lnot P$ for some atomic $P$.
Then, $P,\lnot P$ are kept in the new labeling. Thus, the new labeling $(\Delta,P,\lnot P)'$ includes $P,\lnot P$. So $(\Delta,P,\lnot P)'$ is provable in $T + \Gamma\hyp\RFN^R\hyp\Rule$.

Consider a non-leaf node $x$. 
Then, there are 3 cases.

\vspace*{0.5em}
[Case 1] Assume that $x$ is a consequence of the inference $\displaystyle{\frac{\Delta,\rho(\U)}{\Delta,\exists \V \rho(\V)}}$ and $(\Delta,\rho(\U))' \equiv \Delta' \lor \rho(\U)$ is provable in $T + \Gamma\hyp\RFN^{R}\hyp\Rule$.
It is enough to show that $\Delta'$ is provable in $T + \Gamma\hyp\RFN^{R}\hyp\Rule$.
Since $\forall \V \lnot \rho(\V)$ is provable in $T$, we have 
$T+ \Gamma\hyp\RFN^{R}\hyp\Rule \vdash \lnot \rho(\U)$. Thus, $T + \Gamma\hyp\RFN^{R}\hyp\Rule \vdash \Delta'$.

[Case 2] Assume that $x$ is a consequence of $\displaystyle\frac{\Delta, \sigma_i(\U_0) \land \varphi_{\sigma_i}(\U_0,\U_1)}{\Delta, \exists \V (\sigma_i(\U_0) \land \varphi_{\sigma_i}(\U_0,\V ))}$ and $\Delta' \lor (\sigma_i(\U_0) \land \varphi_{\sigma_i}(\U_0,\U_1))$ is provable in $T + \Gamma\hyp\RFN^R\hyp\Rule$. We will prove that $\Delta'$ is also provable in $T + \Gamma\hyp\RFN^R\hyp\Rule$.
Now $T + \Gamma\hyp\RFN^R\hyp\Rule$ proves both 
$\Delta' \lor \sigma_i(\U_0)$ and $\Delta' \lor \varphi_{\sigma_i}(\U_0,\U_1)$. 
Since each formula in $\Delta'$ is in $\Gamma$ and $\sigma_i$ is in $\Gamma$, $\Delta' \lor \sigma_i(\U_0)$ is also in $\Gamma$. Thus, by $\Gamma\hyp\RFN^R\hyp\Rule$, it proves $R(\Delta' \lor \sigma_i(\U_0))$. Then, $T + \Gamma\hyp\RFN^R\hyp\Rule$ proves $\Delta' \lor R(\sigma_i(\U_0))$ by assumption (1) on $R$. Since $R(\sigma_i(\U_0)) \leftrightarrow \forall \V \lnot \varphi_{\sigma_i}(\U_0,\V)$, we have $\Delta' \lor \lnot \varphi_{\sigma_i}(\U_0,\U_1)$. Since $\Delta' \lor \varphi_{\sigma_i}(\U_0,\U_1)$ is provable, $\Delta'$ is also provable.

[Otherwise] Straightforward.
\end{proof}

We show that Theorem~\ref{BPconservation} can be applied to $\beta_n$-reflections.
\begin{lemma}\label{lem BP can be applied to beta_nRFN}
  Let $n \in \omega$. Then 
  \begin{enumerate}
    \item $\ACAo \vdash \beta_n\RFN(\sigma(\V) \lor \tau(\V)) \to \sigma(\V) \lor \beta_n\RFN(\tau(\V))$ for $\sigma \in \Pi^1_{n+2}$.
    \item For any $\sigma(\V)$, there exists $\varphi_{\sigma} \in \Pi^1_{n+2}$ such that $\ACAo \vdash \lnot \beta_n\RFN(\sigma(\V_0)) \leftrightarrow \exists \V_1\varphi_{\sigma}(\V_0,\V_1)$.
    That is, $\beta_n\RFN$ can be written as a $\Pi^1_{n+3}$-formula in $\ACAo$.
  \end{enumerate}
\end{lemma}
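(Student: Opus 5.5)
For (1), I will argue inside $\ACAo$. Assume $\beta_n\RFN(\sigma(\V)\lor\tau(\V))$ and $\lnot\sigma(\V)$; the goal is $\beta_n\RFN(\tau(\V))$. Fix an arbitrary $\U$. Since $\lnot\sigma(\V)$ is $\Sigma^1_{n+2}$, write it as $\exists X\,\theta(X,\V)$ with $\theta\in\Pi^1_{n+1}$, and pick a witness $X_0$. Apply the hypothesis to the parameter $\U\oplus X_0$ to obtain a $\beta_n$-model $\M$ with $\U,X_0,\V\in\M$ and $\M\models\sigma(\V)\lor\tau(\V)$.

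The key point is that a $\beta_n$-model is $\Sigma^1_{n+1}$-elementary, hence upward absolute for $\Pi^1_{n+1}$-formulas with parameters in $\M$. For $n=0$ this is replaced by arithmetical absoluteness of $\omega$-models of $\ACAo$; then $\theta$ is $\Pi^1_1$, and $\Pi^1_1$-formulas are downward absolute to any $\omega$-model. I expect this absoluteness bookkeeping to be the main point needing care: for $n\ge1$, $\Pi^1_{n+1}$ truth in $V$ transfers downward to $\M$, since $\Sigma^1_{n+1}$ facts in $\M$ transfer up, so $\lnot\theta$ in $\M$ would give $\lnot\theta$ in $V$. Hence $\M\models\theta(X_0,\V)$, so $\M\models\lnot\sigma(\V)$. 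Therefore $\M\models\tau(\V)$, and since $\U$ was arbitrary, $\beta_n\RFN(\tau(\V))$ holds. Formally, satisfaction in coded models is handled via the usual partial satisfaction (formalized in $\ACAo$ for each fixed formula), and the absoluteness is the standard lemma that $\ACAo$ proves for each fixed formula.

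For (2), I will unfold $\lnot\beta_n\RFN(\sigma(\V_0))\equiv\exists\U\forall\M(\U,\V_0\in\M\land\beta_n(\M)\to\M\not\models\sigma(\V_0))$ and take $\varphi_\sigma(\V_0,\V_1)$ to be the formula $\forall\M(\V_1,\V_0\in\M\land\beta_n(\M)\to\M\models\lnot\sigma(\V_0))$. Here ``$\M\models\cdot$'' is arithmetical in $\M$, since $\ACAo$ can define the truth of a fixed formula in a coded model arithmetically. Moreover, $\beta_n(\M)$ is $\Pi^1_n$, so it occurs negatively and contributes a $\Sigma^1_n$ part. Thus $\varphi_\sigma$ is $\forall\M(\Sigma^1_n\lor\text{arithmetical})$, which is $\Pi^1_{n+1}\subseteq\Pi^1_{n+2}$. (For $n=0$, being an $\omega$-model of $\ACAo$ is arithmetical and the count is the same.) The equivalence with $\exists\V_1\varphi_\sigma$ is then immediate by renaming $\U$ as $\V_1$, and $\beta_n\RFN$ is therefore (equivalent to) $\Pi^1_{n+2}$, in particular $\Pi^1_{n+3}$, as stated.
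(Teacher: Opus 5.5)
Your proof is correct and follows the paper's argument essentially verbatim. For (1), you take a witness $X_0$ to the $\Sigma^1_{n+2}$-formula $\lnot\sigma(\V)$, put it (together with $\U$) into the reflecting $\beta_n$-model $\M$, and use downward absoluteness of the $\Pi^1_{n+1}$ matrix to get $\M\models\lnot\sigma(\V)$ and hence $\M\models\tau(\V)$; for (2), quantifier counting shows $\beta_n\RFN(\sigma)$ is $\Pi^1_{n+2}$. One wording slip: a $\beta_n$-model is only $\Sigma^1_n$-elementary (not $\Sigma^1_{n+1}$-elementary), and what this gives is upward absoluteness of $\Sigma^1_{n+1}$, i.e.\ downward (not upward) absoluteness of $\Pi^1_{n+1}$. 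That is exactly the fact you correctly invoke in your next sentence, so the argument is unaffected.
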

\begin{proof}
(1) Let $\sigma(\V) \in \Pi^1_{n+2}$. We work in $\ACAo$.
Let $X$ be a set such that $\beta_n\RFN(\sigma(X) \lor \tau(X))$ and $\lnot \sigma(X)$. 
We will show that $\beta_n\RFN(\tau(X))$ holds. Let $Y$ be a set. We will find an $\M$ such that $X,Y \in \M \land \beta_n(\M) \land \M \models \tau(X)$.

Since $\sigma \in \Pi^1_{n+2}$, $\lnot \sigma(X)$ can be written as $\exists \U \theta(X,\U)$ for some $\theta \in \Pi^1_{n+1}$. Let $Z$ be such that $\theta(X,Z)$. By $\beta_n\RFN(\sigma(X) \lor \tau(X))$, there exists $\M$ such that $X,Y,Z \in \M \land \beta_n(\M) \land \M \models (\sigma(X) \lor \tau(X))$. 
Since $\theta(X,Z)$ holds, $\M \models \theta(X,Z)$. Thus, $\M \models \lnot \sigma(X)$. Hence $\M \models \tau(X)$. This proves (1).

(2) $\beta_n\RFN$ is a $\Pi^1_{n+2}$-formula. This proves (2).
\end{proof}

\begin{definition}
  For a class $\Gamma$ of formulas, we define 
  $$\Gamma\hyp\beta_n\RFN \equiv \{\forall \V(\sigma(\V) \to \beta_n\RFN(\sigma(\V))): \sigma \in \Gamma\}.$$ We also define the inference rule $\Gamma\hyp\beta_n\RFN\hyp\Rule$ as $\displaystyle{\frac{\sigma(\V)}{\beta_n\RFN(\sigma(\V))}}$ for $\sigma \in \Gamma$. 
  As usual, we write $\Gamma\hyp\beta_n\RFN_0$ for the theory $\ACAo + \Gamma\hyp\beta_n\RFN$.
\end{definition}

We note that $\Gamma\hyp\beta_n\RFN$ (resp. $\Gamma\hyp\beta_n\RFN\hyp\Rule$) is $\Gamma\hyp\RFN^R$ (resp. $\Gamma\hyp\RFN^R\hyp\Rule$) in Theorem~\ref{BPconservation} for $R = \beta_n\RFN$.

\begin{theorem}\label{thm cons for beta_n}
$\Pi^1_{n+2}\hyp\beta_n\RFN_0$ is $\Pi^1_{n+2}$-conservative over $\ACAo + \Pi^1_{n+2}\hyp\beta_n\RFN\hyp\Rule$.
\end{theorem}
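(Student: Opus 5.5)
The plan is to obtain Theorem~\ref{thm cons for beta_n} as a direct instance of Theorem~\ref{BPconservation}. We take $R = \beta_n\RFN$, $\Gamma = \Pi^1_{n+2}$ (so the index in Theorem~\ref{BPconservation} is $n+2 > 0$), and $T = \ACAo$.

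First, check that $R$ is a generalized reflection operator. By definition, $\beta_n\RFN(\sigma(\V)) \equiv \forall \U \exists \M(\U,\V \in \M \land \beta_n(\M) \land \M \models \sigma(\V))$. Its free variables are exactly the parameters $\vec{\V}$ of $\sigma$, since $\U$ and $\M$ are bound. Substituting $\vec{\U}$ for $\vec{\V}$ commutes with the construction, provided bound variables are renamed as usual. Next, $T = \ACAo$ is axiomatized by $\Pi^1_1$ sentences, hence is contained in $\Pi^1_{n+3}$. Thus the requirement $T \subseteq \Pi^1_{(n+2)+1}$ holds, and $T$ trivially includes $\ACAo$.

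Then verify the two hypotheses of Theorem~\ref{BPconservation}.
\begin{enumerate}
  \item Hypothesis (1) is exactly Lemma~\ref{lem BP can be applied to beta_nRFN}(1) for $\sigma \in \Pi^1_{n+2}$, read with the free parameters $\V$ universally quantified.
  \item For hypothesis (2), we need some $\varphi_\sigma \in \Pi^1_{n+2}$ with $\ACAo \vdash \lnot \beta_n\RFN(\sigma(\V_0)) \leftrightarrow \exists \V_1 \varphi_\sigma(\V_0,\V_1)$. This is Lemma~\ref{lem BP can be applied to beta_nRFN}(2).
\end{enumerate}
Since the proof of Lemma~\ref{lem BP can be applied to beta_nRFN}(2) is terse, I would spell out the complexity count. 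The clause $\M \models \sigma(\V)$ is arithmetical in $\M$: it is satisfaction for a coded $\omega$-model, which exists in $\ACAo$. The clause $\beta_n(\M)$ is $\Pi^1_n$. So the matrix $\U,\V \in \M \land \beta_n(\M) \land \M\models\sigma$ is $\Pi^1_n$, its $\exists\M$ closure is $\Sigma^1_{n+1}$, and $\beta_n\RFN(\sigma)$ is $\Pi^1_{n+2}$. For $n=0$, read $\beta_0$ as arithmetical, so this is $\Pi^1_2$.

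Hence $\lnot\beta_n\RFN(\sigma)$ is equivalent to $\exists \U\, \forall \M\, \lnot(\cdots)$, which is $\Sigma^1_{n+2}$. Writing it as $\exists \V_1 \varphi_\sigma$ with $\varphi_\sigma = \forall\M\lnot(\cdots)$ gives $\varphi_\sigma \in \Pi^1_{n+1} \subseteq \Pi^1_{n+2}$. We may pad $\varphi_\sigma$ to be literally of the required class.

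The only point needing care is this bookkeeping: making sure the normal forms used are provably equivalent in $\ACAo$ and that the classes line up with $\Gamma$. In particular, membership of $\Delta' \lor \sigma_i$ in $\Gamma$ in Case~2 of the proof of Theorem~\ref{BPconservation} depends only on $\Gamma$ being closed under disjunction, which $\Pi^1_{n+2}$ is up to $\ACAo$-provable equivalence. With both hypotheses verified, Theorem~\ref{BPconservation} yields that $\ACAo + \Pi^1_{n+2}\hyp\RFN^R = \Pi^1_{n+2}\hyp\beta_n\RFN_0$ is $\Pi^1_{n+2}$-conservative over $\ACAo + \Pi^1_{n+2}\hyp\beta_n\RFN\hyp\Rule$, which is the statement.
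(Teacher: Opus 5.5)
Your proposal is correct and follows the paper's own argument: instantiate Theorem~\ref{BPconservation} with $R=\beta_n\RFN$, $\Gamma=\Pi^1_{n+2}$ and $T=\ACAo$, and discharge both hypotheses by Lemma~\ref{lem BP can be applied to beta_nRFN}. There are two harmless slips. First, $\ACAo$ is $\Pi^1_2$-axiomatized rather than $\Pi^1_1$ (arithmetical comprehension has the form $\forall\vec{X}\exists Y\ldots$), though this still lies inside $\Pi^1_{n+3}$. Second, $\M\models\sigma(\V)$ is arithmetical because, for a fixed standard $\sigma$, it is just the relativization $\sigma^{\M}$, not because a full satisfaction relation exists in $\ACAo$, which in general it does not.
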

\begin{proof}
  By the previous lemma, we can apply Theorem~\ref{BPconservation} to $\beta_n\RFN$.
\end{proof}

\begin{lemma}\label{beta-ref satisfies BP}
  Let $n \in \omega$.
  \begin{enumerate}
    \item $\Sigma^1_{n+1}\hyp\DC_0$ and $\Pi^1_{n+2}\hyp\beta_n\RFN_0$ prove the same sentences.
    \item $\ACAo + \beta_n\RFN^{\omega}(\ACAo)$ and $\ACAo + \Pi^1_{n+2}\hyp\beta_n\RFN\hyp\Rule$ prove the same sentences.
  \end{enumerate}
\end{lemma}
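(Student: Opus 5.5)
For (1), I will use the well-known characterization that $\Sigma^1_{n+1}\hyp\DC_0$ is equivalent over $\ACAo$ to $\beta_n$-model reflection for $\Pi^1_{n+2}$-formulas. The argument splits by direction.

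\emph{From $\DC$ to reflection.} Given $\sigma(X)\in\Pi^1_{n+2}$ true and a set $Y$, I build a countable sequence of sets $(X_k)_k$ by $\Sigma^1_{n+1}\hyp\DC$ (really $\Sigma^1_{n+1}$-choice iterated), with $X_0=X\oplus Y$. Each stage adds the arithmetical jump and witnesses for all $\Sigma^1_{n+1}$-instances with parameters from earlier stages. This makes the union $\M=\{(X_k)_j\}$ a $\beta_n$-model: the $\Sigma^1_{n+1}$ statements true in $V$ with parameters in $\M$ get witnesses in $\M$, and an induction on complexity (Tarski--Vaught style) gives $\Sigma^1_{n+1}$-elementarity. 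Closure under the arithmetic jump gives $\ACAo$ in $\M$. Then $\M\models\sigma(X)$ follows by downward absoluteness of $\Pi^1_{n+2}$-formulas into $\beta_{n}$-models. For $n>0$, $\Sigma^1_{n+1}$-elementarity suffices, since a $\Pi^1_{n+2}$ formula is $\forall U\,\exists W\,\pi$ with $\pi\in\Pi^1_n$, and the inner $\Sigma^1_{n+1}$ part reflects. For $n=0$, we need the $\Sigma^1_1$ witnesses placed in $\M$, which is exactly what $\Sigma^1_1\hyp\DC$ supplies.

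\emph{From reflection to $\DC$.} Assume $\forall Z\exists W\theta(Z,W)$ with $\theta\in\Sigma^1_{n+1}$. This is $\Pi^1_{n+2}$, so it holds in some $\beta_n$-model $\M$ containing the parameters. Inside $\M$, I code the enumeration of $\M$ and use the arithmetical definability of $\M$ to pick, at each step, the least-indexed $W\in\M$ with $\M\models\theta(Z,W)$. This gives a sequence arithmetically in $\M$, so it exists by $\ACAo$. Upward absoluteness of $\Sigma^1_{n+1}$ from $\beta_n$-models then gives $\theta$ in $V$. This uses only the instance, so the schemes prove each other's axioms and the theories coincide.

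For (2), the plan is to show two things. First, each $\beta_n\RFN^r(\ACAo)$ is derivable in $\ACAo+\Pi^1_{n+2}\hyp\beta_n\RFN\hyp\Rule$. Second, the set of theorems of $\ACAo+\beta_n\RFN^\omega(\ACAo)$ is closed under the rule. The first part goes by induction on $r$. Here $\ACAo$ is finitely axiomatized by a $\Pi^1_2$ sentence, and $\beta_n\RFN^r(\ACAo)$ is $\Pi^1_{n+2}$ by Lemma~\ref{lem BP can be applied to beta_nRFN}(2). Apply the rule to $\ACAo\land\beta_n\RFN^r(\ACAo)$, which is already provable. For the base case, apply it to $\ACAo$ itself.

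The closure under the rule is the main step. Suppose $\ACAo+\beta_n\RFN^r(\ACAo)\vdash\sigma(\V)$ with $\sigma\in\Pi^1_{n+2}$. I want $\beta_n\RFN^{r+1}(\ACAo)\vdash\beta_n\RFN(\sigma(\V))$. Given $\V,\U$, take $\M\ni\V,\U$, a $\beta_n$-model with $\M\models\ACAo+\beta_n\RFN^r(\ACAo)$. Here we rely on $\M$ satisfying the finitely many axioms used in the proof, which is the point of having the single sentence $\ACAo$ and a finite $r$. Inside $\ACAo$ we can apply soundness of the proof to $\M$: $\M$ is a coded $\omega$-model, and full satisfaction for coded models exists in $\ACAo$. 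So $\M\models\sigma(\V)$, as required.

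The obstacle is mild. Formalized soundness of first-order provability for coded $\omega$-models in $\ACAo$ is standard but must be invoked carefully for a fixed proof. I expect part (1), especially the $\beta_n$ closure construction for $n=0$ versus $n>0$, to need the most care. I would cite \cite{Simpson} for it rather than redo it.
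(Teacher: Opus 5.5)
\textbf{Main issue: your $\DC$-to-reflection direction of (1) does not work as written.} You let each stage contain witnesses for \emph{all} true $\Sigma^1_{n+1}$ statements with parameters from earlier stages, and conclude that $\M$ is $\Sigma^1_{n+1}$-elementary. That would make $\M$ a $\beta_{n+1}$-model. Here a $\beta_n$-model is only $\Pi^1_n$-correct, and $\beta_0$ means an $\omega$-model of $\ACAo$. But $\Sigma^1_{n+1}\hyp\DC_0$ cannot produce $\beta_{n+1}$-models around arbitrary sets. For $n=0$ it would put every set into a coded $\beta$-model, which is equivalent to $\Pi^1_1\hyp\CAo$ and not provable in $\Sigma^1_1\hyp\DC_0$.

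Concretely, the step cannot be fed to $\DC$. To say that $B$ witnesses every true $\Sigma^1_{n+1}$ instance with parameters from $A$, the step relation must assert that the unwitnessed instances are false. That is a $\Pi^1_{n+1}$ condition, so the relation is not $\Sigma^1_{n+1}$, and proving it total would need $\Sigma^1_{n+2}$-choice. Relatedly, your claim that $\Pi^1_{n+2}$ formulas are downward absolute to $\beta_n$-models is false; only $\Pi^1_{n+1}$ formulas are.

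\textbf{How to repair it.} Drop one level and witness only $\sigma$ itself. Write $\sigma(X)\equiv\forall U\exists W\pi(X,U,W)$ with $\pi\in\Pi^1_n$. Let the step relation $\theta(A,B)$ say:
\begin{enumerate}
\item your arithmetical closure conditions hold;
\item $B$ contains witnesses for all true $\Sigma^1_n$ statements with parameters from $A$ (needed only when $n\geq 1$);
\item for each $i$ there is some $j$ with $\pi(X,A_i,B_j)$.
\end{enumerate}
Each clause ``$\forall W\lnot\varphi_e(A_i,W)$ or $\exists j\,\varphi_e(A_i,B_j)$'' with $\varphi_e\in\Pi^1_{n-1}$ is $\Pi^1_n$, so $\theta$ is $\Pi^1_n$. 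It is total by $\Sigma^1_{n+1}$-choice, which $\Sigma^1_{n+1}\hyp\DC_0$ proves. Your Tarski--Vaught argument, now run at level $n$, makes the union a $\beta_n$-model.

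Finally, $\M\models\sigma(X)$ holds not by absoluteness of $\sigma$. It holds because each $U\in\M$ has a witness $W\in\M$ with $\pi(X,U,W)$ true, and $\M$ sees this by $\Pi^1_n$-correctness.

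\textbf{The rest is fine.} Your reflection-to-$\DC$ direction is correct. The paper proves (1) only by citing \cite{Simpson}, as you also propose to do.

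Your part (2) is exactly the paper's argument. The theorems of $\ACAo+\beta_n\RFN^{\omega}(\ACAo)$ are closed under the rule via $\beta_n\RFN^{r+1}(\ACAo)$. Each $\beta_n\RFN^{r}(\ACAo)$ is derived by induction on $r$, applying the rule to what is already provable.

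One justification there is wrong but inessential. $\ACAo$ does not prove that coded $\omega$-models have full satisfaction relations; that needs $\ACA_0^+$, which is why the paper assumes it in the subsection on Pakhomov and Walsh's definition. For a fixed standard derivation of $\tau\to\sigma$ you only need that $\M\models\varphi$ is arithmetical in $\M$ for each of the finitely many formulas involved. Then $\ACAo$ proves $\M\models\tau\to\M\models\sigma$ by relativizing the derivation.
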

\begin{proof}
(1) is \cite[Theorem VII.7.6 (2) and Theorem VIII.5.12]{Simpson}. We will prove (2).

We first show that the consequences of $\ACAo + \beta_n\RFN^{\omega}(\ACAo)$ are closed under the $\Pi^1_{n+2}\hyp\beta_n\RFN$-rule.
Let $\sigma(\V) \in \Pi^1_{n+2}$ be such that $\forall \V (\sigma(\V))$ is provable in  $\ACAo + \beta_n\RFN^{\omega}(\ACAo)$.
Then, for sufficiently large $r$, $\ACAo + \beta_n\RFN^{r}(\ACAo) \vdash \forall \V\sigma(\V)$.
We work in $\ACAo + \beta_n\RFN^\omega(\ACAo)$ and show that 
$\forall \V(\beta_n\RFN(\sigma(\V)))$ holds.
Let $X$ be a set. We will show that $\beta_n\RFN(\sigma(X))$ holds.
Let $Y$ be a set. Then, 
there exists a $\beta_n$-model $\M$ such that 
$X,Y \in \M \land \M \models \ACAo +\beta_n\RFN^r(\ACAo)$ by $\beta_n\RFN^{r+1}(\ACAo)$.
Then, $\M \models \forall \V(\sigma(\V))$ and hence $\M \models \sigma(X)$. Thus, we have 
$\forall Y \exists \M(X,Y \in \M \land \beta_n(\M) \land \M \models \sigma(X))$, which is $\beta_n\RFN(\sigma(X))$.

We next prove that, for all $r > 0$, $\beta_n\RFN^r(\ACAo)$ is provable in $\ACAo + \Pi^1_{n+2}\hyp\beta_n\RFN\hyp\Rule$ by induction on $r$. 
The initial case $r =1$ is trivial. 
For the induction step, notice that $\beta_n\RFN^{r+1}(\ACAo) \equiv \beta_n\RFN(\ACAo + \beta_n\RFN^r(\ACAo))$ is equivalent to $\beta_n\RFN(\beta_n\RFN^r(\ACAo))$. 
In addition, it is easy to show that $\beta_n\RFN^{r+1}(\ACAo)$ is $\Pi^1_{n+2}$. Thus, we can apply $\Pi^1_{n+2}\hyp\beta_n\RFN\hyp\Rule$ to $\beta_n\RFN^{r}(\ACAo)$ and obtain $\beta_n\RFN^{r+1}(\ACAo)$.
\end{proof}

\begin{theorem}[Theorem~\ref{Thm Cantini rephrase}, restated]
  $\Th_{\Pi^1_{n+2}}(\Sigma^1_{n+1}\hyp\DC_0) \equiv \ACAo + \beta_n\RFN^{\omega}(\ACAo)$.
\end{theorem}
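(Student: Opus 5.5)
The plan is to chain the three results just established: Lemma~\ref{beta-ref satisfies BP}~(1), Theorem~\ref{thm cons for beta_n}, and Lemma~\ref{beta-ref satisfies BP}~(2). We show two inclusions. Every axiom of $\ACAo + \beta_n\RFN^{\omega}(\ACAo)$ is a $\Pi^1_{n+2}$-theorem of $\Sigma^1_{n+1}\hyp\DC_0$. Conversely, every $\Pi^1_{n+2}$-theorem of $\Sigma^1_{n+1}\hyp\DC_0$ is provable in $\ACAo + \beta_n\RFN^{\omega}(\ACAo)$.

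\emph{Easy direction.} By Lemma~\ref{beta-ref satisfies BP}~(1), $\Sigma^1_{n+1}\hyp\DC_0$ and $\Pi^1_{n+2}\hyp\beta_n\RFN_0$ have the same theorems. The latter is closed under $\Pi^1_{n+2}\hyp\beta_n\RFN\hyp\Rule$: if it proves $\forall \V\sigma(\V)$ with $\sigma\in\Pi^1_{n+2}$, then the instance $\forall\V(\sigma(\V)\to\beta_n\RFN(\sigma(\V)))$ yields $\forall\V\,\beta_n\RFN(\sigma(\V))$. Hence it contains $\ACAo + \Pi^1_{n+2}\hyp\beta_n\RFN\hyp\Rule$. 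By Lemma~\ref{beta-ref satisfies BP}~(2), it therefore proves every $\beta_n\RFN^r(\ACAo)$. Each $\beta_n\RFN^r(\ACAo)$ is a $\Pi^1_{n+2}$-sentence, as noted in the proof of Lemma~\ref{beta-ref satisfies BP}~(2). So the axioms of $\ACAo + \beta_n\RFN^{\omega}(\ACAo)$ lie in $\Th_{\Pi^1_{n+2}}(\Sigma^1_{n+1}\hyp\DC_0)$. Note also that $\ACAo$ consists of $\Pi^1_2\subseteq\Pi^1_{n+2}$ sentences (taking a finite axiomatization, or the schema instances, which are $\Pi^1_2$).

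\emph{Main direction.} Let $\gamma\in\Pi^1_{n+2}$ be provable in $\Sigma^1_{n+1}\hyp\DC_0$. Then $\Pi^1_{n+2}\hyp\beta_n\RFN_0\vdash\gamma$ by Lemma~\ref{beta-ref satisfies BP}~(1). Theorem~\ref{thm cons for beta_n} then gives $\ACAo + \Pi^1_{n+2}\hyp\beta_n\RFN\hyp\Rule\vdash\gamma$. Lemma~\ref{beta-ref satisfies BP}~(2) converts this to $\ACAo + \beta_n\RFN^{\omega}(\ACAo)\vdash\gamma$.

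Combining the two directions, the two theories prove the same sentences. The only point requiring any care is the hypothesis of Theorem~\ref{BPconservation}, invoked inside Theorem~\ref{thm cons for beta_n}, that $T\subseteq\Pi^1_{n+1}$ with $T\supseteq\ACAo$. Here $T=\ACAo$, which is $\Pi^1_2\subseteq\Pi^1_{n+1}$ whenever $n\geq 1$. For $n=0$ one must check that $\ACAo$ is axiomatized by $\Pi^1_1$ sentences. This holds: arithmetical comprehension instances $\forall\vec X\exists Y\forall x(x\in Y\leftrightarrow\theta)$ can be replaced by a finite axiomatization using a $\Pi^1_1$ formulation, e.g.\ closure under Turing jump stated as $\forall X\exists Y(\ldots)$, which is $\Pi^1_2$. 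If the $n=0$ case fails to fit literally, I would redo the cut-elimination argument of Theorem~\ref{BPconservation} with $\rho\in\Pi^1_{n+2}$ (the $\lnot\tau$'s being $\Sigma^1_2$ still have the form $\exists\V\rho$ with $\rho\in\Pi^1_1$), since that is all the proof uses. This bookkeeping is the only place I expect friction; the rest is direct composition of the cited results.
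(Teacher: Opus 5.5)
Your proposal is correct and follows the paper's argument exactly: the paper's proof just chains Lemma~\ref{beta-ref satisfies BP}~(1), Theorem~\ref{thm cons for beta_n} and Lemma~\ref{beta-ref satisfies BP}~(2), and your explicit easy direction (closure of $\Pi^1_{n+2}\hyp\beta_n\RFN_0$ under the rule) is what the paper leaves implicit. Your worry about the hypothesis of Theorem~\ref{BPconservation} is an index slip, and it leads you to a false side claim: with $\Gamma=\Pi^1_{n+2}$ that theorem needs $T\subseteq\Pi^1_{n+3}$, which $\ACAo\subseteq\Pi^1_2$ satisfies for every $n\geq 0$ (your ``fallback'' is just this correct reading), whereas your assertion that $\ACAo$ is $\Pi^1_1$-axiomatizable is false, since $\Pi^1_1$ sentences pass down to $\omega$-submodels such as the recursive sets, but it is never needed.
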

\begin{proof}
  Immediate from the previous lemma and Theorem~\ref{thm cons for beta_n}.
\end{proof}

\begin{remark}
  We note that
  $\Sigma^1_{n+1}\hyp\DC_0$ has iterated exponential speed-up over $\ACAo + \beta_n\RFN^{\omega}(\ACAo)$ when each $\beta_n\RFN^r(\ACAo)$ is expressed by using the numeral for $r$.
  This is because $\Sigma^1_{n+1}\hyp\DC_0 \vdash \forall x (\beta_n\RFN^x(\ACAo) \to \beta_n\RFN^{x+1}(\ACAo))$.
\end{remark}

In \cite[Theorem 88]{RuedeThesis}, R\"{u}ede proved the following conservation result.
\begin{theorem}\label{Thm ref of DC and ref omega}
  $\ACAo + \Pi^1_2\hyp\beta_0\RFN(\Sigma^1_1\hyp\DC)$ is $\Pi^1_2$-conservative over $\ACAo + \beta_0\RFN^{\omega}(\ATR_0 + \Sigma^1_1\hyp\DC)$.
\end{theorem}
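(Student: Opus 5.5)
We mirror the proof of Theorem~\ref{Thm Cantini rephrase}. We take $T = \ACAo + \Sigma^1_1\hyp\DC$ as the base theory and $R = \beta_0\RFN$. First we must make sure that $\Sigma^1_1\hyp\DC$ can be axiomatized by $\Pi^1_2$-sentences. Each instance of $\Sigma^1_1\hyp\DC$ has the form $\forall X \exists Y \theta \to \exists Z \forall k\,\theta(\ldots)$ with $\theta \in \Sigma^1_1$, so it is $\Pi^1_3$ as it stands. However, over $\ACAo$ the schema $\Sigma^1_1\hyp\DC$ is equivalent to $\Sigma^1_1\hyp\AC$ plus the $\Pi^1_2$-content we need. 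More precisely, it is standard that $\Sigma^1_1\hyp\DC_0 \equiv \Pi^1_2\hyp\beta_0\RFN$ restricted to appropriate formulas, and $\Sigma^1_1\hyp\DC$ is $\Pi^1_3$, not $\Pi^1_2$.

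Rather than fight this, we apply Theorem~\ref{BPconservation} with $T = \ACAo$, $\Gamma = \Pi^1_2$ and $R(\sigma) := \beta_0\RFN(\sigma \land \Sigma^1_1\hyp\DC)$. Here ``$\M \models \Sigma^1_1\hyp\DC$'' is understood as satisfaction of the full schema via a single formula, which is arithmetical in $\M$ using the satisfaction relation for coded $\omega$-models.

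\textbf{Hypotheses of Theorem~\ref{BPconservation}.} Condition (2) holds because $R\sigma$ is $\Pi^1_2$, exactly as in Lemma~\ref{lem BP can be applied to beta_nRFN}. Condition (1) holds by the same argument as in Lemma~\ref{lem BP can be applied to beta_nRFN}(1). Suppose $\lnot\sigma(X)$ with $\sigma \in \Pi^1_2$ is witnessed by $Z$ satisfying an arithmetical $\theta(X,Z)$. Put $Z$ into the model obtained from $R(\sigma \lor \tau)$. Then $\theta$ transfers down to $\M$, so $\M \models \tau(X)$ and $\M \models \Sigma^1_1\hyp\DC$.

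\textbf{Conservativity.} Theorem~\ref{BPconservation} then shows that $\ACAo + \Pi^1_2\hyp\RFN^R$, which is $\ACAo + \Pi^1_2\hyp\beta_0\RFN(\Sigma^1_1\hyp\DC)$, is $\Pi^1_2$-conservative over $\ACAo + \Pi^1_2\hyp\RFN^R\hyp\Rule$.

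\textbf{Identifying the rule theory.} It remains to prove the analogue of Lemma~\ref{beta-ref satisfies BP}(2): $\ACAo + \Pi^1_2\hyp\RFN^R\hyp\Rule$ proves exactly the sentences of $\ACAo + \beta_0\RFN^{\omega}(\ATR_0 + \Sigma^1_1\hyp\DC)$.

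For the direction from the rule theory to the iterated reflection theory, start from $\vdash \beta_0\RFN^r(\ATR_0 + \Sigma^1_1\hyp\DC)$. This is a $\Pi^1_2$ statement, so applying the rule yields a model of $\Sigma^1_1\hyp\DC$ satisfying it. We also need the model to satisfy $\ATR_0$. The key observation is that a coded $\omega$-model of $\Sigma^1_1\hyp\DC$ containing a set $X$ and satisfying $\beta_0\RFN(\ATR_0+\ldots)$ itself satisfies $\ATR_0$, since $\beta_0\RFN(\ACAo)$ already yields $\ATR$ over $\Sigma^1_1\hyp\DC$. Alternatively, a single application of the rule to $\mathsf{true}$ gives $\beta_0\RFN(\Sigma^1_1\hyp\DC)$, and $\ACAo + \beta_0\RFN(\Sigma^1_1\hyp\DC)$ proves that $\omega$-models of $\Sigma^1_1\hyp\DC$ exist around any set. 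By the standard argument in \cite{Simpson} (countable coded $\omega$-models of $\Sigma^1_1\hyp\AC$ give hyperarithmetic-type hierarchies), such models can be taken to satisfy $\ATR_0$. I expect this $\ATR_0$ bookkeeping to be the main obstacle; I would first try deriving $\ATR_0$ inside the reflected model from the extra reflection level.

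The converse direction proceeds as in Lemma~\ref{beta-ref satisfies BP}. If the rule theory proves $\forall \V \sigma$, then some finite fragment $\beta_0\RFN^r(\ATR_0+\Sigma^1_1\hyp\DC)$ proves it. The $(r+1)$-fold reflection supplies $\omega$-models of $\ATR_0 + \Sigma^1_1\hyp\DC + \beta_0\RFN^r(\ldots)$ around any $X, Y$, and these models satisfy $\sigma(X)$. Combining both directions with the conservativity above yields the theorem.
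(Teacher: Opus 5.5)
\textbf{Where your route departs from the paper.} Your overall architecture is sound, but it differs from the paper's in where $\ATRo$ enters. The paper first shows that the schemas $\Pi^1_2\hyp\beta_0\RFN(\Sigma^1_1\hyp\DC)$ and $\Pi^1_2\hyp\beta_0\RFN(\ATRo+\Sigma^1_1\hyp\DC)$ are equivalent over $\ACAo$. It then runs the generic argument (Theorem~\ref{BPconservation} plus the analogue of Lemma~\ref{beta-ref satisfies BP}(2)) with $T=\ATRo+\Sigma^1_1\hyp\DC$. You instead run the generic argument with $T=\Sigma^1_1\hyp\DC$, so you must do the $\ATRo$ bookkeeping when identifying the rule theory.

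\textbf{The gap.} That bookkeeping is the one non-routine step, and as written it fails. Your justification ``$\beta_0\RFN(\ACAo)$ already yields $\ATRo$ over $\Sigma^1_1\hyp\DC$'' is false: $\beta_0\RFN(\ACAo)$ is just $\ACA_0^+$, and $\ACA_0^+ + \Sigma^1_1\hyp\DC_0$ does not prove $\ATRo$. Likewise, the models supplied by $\beta_0\RFN(\Sigma^1_1\hyp\DC)$ cannot ``be taken to satisfy $\ATRo$''. An $\omega$-model of $\Sigma^1_1\hyp\DC$ need not satisfy $\ATRo$; if this were provable, $\ATRo$, which proves such models exist around every set, would prove $\Con(\ATRo)$. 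So your base case $r=1$, deriving $\beta_0\RFN(\ATRo+\Sigma^1_1\hyp\DC)$ in the rule theory, is not established.

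\textbf{The repair.} What you need is the fact the paper uses: $\ACAo\vdash\beta_0\RFN(\Sigma^1_1\hyp\DC)\to\ATRo$. This is the Simpson argument you allude to: an $\omega$-model of $\Sigma^1_1\hyp\mathsf{AC}$ containing $W$ and $X$ contains the hierarchy along $W$, by arithmetical transfinite induction with $\Sigma^1_1$-choice at limits. Its correct conclusion is that the \emph{ambient} rule theory proves $\ATRo$, not that the reflected models do.
\begin{itemize}
\item Apply the rule to a trivial $\Pi^1_2$ sentence to get $\beta_0\RFN(\Sigma^1_1\hyp\DC)$, hence $\ATRo$.
\item Since $\ATRo$ is a single $\Pi^1_2$ sentence, applying the rule to $\ATRo$ gives $\beta_0\RFN^1(\ATRo+\Sigma^1_1\hyp\DC)$.
\item Applying the rule to $\ATRo\land\beta_0\RFN^r(\ATRo+\Sigma^1_1\hyp\DC)$ gives the induction step.
\end{itemize}
For $r\geq 1$ your `key observation' is in fact true, but for a simpler reason: $\ACAo\vdash\beta_0\RFN(\ATRo)\to\ATRo$, since an $\omega$-model of $\ATRo$ containing $W,X$ contains the real hierarchy.

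This is exactly the paper's observation that $\sigma\land\ATRo\in\Pi^1_2$, used at the level of the rule rather than the schema. The paper's placement is slightly more economical, because the generic identification of the rule theory with $\beta_n\RFN^{\omega}(T)$ can then be reused verbatim.

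\textbf{Remaining points.} Your converse direction is fine once its hypothesis is read as provability in $\ACAo+\beta_0\RFN^{\omega}(\ATRo+\Sigma^1_1\hyp\DC)$, as in an induction on derivations. One small slip: in checking condition (1), $\lnot\sigma(X)$ for $\sigma\in\Pi^1_2$ is $\exists Z\,\theta(X,Z)$ with $\theta\in\Pi^1_1$, not arithmetical. The argument survives because $\Pi^1_1$ formulas are downward absolute to $\omega$-models.
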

\begin{remark}
  More precisely, \cite[Theorem 88 b)]{RuedeThesis} mentioned only conservation for arithmetical formulas. However, \cite[Theorem 88 a)]{RuedeThesis} yields $\Pi^1_2$-conservation.
\end{remark}

To obtain the above result, he adapted Cantini's proof to this setting.
In what follows, we show this result  can also be obtained as a corollary of Theorem~\ref{BPconservation}.

\begin{theorem}
  Let $T$ be a finitely axiomatized theory including $\ACAo$.
  Define $\Pi^1_{n+2}\hyp\beta_n\RFN(T)$ as the schema 
  $\{\forall \V(\sigma(\V) \to \beta_n\RFN(T + \sigma(\V))) : \sigma \in \Pi^1_{n+2}\}$, and $\Pi^1_{n+2}\hyp\beta_n\RFN(T)\hyp\Rule$ as the inference rule corresponding to $\Pi^1_{n+2}\hyp\beta_n\RFN(T)$. Then $\ACAo + \Pi^1_{n+2}\hyp\beta_n\RFN(T)$ is $\Pi^1_{n+2}$-conservative over $\ACAo + \Pi^1_{n+2}\hyp\beta_n\RFN(T)\hyp\Rule$.
\end{theorem}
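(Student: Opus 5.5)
We apply Theorem~\ref{BPconservation} with the base theory $\ACAo$, the class $\Gamma = \Pi^1_{n+2}$ (so $n+2 > 0$), and the generalized reflection operator $R$ defined by
\[
R(\sigma(\V)) \equiv \beta_n\RFN(T + \sigma(\V)).
\]
Since $T$ is finitely axiomatized, we identify $T$ with a single sentence $\bigwedge T$. Then $R(\sigma(\V))$ is $\beta_n\RFN(\bigwedge T \land \sigma(\V))$, which has exactly the free variables $\vec{\V}$ of $\sigma$ and commutes with substitution of set variables. So $R$ is a generalized reflection operator. With this choice, $\Gamma\hyp\RFN^R$ is $\Pi^1_{n+2}\hyp\beta_n\RFN(T)$ and $\Gamma\hyp\RFN^R\hyp\Rule$ is $\Pi^1_{n+2}\hyp\beta_n\RFN(T)\hyp\Rule$. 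The conclusion of Theorem~\ref{BPconservation} is then exactly the claimed conservation. Also $\ACAo \subseteq \Pi^1_{n+3}$, since $\ACAo$ is axiomatized by $\Pi^1_2$ sentences (or is finitely axiomatized by a $\Pi^1_2$ sentence). It remains to verify hypotheses (1) and (2).

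For (2), we argue as in Lemma~\ref{lem BP can be applied to beta_nRFN}(2). The formula $\M \models \bigwedge T \land \sigma(\V)$ is arithmetical in $\M$ and $\V$, and $\beta_n(\M)$ is $\Pi^1_n$. Hence $R(\sigma(\V_0)) \equiv \forall \U\exists\M(\ldots)$ is $\Pi^1_{n+2}$ for $n>0$, and for $n=0$ it is $\Pi^1_2$. Its negation is therefore of the form $\exists \V_1 \varphi_\sigma(\V_0,\V_1)$ with $\varphi_\sigma \in \Sigma^1_{n+1} \subseteq \Pi^1_{n+2}$.

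For (1), we need
\[
\ACAo \vdash \beta_n\RFN(T + (\sigma \lor \tau)) \to \sigma \lor \beta_n\RFN(T + \tau)
\]
for $\sigma \in \Pi^1_{n+2}$. We repeat the argument of Lemma~\ref{lem BP can be applied to beta_nRFN}(1). Assume $\lnot\sigma(X)$, write it as $\theta(X,Z)$ with $\theta \in \Pi^1_{n+1}$, and let $Y$ be given. Apply the hypothesis to a set coding $Y$ and $Z$ (so that both lie in $\M$). This yields a $\beta_n$-model $\M \ni X,Y,Z$ with $\M \models \bigwedge T \land (\sigma(X)\lor\tau(X))$. Since $\M$ is a $\beta_n$-model and $\theta \in \Pi^1_{n+1}$, the truth of $\theta(X,Z)$ passes down to $\M$. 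Hence $\M \models \lnot\sigma(X)$, so $\M \models \bigwedge T \land \tau(X)$, which witnesses $\beta_n\RFN(T+\tau(X))$.

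The main point needing care is this downward absoluteness of $\Pi^1_{n+1}$ formulas to $\beta_n$-models, provably in $\ACAo$. It is the same fact already used in Lemma~\ref{lem BP can be applied to beta_nRFN}(1), so it is available. The only genuinely new observation is that finite axiomatizability of $T$ lets $T$ be absorbed into a single formula, so that $R$ is a legitimate operator satisfying the hypotheses of Theorem~\ref{BPconservation}.
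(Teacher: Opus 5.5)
Your proposal is correct and follows the paper's argument. The paper's proof just says that the reasoning of Lemma~\ref{lem BP can be applied to beta_nRFN} lets Theorem~\ref{BPconservation} apply to the operator $\beta_n\RFN(T + \cdot)$, and you carry out that verification in detail: absorbing $T$ into $\bigwedge T$, coding $Y \oplus Z$, and using downward absoluteness of $\Pi^1_{n+1}$-formulas to $\beta_n$-models. One harmless slip: the negation of $R(\sigma(\V_0))$ is $\exists \V_1 \varphi_\sigma$ with $\varphi_\sigma \in \Pi^1_{n+1}$ rather than $\Sigma^1_{n+1}$, but either way $\varphi_\sigma \in \Pi^1_{n+2}$ as required.
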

\begin{proof}
  By the same argument as in the proof of Lemma~\ref{lem BP can be applied to beta_nRFN}, we can apply Theorem~\ref{BPconservation} to 
  $\beta_n\RFN(T)$.
\end{proof}

As for $\Pi^1_{n+2}\hyp\beta_n\RFN\hyp\Rule$, we obtain the following results.
\begin{lemma}
  Let $T$ be a finitely axiomatized theory.
  $\ACAo + \Pi^1_{n+2}\hyp\beta_n\RFN(T)\hyp\Rule$ proves the same sentences as $\ACAo + \beta_n\RFN^{\omega}(T)$.
\end{lemma}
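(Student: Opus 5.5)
The plan is to mirror the proof of Lemma~\ref{beta-ref satisfies BP}(2), replacing $\ACAo$ by $T$ inside the reflection operator. Here $\beta_n\RFN(T+\sigma)$ means $\beta_n\RFN(\bigwedge T \land \sigma)$, which is legitimate because $T$ is finitely axiomatized. Likewise $\beta_n\RFN^{r+1}(T) \equiv \beta_n\RFN(T + \beta_n\RFN^r(T))$. Two inclusions must be shown.

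\emph{First inclusion: the consequences of $\ACAo + \beta_n\RFN^{\omega}(T)$ are closed under $\Pi^1_{n+2}\hyp\beta_n\RFN(T)\hyp\Rule$.}
\begin{enumerate}
\item Suppose $\ACAo + \beta_n\RFN^{\omega}(T) \vdash \forall \V\sigma(\V)$ with $\sigma \in \Pi^1_{n+2}$. Fix $r$ with $\ACAo + \beta_n\RFN^{r}(T) \vdash \forall\V\sigma(\V)$.
\item Work in $\ACAo + \beta_n\RFN^{r+1}(T)$ and fix $X$ and $Y$. By $\beta_n\RFN^{r+1}(T)$ there is a $\beta_n$-model $\M \ni X,Y$ with $\M \models T + \beta_n\RFN^r(T)$.
\item Since $\M$ is a coded $\omega$-model of $\ACAo$, the above derivation (a fixed standard proof) transfers inside $\M$. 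Hence $\M \models \sigma(X)$, and $\M$ still satisfies $T$.
\item This gives $\beta_n\RFN(T+\sigma(X))$. As $X$ was arbitrary, $\forall\V\,\beta_n\RFN(T+\sigma(\V))$ follows.
\end{enumerate}
One caveat in step 3: if $T$ does not contain $\ACAo$, only $\M\models T + \beta_n\RFN^r(T)$ is directly available, and the proof uses $\ACAo$ as well. This is harmless, since $\beta_n(\M)$ includes $\M\models\ACAo$ by the paper's convention on $\beta_0$-models.

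\emph{Second inclusion: $\ACAo + \Pi^1_{n+2}\hyp\beta_n\RFN(T)\hyp\Rule \vdash \beta_n\RFN^r(T)$ for every $r>0$,} by induction on $r$.
\begin{enumerate}
\item For $r=1$: apply the rule to the trivially provable $\Pi^1_{n+2}$ sentence $0=0$. This yields $\beta_n\RFN(T + 0=0)$, which is equivalent over $\ACAo$ to $\beta_n\RFN(T)$, since within $\M$ the extra conjunct is trivial.
\item For the step: $\beta_n\RFN^r(T)$ is provable by the induction hypothesis. It is a $\Pi^1_{n+2}$ sentence, because $\beta_n(\M)$ is $\Pi^1_n$ and satisfaction in a coded model is arithmetical, giving the form $\forall\U\exists\M(\ldots)$ with matrix $\Pi^1_n$.
\item Applying the rule gives $\beta_n\RFN(T + \beta_n\RFN^r(T)) = \beta_n\RFN^{r+1}(T)$.
\end{enumerate}

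Finally, combine the two inclusions. The first shows that the deductive closure of $\ACAo+\beta_n\RFN^\omega(T)$ contains $\ACAo$ and is closed under the rule. Hence every theorem of $\ACAo + \Pi^1_{n+2}\hyp\beta_n\RFN(T)\hyp\Rule$ is a theorem of $\ACAo + \beta_n\RFN^\omega(T)$; formally this is an induction on derivations using the rule. The second inclusion gives the converse. I do not expect a real obstacle. The only points needing care are the complexity bookkeeping, namely that $\beta_n\RFN^r(T)$ stays $\Pi^1_{n+2}$, and making sure the rule is applied only to closed-form provable $\Pi^1_{n+2}$ formulas. If the rule is read as applying to formulas with free variables, it should be read as acting on their universal closures, as in the earlier proof.
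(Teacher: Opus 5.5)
Your proposal is correct and follows essentially the paper's intended route: the paper states this lemma without proof, relying on the argument of Lemma~\ref{beta-ref satisfies BP}(2) with $\ACAo$ replaced by $T$ inside the reflection operator, which is exactly what you do. Your extra details are correct and fill in points the paper leaves implicit: you get the case $r=1$ by applying the rule to a trivially provable sentence, and you note that the transfer step needs $\M \models \ACAo$ when $T$ does not contain $\ACAo$, which $\beta_n(\M)$ guarantees.
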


\begin{theorem}\label{Cor conservation for RFNT schema and RFNomega}
  Let $T$ be a finitely axiomatized theory. Then 
  $\ACAo + \Pi^1_{n+2}\hyp\beta_n\RFN(T)$ is $\Pi^1_{n+2}$-conservative over $\ACAo + \beta_n\RFN^{\omega}(T)$.
\end{theorem}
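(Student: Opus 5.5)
The plan is to combine the two results immediately preceding the statement, exactly as Theorem~\ref{Thm Cantini rephrase} was obtained from Theorem~\ref{thm cons for beta_n} and Lemma~\ref{beta-ref satisfies BP}.

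\textbf{Step 1: pass from the schema to the rule.} Let $\gamma \in \Pi^1_{n+2}$ be a sentence provable in $\ACAo + \Pi^1_{n+2}\hyp\beta_n\RFN(T)$. By the preceding theorem (the version of Theorem~\ref{BPconservation} for $R = \beta_n\RFN(T+\cdot)$), $\gamma$ is already provable in $\ACAo + \Pi^1_{n+2}\hyp\beta_n\RFN(T)\hyp\Rule$.

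\textbf{Step 2: pass from the rule to the iterated axioms.} By the preceding lemma, $\gamma$ is then provable in $\ACAo + \beta_n\RFN^{\omega}(T)$. This yields the claimed $\Pi^1_{n+2}$-conservativity. For the statement to be a genuine conservation result, we also check the easy inclusion: $\ACAo + \Pi^1_{n+2}\hyp\beta_n\RFN(T)$ proves every $\beta_n\RFN^r(T)$. This follows by induction on $r$. The case $r=1$ is the instance $\sigma \equiv 0=0$ of the schema. For $r+1$, apply the schema to the $\Pi^1_{n+2}$-sentence $\beta_n\RFN^r(T)$.

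\textbf{Where the work lies.} Since the lemma is stated without proof, the main obstacle is verifying it. The plan mirrors Lemma~\ref{beta-ref satisfies BP}(2).

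\emph{Closure under the rule.} Suppose $\ACAo + \beta_n\RFN^r(T) \vdash \forall \V\sigma(\V)$. Working in $\ACAo + \beta_n\RFN^{r+1}(T)$, fix $X$ and $Y$. We obtain a $\beta_n$-model $\M \ni X,Y$ with $\M \models T + \beta_n\RFN^r(T)$. Since $\M$ is a model of $\ACAo$ (because $T \supseteq \ACAo$), soundness of the finite proof inside $\M$ gives $\M \models \sigma(X)$. Hence $\M \models T + \sigma(X)$, which is $\beta_n\RFN(T+\sigma(X))$.

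\emph{Derivability of the iterates.} Each $\beta_n\RFN^r(T)$ is derivable with the rule by induction on $r$. Here $\beta_n\RFN^{r+1}(T) = \beta_n\RFN(T + \beta_n\RFN^r(T))$ is literally the conclusion of the rule applied to the $\Pi^1_{n+2}$-sentence $\beta_n\RFN^r(T)$. For the base case, apply the rule to the trivially provable $0=0$.

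\textbf{Delicate points.} Two details need care. First, $T$ must be finitely axiomatized, so that $\M \models T$ is expressed by a single formula; this is what keeps $\beta_n\RFN(T+\sigma)$ in the right complexity class for Lemma~\ref{lem BP can be applied to beta_nRFN}(2). Second, the earlier argument's use of the hypothesis $T\supseteq\ACAo$ to make coded models satisfy $\ACAo$ carries over to this setting.
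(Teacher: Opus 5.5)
Your proposal is correct and matches the paper's (implicit) proof: the statement is obtained by chaining the preceding theorem (the schema is $\Pi^1_{n+2}$-conservative over the rule, via Theorem~\ref{BPconservation}) with the preceding lemma (the rule proves the same sentences as $\beta_n\RFN^{\omega}(T)$), and your reconstruction of that unproved lemma along the lines of Lemma~\ref{beta-ref satisfies BP}(2) is sound. One small correction: you never need $T \supseteq \ACAo$, because under the paper's definition every coded $\beta_n$-model already satisfies $\ACAo$, so $\beta_n\RFN(T+\sigma)$ and $\beta_n\RFN(\ACAo+T+\sigma)$ are the same condition.
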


R\"{u}ede's conservation theorem follows from the previous theorem.
\begin{corollary}[Theorem~\ref{Thm ref of DC and ref omega}, restated]
  We have
  $$\Th_{\Pi^1_2}(\ACAo + \Pi^1_2\hyp\beta_0\RFN(\Sigma^1_1\hyp\DC)) \equiv \ACAo + \beta_0\RFN^{\omega}(\ATR_0 + \Sigma^1_1\hyp\DC).$$
\end{corollary}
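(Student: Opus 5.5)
The plan is to apply Theorem~\ref{Cor conservation for RFNT schema and RFNomega} with $n = 0$ and $T = \ATR_0 + \Sigma^1_1\hyp\DC$, which is finitely axiomatizable. Indeed, $\ATRo$ is finitely axiomatized, and over $\ACAo$ the schema $\Sigma^1_1\hyp\DC$ is equivalent to a single sentence. The latter holds because a universal $\Sigma^1_1$-formula, built from a universal arithmetical formula with set parameters, lets one instance of $\Sigma^1_1\hyp\DC$ with a parameter code all others. I would record this first, citing \cite{Simpson} for the finite axiomatizability of $\ATRo$ and checking that $\Sigma^1_1\hyp\DC_0$ is finitely axiomatized in the same way as $\Sigma^1_1\hyp\AC_0$. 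Then $T$ is a single $L_2$-sentence, so $\beta_0\RFN(T+\sigma(\V))$ is well-defined and the earlier theorem applies.

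With this $T$, Theorem~\ref{Cor conservation for RFNT schema and RFNomega} gives that $\ACAo + \Pi^1_2\hyp\beta_0\RFN(\Sigma^1_1\hyp\DC)$ is $\Pi^1_2$-conservative over $\ACAo + \beta_0\RFN^{\omega}(\ATRo + \Sigma^1_1\hyp\DC)$. Here I read $\Pi^1_2\hyp\beta_0\RFN(\Sigma^1_1\hyp\DC)$ as the schema $\forall \V(\sigma(\V)\to\beta_0\RFN(\ATRo+\Sigma^1_1\hyp\DC+\sigma(\V)))$. This reading must be justified if the intended meaning is reflection into $\omega$-models of $\Sigma^1_1\hyp\DC$ alone. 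In that case I would note that $\beta_0$-models are $\omega$-models of $\ACAo$, and that an $\omega$-model of $\Sigma^1_1\hyp\DC_0$ reflected inside a stronger reflection instance can be upgraded to one satisfying $\ATRo$. Equivalently, $\ACAo + \Pi^1_2\hyp\beta_0\RFN(\Sigma^1_1\hyp\DC)$ proves $\ATRo$, as in R\"{u}ede's setting, so adding $\ATRo$ to the reflected theory changes nothing. This identification is the step I expect to need care: one must check that the $\omega$-models obtained can be taken to satisfy $\ATRo$. I would try to derive $\ATRo$ from $\Pi^1_2$-reflection into models of $\Sigma^1_1\hyp\DC$ via the standard fact that $\Pi^1_2$-$\omega$-model reflection over $\ACAo$ yields $\Sigma^1_1\hyp\DC$ and arithmetical transfinite recursion.

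It remains to show the converse inclusion, so that the two theories have the same $\Pi^1_2$-theorems rather than only one-directional conservativity. Each $\beta_0\RFN^r(\ATRo+\Sigma^1_1\hyp\DC)$ is $\Pi^1_2$: $\beta_0\RFN$ of anything is $\Pi^1_2$, since $\beta_0(\M)$ is arithmetical and satisfaction in a coded model is arithmetical. Each such axiom is also provable in $\ACAo + \Pi^1_2\hyp\beta_0\RFN(\Sigma^1_1\hyp\DC)$, by the lemma preceding Theorem~\ref{Cor conservation for RFNT schema and RFNomega}: the reflection rule is derivable from the schema. Hence every sentence of $\ACAo+\beta_0\RFN^\omega(T)$ is a $\Pi^1_2$-consequence of the schema theory. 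Combined with the conservativity above, this gives $\Th_{\Pi^1_2}(\ACAo + \Pi^1_2\hyp\beta_0\RFN(\Sigma^1_1\hyp\DC)) \equiv \ACAo + \beta_0\RFN^{\omega}(\ATRo + \Sigma^1_1\hyp\DC)$.
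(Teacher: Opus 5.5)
Your outer structure matches the paper's: apply Theorem~\ref{Cor conservation for RFNT schema and RFNomega} with $n=0$ and $T=\ATRo+\Sigma^1_1\hyp\DC$, then identify $\Pi^1_2\hyp\beta_0\RFN(\Sigma^1_1\hyp\DC)$ with $\Pi^1_2\hyp\beta_0\RFN(\ATRo+\Sigma^1_1\hyp\DC)$. Your remarks on finite axiomatizability are fine. But that identification is the entire content of the paper's proof, and you leave it open. Until it is done, both your conservativity step and your converse step (deriving $\beta_0\RFN^r(\ATRo+\Sigma^1_1\hyp\DC)$ from the schema) concern a different schema from the one in the statement.

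Two points in your sketch fail. First, read literally, the ``standard fact'' you cite is false. $\Pi^1_2$ $\omega$-model reflection over $\ACAo$ is just $\Sigma^1_1\hyp\DC_0$ (Lemma~\ref{beta-ref satisfies BP}(1) with $n=0$). $\Sigma^1_1\hyp\DC_0$ does not prove $\ATRo$, because $\ATRo$ proves that every set lies in an $\omega$-model of $\Sigma^1_1\hyp\DC_0$, and hence proves $\Con(\Sigma^1_1\hyp\DC_0)$. The correct source of $\ATRo$ is the equivalence $\ATRo\leftrightarrow\beta_0\RFN(\Sigma^1_1\hyp\DC)$ over $\ACAo$, applied to the instance of the schema with a trivially true $\sigma$. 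Second, $\ATRo$ holding in the ground model does not make the reflected $\omega$-models satisfy $\ATRo$. For example, $\mathrm{HYP}$ is an $\omega$-model of $\Sigma^1_1\hyp\DC_0$ in which $\ATRo$ fails. So ``adding $\ATRo$ to the reflected theory changes nothing'' is exactly what needs proof.

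The missing idea is one line. $\ATRo$ is axiomatized by a single $\Pi^1_2$-sentence, so for $\sigma(\V)\in\Pi^1_2$ the formula $\sigma(\V)\land\ATRo$ is again $\Pi^1_2$. Having derived $\ATRo$, apply the schema to $\sigma(\V)\land\ATRo$ to get $\sigma(\V)\to\beta_0\RFN(\Sigma^1_1\hyp\DC+\ATRo+\sigma(\V))$; the other direction of the schema equivalence is trivial.

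Alternatively, apply Theorem~\ref{Cor conservation for RFNT schema and RFNomega} directly with $T=\Sigma^1_1\hyp\DC$. Then note that $\ACAo\vdash\beta_0\RFN^{r+1}(\Sigma^1_1\hyp\DC)\to\beta_0\RFN^{r}(\ATRo+\Sigma^1_1\hyp\DC)$, because the inner $\beta_0\RFN(\Sigma^1_1\hyp\DC)$ forces $\ATRo$ in each reflected model. So the two $\omega$-schemas are equivalent over $\ACAo$.
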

\begin{proof}
  It is enough to show that $\Pi^1_2\hyp\beta_0\RFN(\Sigma^1_1\hyp\DC)$ is equivalent to $\Pi^1_2\hyp\beta_0\RFN(\ATRo + \Sigma^1_1\hyp\DC)$ over $\ACAo$.
  Let $\sigma(\V)$ be a $\Pi^1_2$-formula. We show that 
  $\ACAo + \Pi^1_2\hyp\beta_0\RFN(\Sigma^1_1\hyp\DC) \vdash \forall \V (\sigma(\V) \to \exists \M(\V \in \M \land \M \models \sigma(\V) + \ATRo + \Sigma^1_1\hyp\DC))$.
  
  Since $\ATRo$ is equivalent to $\beta_0\RFN(\Sigma^1_1\hyp\DC)$, $\ACAo + \Pi^1_2\hyp\beta_0\RFN(\Sigma^1_1\hyp\DC) \vdash \ATRo$.
  In addition, $\sigma(\V) \land \ATRo$ is in $\Pi^1_2$.
  Hence, we have 
  \begin{align*}
    \ACAo + \Pi^1_2\hyp\beta_0\RFN(\Sigma^1_1\hyp\DC) \vdash \sigma(\V) &\to (\sigma(\V) \land \ATRo) \\
    &\to \exists \M(\V \in \M \land \M \models \sigma(\V) + \ATRo + \Sigma^1_1\hyp\DC).
  \end{align*}
  This completes the proof.
\end{proof}

In \cite{ASY}, Aguilera, the author and Yokoyama gave $\Pi^1_{m+2},\B(\Pi^1_{m+2}),\Sigma^1_{m+2}$ subtheories of $\Sigma^1_{n+1}\hyp\SDC_0$ for $m < n$. Here, $\B(\Gamma)$ denotes the class of Boolean combinations of formulas in $\Gamma$, and $\SDC$ stands for strong $\DC$ (for strong $\DC$, see also \cite[VII.6.1.4]{Simpson}).
At the end of this section, we give similar results for ordinary $\DC$.

\begin{theorem} \label{thm lower cons for pi}
  Let $m < n$. Then,
$$\Th_{\Pi^1_{m+2}}(\Sigma^1_{n+1}\hyp\DC_0) \equiv \ACAo + \{\beta_m\RFN(\beta_n\RFN^r(\ACAo)) : r >0\}.$$
\end{theorem}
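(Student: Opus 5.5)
The plan is to prove the two inclusions separately. In both directions, Theorem~\ref{Thm Cantini rephrase} does the work of replacing $\Sigma^1_{n+1}\hyp\DC_0$ by the theories $\ACAo + \beta_n\RFN^r(\ACAo)$. A basic absoluteness property of $\beta_m$-models then transfers $\Pi^1_{m+2}$-statements downward.

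\emph{The right-hand side is contained in $\Th_{\Pi^1_{m+2}}(\Sigma^1_{n+1}\hyp\DC_0)$.}
\begin{itemize}
\item Each $\beta_m\RFN(\beta_n\RFN^r(\ACAo))$ is a $\Pi^1_{m+2}$-sentence, since $\beta_m(\M)$ is $\Pi^1_m$ and satisfaction in a coded model is arithmetical.
\item So it suffices to show that $\Sigma^1_{n+1}\hyp\DC_0$ proves each of these sentences.
\item By Theorem~\ref{Thm Cantini rephrase}, $\Sigma^1_{n+1}\hyp\DC_0 \vdash \beta_n\RFN^r(\ACAo)$, and this is a $\Pi^1_{n+2}$-sentence.
\item By Lemma~\ref{beta-ref satisfies BP}(1), $\Sigma^1_{n+1}\hyp\DC_0$ proves the schema $\Pi^1_{n+2}\hyp\beta_n\RFN$. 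Applying it to this sentence gives, for every $\U$, a $\beta_n$-model $\M \ni \U$ with $\M \models \beta_n\RFN^r(\ACAo)$.
\item Since $m<n$, every coded $\beta_n$-model is a coded $\beta_m$-model; this is provable in $\ACAo$. Hence $\beta_m\RFN(\beta_n\RFN^r(\ACAo))$ follows. (One could also note that $\M\models\ACAo$ automatically.)
\end{itemize}

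\emph{$\Th_{\Pi^1_{m+2}}(\Sigma^1_{n+1}\hyp\DC_0)$ is contained in the right-hand side.}
\begin{itemize}
\item Let $\gamma \equiv \forall \V\, \varphi(\V)$ with $\varphi \in \Sigma^1_{m+1}$ be provable in $\Sigma^1_{n+1}\hyp\DC_0$.
\item By Theorem~\ref{Thm Cantini rephrase} and compactness, there is some $r$ with $\ACAo + \beta_n\RFN^r(\ACAo) \vdash \gamma$. We may take a single $r$ because the $\beta_n\RFN^r$ are monotone in $r$.
\item Now work in $\ACAo + \beta_m\RFN(\beta_n\RFN^r(\ACAo))$ and fix $X$. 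The reflection axiom provides a $\beta_m$-model $\M \ni X$ with $\M \models \beta_n\RFN^r(\ACAo)$. Being an $\omega$-model of $\ACAo$, $\M$ satisfies the full theory $\ACAo + \beta_n\RFN^r(\ACAo)$.
\item By soundness of coded $\omega$-models for the finitely many axioms used in the proof of $\gamma$, we get $\M\models\gamma$, hence $\M \models \varphi(X)$.
\item Finally, $\varphi(X)$ is $\Sigma^1_{m+1}$, say $\exists \U\, \psi(X,\U)$ with $\psi \in \Pi^1_m$. Choose a witness $Z \in \M$. Since $\M$ is a $\beta_m$-model, $\psi(X,Z)$ is absolute, so $\varphi(X)$ holds in the ground model. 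For $m=0$ this uses only arithmetical absoluteness for $\omega$-models of $\ACAo$.
\end{itemize}

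The main technical point is formalizing, inside $\ACAo$, that a coded $\omega$-model of a finite set of axioms satisfies every sentence provable from them. This is needed because the model is coded and $\ACAo$ has no global truth. I would handle it in the standard way: the formula $\gamma$ and the proof are fixed externally. One then proves by meta-induction on the fixed proof, using only the arithmetical satisfaction relation of $\M$ restricted to the finitely many subformulas involved, that each line is true in $\M$. The induction axioms present no difficulty, since $\M$ is an $\omega$-model and $\ACAo$ proves arithmetical induction for the satisfaction relation.
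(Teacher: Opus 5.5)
Your proof is correct and is essentially the paper's argument. For the first inclusion, you get $\beta_m\RFN(\beta_n\RFN^r(\ACAo))$ from $\beta_n$-reflecting $\beta_n\RFN^r(\ACAo)$ (the paper phrases this as $\beta_n\RFN^{r+1}(\ACAo)$) and weakening $\beta_n$ to $\beta_m$; for the converse, you reduce via Theorem~\ref{Thm Cantini rephrase} to a single $\beta_n\RFN^r(\ACAo)$ and transfer the $\Sigma^1_{m+1}$ matrix from a $\beta_m$-model to the ground model by upward absoluteness. Your remark on formalizing the soundness of a fixed proof inside a coded $\omega$-model only makes explicit a step the paper leaves implicit.
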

\begin{proof}
Since $\ACAo + \beta_n\RFN^{\omega}(\ACAo) \vdash \beta_m\RFN(\beta_n\RFN^r(\ACAo))$ and $\beta_m\RFN(\beta_n\RFN^r(\ACAo)) \in \Pi^1_{m+2}$, $\beta_m\RFN(\beta_n\RFN^r(\ACAo)) \in \Th_{\Pi^1_{m+2}}(\Sigma^1_{n+1}\hyp\DC_0)$.

  We show that any $\Pi^1_{m+2}$-sentence provable from $\Sigma^1_{n+1}\hyp\DC_0$ is provable in the theory $\ACAo + \{\beta_m\RFN(\beta_n\RFN^r(\ACAo)) : r >0\}$.
  Let $\theta$ be $\Pi^1_m$ such that $\Sigma^1_{n+1}\hyp\DC_0 \vdash \forall \U \exists \V \theta(\U,\V)$.
  Then, 
  $\ACAo + \beta_n\RFN^{r}(\ACAo) \vdash \forall \U \exists \V \theta(\U,\V)$ for some $r$.
  We work in $\ACAo + \beta_m\RFN(\beta_n\RFN^r(\ACAo))$ and prove that $\forall \U \exists \V \theta(\U,\V)$ holds.
  Let $X$ be a set. Take a $\beta_m$-model $\M$ such that $X \in \M \land \M \models \beta_n\RFN^r(\ACAo)$.
  Then, $\M \models \forall \U \exists \V \theta (\U,\V)$ and hence $\M \models \exists \V \theta(X,\V)$.
  Since $\exists \V \theta(X,\V)$ is $\Sigma^1_{m+1}$, it actually holds. This completes the proof.
\end{proof}

By a similar argument, we obtain the following theorem.
\begin{theorem}
  Let $m \leq n$. Let $T$ be a finitely axiomatized theory.
  Then, $\ACAo + \{\exists \M(\beta_m(\M) \land \M \models \beta_n\RFN^r(T)) : r >0\} \equiv \Th_{\Sigma^1_{m+1}}(\ACAo + \beta_n\RFN^{\omega}(T))$.
  In particular, $\ACAo + \{\exists \M(\beta_m(\M) \land \M \models \beta_n\RFN^r(\ACAo)) : r >0\} \equiv \Th_{\Sigma^1_{m+1}}(\Sigma^1_{n+1}\hyp\DC_0)$.
\end{theorem}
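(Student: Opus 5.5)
We prove both inclusions, following the proof of Theorem~\ref{thm lower cons for pi}. The ``in particular'' clause then follows from the main claim with $T = \ACAo$, combined with Theorem~\ref{Thm Cantini rephrase}. Since $\Sigma^1_{m+1}\subseteq\Pi^1_{n+2}$, that theorem gives $\Th_{\Sigma^1_{m+1}}(\Sigma^1_{n+1}\hyp\DC_0)\equiv\Th_{\Sigma^1_{m+1}}(\ACAo+\beta_n\RFN^\omega(\ACAo))$.

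\emph{Left-to-right.} First we check that each sentence $\exists \M(\beta_m(\M) \land \M \models \beta_n\RFN^r(T))$ is $\Sigma^1_{m+1}$. Here $\beta_m(\M)$ is $\Pi^1_m$, and satisfaction in a coded model is arithmetical, so the whole sentence is $\Sigma^1_{m+1}$ for $m\ge 0$ (for $m=0$, $\beta_0(\M)$ is arithmetical). Next we show it is provable in $\ACAo+\beta_n\RFN^{r+1}(T)$. Instantiating $\beta_n\RFN^{r+1}(T)=\beta_n\RFN(T+\beta_n\RFN^r(T))$ with $\U=\emptyset$ yields a $\beta_n$-model $\M$ with $\M\models\beta_n\RFN^r(T)$. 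Since $m\le n$, every $\beta_n$-model is a $\beta_m$-model, and $\ACAo$ proves this implication uniformly.

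\emph{Right-to-left.} Let $\exists\V\theta(\V)$ be a $\Sigma^1_{m+1}$-sentence with $\theta\in\Pi^1_m$, provable in $\ACAo+\beta_n\RFN^\omega(T)$. By compactness and the monotonicity noted in the Remark (which needs $\ACAo\subseteq T$; otherwise use $\beta_n\RFN^{r}(T)$ together with the $\ACAo$ axioms, which hold in every $\beta_0$-model anyway), some $r$ gives $\ACAo+\beta_n\RFN^r(T)\vdash\exists\V\theta(\V)$. Now work in $\ACAo$ plus the axiom $\exists\M(\beta_m(\M)\land\M\models\beta_n\RFN^r(T))$ and take such an $\M$. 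Since $\M$ is an $\omega$-model of $\ACAo$ satisfying $\beta_n\RFN^r(T)$, the formalized soundness of coded $\omega$-models gives $\M\models\exists\V\theta(\V)$. Concretely, every theorem of a finitely axiomatized theory true in a coded $\omega$-model is true there, by induction on proofs in $\ACAo$ using the arithmetical satisfaction relation. So there is $Z\in\M$ with $\M\models\theta(Z)$. Because $\M$ is a $\beta_m$-model and $\theta\in\Pi^1_m$, $\Pi^1_m$ formulas with parameters in $\M$ are absolute, so $\theta(Z)$ holds. Hence $\exists\V\theta(\V)$ holds.

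\emph{Main obstacle.} The main point to check is that $\ACAo$ proves this soundness of coded $\omega$-models for the finite theory $\ACAo+\beta_n\RFN^r(T)$. This is fine: the proof is a fixed finite object, so one argues externally by following the proof step by step, using the Tarskian clauses of $\models$ for coded models, which $\ACAo$ verifies for each fixed formula. We also need absoluteness of $\Pi^1_m$ formulas for $\beta_m$-models, which is the defining property for $m>0$ and the arithmetical case for $m=0$.
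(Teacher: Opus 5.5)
Your proposal is correct and matches the argument the paper intends. The paper gives no separate proof, saying only that the theorem follows ``by a similar argument'' to Theorem~\ref{thm lower cons for pi}: compactness to a single $\beta_n\RFN^r(T)$, realizing it in a $\beta_m$-model, and transferring the $\Sigma^1_{m+1}$ conclusion by $\Pi^1_m$-absoluteness, with the second clause obtained from Theorem~\ref{Thm Cantini rephrase} as you do (your parenthetical caveat is unnecessary, since monotonicity of $\beta_n\RFN^r(T)$ in $r$ holds regardless of whether $\ACAo \subseteq T$, coded $\beta_n$-models satisfying $\ACAo$ by definition).
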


In \cite{ASY}, $\Th_{\B(\Pi^1_{m+2})}(\Sigma^1_{n+1}\hyp\SDC_0)$ is characterized by a kind of local reflection principle for $m \leq n$.
In what follows, we show that another kind of local reflection characterizes $\Th_{\B(\Pi^1_{n+2})}(\Sigma^1_{n+1}\hyp\DC_0)$ and $\Th_{\Sigma^1_{m+2}}(\Sigma^1_{n+1}\hyp\DC_0)$.

\begin{definition}
  Let $\Gamma$ be a class of formulas. 
  We define the sentence restriction $\Gamma^{-}$ as $\{\sigma \in \Gamma: \sigma \text{ is a sentence} \}$.
\end{definition}
\begin{theorem}\label{thm Boolean conservatioin}
  $\Th_{\B(\Pi^1_{n+2})}(\Sigma^1_{n+1}\hyp\DC_0) \equiv (\Pi^1_{n+2})^{-}\hyp\beta_n\RFN_0$.
\end{theorem}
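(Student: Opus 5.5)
The inclusion of $(\Pi^1_{n+2})^{-}\hyp\beta_n\RFN_0$ into $\Th_{\B(\Pi^1_{n+2})}(\Sigma^1_{n+1}\hyp\DC_0)$ is easy; the converse reduces to single clauses $\lnot\alpha \lor \beta$ and is proved by applying Theorem~\ref{BPconservation} with the base theory enlarged to $\ACAo + \alpha$.

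\emph{Easy inclusion.} By Lemma~\ref{beta-ref satisfies BP}(1), $\Sigma^1_{n+1}\hyp\DC_0$ proves every instance of $\Pi^1_{n+2}\hyp\beta_n\RFN$, hence every instance $\sigma \to \beta_n\RFN(\sigma)$ with $\sigma$ a $\Pi^1_{n+2}$-sentence. By Lemma~\ref{lem BP can be applied to beta_nRFN}(2), $\beta_n\RFN(\sigma)$ is $\Pi^1_{n+2}$ over $\ACAo$. So each such instance is a Boolean combination of $\Pi^1_{n+2}$-sentences, and lies in $\Th_{\B(\Pi^1_{n+2})}(\Sigma^1_{n+1}\hyp\DC_0)$.

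\emph{Reduction to clauses.} Let $\varphi \in \B(\Pi^1_{n+2})$ be a sentence provable in $\Sigma^1_{n+1}\hyp\DC_0$. Put $\varphi$ in conjunctive normal form. Finite conjunctions and disjunctions of $\Pi^1_{n+2}$-sentences are again (equivalent over $\ACAo$ to) $\Pi^1_{n+2}$-sentences, so each conjunct has the form $\lnot\alpha \lor \beta$ with $\alpha,\beta$ $\Pi^1_{n+2}$-sentences. It suffices to treat one such clause. Thus $\Sigma^1_{n+1}\hyp\DC_0 + \alpha \vdash \beta$, i.e.\ $\Pi^1_{n+2}\hyp\beta_n\RFN_0 + \alpha \vdash \beta$ by Lemma~\ref{beta-ref satisfies BP}(1).

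\emph{Applying Theorem~\ref{BPconservation}.} Take $T = \ACAo + \alpha$, which is a theory $\subseteq \Pi^1_{n+3}$ containing $\ACAo$. Take $\Gamma = \Pi^1_{n+2}$ and $R = \beta_n\RFN$. Hypotheses (1) and (2) already hold over $\ACAo$ by Lemma~\ref{lem BP can be applied to beta_nRFN}, hence over $T$. The theorem yields
\[
\ACAo + \alpha + \Pi^1_{n+2}\hyp\beta_n\RFN\hyp\Rule \vdash \beta.
\]
Running the argument of Lemma~\ref{beta-ref satisfies BP}(2) with $\ACAo+\alpha$ in place of $\ACAo$ (so that the models considered also satisfy $\alpha$) gives
\[
\ACAo + \alpha + \Pi^1_{n+2}\hyp\beta_n\RFN\hyp\Rule \equiv \ACAo + \alpha + \beta_n\RFN^{\omega}(\ACAo+\alpha).
\]
The finitely axiomatized version stated after Theorem~\ref{Thm ref of DC and ref omega} covers this. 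The point is that with $\alpha$ and $\ACAo$ provable, the conclusion $\beta_n\RFN(\sigma)$ of the rule can be strengthened to $\beta_n\RFN(\ACAo + \alpha + \sigma)$. Concretely, if $\ACAo+\alpha+\beta_n\RFN^r(\ACAo+\alpha) \vdash \forall\V\sigma(\V)$, then $\beta_n\RFN^{r+1}(\ACAo+\alpha)$ supplies, for any $X,Y$, a $\beta_n$-model containing them and satisfying $\ACAo+\alpha+\beta_n\RFN^r(\ACAo+\alpha)$. That model then satisfies $\sigma(X)$, which gives $\beta_n\RFN(\sigma(X))$.

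\emph{Deriving the clause locally.} It remains to show, by induction on $r$, that
\[
(\Pi^1_{n+2})^{-}\hyp\beta_n\RFN_0 + \alpha \vdash \beta_n\RFN^r(\ACAo+\alpha) \quad\text{for every } r>0.
\]
Treat $\ACAo$ as a single $\Pi^1_2$-sentence. For $r=1$, apply the local schema to the $\Pi^1_{n+2}$-sentence $\ACAo \land \alpha$. For the step, $\ACAo \land \alpha \land \beta_n\RFN^r(\ACAo+\alpha)$ is again a $\Pi^1_{n+2}$-sentence, true by the induction hypothesis. The local schema then gives its $\beta_n$-reflection, which is exactly $\beta_n\RFN^{r+1}(\ACAo+\alpha)$. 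Hence $(\Pi^1_{n+2})^{-}\hyp\beta_n\RFN_0 + \alpha \vdash \beta$, i.e.\ $(\Pi^1_{n+2})^{-}\hyp\beta_n\RFN_0 \vdash \lnot\alpha\lor\beta$.

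\emph{Expected main obstacle.} The step needing most care is the relativized equivalence between the rule theory over $\ACAo+\alpha$ and $\beta_n\RFN^{\omega}(\ACAo+\alpha)$. We must confirm that Theorem~\ref{BPconservation} really permits the extra axiom $\alpha$ in the base theory, which needs only $\alpha \in \Pi^1_{n+3}$. We must also confirm that the rule closure argument goes through with $\alpha$ carried inside the reflected models. The latter works because the reflected theory is finitely axiomatized by $\Pi^1_{n+2}$-sentences.
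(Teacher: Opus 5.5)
Your proof is correct. It follows the same overall plan as the paper: reduce to a clause $\lnot\alpha\lor\beta$, apply Beklemishev--Pakhomov conservation, pass from the rule to $\beta_n\RFN^{\omega}$, and recover each $\beta_n\RFN^r$ from $\alpha$ by the local schema. The difference is where you put $\alpha$.

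\textbf{The paper's route.} Writing the clause as $\sigma\lor\pi$, with $\lnot\sigma$ in the role of your $\alpha$, the paper first rewrites $\Sigma^1_{n+1}\hyp\DC_0+\alpha$ as $\ACAo+\Pi^1_{n+2}\hyp\beta_n\RFN(\alpha)$. This absorbs $\alpha$ into a relativized reflection operator. It then cites Theorem~\ref{Cor conservation for RFNT schema and RFNomega}, which is Theorem~\ref{BPconservation} applied to the operator $\beta_n\RFN(\alpha)$ over the base $\ACAo$.

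\textbf{Your route.} You keep the plain operator $\beta_n\RFN$ and put $\alpha$ into the base theory. Theorem~\ref{BPconservation} allows this, since $\alpha\in\Pi^1_{n+3}$. This spares you the rewriting step and the re-verification of hypotheses (1)--(2) for a relativized operator. The cost is that you must relativize Lemma~\ref{beta-ref satisfies BP}(2) to the rule over $\ACAo+\alpha$. Your explicit closure argument does this correctly, and only the inclusion of the rule theory into $\ACAo+\alpha+\beta_n\RFN^{\omega}(\ACAo+\alpha)$ is actually used.

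\textbf{A citation point.} The lemma stated after Theorem~\ref{Thm ref of DC and ref omega} concerns the $\beta_n\RFN(\alpha)$-rule over $\ACAo$, not the $\beta_n\RFN$-rule over $\ACAo+\alpha$. The two rule theories do coincide:
\begin{itemize}
\item Applying the former rule to $0=0$ gives $\beta_n\RFN(\alpha)$. This implies $\alpha$, because $\alpha$ is $\Pi^1_{n+2}$ and the reflecting models are $\beta_n$.
\item Conversely, over $\ACAo+\alpha$ you can apply the plain rule to $\alpha\land\sigma$.
\end{itemize}
Your direct argument makes the citation unnecessary anyway.
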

\begin{proof}
  Note that each instance of $(\Pi^1_{n+2})^{-}\hyp\beta_n\RFN$ is a $\B(\Pi^1_{n+2})$-sentence provable in $\Sigma^1_{n+1}\hyp\DC_0$. 

  For the converse, it is enough to prove that 
  if $\sigma \in \Sigma^1_{n+2}, \pi \in \Pi^1_{n+2}$ and $\Sigma^1_{n+1}\hyp\DC_0 \vdash \sigma \lor \pi$, then $(\Pi^1_{n+2})^{-}\hyp\beta_n\RFN_0 \vdash \sigma \lor \pi$.
  Assume $\Sigma^1_{n+1}\hyp\DC_0 \vdash \sigma \lor \pi$.
  We show that $\ACAo + \beta_n\RFN^r(\lnot \sigma) \vdash \pi$ for some $r > 0$.
  This is sufficient because  $(\Pi^1_{n+2})^{-}\hyp\beta_n\RFN_0 + \lnot \sigma \vdash \beta_n\RFN^r(\lnot \sigma)$ for all $r > 0$. This is proved by induction on $r$. 
  
  By assumption, $\Sigma^1_{n+1}\hyp\DC_0 + \lnot \sigma \vdash \pi$.
  Since $\Sigma^1_{n+1}\hyp\DC_0$ is equivalent to $\Pi^1_{n+2}\hyp\beta_n\RFN_0$ and $\lnot \sigma \in \Pi^1_{n+2}$,
  $\Sigma^1_{n+1}\hyp\DC_0 + \lnot \sigma$ is equivalent to 
  $\ACAo + \Pi^1_{n+2}\hyp\beta_n\RFN(\lnot \sigma)$. Hence $\ACAo + \Pi^1_{n+2}\hyp\beta_n\RFN(\lnot \sigma) \vdash \pi$.
  By Theorem~\ref{Cor conservation for RFNT schema and RFNomega}, 
  $\ACAo + \Pi^1_{n+2}\hyp\beta_n\RFN(\lnot \sigma)$ is $\Pi^1_{n+2}$-conservative over $\ACAo + \beta_n\RFN^{\omega}(\lnot \sigma)$. Thus, $\ACAo + \beta_n\RFN^{\omega}(\lnot \sigma) \vdash \pi$.

  Now we have $\ACAo + (\Pi^1_{n+2})^{-}\hyp\beta_n\RFN + \lnot \sigma \vdash \pi$. Thus, $(\Pi^1_{n+2})^{-}\hyp\beta_n\RFN_0 \vdash \sigma \lor \pi$. This completes the proof.
\end{proof}

By arguments similar to those above, we obtain the following results.

\begin{theorem}\label{thm lower cons for Sigma}
  Let $m \leq n$.
  $$\Th_{\Sigma^1_{m+2}}(\Sigma^1_{n+1}\hyp\DC_0) \equiv \ACAo + \{\sigma \to \exists \M(\beta_m(\M) \land \M \models \beta_n\RFN^r(\sigma)) : r > 0, \sigma \in (\Pi^1_{m+2})^{-}\}.$$
\end{theorem}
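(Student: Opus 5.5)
We prove the two inclusions separately, combining the argument of Theorem~\ref{thm Boolean conservatioin} with the downward-absoluteness argument of Theorem~\ref{thm lower cons for pi}.

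\emph{Soundness.} First we check that every axiom on the right is a $\Sigma^1_{m+2}$-consequence of $\Sigma^1_{n+1}\hyp\DC_0$. Fix $\sigma\in(\Pi^1_{m+2})^-\subseteq(\Pi^1_{n+2})^-$ and $r>0$. Since $\Sigma^1_{n+1}\hyp\DC_0$ proves $\Pi^1_{n+2}\hyp\beta_n\RFN_0$ (Lemma~\ref{beta-ref satisfies BP}), an induction on $r$ as in Theorem~\ref{thm Boolean conservatioin} shows that $\Sigma^1_{n+1}\hyp\DC_0+\sigma\vdash\beta_n\RFN^{r+1}(\sigma)$. Applying this once more yields a $\beta_n$-model, hence a $\beta_m$-model because $m\le n$, satisfying $\beta_n\RFN^r(\sigma)$. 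The formula $\exists\M(\beta_m(\M)\land\M\models\beta_n\RFN^r(\sigma))$ is $\Sigma^1_{m+1}$, because $\beta_m$ is $\Pi^1_m$ and satisfaction in a coded model is arithmetical. So $\sigma\to\exists\M(\dots)$ is equivalent to a $\Sigma^1_{m+2}$-sentence, and it is provable in $\Sigma^1_{n+1}\hyp\DC_0$.

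\emph{Conservativity.} Conversely, let $\exists\V\theta(\V)$ with $\theta\in\Pi^1_{m+1}$ be provable in $\Sigma^1_{n+1}\hyp\DC_0$. Put $\sigma\equiv\lnot\exists\V\theta(\V)$, which is a $\Pi^1_{m+2}$-sentence. Then $\Sigma^1_{n+1}\hyp\DC_0+\sigma$ is inconsistent. As in Theorem~\ref{thm Boolean conservatioin}, $\Sigma^1_{n+1}\hyp\DC_0+\sigma$ is equivalent to $\ACAo+\Pi^1_{n+2}\hyp\beta_n\RFN(\sigma)$, because $\sigma\in\Pi^1_{n+2}$. By Theorem~\ref{Cor conservation for RFNT schema and RFNomega}, applied to the false $\Pi^1_{n+2}$-sentence $0=1$, we get that $\ACAo+\beta_n\RFN^\omega(\sigma)$ is inconsistent. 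Hence $\ACAo+\beta_n\RFN^r(\sigma)\vdash\bot$ for some $r$, which in particular gives $\beta_n\RFN^r(\sigma)\vdash\exists\V\theta(\V)$.

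Now we work in the right-hand theory and argue by cases.
\begin{itemize}
\item If $\sigma$ fails, then $\exists\V\theta(\V)$ holds outright.
\item If $\sigma$ holds, the axiom gives a $\beta_m$-model $\M$ with $\M\models\beta_n\RFN^r(\sigma)$. Since $\M$ is a model of $\ACAo$ (it is at least $\beta_0$), formalized soundness inside $\ACAo$ gives $\M\models\exists\V\theta(\V)$. In fact $\M\models\bot$, which is already a contradiction. Alternatively, $\exists\V\theta$ is $\Sigma^1_{m+2}$ with $\theta\in\Pi^1_{m+1}$, so we cannot just pull it up; the contradiction route is what closes the case.
\end{itemize}
So the theory proves $\lnot\sigma$, that is, $\exists\V\theta(\V)$.

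\emph{Main obstacle.} The delicate step is the one just used: that $\ACAo$ proves every coded $\omega$-model of $\ACAo$ satisfies all theorems of a given finite theory, so that $\M\models\beta_n\RFN^r(\sigma)$ together with $\beta_n\RFN^r(\sigma)\vdash\bot$ yields a contradiction. This is standard, since full satisfaction for a coded $\omega$-model exists in $\ACAo$, so it should go through. One also has to confirm that the conservativity theorem is applied correctly with $\sigma$ treated as the finitely axiomatized $T$. If one prefers to avoid inconsistency, use $\pi\equiv\exists\V\theta$ with $\theta\in\Pi^1_{m+1}$. Then $\ACAo+\beta_n\RFN^r(\sigma)\vdash\pi$, and inside $\M$ we find $V$ with $\M\models\theta(V)\land\sigma$. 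Since $\sigma$ is $\forall\V\lnot\theta$ and $\M$ is a $\beta_m$-model, $\Pi^1_{m+1}$ statements pass correctly enough to get a contradiction in $\M$ as well.
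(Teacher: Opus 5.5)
Your proof is correct and is the combination of the arguments for Theorems~\ref{thm Boolean conservatioin} and~\ref{thm lower cons for pi} that the paper intends: Theorem~\ref{Cor conservation for RFNT schema and RFNomega} turns the inconsistency of $\Sigma^1_{n+1}\hyp\DC_0+\forall\V\lnot\theta(\V)$ into the inconsistency of some $\ACAo+\beta_n\RFN^r(\forall\V\lnot\theta(\V))$, and the axiom for this $\sigma$ and $r$ then refutes $\sigma$. The one thing to fix is your justification of the final step: $\ACAo$ does not provide full satisfaction relations for coded $\omega$-models (that needs $\ACA_0^+$, as the paper notes in its discussion of the Pakhomov--Walsh definition), but none is needed, because relativizing the fixed standard derivation of $\bot$ to $\M$ already shows in $\ACAo$ that no coded $\omega$-model satisfies its finitely many axioms.
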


\begin{theorem} \label{thm lower cons for B}
  Let $m < n$.
  $$\Th_{\B(\Pi^1_{m+2})}(\Sigma^1_{n+1}\hyp\DC_0) \equiv \ACAo + \{\sigma \to \beta_m\RFN( \beta_n\RFN^r(\sigma)) : r > 0, \sigma \in (\Pi^1_{m+2})^{-}\}.$$
\end{theorem}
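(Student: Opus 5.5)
We prove the two inclusions separately, combining the argument of Theorem~\ref{thm Boolean conservatioin} with that of Theorem~\ref{thm lower cons for pi}.

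\emph{Soundness.} Fix $\sigma \in (\Pi^1_{m+2})^{-}$ and $r>0$. Work in $\Sigma^1_{n+1}\hyp\DC_0$, which by Lemma~\ref{beta-ref satisfies BP}(1) is $\Pi^1_{n+2}\hyp\beta_n\RFN_0$, and assume $\sigma$. Since $m<n$, $\sigma$ is $\Pi^1_{n+2}$. By induction on $r$, as in Theorem~\ref{thm Boolean conservatioin}, we get $\beta_n\RFN^r(\sigma)$. This is a $\Pi^1_{n+2}$-sentence, so the schema gives a $\beta_n$-model of it containing any prescribed set. A $\beta_n$-model is in particular a $\beta_m$-model, hence $\beta_m\RFN(\beta_n\RFN^r(\sigma))$ holds. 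Moreover $\sigma \to \beta_m\RFN(\beta_n\RFN^r(\sigma))$ is a Boolean combination of $\Pi^1_{m+2}$-sentences, since $\beta_m\RFN(\cdot)$ is $\Pi^1_{m+2}$ by Lemma~\ref{lem BP can be applied to beta_nRFN}(2). So every axiom on the right-hand side lies in $\Th_{\B(\Pi^1_{m+2})}(\Sigma^1_{n+1}\hyp\DC_0)$.

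\emph{Conservation.} Every $\B(\Pi^1_{m+2})$-sentence is equivalent to a conjunction of disjunctions $\tau \lor \pi$ with $\tau \in \Sigma^1_{m+2}$ and $\pi \in \Pi^1_{m+2}$, after collecting the $\Pi$- and $\Sigma$-parts using prenex closure under $\lor$. So assume $\Sigma^1_{n+1}\hyp\DC_0 \vdash \tau\lor\pi$ and put $\sigma := \lnot\tau \in (\Pi^1_{m+2})^{-}$. We follow the proof of Theorem~\ref{thm Boolean conservatioin}. Since $\sigma \in \Pi^1_{n+2}$, the theory $\Sigma^1_{n+1}\hyp\DC_0 + \sigma$ is equivalent to $\ACAo + \Pi^1_{n+2}\hyp\beta_n\RFN(\sigma)$. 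Theorem~\ref{Cor conservation for RFNT schema and RFNomega} applies because $\pi \in \Pi^1_{m+2} \subseteq \Pi^1_{n+2}$, so $\ACAo + \beta_n\RFN^{r}(\sigma) \vdash \pi$ for some $r$. Write $\pi \equiv \forall \U\exists \V\theta(\U,\V)$ with $\theta\in\Pi^1_m$.

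Now work in $\ACAo + \sigma + (\sigma \to \beta_m\RFN(\beta_n\RFN^r(\sigma)))$ and fix $X$. There is a $\beta_m$-model $\M \ni X$ with $\M \models \beta_n\RFN^r(\sigma)$. We would also like $\M \models \ACAo$, which is automatic because coded $\beta_m$-models are models of $\ACAo$ by the convention for $\beta_0$-models. Hence $\M \models \pi$, so $\M\models \exists\V\theta(X,\V)$. This is a $\Sigma^1_{m+1}$ statement, and it is upward absolute from the $\beta_m$-model $\M$: any witness $V\in\M$ satisfies $\theta(X,V)$ in $\M$, and $\Pi^1_m$ facts about parameters in $\M$ are absolute for $\beta_m$-models. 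Thus $\pi$ holds, and therefore the right-hand theory proves $\sigma\to\pi$, i.e.\ $\tau\lor\pi$.

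The step requiring care is the absoluteness claim: that a $\beta_m$-model reflects $\Sigma^1_{m+1}$ formulas upward, including the case $m=0$ where $\theta$ is arithmetical. This is the same fact already used in Theorem~\ref{thm lower cons for pi}, so I would simply cite that argument. The other point to verify is that the implication $\ACAo + \beta_n\RFN^{r}(\sigma) \vdash \pi$ really comes out with the \emph{sentence} $\sigma$ as parameter. This holds because Theorem~\ref{Cor conservation for RFNT schema and RFNomega} is applied to the finitely axiomatized theory $T = \ACAo + \sigma$; note that $\beta_n\RFN^r$ of $\sigma$ and of $\ACAo+\sigma$ coincide, since models are models of $\ACAo$.
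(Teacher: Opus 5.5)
Your proof is correct and follows the route the paper intends by ``arguments similar to those above'': you use the reduction of Theorem~\ref{thm Boolean conservatioin}, applying Theorem~\ref{Cor conservation for RFNT schema and RFNomega} to $\ACAo+\sigma$ to get $\ACAo+\beta_n\RFN^r(\sigma)\vdash\pi$, and then the $\beta_m$-model absoluteness step from Theorem~\ref{thm lower cons for pi}. One small citation point: the fact that $\beta_m\RFN(\cdot)$ is $\Pi^1_{m+2}$ comes from the proof of Lemma~\ref{lem BP can be applied to beta_nRFN}(2), whose statement only records $\Pi^1_{m+3}$, but it is immediate from the definition anyway.
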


\section{Transfinite Dependent Choice}
In this section, we study R\"{u}ede's transfinite dependent choice (we write $\TDC$ for short) from the perspective of $\beta_n$-model reflections.
In the first subsection, we introduce transfinite dependent choice and transfinite length reflection $\TRFN$. Then, we prove their equivalence. In addition, we give a $\Pi^1_{n+2}$-axiomatization of $\Th_{\Pi^1_{n+2}}(\TDC)$ by an argument similar to those in the previous section.
In the second subsection, we give another characterization of transfinite dependent choice in terms of transfinitely iterated reflection.
In the third subsection, we show the equivalence between our transfinitely iterated reflection and that of Pakhomov and Walsh \cite{PW}.

\subsection{Transfinite length reflection}

In \cite{Ruede, RuedeThesis}, R\"{u}ede introduced $\Sigma^1_1\hyp\TDC$, the transfinite dependent choice principle for $\Sigma^1_1$-formulas. We give the definition of $\TDC$.
\begin{definition}
  The following definitions are formalized in $\RCAo$. 
  \begin{itemize}
    \item Let $\WO(W)$ be the formula stating that $W$ is a well-order.
    \item Let $L$ be a linear order. We write $a <_L b$ to mean $a$ is smaller than $b$ in the sense of $L$.
    If $L$ is clear from the context, we merely write $a < b$.
    \item Let $W$ be a well-order. 
    \begin{itemize}
      \item We sometimes identify $W$ and its field. For example,
    we write $a \in W$ if $a$ is in the field of $W$.
      \item We write $0^W$ (or simply $0$) for the minimum element of $W$.
      \item We write $W^{+}$ for $W \setminus \{0\}$.
      \item For $a \in W$, $W\mathord{\upharpoonright}_a$ denotes the restriction of $W$ up to $a$.
      \item Let $X$ be a set. For $a \in W$, we define the $a$-th section $X_a$ of $X$ by $X_a = \{x : (a,x) \in X\}$.
      We sometimes identify $X$ and the sequence $\langle X_a \rangle_{a \in W}$. For $a \in W$, $X_{<a}$ denotes the initial segment $\langle X_b \rangle_{b < a}$.
    \end{itemize}
  \end{itemize}
\end{definition}
\begin{definition}
Define $\Gamma\hyp\TDC$ as the following schema 
\begin{align*}
  \left\{
  \begin{aligned}
    &\forall \V,W\bigl(
      \forall \U_0 \exists \U_1
      \varphi(\V,\U_0,\U_1)
      \land \WO(W) \to {}\\
    &\qquad
      \forall \U \exists \langle H_a \rangle_{a \in W}
      \bigl(\U = H_0 \land
      \forall a \in W^{+}
      \varphi(\V,H_{<a},H_a)\bigr)\bigr)
  \end{aligned}
  \;\middle|\;
  \varphi \in \Gamma
  \right\}.
\end{align*}
We write $\Gamma\hyp\TDC_0$ for $\ACAo + \Gamma\hyp\TDC$ as usual.
\end{definition}

\begin{remark}
  We give two remarks on the definition of $\TDC$.
  \begin{itemize}
    \item As for ordinary $\DC$, $\Sigma^1_{n+1}\hyp\TDC_0 \equiv \Pi^1_{n}\hyp\TDC_0$.
  In what follows, we use this equivalence implicitly.
    \item The consequent of $\TDC$, $\forall \U \exists \langle H_a \rangle_{a \in W} (\U =H_0 \land \forall a \in W^{+}\varphi(\V,H_{<a},H_a))$, can be replaced by $\forall \U \forall a \in W \exists \langle H_b \rangle_{b < a} (\U =H_0 \land \forall b \in (W\mathord{\upharpoonright}_a)^{+}\,\varphi(\V,H_{<b},H_b))$.
    \end{itemize}
\end{remark}
R\"{u}ede proved that 
$\Sigma^1_1\hyp\TDC_0$ is equivalent to $\Pi^1_2\hyp\beta_0\RFN(\Sigma^1_1\hyp\DC)$.
Therefore, we obtain the following conservation result by Theorem~\ref{Thm ref of DC and ref omega}.

\begin{theorem}
  $\Th_{\Pi^1_2}(\Sigma^1_1\hyp\TDC_0) \equiv \ACAo + \beta_0\RFN^{\omega}(\ATRo + \Sigma^1_1\hyp\DC)$.
\end{theorem}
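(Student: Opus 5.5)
The plan is to chain Rüede's equivalence with the corollary restating Theorem~\ref{Thm ref of DC and ref omega}. First, since Rüede proved that $\Sigma^1_1\hyp\TDC_0$ is equivalent to $\ACAo + \Pi^1_2\hyp\beta_0\RFN(\Sigma^1_1\hyp\DC)$, the two theories have the same theorems. In particular they have the same $\Pi^1_2$-theorems:
$$\Th_{\Pi^1_2}(\Sigma^1_1\hyp\TDC_0) \equiv \Th_{\Pi^1_2}(\ACAo + \Pi^1_2\hyp\beta_0\RFN(\Sigma^1_1\hyp\DC)).$$
Second, the corollary identifies the right-hand side with $\ACAo + \beta_0\RFN^{\omega}(\ATRo + \Sigma^1_1\hyp\DC)$. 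Combining these two identities gives the statement.

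The point needing care is that the corollary's right-hand theory really is a subtheory of $\Sigma^1_1\hyp\TDC_0$ consisting of $\Pi^1_2$-sentences. I would verify this directly rather than only citing it, as follows.

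Each $\beta_0\RFN^r(\ATRo + \Sigma^1_1\hyp\DC)$ is $\Pi^1_2$. Indeed, the matrix ``$\M \models \cdots$'' is arithmetical in $\M$, and $\beta_0(\M)$ is arithmetical, so the whole formula has the form $\forall\U\exists\M(\text{arithmetical})$.

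Provability of these sentences in $\ACAo + \Pi^1_2\hyp\beta_0\RFN(\Sigma^1_1\hyp\DC)$ goes by induction on $r$, as in the proof of Lemma~\ref{beta-ref satisfies BP}(2).
\begin{itemize}
\item From the schema we get $\ATRo$, exactly as in the proof of the corollary.
\item Given that $\beta_0\RFN^r(\cdots)$ is already provable, apply the schema to the $\Pi^1_2$-sentence $\ATRo \land \beta_0\RFN^r(\cdots)$. This yields a coded model of $\Sigma^1_1\hyp\DC$, $\ATRo$ and $\beta_0\RFN^r(\cdots)$ containing any given set, which is exactly $\beta_0\RFN^{r+1}(\cdots)$.
\end{itemize}

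The converse direction, namely that every $\Pi^1_2$-theorem of $\Sigma^1_1\hyp\TDC_0$ already follows from $\ACAo + \beta_0\RFN^{\omega}(\ATRo + \Sigma^1_1\hyp\DC)$, is the substantive part. It is exactly Theorem~\ref{Cor conservation for RFNT schema and RFNomega}, instantiated with $T = \ATRo + \Sigma^1_1\hyp\DC$ and $n=0$, together with the equivalence of schemas established in the corollary's proof.

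The one genuine obstacle is finiteness. Theorem~\ref{Cor conservation for RFNT schema and RFNomega} requires $T$ to be finitely axiomatized, while $\Sigma^1_1\hyp\DC$ is a schema. I would handle this by noting that, over $\ACAo$, $\ATRo + \Sigma^1_1\hyp\DC$ restricted to coded $\omega$-models is expressible by a single sentence. This works because ``$\M \models \Sigma^1_1\hyp\DC$'' is one arithmetical statement about $\M$, using the satisfaction relation for the coded model. So the schema can be treated as a single formula inside $\beta_0\RFN$, and this is presumably the reading already used in the corollary. With that observation the two steps compose and the proof is complete.
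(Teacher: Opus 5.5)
Your proposal is correct and is essentially the paper's argument: the paper gets this theorem in one line by combining Rüede's equivalence $\Sigma^1_1\hyp\TDC_0 \equiv \ACAo + \Pi^1_2\hyp\beta_0\RFN(\Sigma^1_1\hyp\DC)$ with Theorem~\ref{Thm ref of DC and ref omega} (in its restated corollary form), and your extra checks simply unpack that corollary. One small correction to your finiteness fix: for the schema part, ``$\M \models \Sigma^1_1\hyp\DC$'' is not arithmetical in $\M$ over $\ACAo$, since it needs a satisfaction relation. Write it as the $\Sigma^1_1$ formula ``there is a satisfaction relation for $\M$ validating every instance''; this still keeps each $\beta_0\RFN^r(\ATRo + \Sigma^1_1\hyp\DC)$ in $\Pi^1_2$ and preserves the two Beklemishev--Pakhomov hypotheses, so your argument goes through.
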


However, for $n \geq 1$, this result cannot be generalized to the relationship between $\Sigma^1_{n+1}\hyp\TDC_0$ and $\beta_n\RFN^{\omega}(\Sigma^1_{n+1}\hyp\DC_0)$.

\begin{definition}
      Let $\theta(X,Y)$ be a formula.
    We define the formula $\Hier^\theta(W,a,A) \equiv 
    \forall b \in (W^+)\mathord{\upharpoonright}_a ( \theta(A_{<b},A_b))$.
\end{definition}
Intuitively, $\Hier^{\theta}(W,a,A)$ states that 
    the sequence $\langle A_b \rangle_{b < a}$ is a sequence obtained by transfinite dependent choice for $\theta$.

\begin{lemma}
  Let $n \in \omega$. Then, 
  $\ACAo + \Pi^1_{n+2}\hyp\TI + \Sigma^1_{n+1}\hyp\DC \vdash \Sigma^1_{n+1}\hyp\TDC$.
\end{lemma}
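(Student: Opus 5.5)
We prove the schema instance by instance. Fix $\varphi \in \Pi^1_n$, the equivalent form of $\Sigma^1_{n+1}\hyp\TDC$ noted in the remark after the definition. Fix $\V$ and a well-order $W$ with $\forall \U_0 \exists \U_1 \varphi(\V,\U_0,\U_1)$, and fix a starting set $\U$. We want a $W$-sequence $H$ with $H_0=\U$ and $\Hier^{\varphi}(W,a,H)$ for every $a$. By the second remark after the definition of $\TDC$, it suffices to show that for every $a \in W$ there is a partial hierarchy $\langle H_b\rangle_{b<a}$ with $H_0=\U$ and $\Hier^\varphi(W,a,H)$. The statement
\[
\psi(a) \equiv \exists A\,\bigl(A_0 = \U \land \Hier^{\varphi}(W,a,A)\bigr)
\]
is $\Sigma^1_{n+1}$, since $\Hier^\varphi$ is a number quantifier in front of a $\Pi^1_n$ formula. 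Hence $\forall a\, \psi(a)$ is certainly $\Pi^1_{n+2}$, and we prove it by $\Pi^1_{n+2}\hyp\TI$ along $W$. In fact $\Sigma^1_{n+1}$ transfinite induction already suffices, since we are inducting on a $\Sigma^1_{n+1}$ formula.

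For the induction step, assume $\psi(b)$ for all $b<a$ and derive $\psi(a)$. There are two cases.

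\textbf{Successor case, $a=b+1$.} Take $A$ witnessing $\psi(b)$. Apply the hypothesis $\forall \U_0\exists \U_1\varphi$ to $A_{<b}$ to get $Y$ with $\varphi(\V,A_{<b},Y)$. Set $A_b=Y$; this is done by $\ACAo$ set formation. The result witnesses $\psi(a)$. The base case $a = 0$ is trivial via $A_0=\U$.

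\textbf{Limit case.} This is the main obstacle. We know that for each $b<a$ some partial hierarchy of length $b$ exists, but the hierarchies for different $b$ need not cohere, because choices are non-unique. We cannot simply glue them, and $\Sigma^1_{n+1}$ choice alone only yields a sequence of incompatible hierarchies. This is where $\Sigma^1_{n+1}\hyp\DC$ enters. Since $\Pi^1_{n+2}\hyp\TI$ gives $\Sigma^1_{n+1}\hyp\AC$-style arguments, we plan to strengthen the induction hypothesis so that extension becomes possible:
\[
\psi'(a) \equiv \forall A\,\bigl(\exists c<a\ \Hier^\varphi(W,c,A) \land A_0=\U \to \exists B\,(B_{<c}=A_{<c} \land \Hier^\varphi(W,a,B))\bigr),
\]
meaning that every shorter hierarchy extends to length $a$. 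This formula is $\Pi^1_{n+2}$, which is exactly why $\Pi^1_{n+2}\hyp\TI$ is assumed.

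With $\psi'$, the successor case is handled as above. At a limit $a$, fix a cofinal $\omega$-sequence $c=c_0<c_1<\cdots$ in $a$. This sequence is arithmetically definable from $W$ and $a$ when $a$ is a limit and $W\mathord{\upharpoonright}_a$ has cofinality $\omega$, which holds since $W$ is countable; we enumerate the field and take an increasing cofinal subsequence. Now apply $\Sigma^1_{n+1}\hyp\DC$ to the $\Sigma^1_{n+1}$ relation
\[
\text{``}Y \text{ extends } X \text{ from } c_k \text{ to a hierarchy of length } c_{k+1}\text{.''}
\]
Totality of this relation is given by $\psi'(c_{k+1})$, which holds since $c_{k+1}<a$. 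The relation also depends on $k$, which we encode into the sets in the usual way. $\DC$ yields a sequence $\langle Y^k\rangle_k$ of coherent hierarchies, and their union $B$ (an arithmetical set) satisfies $\Hier^\varphi(W,a,B)$ and extends $A$. Finally, $\psi'$ for all $a$ gives $\psi(a)$ by starting from the trivial hierarchy $A$ with $A_0=\U$ and $c=1$. This yields the consequent of $\Sigma^1_{n+1}\hyp\TDC$ in the restricted form from the remark.
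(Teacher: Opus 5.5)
Your proof is correct and is essentially the paper's. Your strengthened hypothesis $\psi'(a)$ is exactly the paper's $\Pi^1_{n+2}$ induction statement (every hierarchy of length $c<a$ extends to one of length $a$), proved by $\Pi^1_{n+2}\hyp\TI$, with the limit case handled by $\Sigma^1_{n+1}\hyp\DC$ along a fundamental sequence followed by gluing.

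Two small repairs are needed. First, the relation you feed to $\DC$ must be phrased as an implication (if $X$ is a hierarchy of length $c_k$ extending $A$, then $Y$ extends it to length $c_{k+1}$); only then is it total, which is how the paper states it. Second, $c$ should be quantified universally outside the implication in $\psi'$.
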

\begin{proof}
  We work in $\ACAo + \Pi^1_{n+2}\hyp\TI + \Sigma^1_{n+1}\hyp\DC$.
  
  Let $\theta(\U,\V)$ be a $\Pi^1_{n}$-formula such that $\forall \U \exists \V \theta(\U,\V)$. Let $W$ be a well-order.
  We show that $\forall b <_W a \forall H(\Hier^{\theta}(W,b,H) \to \exists K(H \preceq_b K \land \Hier^{\theta}(W,a,K) ))$ for all $a \in W$. Here, $H \preceq_b K$ denotes $\forall c<_W b(H_c = K_c) $. 

  Since $\forall b<_W a \forall H(\Hier^{\theta}(W,b,H) \to \exists K(H \preceq_b K \land \Hier^{\theta}(W,a,K) ))$ is $\Pi^1_{n+2}$, we can use transfinite induction to prove it. Assume that the above condition holds for all $a' < a$.

  [Case 1: $a$ is $a' + 1$ for some $a'$] This case is trivial.

  [Case 2: $a$ is a limit] Let $b < a$ and let $H$ be a set such that $\Hier^{\theta}(W,b,H)$ holds.
  Since $a$ is a limit, there is a fundamental sequence $\{a[i] : i \in \N\}$ of  $a$ such that $a[0] = b$.
  Now we have $$\forall i \forall H' \exists K ((H \preceq_{a[0]} H' \land \Hier^{\theta}(W,a[i],H')) \to (H' \preceq_{a[i]} K \land \Hier^{\theta}(W,a[i+1],K))). $$
  Since $(H \preceq_{a[0]} H' \land \Hier^{\theta}(W,a[i],H')) \to (H' \preceq_{a[i]} K \land \Hier^{\theta}(W,a[i+1],K))$ is $\Sigma^1_{n+1}$, by dependent choice, we obtain a sequence 
  $\langle K_i \rangle_{i \in \N}$ such that  
  $K_0 = H$ and 
  $$\forall i ((H \preceq_{a[0]} K_i \land \Hier^{\theta}(W,a[i],K_i)) \to (K_i \preceq_{a[i]} K_{i+1} \land \Hier^{\theta}(W,a[i+1],K_{i+1}))).$$ 
  Then, by induction on $i$, we have 
  \begin{align*}
    \forall i (H \preceq_{a[0]} K_i \preceq_{a[i]} K_{i+1} \land \Hier^{\theta}(W,a[i+1],K_{i+1})).
  \end{align*}
  Define $\widetilde{K} = \langle \widetilde{K}_c \rangle_{c < a}$ by $\widetilde{K}_c = (K_i)_c$ for the least $i$ such that $c < a[i]$. Then we have $H \preceq_{b} \widetilde{K} \land \Hier^{\theta}(W,a,\widetilde{K})$. This completes the proof.
\end{proof} 

\begin{theorem}[$\ACAo$]
  The existence of a $\beta_1$-model of $\Sigma^1_{n+1}\hyp\DC$ implies $\Con(\Sigma^1_{n+1}\hyp\TDC)$. In particular, $\beta_n\RFN^{\omega}(\Sigma^1_{n+1}\hyp\DC)$ implies $\Con(\Sigma^1_{n+1}\hyp\TDC)$ for $n \geq 1$.
\end{theorem}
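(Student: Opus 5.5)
The plan is to show, inside $\ACAo$, that a $\beta_1$-model $\M$ of $\Sigma^1_{n+1}\hyp\DC$ is already a model of $\Sigma^1_{n+1}\hyp\TDC$. Consistency then follows from the fact that any coded $\omega$-model satisfies $\Con$ of the theory it models. The last step is provable in $\ACAo$: one builds the satisfaction relation for $\M$ arithmetically in $\M$, and shows by induction on proofs that every provable formula is true in $\M$. This uses $\Sigma^0_1$-induction over the arithmetically defined truth set, which $\ACAo$ provides.

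The main step uses the preceding lemma. Since $\Sigma^1_{n+1}\hyp\TDC$ follows from $\ACAo + \Pi^1_{n+2}\hyp\TI + \Sigma^1_{n+1}\hyp\DC$, it suffices to check that $\M \models \Pi^1_{n+2}\hyp\TI$. In fact I would check full transfinite induction in $\M$ along every $W \in \M$ with $\M \models \WO(W)$.

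\begin{itemize}
\item $\M$ is a $\beta_1$-model and $\WO(W)$ is $\Pi^1_1$, so $\M \models \WO(W)$ implies that $W$ is truly well-ordered.
\item Suppose $\M \models \lnot\varphi(a) \land \forall b<_W a\, \varphi(b)$ fails to be impossible, i.e.\ $\M$ violates transfinite induction along $W$ for some formula $\varphi$ with parameters in $\M$. Then the set $\{a \in W : \M \models \lnot \varphi(a)\}$ is nonempty and has no $W$-least element.
\item This set is arithmetical in the code of $\M$, hence exists in $\ACAo$. It witnesses the ill-foundedness of $W$ in the ground model, contradicting the previous item.
\end{itemize}

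Hence $\M \models \ACAo + \TI + \Sigma^1_{n+1}\hyp\DC$, and by the lemma, applied inside $\M$, $\M \models \Sigma^1_{n+1}\hyp\TDC$. Therefore $\Con(\Sigma^1_{n+1}\hyp\TDC)$ holds.

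For the ``in particular'' clause, assume $n \geq 1$. Then $\beta_n\RFN^{1}(\Sigma^1_{n+1}\hyp\DC)$, which is $\beta_n\RFN(\Sigma^1_{n+1}\hyp\DC)$, gives, applied to any $\U$, a $\beta_n$-model of $\Sigma^1_{n+1}\hyp\DC$. Every $\beta_n$-model with $n \geq 1$ is a $\beta_1$-model, so the first part applies.

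The main point of care is the first part: making sure that ``$\M$ is a $\beta_1$-model'' transfers well-foundedness downward, i.e.\ that the $\Pi^1_1$-formula $\WO$ is absolute between $\M$ and the ground model. This is the standard characterization of $\beta_1$-models and can be assumed. The formalized soundness argument is routine in $\ACAo$.
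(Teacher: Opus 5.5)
Your proposal is correct and is essentially the paper's proof: a $\beta_1$-model satisfies $\Pi^1_{n+2}\hyp\TI$, so by the preceding lemma it is a model of $\Sigma^1_{n+1}\hyp\TDC$, and consistency follows; you also supply the absoluteness-of-$\WO$ argument for $\TI$ that the paper only asserts, and your treatment of the ``in particular'' clause is fine. The one thing to adjust is your formalization remark, which the paper also leaves implicit: in $\ACAo$ a satisfaction relation for $\M$ is available (arithmetically in the code of $\M$) only for formulas of bounded set-quantifier complexity, not for all formulas, so the soundness induction must be run on a cut-free proof of a contradiction, whose formulas are subformulas of the finitely many axioms used and therefore stay in such a bounded class.
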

\begin{proof}
  If $\M$ is a $\beta_1$-model, then $\M \models \Pi^1_{n+2}\hyp\TI$.
  Thus, a $\beta_1$-model of $\Sigma^1_{n+1}\hyp\DC$ is a model of $\Sigma^1_{n+1}\hyp\TDC$.
\end{proof}

Therefore, to obtain a characterization of $\Th_{\Pi^1_{n+2}}(\Sigma^1_{n+1}\hyp\TDC)$, we need another theory.
For this purpose, we introduce a new reflection principle.

\begin{definition}
  For a formula $\sigma(\V)$, we define the transfinite $\beta_n$-model reflection of $\sigma$ as follows.
  \begin{align*}
    \beta_n\TRFN(\sigma(\V)) \equiv \forall W(&\WO(W) \to \\ &\forall \U \exists \langle \M_a \rangle_{a \in W} \forall a \in W(\U,\V,\M_{<a} \in \M_a \land \beta_n(\M_a) \land \M_a \models \sigma(\V))).
  \end{align*}
  
  For $r > 0$, we define the $r$-fold iterated transfinite $\beta_n$-model reflection of $\sigma$ as follows.
  \begin{align*}
    \beta_n\TRFN^1(\sigma) \equiv \beta_n\TRFN(\sigma), \quad \beta_n\TRFN^{r+1}(\sigma) \equiv \beta_n\TRFN(\sigma + \beta_n\TRFN^{r}(\sigma)).
  \end{align*}
  We also define $\beta_n\TRFN^{\omega}(\sigma)$ as the schema $\{\beta_n\TRFN^r(\sigma) : r >0\}$.

   For a class $\Gamma$ of formulas, we define $\Gamma\hyp\beta_n\TRFN$ as the schema $\{\forall \V(\sigma(\V) \to \beta_n\TRFN(\sigma(\V))) : \sigma \in \Gamma\}$.
\end{definition}

We see the equivalence between transfinite $\beta_n$-reflections and transfinite recursions.
\begin{definition}
  Let $n > 0$. Let $\pi^1_n$ be a $\Pi^1_n$-universal formula.
  For a set $X$, we define a canonical $\Pi^{1,X}_n$-complete set $C_n(X)$ by $C_n(X) = \{(e,x) : \pi^1_n(e,x,X)\}$.

  For $n = 0$, we put $C_0(X) = X^{(\omega)}$. 
\end{definition}
We note that $C_1(X)$ is the same as $\HJ(X)$, the hyperjump of $X$.

The following lemma is well-known. 
For (1) -- (3), see \cite[{Lemma 2.4 (ii)}]{Cantini}. For (1), see also \cite[Lemma 3.10]{marcone2024galvin} and \cite[proof of Lemma 3.17]{suzuki2025relative}.
(4) is straightforward. For the assertion $V = L(A)$, see \cite[Chapter VII]{Simpson}.
\begin{lemma}
  The following hold.
  \begin{enumerate}
    \item For $n \leq 1$, there is a Turing functional $\Phi_n$ such that $\ACAo$ proves  $\Phi_n(C_n(X))$ is a coded $\beta_n$-model containing $X$.
    \item There is a Turing functional $\Phi_2$ such that $\Pi^1_1\hyp\CAo$ proves that $\Phi_2(C_2(X))$ is a coded $\beta_2$-model containing $X$.
    \item For $n \geq 3$, there is a Turing functional $\Phi_n$ such that $\Pi^1_{n-1}\hyp\CA_0 + \exists A(V=L(A))$ proves that $\Phi_n(C_n(X))$ is a coded $\beta_n$-model containing $X$.
    \item For $n \in \omega$, there is a Turing functional $\Psi_n$ such that 
    $$
    \ACAo \vdash \forall X,\M(X \in \M \land \beta_n(\M) \to \Psi_n((X \oplus \M)^{(n + 2)}) = C_n(X)).
    $$
  \end{enumerate}
\end{lemma}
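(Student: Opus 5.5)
The plan is to take items (1)--(3) from the cited literature, since the paper already attributes them to \cite[Lemma 2.4 (ii)]{Cantini} (and, for (1), to \cite{marcone2024galvin,suzuki2025relative}). For these we only have to check that the cited constructions are uniform, i.e.\ given by a single Turing functional. Roughly, $\Phi_n$ reads off from $C_n(X)$ the $\Delta^1_{n}$-in-$X$ sets in the low cases, and the sets constructible in $L(X)$ below a suitable stage in the higher cases. The hypotheses $\Pi^1_1\hyp\CAo$ and $\Pi^1_{n-1}\hyp\CA_0 + \exists A(V=L(A))$ are exactly what is needed to verify the $\beta_n$-property inside the theory; for $n \geq 3$ this uses $\Sigma^1_{n}$-absoluteness through a $\Sigma^1_n$-good wellordering of $L(A)$. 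The real work in this lemma is item (4), and that is where I would focus.

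For (4) with $n > 0$, the idea is to compute $C_n(X)$ by evaluating the universal formula inside $\M$. First I would fix $\pi^1_n$ in Kleene normal form, $\forall Y_1 \exists Y_2 \cdots Q Y_n\, \theta(e,x,X,\vec Y)$ with $\theta$ of bounded complexity (say $\Sigma^0_1$ or $\Pi^0_1$). Since $\M$ is a $\beta_n$-model containing $X$, we have $\pi^1_n(e,x,X) \leftrightarrow \M \models \pi^1_n(e,x,X)$. Here one direction uses that $\beta_n$-models reflect $\Sigma^1_n$-formulas, applied to the negation; this is the definition of $\beta_n$. For coded $\M$, the relation $\M \models \pi^1_n(e,x,X)$ is obtained by replacing each set quantifier with a number quantifier over indices $i$ ranging over $\M_i$. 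So $\M \models \pi^1_n(e,x,X)$ is a $\Pi^0_{n+1}$ (or $\Pi^0_{n}$ followed by a $\Sigma^0_1$ matrix) property of $(e,x, X\oplus\M)$, uniformly in $e,x$. Hence $(X\oplus\M)^{(n+2)}$ decides it uniformly, and this decision procedure defines $\Psi_n$. The bound $n+2$ leaves a margin for the extra quantifier in locating $X$ as some $\M_{i_0}$ and in the matrix. $\ACAo$ proves that $(X\oplus\M)^{(n+2)}$ exists, since $n$ is fixed, and it also proves the correctness of the reduction.

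For $n=0$ the target is $X^{(\omega)}$ and $\M$ is merely an $\omega$-model of $\ACAo$, so the argument changes. The plan is to locate inside $\M$ the indices $i_k$ with $\M_{i_k} = X^{(k)}$ by recursion on $k$. The relation ``$\M_j = (\M_i)'$'' is $\Pi^0_2$ in $\M$, so $(X\oplus\M)^{(2)}$ can search for the least such $j$ at each step. Then $(k,y) \in X^{(\omega)}$ iff $y \in \M_{i_k}$. The existence of each $i_k$ follows because $\M \models \ACAo$ and $\M$ is an $\omega$-model. The identification ``$\M_j$ is the jump of $\M_i$'' is correct because the jump is defined by an arithmetical formula evaluated in the real universe.

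The main obstacle I expect is bookkeeping rather than ideas. It consists of pinning down the exact arithmetical complexity of the satisfaction relation $\M \models \pi^1_n$ for the chosen normal form, so that the jump count $n+2$ genuinely suffices uniformly in $n$. It also requires making sure $\ACAo$, rather than a stronger system, verifies the correctness of $\Psi_n$; in the $n=0$ case this needs care, since the recursion producing $\langle i_k\rangle_k$ must be carried out by $\Sigma^0_1$-induction relative to $(X\oplus\M)^{(2)}$.
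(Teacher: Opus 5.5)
Your plan matches the paper's, which likewise cites Cantini for items (1)--(3), with further references for (1), and calls (4) straightforward. Your argument for (4) correctly fills in that step: you read $\pi^1_n(e,x,X)$ as $\M\models\pi^1_n(e,x,X)$, which via the normal form is a $\Pi^0_{n+1}$ property of $X\oplus\M$ for $n\ge 1$, and for $n=0$ you locate the indices of the sets $X^{(k)}$ in $\M$ using $(X\oplus\M)^{(2)}$.

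One correction to your informal gloss on (1): for $n=1$ the $\Delta^1_1$-in-$X$ sets do not form a $\beta$-model, since some $X$-computable trees on $\omega$ have paths but none hyperarithmetic in $X$. So the model $\Phi_1(\HJ(X))$ in the cited sources must be larger, though this does not affect your plan because you rely on the citations.
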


\begin{theorem} \label{thm TRFN vs TR}
  The following hold.
  \begin{enumerate}
    \item Over $\ACAo$, $\Pi^1_n\hyp\TR$ is equivalent to $\beta_n\TRFN(\ACAo)$ for $n \leq 2$.
    \item Let $n > 2$. Then, 
    \begin{itemize}
      \item $\Pi^1_n\hyp\TR_0 + \exists A(V= L(A)) \vdash \beta_n\TRFN(\ACAo)$, 
      \item $\ACAo + \beta_n\TRFN(\ACAo) \vdash \Pi^1_n\hyp\TR_0$.
    \end{itemize}
    In particular,
    $\ACAo + \beta_n\TRFN(\ACAo)$ is $\Pi^1_4$-conservative over $\Pi^1_n\hyp\TR_0$.
  \end{enumerate}
\end{theorem}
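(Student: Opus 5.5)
We prove both directions via the lemma on $C_n(X)$ and coded $\beta_n$-models. Throughout, $\Pi^1_n\hyp\TR$ is read as: for every well-order $W$ and every $X$ there is a hierarchy $\langle Y_a\rangle_{a\in W}$ with $Y_0 = X$ and $Y_a = C_n(Y_{<a})$ for $a \in W^+$, where $Y_{<a}$ is coded as a single set.

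\emph{From $\Pi^1_n\hyp\TR$ to $\beta_n\TRFN(\ACAo)$.} Fix a well-order $W$ and a set $U$.
\begin{enumerate}
\item Apply $\Pi^1_n\hyp\TR$ along $W$, starting from $U$, to obtain the hierarchy $\langle Y_a\rangle_{a\in W}$.
\item Set $\M_a = \Phi_n(Y_a)$ for $a \in W^+$, and let $\M_0$ be $\Phi_n(C_n(U))$, whose existence we also get from $\Pi^1_n\hyp\TR$ using a one-step hierarchy.
\item By item (1) of the preceding lemma, $\M_a$ is a coded $\beta_n$-model containing $Y_{<a}$. Since $U$ and each $\M_b$ with $b<a$ are computable from $Y_{<a}$, they belong to $\M_a$, because a $\beta_n$-model is closed under Turing reducibility.
\end{enumerate}
Each $\M_a$ is an $\omega$-model of $\ACAo$ by definition. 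This gives $\beta_n\TRFN(\ACAo)$ for $n\le 1$ in $\ACAo$.
\begin{itemize}
\item For $n=2$, item (2) of the lemma requires $\Pi^1_1\hyp\CAo$. This holds: $\Pi^1_2\hyp\TR$ along the order with two elements gives $C_2(X)$, hence $C_1(X)$, which yields $\Pi^1_1\hyp\CAo$.
\item For $n>2$, we argue the same way in $\Pi^1_n\hyp\TR_0+\exists A(V=L(A))$. This theory proves $\Pi^1_{n-1}\hyp\CAo$, so item (3) of the lemma applies.
\end{itemize}

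\emph{From $\beta_n\TRFN(\ACAo)$ to $\Pi^1_n\hyp\TR$.} Given $W$ and $X$, take $\langle \M_a\rangle_{a\in W'}$ from $\beta_n\TRFN(\ACAo)$ applied to a slightly lengthened well-order $W' = W\cdot(n+3)$ or $W+1$, with $U = X$. We then define the hierarchy $Y$ by arithmetical transfinite recursion relative to the sequence $\langle \M_a\rangle$:
\begin{itemize}
\item $Y_0 = X$;
\item $Y_a = \Psi_n((Y_{<a}\oplus\M_a)^{(n+2)})$.
\end{itemize}
Here we need $Y_{<a}\in\M_a$. To ensure this, we prove by transfinite induction along $W$, for the arithmetical-in-$\langle\M_a\rangle$ formula, that $Y_{<a}$ is arithmetical in $\M_{<a}$, hence a member of $\M_a$. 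Item (4) of the lemma then gives $Y_a = C_n(Y_{<a})$.

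\emph{Main obstacle.} The delicate point is that this recursion is not literally arithmetical, because each step uses the $(n+2)$-nd jump of a set inside $\M_a$. $\ACAo$ alone does not give $\ATRo$-style recursion. We avoid this as follows.
\begin{itemize}
\item Since $\M_a \in \M_{a+1}$ and $\M_{a+1}$ models $\ACAo$, the jumps $(Y_{<a}\oplus\M_a)^{(n+2)}$ exist inside $\M_{a+1}$. The whole sequence $Y$ is then uniformly $\Delta^0_k$ in the single set $\bigoplus_a\M_a$, using the codes of those jumps taken from the later models.
\item We therefore take $W'=W\cdot 2$ (pairs $(a,i)$ with $i<2$), so every stage has a successor model available.
\item The uniqueness of $C_n$ is used to show, by arithmetical transfinite induction, that the locally defined pieces cohere.
\end{itemize}

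Finally, the $\Pi^1_4$-conservation for $n>2$ follows from the two bullets in the standard way. Every model of $\Pi^1_n\hyp\TR_0$ has an inner model $L(A)$ in which $\Pi^1_n\hyp\TR_0 + V=L(A)$ holds, and Shoenfield absoluteness makes $\Pi^1_4$-sentences true in $L(A)$ true in $V$.
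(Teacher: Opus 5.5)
Your strategy is the paper's own; the paper just declares the theorem immediate from the lemma on $\Phi_n$ and $\Psi_n$. However, the forward direction has a step that fails as written. You put $\M_0=\Phi_n(C_n(U))$ and $\M_a=\Phi_n(Y_a)=\Phi_n(C_n(Y_{<a}))$ for $a\in W^+$, and claim that every $\M_b$ with $b<a$ is computable from $Y_{<a}$. For $a=1$, the immediate successor of $0^W$, you only have $Y_{<1}=\langle U\rangle$. But $\M_0$ is computed from $C_n(U)$, one level higher, so nothing yields $\M_{<1}=\langle\M_0\rangle\in\M_1$, which $\beta_n\TRFN$ requires.

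With natural choices this is actually false. For $n=0$ and the natural $\Phi_0$, $\M_1$ is the model of sets arithmetical in $U$. The code $\M_0$, however, computes every $U^{(k)}$ and so is not arithmetical in $U$. (For $a>1$ your claim is fine, since $Y_1=C_n(\langle U\rangle)$ computes $C_n(U)$.)

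The repair is to apply $\Pi^1_n\hyp\TR$ along $1+W$, with a new least point $\bot$ and $Y_\bot=U$, and set $\M_a=\Phi_n(Y_a)$ for every $a\in W$, including $0^W$. Then $U$ and $\M_{<a}=\langle\Phi_n(Y_b)\rangle_{b<_W a}$ are uniformly computable from $Y_{<a}$, which lies in the $\beta_n$-model $\M_a$. The rest of your argument, including the cases $n=2$ and $n>2$, then goes through.

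The converse direction is right in substance, but you should say precisely why $\ATRo$ is not needed. The lengthening of $W$ is not the reason; it is unnecessary, since $(\bigoplus_a\M_a)^{(n+2)}$ already exists in $\ACAo$. The real point is that $Y_{<a}$ is never computed by iterating jumps. Instead it is located inside $\M_a$ as the unique $Z$ with $Z_0=X$ and, for all $0<b<a$, $Z_{<b}\in\M_b$ and $Z_b=\Psi_n((Z_{<b}\oplus\M_b)^{(n+2)})$.

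This condition is arithmetical in $Z\oplus\bigoplus_a\M_a$ uniformly in $a$. So arithmetical transfinite induction in $\ACAo$ shows such a $Z$ exists in every $\M_a$; at limits this uses $\M_{<a}\in\M_a$ and $\M_a\models\ACAo$. Item (4) of the lemma then turns $Z$ into a $C_n$-hierarchy.

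Finally, in the conservation step, the claim ``$\Pi^1_4$-sentences true in $L(A)$ are true in $V$'' is false for a fixed $A$; consider $V=L$. You must choose $A$ witnessing the negation $\exists A\,\chi(A)$ (with $\chi\in\Pi^1_3$) of the given sentence. Then use Shoenfield $\Sigma^1_2$-absoluteness to get $L(A)\models\chi(A)$.
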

\begin{proof}
  Immediate from the previous lemma.
\end{proof}

\begin{theorem}\label{thm tdc vs trfn}
  Let $n \in \omega$.
  Then, $\Sigma^1_{n+1}\hyp\TDC_0 \equiv \Pi^1_{n+2}\hyp\beta_n\TRFN_0$.
\end{theorem}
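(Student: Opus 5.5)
We prove the two inclusions separately, following the pattern of the classical equivalence $\Sigma^1_{n+1}\hyp\DC_0 \equiv \Pi^1_{n+2}\hyp\beta_n\RFN_0$ (\cite[Theorem VII.7.6, VIII.5.12]{Simpson}), with well-orders $W$ in place of $\omega$.

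\emph{From $\Pi^1_{n+2}\hyp\beta_n\TRFN_0$ to $\Sigma^1_{n+1}\hyp\TDC$.}
Let $\theta(\V,\U_0,\U_1)\in\Pi^1_n$ satisfy $\forall\U_0\exists\U_1\theta$. Let $W$ be a well-order and $\U$ a set. Then $\sigma(\V)\equiv\forall\U_0\exists\U_1\theta(\V,\U_0,\U_1)$ is $\Pi^1_{n+2}$. Apply $\beta_n\TRFN(\sigma(\V))$ to the well-order $W$ and the parameter $\U\oplus W$. This yields a sequence $\langle\M_a\rangle_{a\in W}$ such that each $\M_a$ is a $\beta_n$-model containing $\V,\U,W$ and $\M_{<a}$, and $\M_a\models\sigma(\V)$.

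We define $H$ by arithmetical transfinite recursion along $W$ relative to $\langle\M_a\rangle$:
\begin{itemize}
\item $H_0=\U$;
\item $H_a$ is the $\M_a$-least set $Y$ (in the enumeration of $\M_a$) such that $\M_a\models\theta(\V,H_{<a},Y)$.
\end{itemize}
This requires $H_{<a}\in\M_a$. We get this inductively: $H_{<a}$ is arithmetically definable from $\M_{<a}$, $W$ and $\U$, all of which lie in the $\beta_n$-model $\M_a$, which is closed under arithmetical definability (it models $\ACAo$). Since $\M_a\models\sigma(\V)$, such a $Y$ exists. Because $\M_a$ is a $\beta_n$-model and $\theta\in\Pi^1_n$, the formula $\theta(\V,H_{<a},H_a)$ is absolute, so it holds in the ground model.

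The subtle point is that the recursion is not literally arithmetical, since the condition ``$H_{<a}\in\M_a$'' has to be built in. The clean way is to define the whole sequence $H$ as a single arithmetical-in-$\langle\M_a\rangle$ object: let $H_a$ be the $e$-th set of $\M_a$, where $e$ is least with $\M_a\models\theta$ and where the code of $H_{<a}$ inside $\M_a$ is found arithmetically. Existence of such an $H$ then follows by arithmetical transfinite recursion. $\ATRo$ is available here, since $\Pi^1_{n+2}\hyp\beta_n\TRFN$ with a trivial $\sigma$ gives $\beta_0\TRFN(\ACAo)$, hence $\Pi^1_0\hyp\TR$ by Theorem~\ref{thm TRFN vs TR}.

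\emph{From $\Sigma^1_{n+1}\hyp\TDC_0$ to $\Pi^1_{n+2}\hyp\beta_n\TRFN$.}
Let $\sigma(\V)\in\Pi^1_{n+2}$ hold, and write $\sigma\equiv\forall\U_0\exists\U_1\theta$ with $\theta\in\Pi^1_n$. Since $\Sigma^1_{n+1}\hyp\TDC_0\vdash\Sigma^1_{n+1}\hyp\DC_0$ (take $W=\omega$), the known equivalence gives $\Pi^1_{n+2}\hyp\beta_n\RFN$. Hence
\[
\forall Z\,\exists\M\bigl(Z,\V\in\M\land\beta_n(\M)\land\M\models\sigma(\V)\bigr).
\]
The matrix here is $\Pi^1_n$ (for $n\ge1$; it is arithmetical for $n=0$), because the satisfaction relation for coded models is arithmetical. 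Apply $\Pi^1_n\hyp\TDC$ along $W$ with the step relation
\[
\varphi(H_{<a},\M_a)\equiv H_{<a},\U,\V\in\M_a\land\beta_n(\M_a)\land\M_a\models\sigma(\V).
\]
Its totality follows from the displayed statement with $Z=H_{<a}\oplus\U$. The resulting sequence (discarding the dummy $H_0$, or letting $H_0$ carry $\U$) witnesses $\beta_n\TRFN(\sigma(\V))$.

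\emph{Main obstacle.} The main obstacle is the first direction, specifically ensuring that $H_{<a}\in\M_a$ so that the choice can be made inside $\M_a$ uniformly along $W$, and verifying that the construction of $H$ is a legitimate arithmetical transfinite recursion. I would handle this by carrying, along the recursion, indices locating each $H_b$ inside $\M_b$, and using that $\M_{<a}\in\M_a$ together with arithmetical closure of $\M_a$ to locate $H_{<a}$.
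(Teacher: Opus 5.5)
Both directions of your proposal follow the paper's proof. For $\TDC\Rightarrow\TRFN$, the paper likewise gets totality from $\Pi^1_{n+2}\hyp\beta_n\RFN$ and applies transfinite dependent choice to the $\Pi^1_n$ relation ``$\M$ is a $\beta_n$-model of $\sigma(\V)$ containing $\V$ and the previous stage''. For $\TRFN\Rightarrow\TDC$, it reflects $\forall\U_0\exists\U_1\theta$ along $W$, picks $H_a$ inside $\M_a$ by arithmetical transfinite recursion, and uses $\beta_n$-absoluteness of $\theta$. One small point: take $H_0$ to be a $\beta_n$-model of $\sigma(\V)$ containing $\U,\V$, which exists by $\beta_n\RFN$. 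Letting $H_0$ merely carry $\U$ does not make $\M_0$ such a model.

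The step you flag, $H_{<a}\in\M_a$, is the one place that needs an argument, and what you wrote does not supply it. (The paper also asserts it without comment.) Arithmetical closure of $\M_a$ only helps if $H_{<a}$ is arithmetical in $\M_{<a}\oplus\U\oplus\V\oplus W$. But $H_{<a}$ is the output of a transfinite recursion along $W\mathord{\upharpoonright}_a$. The inductive hypothesis ``$H_{<b}\in\M_b$ for $b<a$'' gives no uniform way to assemble $H_{<a}$ at a limit. Naively carrying indices costs a fresh jump at every limit stage, because locating $H_{<\lambda}$ inside $\M_\lambda$ is a $\Pi^0_1$ search relative to the index sequence already built. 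Likewise, ``a single arithmetical-in-$\langle\M_a\rangle$ object'' and ``exists by arithmetical transfinite recursion'' are not the same thing.

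Any of the following closes the gap.
\begin{enumerate}
\item For $n\geq 1$: $H_{<a}$ is the unique solution of an arithmetical condition with parameters $\M_{<a},\U,\V,W\in\M_a$, and it exists by $\ATRo$, which you correctly derived. So its existence is a true $\Sigma^1_1$ statement, which the $\beta_1$-model $\M_a$ reflects. Uniqueness, by arithmetical transfinite induction, then identifies the witness in $\M_a$ with $H_{<a}$.
\item For all $n$: reflect $\sigma(\V)\land\ATRo\land\WO(W)$ instead of $\sigma(\V)$; this is still $\Pi^1_{n+2}$ and true. Then each $\M_a$ builds the hierarchy along $W\mathord{\upharpoonright}_a$ itself, and uniqueness again gives $H_{<a}\in\M_a$.
\item Your literal claim is in fact true, but it needs an effective transfinite recursion. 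Since $\M_b$ contains every finite jump of $\M_{<b}$ and is uniformly computable from $\M_{<b+1}$, the jump needed to locate $H_{<b}$ in $\M_b$ can be absorbed at each stage. This yields $H_{<a}\leq_T(\M_{<a}\oplus\U\oplus\V\oplus W)^{(c)}$ for a fixed $c$.
\end{enumerate}
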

\begin{proof}
  We first show that $\Sigma^1_{n+1}\hyp\TDC_0$ proves $\Pi^1_{n+2}\hyp\beta_n\TRFN_0$.
  We work in $\Sigma^1_{n+1}\hyp\TDC_0$.
  Let $\theta(\U,\V)$ be a $\Pi^1_{n}$-formula such that $\forall \U \exists \V \theta(\U,\V)$ holds. We show that $\beta_n\TRFN(\forall \U \exists \V \theta)$ holds.
  Since $\Sigma^1_{n+1}\hyp\TDC_0$ includes $\Sigma^1_{n+1}\hyp\DC_0$, which is equivalent to $\Pi^1_{n+2}\hyp\beta_n\RFN_0$, 
  we obtain $\forall X \exists \M(X \in \M \land \beta_n(\M) \land \M \models \forall \U \exists \V\theta(\U,\V) )$.
  Since $(X \in \M \land \beta_n(\M) \land \M \models \forall \U \exists \V\theta(\U,\V) )$ is $\Pi^1_n$, we can apply $\Sigma^1_{n+1}\hyp\TDC$ to it. Thus, we obtain 
  \begin{align*}
  \forall X,W(&\WO(W) \to \\
  &\exists \langle \M_a \rangle_{a \in W}(X \in \M_0 \land \forall a \in W (\beta_n(\M_a) \land \M_a \models \forall \U \exists \V \theta(\U,\V)) \land  \forall a \in W^{+}(\M_{<a} \in \M_a))).
  \end{align*}
  This means that $\beta_n\TRFN(\forall \U \exists \V\theta)$ holds.

  We next show that $\Pi^1_{n+2}\hyp\beta_n\TRFN_0$ proves $\Sigma^1_{n+1}\hyp\TDC_0$.
  We work in $\Pi^1_{n+2}\hyp\beta_n\TRFN_0$.
  Let $X$ be a set, $W$ a well-order and $\theta(\U,\V)$ a $\Pi^1_{n}$-formula such that $\forall \U \exists \V \theta(\U,\V)$ holds.
  We show that there exists $H = \langle H_a \rangle_{a \in W}$ such that $X = H_0 \land \forall a \in W(\Hier^{\theta}(W,a,H))$.
  Applying $\Pi^1_{n+2}\hyp\beta_n\TRFN$ to $\forall \U \exists \V \theta(\U,\V)$, we obtain $\langle \M_a \rangle_{a \in W}$ such that 
  \begin{align*}
    \forall a \in W(X,\M_{<a} \in \M_a \land \beta_n(\M_a) \land \M_a \models \forall \U \exists \V\theta(\U,\V)).
  \end{align*}
  Then, by arithmetical transfinite recursion along $a \in W$, we can find a sequence $\langle H_a \rangle_a$ such that 
  $X = H_0 \land \forall a \in W(H_{<a}, H_a \in \M_a) \land \forall a \in W^{+} (\M_{a} \models \theta(H_{<a},H_a))$.
  Since $\M_a$ is a $\beta_n$-model for all $a \in W$, $H$ satisfies the desired conditions.
\end{proof}

We give conservation results similar to those in Section 2
based on the characterization by $\TRFN$ for $\Sigma^1_{n+1}\hyp\TDC_0$.
We first show that we can apply Theorem~\ref{BPconservation} to the $\TRFN$ operator.
\begin{lemma}
  Let $n \in \omega$. Then,
  \begin{enumerate}
    \item $\ACAo \vdash \beta_n\TRFN(\sigma \lor \tau) \to \sigma \lor \beta_n\TRFN(\tau)$ for $\sigma \in \Pi^1_{n+2}$.
    \item For any $\sigma$, $\beta_n\TRFN(\sigma)$ is $\Pi^1_{n+2}$ in $\ACAo$.
  \end{enumerate}
\end{lemma}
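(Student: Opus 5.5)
The plan is to follow the proof of Lemma~\ref{lem BP can be applied to beta_nRFN} closely. For (1) we only need to fix one witness set before choosing the whole sequence of models. For (2) we just count quantifiers.

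For (1), fix $\sigma(\V)\in\Pi^1_{n+2}$ and work in $\ACAo$. Take $X$ with $\beta_n\TRFN(\sigma(X)\lor\tau(X))$ and $\lnot\sigma(X)$; we must show $\beta_n\TRFN(\tau(X))$.
\begin{itemize}
\item Write $\lnot\sigma(X)$ as $\exists \U\,\theta(X,\U)$ with $\theta\in\Pi^1_{n+1}$, and fix $Z$ with $\theta(X,Z)$.
\item Given a well-order $W$ and a set $Y$, apply $\beta_n\TRFN(\sigma\lor\tau)$ to $W$ and to the single parameter $Y\oplus Z$ in place of $\U$. This yields $\langle \M_a\rangle_{a\in W}$ such that each $\M_a$ contains $X$, $Y\oplus Z$ (hence $Y$ and $Z$, since $\M_a$ is closed under arithmetical definability) and $\M_{<a}$, and satisfies $\beta_n(\M_a)$ and $\M_a\models\sigma(X)\lor\tau(X)$.
\item Since $\M_a$ is a $\beta_n$-model, $\Pi^1_{n+1}$ formulas with parameters in $\M_a$ are downward absolute to $\M_a$. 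So $\M_a\models\theta(X,Z)$, hence $\M_a\models\lnot\sigma(X)$, and therefore $\M_a\models\tau(X)$.
\end{itemize}
For $n=0$, a $\beta_0$-model is just an $\omega$-model of $\ACAo$. The same absoluteness holds there, because $\theta$ is then $\Pi^1_1$ and $\Pi^1_1$ statements pass down to $\omega$-submodels. The same sequence $\langle\M_a\rangle_{a\in W}$ therefore witnesses the clause of $\beta_n\TRFN(\tau(X))$ for $W$ and $Y$.

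For (2), count quantifiers in the definition of $\beta_n\TRFN(\sigma(\V))$:
\begin{itemize}
\item $\WO(W)$ is $\Pi^1_1$, so it sits in antecedent position as a $\Sigma^1_1$ disjunct.
\item $\beta_n(\M_a)$ is $\Pi^1_n$.
\item Satisfaction $\M_a\models\sigma(\V)$ and the membership conditions are arithmetical in the code.
\end{itemize}
The matrix after $\exists\langle\M_a\rangle_{a\in W}$ is therefore $\Pi^1_n$, and for $n=0$ it is arithmetical. The whole formula has shape
\[
\forall W\,\forall \U\,\bigl(\lnot\WO(W)\lor\exists\langle\M_a\rangle\,\Pi^1_n\bigr),
\]
which is $\Pi^1_{\max(n,1)+1}$, hence $\Pi^1_{n+2}$. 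Its negation is then of the form $\exists \V_1\varphi$ with $\varphi\in\Pi^1_{n+2}$, exactly as hypothesis (2) of Theorem~\ref{BPconservation} requires.

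The only point needing care is in (1): the witness $Z$ must go into every $\M_a$ of the sequence, not just one model. Coding it into the parameter $\U$ handles this, since the definition of $\beta_n\TRFN$ puts $\U$ into every $\M_a$. No real obstacle is expected.
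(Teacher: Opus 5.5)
Your proof is correct and follows the paper's argument: the paper just refers back to the $\beta_n\RFN$ case, where one fixes a witness $Z$ for $\lnot\sigma(X)$, places it in the reflecting model, and uses downward absoluteness of $\Pi^1_{n+1}$ to get $\M\models\lnot\sigma(X)$. Coding $Y\oplus Z$ into the single parameter $\U$, so that $Z$ lands in every $\M_a$, is exactly the adaptation needed for $\beta_n\TRFN$. One small slip in (2): for $n\geq 1$ the expression $\Pi^1_{\max(n,1)+1}$ gives $\Pi^1_{n+1}$, which undercounts, but your displayed shape $\forall W\,\forall\U\,(\Sigma^1_1\lor\Sigma^1_{n+1})$ is $\Pi^1_{n+2}$, which is the conclusion you correctly draw.
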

\begin{proof}
  The proof is essentially the same as that of Lemma~\ref{lem BP can be applied to beta_nRFN}.
\end{proof}

\begin{theorem}
  $\Sigma^1_{n+1}\hyp\TDC_0$ is $\Pi^1_{n+2}$-conservative over $\ACAo + \Pi^1_{n+2}\hyp\beta_n\TRFN\hyp\Rule$.
\end{theorem}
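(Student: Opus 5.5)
The plan is to combine Theorem~\ref{thm tdc vs trfn} with the abstract conservation result, Theorem~\ref{BPconservation}. The argument follows the proof of Theorem~\ref{thm cons for beta_n}, with $\beta_n\RFN$ replaced by $\beta_n\TRFN$.

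First, by Theorem~\ref{thm tdc vs trfn}, $\Sigma^1_{n+1}\hyp\TDC_0$ and $\Pi^1_{n+2}\hyp\beta_n\TRFN_0 = \ACAo + \Pi^1_{n+2}\hyp\beta_n\TRFN$ prove the same sentences. So it suffices to show that $\ACAo + \Pi^1_{n+2}\hyp\beta_n\TRFN$ is $\Pi^1_{n+2}$-conservative over $\ACAo + \Pi^1_{n+2}\hyp\beta_n\TRFN\hyp\Rule$.

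Second, I apply Theorem~\ref{BPconservation} with the following choices:
\begin{itemize}
  \item $R = \beta_n\TRFN$;
  \item $\Gamma = \Pi^1_{n+2}$, so the index in Theorem~\ref{BPconservation} is $n+2 > 0$;
  \item $T = \ACAo$, which is $\Pi^1_2$-axiomatized and hence $T \subseteq \Pi^1_{n+3}$.
\end{itemize}

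Next I check that $R$ is a generalized reflection operator. In $\beta_n\TRFN(\sigma(\V))$, the variables $W$, $\U$, $\M_a$ are bound, and $\sigma$ occurs only inside the formalized satisfaction predicate $\M_a \models \sigma(\V)$. Hence the free variables are exactly $\vec\V$, and the operator commutes with renaming of parameters.

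Hypotheses (1) and (2) of Theorem~\ref{BPconservation} are exactly the two items of the preceding lemma.
\begin{itemize}
  \item For (1), suppose $\lnot\sigma(X)$ holds with $\sigma \in \Pi^1_{n+2}$, and write $\lnot\sigma(X)$ as $\exists \U\,\theta(X,\U)$ with $\theta \in \Pi^1_{n+1}$. Given $W$ and $Y$, choose a witness $Z$ for $\theta(X,Z)$. Apply $\beta_n\TRFN(\sigma \lor \tau)$ to $W$ and to $Y \oplus Z$, which ensures $Y, Z \in \M_a$ for every $a$. Since each $\M_a$ is a $\beta_n$-model containing $X$ and $Z$, it satisfies $\theta(X,Z)$ by $\Pi^1_{n+1}$-downward absoluteness. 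Hence $\M_a \models \lnot\sigma(X)$, and therefore $\M_a \models \tau(X)$ for every $a \in W$.
  \item For (2), $\beta_n\TRFN(\sigma)$ has the form $\forall W(\Pi^1_1 \to \forall \U \exists \langle\M_a\rangle\, \Pi^1_n)$, which is $\Pi^1_{n+2}$. Thus its negation is $\exists \V_1 \varphi_\sigma$ with $\varphi_\sigma \in \Pi^1_{n+2}$, as required.
\end{itemize}

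The conclusion of Theorem~\ref{BPconservation} then gives exactly the stated conservativity. Every step is routine. The one point needing care is item (1): the witness $Z$ must be placed into each model of the sequence, not just $\M_0$. Coding $Y \oplus Z$ into the parameter $\U$ of $\beta_n\TRFN$ handles this, since that parameter is required to lie in every $\M_a$.
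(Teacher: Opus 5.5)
Your proposal is correct and follows the paper's proof: it applies Theorem~\ref{BPconservation} with $R = \beta_n\TRFN$, $\Gamma = \Pi^1_{n+2}$ and $T = \ACAo$, takes the two hypotheses from the lemma immediately preceding the statement, and uses Theorem~\ref{thm tdc vs trfn} to pass from $\Sigma^1_{n+1}\hyp\TDC_0$ to $\ACAo + \Pi^1_{n+2}\hyp\beta_n\TRFN$. The only difference is that you write out the lemma's verification, which the paper leaves as ``essentially the same'' as the $\beta_n\RFN$ case. Your verification is correct, including putting the witness $Z$ into the parameter $\U$ so that it lies in every $\M_a$.
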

\begin{proof}
  This is an immediate consequence of Theorem~\ref{BPconservation} and the previous lemma.
\end{proof}

\begin{corollary}
  $\Th_{\Pi^1_{n+2}}(\Sigma^1_{n+1}\hyp\TDC_0) \equiv \ACAo + \beta_n\TRFN^{\omega}(\ACAo) $.
\end{corollary}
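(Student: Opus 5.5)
The plan is to follow the proof of Theorem~\ref{Thm Cantini rephrase} step by step, replacing $\beta_n\RFN$ by $\beta_n\TRFN$. By Theorem~\ref{thm tdc vs trfn}, $\Sigma^1_{n+1}\hyp\TDC_0$ proves the same sentences as $\Pi^1_{n+2}\hyp\beta_n\TRFN_0$. By the preceding theorem, its $\Pi^1_{n+2}$-consequences are exactly those of $\ACAo + \Pi^1_{n+2}\hyp\beta_n\TRFN\hyp\Rule$; the reverse inclusion holds because the rule is admissible in the schema. So it remains to show the analogue of Lemma~\ref{beta-ref satisfies BP}(2): the theories $\ACAo + \Pi^1_{n+2}\hyp\beta_n\TRFN\hyp\Rule$ and $\ACAo + \beta_n\TRFN^{\omega}(\ACAo)$ prove the same sentences. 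Since every $\beta_n\TRFN^r(\ACAo)$ is $\Pi^1_{n+2}$ (by the lemma just before), this identifies $\Th_{\Pi^1_{n+2}}(\Sigma^1_{n+1}\hyp\TDC_0)$ with $\ACAo + \beta_n\TRFN^{\omega}(\ACAo)$.

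\emph{Deriving each $\beta_n\TRFN^r(\ACAo)$ from the rule.} Argue by induction on $r$. For $r=1$, note that $\ACAo$ is a $\Pi^1_2$ sentence provable in $\ACAo$, so one application of the rule gives $\beta_n\TRFN(\ACAo)$. For the step, suppose $\beta_n\TRFN^r(\ACAo)$ has been derived. It is $\Pi^1_{n+2}$, so the rule applies to it and yields $\beta_n\TRFN(\beta_n\TRFN^r(\ACAo))$. This is equivalent to $\beta_n\TRFN(\ACAo + \beta_n\TRFN^r(\ACAo))$, because each $\M_a$ is a $\beta_n$-model and hence already satisfies $\ACAo$.

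\emph{Closure of $\ACAo + \beta_n\TRFN^{\omega}(\ACAo)$ under the rule.} Suppose $\ACAo + \beta_n\TRFN^r(\ACAo) \vdash \forall \V\sigma(\V)$. Working in $\beta_n\TRFN^{r+1}(\ACAo)$, fix $X$, a well-order $W$ and a set $Y$. Apply $\beta_n\TRFN^{r+1}(\ACAo)$ with the same $W$ and with $\U = X \oplus Y$. This gives $\langle \M_a\rangle_{a\in W}$ in which every $\M_a$ is a $\beta_n$-model of $\ACAo + \beta_n\TRFN^r(\ACAo)$ containing $X$, $Y$ and $\M_{<a}$.

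The delicate point is passing from "$\M_a$ satisfies the theory $\ACAo + \beta_n\TRFN^r(\ACAo)$" to "$\M_a \models \forall\V\sigma(\V)$". This requires that a coded $\omega$-model of $\ACAo$ satisfying the axioms also satisfies their provable consequences, i.e.\ the soundness theorem for coded models formalized in $\ACAo$. This is the same fact used in Lemma~\ref{beta-ref satisfies BP}(2), and I expect it to go through identically here. Once it is in place, $\M_a \models \sigma(X)$ for every $a$, so the same sequence $\langle \M_a \rangle_{a \in W}$ witnesses $\beta_n\TRFN(\sigma(X))$.

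Putting the two directions together with the conservation theorem gives the corollary. The only new element compared with Section 2 is the bookkeeping of $W$ and $\M_{<a}$, which is routine.
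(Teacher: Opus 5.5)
Your proposal is correct and follows the paper's route: the paper reduces the corollary to showing that $\ACAo + \Pi^1_{n+2}\hyp\beta_n\TRFN\hyp\Rule$ and $\ACAo + \beta_n\TRFN^{\omega}(\ACAo)$ prove the same sentences, and proves this as in Lemma~\ref{beta-ref satisfies BP}(2), which is exactly the two-direction argument you give. Your \emph{delicate point} is harmless: the derivation of $\forall\V\sigma(\V)$ from $\ACAo+\beta_n\TRFN^r(\ACAo)$ is a fixed standard proof, so its relativization to the coded model $\M_a$ is already provable in $\ACAo$, and no general formalized soundness theorem is needed.
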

\begin{proof}
  It is enough to show that $\ACAo + \Pi^1_{n+2}\hyp\beta_n\TRFN\hyp\Rule$ and $\ACAo + \beta_n\TRFN^{\omega}(\ACAo)$ prove the same sentences. This is essentially the same as Lemma~\ref{beta-ref satisfies BP} (2).
\end{proof}

By the same argument as in the proof of Theorem~\ref{thm lower cons for pi}, we obtain the following result from the previous theorem.
\begin{corollary}
  Let $m < n$. Then,
    $$\Th_{\Pi^1_{m+2}}(\Sigma^1_{n+1}\hyp\TDC_0) \equiv \ACAo + \{\beta_m\RFN(\beta_n\TRFN^{r}(\ACAo)) : r > 0\}.$$
\end{corollary}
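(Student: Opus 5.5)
The plan is to copy the proof of Theorem~\ref{thm lower cons for pi}, with $\beta_n\RFN$ replaced by $\beta_n\TRFN$ and with the previous corollary, $\Th_{\Pi^1_{n+2}}(\Sigma^1_{n+1}\hyp\TDC_0) \equiv \ACAo + \beta_n\TRFN^{\omega}(\ACAo)$, taking the role of Theorem~\ref{Thm Cantini rephrase}. There are two inclusions to prove.

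\emph{Soundness.} We show that each $\beta_m\RFN(\beta_n\TRFN^r(\ACAo))$ is a $\Pi^1_{m+2}$-sentence provable in $\Sigma^1_{n+1}\hyp\TDC_0$. By the preceding lemma, $\beta_n\TRFN^r(\ACAo)$ is $\Pi^1_{n+2}$ in $\ACAo$. Since $\beta_m(\M)$ is $\Pi^1_m$ and satisfaction in a coded model is arithmetical, $\beta_m\RFN(\cdot)$ is always $\Pi^1_{m+2}$. For provability, note that $\Sigma^1_{n+1}\hyp\TDC_0$ proves $\beta_n\TRFN^{r+1}(\ACAo)$. Taking $W$ to be the one-point well-order (or any $\M_a$ of the family), this yields a $\beta_n$-model containing any given set and satisfying $\beta_n\TRFN^r(\ACAo)$. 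Because $m<n$, every $\beta_n$-model is a $\beta_m$-model, so $\beta_m\RFN(\beta_n\TRFN^r(\ACAo))$ follows. This also needs $\ACAo$ to hold inside the model, which is true since $\beta_n$-models are $\omega$-models of $\ACAo$.

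\emph{Conservativity.} Let $\forall \U \exists \V \theta(\U,\V)$ with $\theta \in \Pi^1_m$ be provable in $\Sigma^1_{n+1}\hyp\TDC_0$. This sentence is $\Pi^1_{m+2} \subseteq \Pi^1_{n+2}$, so by the previous corollary and compactness it is provable in $\ACAo + \beta_n\TRFN^{r}(\ACAo)$ for some $r$ (the schema is increasing in $r$). We then argue inside $\ACAo + \beta_m\RFN(\beta_n\TRFN^r(\ACAo))$. Given $X$, take a $\beta_m$-model $\M \ni X$ with $\M \models \ACAo + \beta_n\TRFN^r(\ACAo)$; here $\ACAo$ holds in $\M$ because $\beta_m$-models are $\omega$-models of $\ACAo$ by definition. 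The formalized soundness of provability for coded $\omega$-models of the axioms then gives $\M \models \forall \U \exists \V\theta$, hence $\M \models \exists \V \theta(X,\V)$. This is $\Sigma^1_{m+1}$, and $\beta_m$-models are upward absolute for $\Sigma^1_{m+1}$ formulas with parameters in the model, since $\theta(X,Y)$ is $\Pi^1_m$ and transfers from $\M$ to the ground model. Thus $\exists \V\theta(X,\V)$ holds.

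The only point needing care is the soundness step: it requires a single fixed finite fragment in order to invoke reflection into $\M$ (here $\ACAo$ plus one sentence, which is finitely axiomatizable), and it requires the $\Pi^1_{n+2}$ classification of $\beta_n\TRFN^r$ for the complexity count. Both are supplied by earlier results, so I expect no real obstacle.
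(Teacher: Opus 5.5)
Your proposal is correct and is the paper's argument: the paper obtains this corollary by rerunning the proof of Theorem~\ref{thm lower cons for pi}, with $\beta_n\TRFN$ in place of $\beta_n\RFN$ and $\Th_{\Pi^1_{n+2}}(\Sigma^1_{n+1}\hyp\TDC_0)\equiv\ACAo+\beta_n\TRFN^{\omega}(\ACAo)$ in place of Theorem~\ref{Thm Cantini rephrase}, exactly as you do. Two small simplifications: the $\Pi^1_{n+2}$ classification of $\beta_n\TRFN^r(\ACAo)$ is not actually needed, since $\beta_m\RFN(\sigma)$ is $\Pi^1_{m+2}$ for every sentence $\sigma$, and transferring the fixed standard proof into $\M$ is just relativization rather than formalized soundness.
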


By the same argument as in the proof of Theorem~\ref{thm Boolean conservatioin}, we obtain the following result.
\begin{theorem}
  $\Th_{\B(\Pi^1_{n+2})}(\Sigma^1_{n+1}\hyp\TDC_0) \equiv \ACAo + (\Pi^1_{n+2})^{-}\hyp\beta_n\TRFN$.
\end{theorem}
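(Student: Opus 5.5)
We follow the proof of Theorem~\ref{thm Boolean conservatioin}, replacing $\beta_n\RFN$ by $\beta_n\TRFN$ and $\Sigma^1_{n+1}\hyp\DC_0$ by $\Sigma^1_{n+1}\hyp\TDC_0$. For the easy inclusion, note that each instance $\sigma \to \beta_n\TRFN(\sigma)$ with $\sigma \in (\Pi^1_{n+2})^{-}$ is a $\B(\Pi^1_{n+2})$-sentence, since $\beta_n\TRFN(\sigma)$ is $\Pi^1_{n+2}$ by the lemma preceding the $\TRFN$ conservation theorem. Each such instance is provable in $\Sigma^1_{n+1}\hyp\TDC_0$ by Theorem~\ref{thm tdc vs trfn}.

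For the converse, every $\B(\Pi^1_{n+2})$-sentence is equivalent to a conjunction of sentences of the form $\sigma \lor \pi$ with $\sigma \in \Sigma^1_{n+2}$ and $\pi \in \Pi^1_{n+2}$. So it suffices to take such a sentence with $\Sigma^1_{n+1}\hyp\TDC_0 \vdash \sigma \lor \pi$ and show it follows from $\ACAo + (\Pi^1_{n+2})^{-}\hyp\beta_n\TRFN$. The plan has three steps.

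\emph{Step 1: absorb $\lnot\sigma$ into the reflected theory.} We have $\Sigma^1_{n+1}\hyp\TDC_0 + \lnot\sigma \vdash \pi$, and by Theorem~\ref{thm tdc vs trfn} the hypothesis theory is $\ACAo + \Pi^1_{n+2}\hyp\beta_n\TRFN + \lnot\sigma$. Since $\lnot\sigma$ is a $\Pi^1_{n+2}$ sentence, we can conjoin it to any $\Pi^1_{n+2}$-formula and stay in $\Pi^1_{n+2}$. Hence this theory is equivalent to $\ACAo + \Pi^1_{n+2}\hyp\beta_n\TRFN(\lnot\sigma)$, the schema $\forall \V(\tau(\V) \to \beta_n\TRFN(\lnot\sigma + \tau(\V)))$. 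In one direction, apply the schema to $\tau \land \lnot\sigma$. In the other, $\beta_n\TRFN(\lnot\sigma + \tau)$ yields both $\beta_n\TRFN(\tau)$ and $\lnot\sigma$. The latter holds because $\lnot\sigma$ is $\Pi^1_{n+2}$ and each $\M_a$ is a $\beta_n$-model, so $\lnot\sigma$ reflects down: the argument is Lemma~\ref{lem BP can be applied to beta_nRFN}(1) with $\tau$ trivial.

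\emph{Step 2: conservation over the iterated reflection.} We will show that $\ACAo + \Pi^1_{n+2}\hyp\beta_n\TRFN(\lnot\sigma)$ is $\Pi^1_{n+2}$-conservative over $\ACAo + \beta_n\TRFN^{\omega}(\lnot\sigma)$, the $\TRFN$ analogue of Theorem~\ref{Cor conservation for RFNT schema and RFNomega}. First apply Theorem~\ref{BPconservation} to the operator $R\tau = \beta_n\TRFN(\lnot\sigma + \tau)$. Its hypotheses are checked as in Lemma~\ref{lem BP can be applied to beta_nRFN}: for (1), take a witness to the $\Sigma^1_{n+2}$ failure of a $\Pi^1_{n+2}$ disjunct, put it into $\U$, and use upward absoluteness of $\Pi^1_{n+1}$ facts to $\beta_n$-models; for (2), $R\tau$ is $\Pi^1_{n+2}$. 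This gives conservation over the rule version. Then show, as in Lemma~\ref{beta-ref satisfies BP}(2), that the rule theory and $\ACAo + \beta_n\TRFN^{\omega}(\lnot\sigma)$ prove the same sentences.
\begin{itemize}
\item Closure under the rule: if $\beta_n\TRFN^r(\lnot\sigma) \vdash \forall\V\tau$, then $\beta_n\TRFN^{r+1}(\lnot\sigma)$ supplies models of $\beta_n\TRFN^r(\lnot\sigma)$, hence of $\tau$, arranged along any well-order.
\item Generating the iterates: obtain each $\beta_n\TRFN^{r+1}(\lnot\sigma)$ by applying the rule to the $\Pi^1_{n+2}$ sentence $\beta_n\TRFN^r(\lnot\sigma)$.
\end{itemize}
Consequently $\ACAo + \beta_n\TRFN^r(\lnot\sigma) \vdash \pi$ for some $r$.

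\emph{Step 3: recover the iterates from the sentence schema.} By induction on $r$, $\ACAo + (\Pi^1_{n+2})^{-}\hyp\beta_n\TRFN + \lnot\sigma \vdash \beta_n\TRFN^r(\lnot\sigma)$. Apply the schema to the $\Pi^1_{n+2}$ sentence $\lnot\sigma$, and then to $\lnot\sigma \land \beta_n\TRFN^r(\lnot\sigma)$, which is again a $\Pi^1_{n+2}$ sentence. Thus $\ACAo + (\Pi^1_{n+2})^{-}\hyp\beta_n\TRFN \vdash \lnot\sigma \to \pi$, which is $\sigma \lor \pi$.

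The main obstacle is verifying hypothesis (1) of Theorem~\ref{BPconservation} for the relativized transfinite operator in Step 2. The witness $Z$ for $\lnot\sigma'$ must be placed into every model $\M_a$ of the sequence. Since the definition of $\beta_n\TRFN$ quantifies over an arbitrary $\U$ that belongs to all $\M_a$, we take $\U = Y \oplus Z$, which makes this routine.
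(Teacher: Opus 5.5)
Your proposal is correct and follows the paper's route: the paper proves this theorem ``by the same argument as in the proof of Theorem~\ref{thm Boolean conservatioin}'', and you carry that argument out. You absorb $\lnot\sigma$ via Theorem~\ref{thm tdc vs trfn}, apply Theorem~\ref{BPconservation} to $\tau \mapsto \beta_n\TRFN(\lnot\sigma + \tau)$, identify the rule theory with $\ACAo + \beta_n\TRFN^{\omega}(\lnot\sigma)$, and regenerate the iterates from the sentence schema. One slip is only terminological: the fact you use in Steps 1 and 2 is that true $\Pi^1_{n+1}$ statements with parameters in a $\beta_n$-model also hold in that model, which is downward absoluteness, not upward absoluteness.
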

Thus, we also obtain the following result by the same argument as in the proofs of Theorems~\ref{thm lower cons for Sigma} and \ref{thm lower cons for B}.
\begin{theorem}
  We have 
  \begin{itemize}
  \item $\Th_{\B(\Pi^1_{m+2})}(\Sigma^1_{n+1}\hyp\TDC_0) \equiv \ACAo + \{\sigma \to \beta_m\RFN(\beta_n\TRFN^r(\sigma)) : \sigma \in (\Pi^1_{m+2})^{-}, r > 0\}$ for $m < n$.
  \item $\Th_{\Sigma^1_{m+2}}(\Sigma^1_{n+1}\hyp\TDC_0) \equiv \ACAo + \{\sigma \to \exists \M(\beta_m(\M) \land \M \models \beta_n\TRFN^r(\sigma)) : \sigma \in (\Pi^1_{m+2})^{-}, r > 0\}$ for $m \leq n$.
  \end{itemize}
\end{theorem}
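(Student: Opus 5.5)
We prove the two items by transferring the arguments behind Theorems~\ref{thm lower cons for Sigma} and \ref{thm lower cons for B} to the operator $\beta_n\TRFN$. The inputs are three results already established for $\TRFN$: the equivalence $\Sigma^1_{n+1}\hyp\TDC_0 \equiv \Pi^1_{n+2}\hyp\beta_n\TRFN_0$ (Theorem~\ref{thm tdc vs trfn}), the Beklemishev--Pakhomov lemma for $\beta_n\TRFN$, and the Boolean characterization $\Th_{\B(\Pi^1_{n+2})}(\Sigma^1_{n+1}\hyp\TDC_0) \equiv \ACAo + (\Pi^1_{n+2})^{-}\hyp\beta_n\TRFN$.

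\textbf{Soundness.} Fix $\sigma \in (\Pi^1_{m+2})^{-}$ and $r>0$, and work in $\Sigma^1_{n+1}\hyp\TDC_0 + \sigma$.
\begin{enumerate}
\item Since $m \le n$, $\sigma$ is also $\Pi^1_{n+2}$. By induction on $r$, using the $(\Pi^1_{n+2})^{-}$-instances of $\beta_n\TRFN$, we get $\beta_n\TRFN^r(\sigma)$ for every $r$.
\item Apply this at level $r+1$ with the well-order of length $1$. This yields a $\beta_n$-model $\M \models \sigma + \beta_n\TRFN^r(\sigma)$.
\item Since a $\beta_n$-model is a $\beta_m$-model, this gives $\exists \M(\beta_m(\M) \land \M \models \beta_n\TRFN^r(\sigma))$.
\item For the $\B$ case with $m<n$, apply the full $\Pi^1_{m+2}$-$\beta_m\RFN$ to the sentence $\beta_n\TRFN^r(\sigma)$. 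That sentence is $\Pi^1_{n+2}$, not $\Pi^1_{m+2}$, so the direct route fails. Instead, for any $U$ take a $\beta_n$-model $\M\ni U$ with $\M \models \beta_n\TRFN^r(\sigma)$, again from level $r+1$ with the length-$1$ well-order. This $\M$ witnesses $\beta_m\RFN(\beta_n\TRFN^r(\sigma))$.
\end{enumerate}
Each resulting axiom has the right complexity: $\exists\M(\ldots)$ is $\Sigma^1_{m+1}$, so $\sigma \to \exists\M(\ldots)$ is $\Sigma^1_{m+2}$; and $\beta_m\RFN(\cdot)$ is $\Pi^1_{m+2}$, so $\sigma \to \beta_m\RFN(\cdot)$ is $\B(\Pi^1_{m+2})$.

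\textbf{Completeness, $\B$ case ($m<n$).} It suffices to handle a provable sentence $\sigma' \lor \pi$ with $\sigma' \in \Sigma^1_{m+2}$ and $\pi \in \Pi^1_{m+2}$. Put $\sigma := \lnot\sigma' \in (\Pi^1_{m+2})^{-}$.
\begin{enumerate}
\item Then $\Sigma^1_{n+1}\hyp\TDC_0 + \sigma \vdash \pi$.
\item By Theorem~\ref{thm tdc vs trfn}, $\Sigma^1_{n+1}\hyp\TDC_0 + \sigma$ is equivalent to $\ACAo + \Pi^1_{n+2}\hyp\beta_n\TRFN(\sigma)$.
\item Apply Theorem~\ref{BPconservation} to the operator $\beta_n\TRFN(\sigma + \cdot)$, verifying hypotheses (1) and (2) exactly as in the preceding lemma. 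Then apply the rule-to-iteration argument of Lemma~\ref{beta-ref satisfies BP}(2). Together these give that $\ACAo + \Pi^1_{n+2}\hyp\beta_n\TRFN(\sigma)$ is $\Pi^1_{n+2}$-conservative over $\ACAo + \beta_n\TRFN^\omega(\sigma)$.
\item Since $\pi$ is $\Pi^1_{n+2}$, some $r$ gives $\ACAo + \beta_n\TRFN^r(\sigma) \vdash \pi$.
\item Work in the target theory plus $\sigma$, and fix a set $X$. The axiom $\sigma \to \beta_m\RFN(\beta_n\TRFN^r(\sigma))$ supplies a $\beta_m$-model $\M \ni X$ with $\M \models \beta_n\TRFN^r(\sigma)$. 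Hence $\M \models \pi$, read with $X$ as parameter.
\item Write $\pi = \forall U \exists V \theta$ with $\theta \in \Pi^1_m$. Then $\M \models \exists V\theta(X,V)$, and this is a $\Sigma^1_{m+1}$ statement, so it is upward absolute from the $\beta_m$-model $\M$ and holds outright. As $X$ was arbitrary, $\pi$ holds, so the target theory proves $\sigma' \lor \pi$.
\end{enumerate}

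\textbf{Completeness, $\Sigma$ case ($m \le n$).} Take a provable $\Sigma^1_{m+2}$ sentence $\exists U\,\psi(U)$ with $\psi \in \Pi^1_{m+1}$.
\begin{enumerate}
\item Let $\sigma := \forall U \lnot\psi(U) \in (\Pi^1_{m+2})^{-}$.
\item Then $\Sigma^1_{n+1}\hyp\TDC_0 + \sigma$ is inconsistent, so by the same conservation argument $\ACAo + \beta_n\TRFN^r(\sigma) \vdash \bot$ for some $r$.
\item In the target theory, assuming $\sigma$ yields a $\beta_m$-model of $\beta_n\TRFN^r(\sigma)$. This is a coded $\omega$-model of $\ACAo$, so it satisfies the theory $\ACAo + \beta_n\TRFN^r(\sigma)$ and hence satisfies $\bot$, which is impossible. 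So $\lnot\sigma$, that is, $\exists U\psi(U)$, is provable.
\end{enumerate}

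\textbf{Main obstacle.} The delicate step is item (3) of the $\B$ case: getting relativized conservation over $\beta_n\TRFN^\omega(\sigma)$ for a $\Pi^1_{m+2}$ sentence $\sigma$. This is the $\TRFN$ analogue of Theorem~\ref{Cor conservation for RFNT schema and RFNomega}. The plan is to check two things.
\begin{enumerate}
\item The operator $\beta_n\TRFN(\sigma + \cdot)$ satisfies hypotheses (1) and (2) of Theorem~\ref{BPconservation}. This should follow because the witness $Z$ for $\lnot\sigma$ can be placed in every $\M_a$ of the sequence (using $\U$ as a parameter) and is preserved there by $\beta_n$-absoluteness.
\item The induction showing that the rule $\Pi^1_{n+2}\hyp\beta_n\TRFN(\sigma)\hyp\Rule$ generates exactly $\beta_n\TRFN^\omega(\sigma)$ goes through. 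This needs $\beta_n\TRFN(\sigma + \beta_n\TRFN^r(\sigma))$ to remain $\Pi^1_{n+2}$, which the preceding lemma provides.
\end{enumerate}
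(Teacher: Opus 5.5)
Your proposal is correct and follows essentially the paper's route. You obtain $\ACAo+\beta_n\TRFN^r(\sigma)\vdash\pi$ (or $\vdash\bot$ in the $\Sigma$ case) via the $\beta_n\TRFN$-analogue of the relativized Beklemishev--Pakhomov conservation, exactly as in the proof of Theorem~\ref{thm Boolean conservatioin}, and you then transfer it down through a $\beta_m$-model as in Theorem~\ref{thm lower cons for pi}; along the way you correctly verify hypotheses (1)--(2) of Theorem~\ref{BPconservation} and the rule-to-iteration step for the operator $\beta_n\TRFN(\sigma+\cdot)$, which is the ingredient the paper leaves implicit under ``the same argument''.
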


\subsection{Transfinitely iterated reflection}

By Theorem~\ref{thm tdc vs trfn}, transfinite dependent choice is equivalent to transfinite length reflection. 
Ordinary dependent choice, which makes a sequence of length $\omega$, corresponds to $\beta_n$-model reflection: dependent choice of length $\omega$ yields a $\beta_n$-model, and vice versa.
This correspondence suggests that dependent choice of length $\omega^2$ corresponds to $2$-fold $\beta_n$-model reflection. 
Moreover, iterating this argument, we may obtain the equivalence of $\omega^{\alpha}$-length dependent choice and $\alpha$-fold iterated $\beta_n$-model reflection.
In this subsection, we justify this argument.

We begin by introducing transfinitely iterated $\beta_n$-model reflection.
In \cite{PW}, Pakhomov and Walsh defined transfinitely iterated $\beta_0$-model reflection over $\ACAo$ based on G\"{o}del's fixed-point lemma. The idea of this formulation goes back to Feferman.

In this paper, we give a definition based directly on transfinite recursion, working over $\mathsf{ATR}_0$.
In the next subsection, we show the equivalence between the definition in \cite{PW} and our definition over $\ATRo$.
\begin{definition}[$\ATRo$]
  Let $\sigma(\V)$ be a formula, $W$ a well-order, 
  $\M =  \langle M_i \rangle_{i \in \N}$ a coded $\beta_0$-model.
  
  For a set $X \in \M$, define $I_X = \{i : X \in M_i \land \M \models \beta_n(M_i)\}$.
  We define a sequence $H^{\sigma(X)}(\M) = \langle H^{\sigma(X)}_a(\M) \rangle_{a \in W}$ by recursion on $a \in W$ as follows.
  \begin{align*}
    i \in H^{\sigma(X)}_a(\M) \leftrightarrow  i \in I_X   \land  \forall b <_W a \forall j(&M_j \in M_i \to \\
    &\exists k \in H^{\sigma(X)}_b(\M)(M_j \in M_k \in M_i \land M_k \models \sigma(X))).
  \end{align*}
  We write $\M \models \beta_n\RFN^{W}(\sigma(X))$ if $\forall a \in W \forall k \exists i \in H^{\sigma(X)}_a(\M) (M_k \in M_i)$.

  In what follows, we write $H^{\sigma(X)}_a$ instead of $H^{\sigma(X)}_a(\M)$ if it is clear from the context.
  We also write $\K \in H^{\sigma(X)}_a$ for $\exists i(\K = M_i \land i \in H^{\sigma(X)}_a)$. Thus, $\M \models \beta_n\RFN^W(\sigma(X))$ can be written as $\forall a \in W \forall Y \in \M \exists \K \in H^{\sigma(X)}_a(Y \in \K)$.
\end{definition}
Intuitively, $i \in H^{\sigma(X)}_a$ means that $\M$ believes that $M_i$ is a $\beta_n$-model of $a$-fold iterated $\beta_n$-reflection of $\sigma$. 

\begin{remark}
  We note that $H^{\sigma}$ is $\Delta^{1,\M \oplus W}_1$. 
  In particular, if $W \in \M$, then $\langle H_a \rangle_{a \in W}$ is $\Delta^{1,\M}_1$.
  Moreover, $\M \models \beta_n\RFN^W(\sigma(X))$ is also $\Delta^{1,\M \oplus W}_1$ because 
  \begin{align*}
    \M \models \beta_n\RFN^W(\sigma(X)) &\leftrightarrow \forall H(H = H^{\sigma(X)} \to \forall a \in W \forall k \exists i \in H_a(M_k \in M_i)) \\
    &\leftrightarrow \exists H(H = H^{\sigma(X)} \land \forall a \in W \forall k \exists i \in H_a(M_k \in M_i)).
  \end{align*}
  Thus, occurrences of $H^{\sigma}$ do not change the complexity of $\Pi^1_n$-formulas for $n \geq 1$. 
\end{remark}

\begin{lemma}[$\ATRo$]\label{lem RFNW and RFN}
  Let $\sigma(\V)$ be a formula.
  Let $W$ be a well-order having a non-minimum element, $\M$ be a $\beta_0$-model such that $\M \models \beta_n\RFN^{W}(\sigma(X))$.
  Then $\M \models \beta_n\RFN(\sigma(X))$.
\end{lemma}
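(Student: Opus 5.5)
The plan is to unfold the definitions and use only the first level $0^W$ of the hierarchy $H^{\sigma(X)}$, which lies strictly below some element $a \in W$ because $W$ has a non-minimum element. We must show that $\M \models \beta_n\RFN(\sigma(X))$. Relativized to $\M$, this means that for every $Y \in \M$ there is some $\K \in \M$ with $X, Y \in \K$, $\M \models \beta_n(\K)$ and $\M \models (\K \models \sigma(X))$.

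First, fix a non-minimum $a \in W$, so that $0^W <_W a$. Let $Y \in \M$ be arbitrary. Since $\M$ is a $\beta_0$-model, hence closed under $\oplus$, we have $Y \oplus X \in \M$. Choose $j$ with $M_j = Y \oplus X$. Applying $\M \models \beta_n\RFN^W(\sigma(X))$ at $a$ and $j$ gives some $i \in H^{\sigma(X)}_a$ with $M_j \in M_i$.

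Next, unfold the defining clause of $i \in H^{\sigma(X)}_a$ with $b = 0^W <_W a$ and this $j$. It yields some $k \in H^{\sigma(X)}_{0^W}$ with $M_j \in M_k \in M_i$ and $M_k \models \sigma(X)$. Membership $k \in H^{\sigma(X)}_{0^W}$ gives $k \in I_X$, that is, $X \in M_k$ and $\M \models \beta_n(M_k)$. Moreover, $Y \oplus X = M_j \in M_k$. Hence $Y \in M_k$: the set $M_k$ is a coded $\beta_n$-model in the sense of $\M$, in particular an $\omega$-model of $\ACAo$ in the sense of $\M$. Since "$M_k$ is closed under $\leq_T$" is arithmetical in $M_k$, it is absolute between $\M$ and the ground model. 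Therefore $\K := M_k$ witnesses the required instance for $Y$.

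The only delicate point, which I expect to be the main obstacle, is matching the satisfaction clause. The definition of $H$ uses the ground-model statement $M_k \models \sigma(X)$, whereas $\M \models \beta_n\RFN(\sigma(X))$ requires $\M \models (M_k \models \sigma(X))$. I would handle this by observing that for a coded $\omega$-model $M_k$, the full satisfaction relation is arithmetical in (the code of) $M_k$, since all set quantifiers range over the columns of $M_k$. Because $M_k \in \M$ and $\M$ is an $\omega$-model of $\ACAo$, this satisfaction set exists in $\M$ and is computed correctly there, by absoluteness of arithmetical formulas for $\omega$-models. Thus $\M \models (M_k \models \sigma(X))$ iff $M_k \models \sigma(X)$. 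If the definition of $H$ is instead read inside $\M$ (as $\M \models M_k \models \sigma(X)$), this step is immediate. Either way, the proof is completed for arbitrary $Y \in \M$.
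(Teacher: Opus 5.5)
Your proof is correct and is essentially the paper's argument. You take an element above $0^W$ (the paper uses the second-least element), obtain $\K' \in H^{\sigma(X)}_a$ containing $Y$, and unfold at $b = 0^W$ to get $\K \in H^{\sigma(X)}_{0}$ with $Y \in \K \in \K'$ and $\K \models \sigma(X)$. Membership in $H^{\sigma(X)}_0$ already gives $X \in \K$ and $\M \models \beta_n(\K)$, so the detour through $Y \oplus X$ is harmless but unnecessary.

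On the satisfaction issue, which the paper passes over silently, the right justification is per formula. For the fixed $\sigma$, ``$\K \models \sigma(X)$'' is arithmetical in $\K \oplus X$ and hence absolute for the $\omega$-model $\M$. Your appeal to the full satisfaction relation does not work as stated: that relation is not arithmetical in $\K$ and need not exist in an $\omega$-model of $\ACAo$.
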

\begin{proof}
  Let $1 \in W$ be the second least element in $W$.
  Take a set $Y \in \M$. Then, there exists $\K' \in \M (Y \in \K' \land \K' \in H^{\sigma(X)}_1)$. 
  Then, there exists $\K \in \K' (Y \in \K \land \K \in H^{\sigma(X)}_0 \land \K \models \sigma(X) )$.
  Thus, we obtain $\forall Y \in \M \exists \K \in \M(\M \models [Y \in \K \land \beta_n(\K) \land \K \models \sigma(X)])$. This completes the proof.
\end{proof}

\begin{lemma}[$\ATRo$]
  Let $\sigma(\V)$ be a formula, $W$ a well-order, $\M$ a coded $\beta_0$-model and $X \in \M$. Then, 
  $\forall a,b \in W(b < a \to H^{\sigma(X)}_b \supseteq H^{\sigma(X)}_a)$.
\end{lemma}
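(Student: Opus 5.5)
We prove the monotonicity by transfinite induction on $a \in W$, with the inductive claim
\[
\forall b <_W a\,\bigl(H^{\sigma(X)}_b \supseteq H^{\sigma(X)}_a\bigr).
\]
Since the sequence $\langle H^{\sigma(X)}_a\rangle_{a\in W}$ exists in $\ATRo$ (it is defined by arithmetical transfinite recursion relative to $\M \oplus W$), this statement is arithmetical in the parameter $H^{\sigma(X)}$. Arithmetical transfinite induction along the well-order $W$ is available in $\ATRo$, and in fact already in $\ACAo$ once the parameter is fixed.

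Assume the claim holds for all $a' <_W a$, and fix $b <_W a$ and $i \in H^{\sigma(X)}_a$. We must show $i \in H^{\sigma(X)}_b$.

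First, $i \in I_X$, because membership in $I_X$ is part of the defining condition for $H^{\sigma(X)}_a$.

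Second, let $c <_W b$ and $j$ with $M_j \in M_i$. Since $c <_W b <_W a$, transitivity of $<_W$ gives $c <_W a$. The defining clause for $i \in H^{\sigma(X)}_a$ then yields some $k \in H^{\sigma(X)}_c$ with $M_j \in M_k \in M_i$ and $M_k \models \sigma(X)$. This is exactly the condition needed at stage $b$ with witness level $c$. Hence $i \in H^{\sigma(X)}_b$.

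This argument uses neither the induction hypothesis nor the recursion's dependence on earlier stages except through the fixed sets $H^{\sigma(X)}_c$. The defining condition at stage $a$ simply quantifies over all $c <_W a$, which is a superset of the $c <_W b$. So the induction is not actually needed, and a direct argument suffices: for fixed $b <_W a$, compare the two defining formulas and note that the universal quantifier over $c <_W a$ implies the one over $c <_W b$.

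The only point needing care, and the step I expect to be the main obstacle if any, is confirming that $H^{\sigma(X)}$ is a genuine set satisfying its recursion equations at every level. Then the defining equivalence for $H_a$ and $H_b$ can be used directly rather than via the recursion. This is guaranteed by $\ATRo$, as noted in the preceding remark.
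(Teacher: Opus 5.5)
Your argument is correct and matches the paper's proof, which is simply ``By definition'': the defining clause for $i \in H^{\sigma(X)}_a$ quantifies over all $c <_W a$, so for $b <_W a$ it covers every $c <_W b$ and yields $i \in H^{\sigma(X)}_b$. As you already note yourself, the transfinite induction you set up at the start is unnecessary.
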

\begin{proof}
  By definition.
\end{proof}

\begin{definition}
  Let $\Gamma$ be a class of formulas.
  We define $\Gamma\hyp\beta_n\RFN^{\ON}$ as 
  \begin{align*}
   \{\forall \V(\sigma(\V) \to  \forall W,\U(\WO(W) \to \exists \M(\U,\V \in \M \land \beta_n(\M) \land \M \models \beta_n\RFN^W(\sigma(\V)) ))) : \sigma \in \Gamma\}.
  \end{align*}
\end{definition}
\begin{remark}
  We note that we can omit $\U$ from the above definition by taking $\sigma'(\V \oplus \U) \equiv \sigma(\V) \land \U = \U$.
\end{remark}

We will show that $\Pi^1_{n+2}\hyp\beta_n\RFN^{\ON}$ is equivalent to $\Sigma^1_{n+1}\hyp\TDC$.
As we mentioned at the beginning of this subsection, 
it seems that dependent choice of length $\omega^a$ corresponds to $a$-fold $\beta_n$-model reflection.
To establish this correspondence,
we introduce the standard ordinal notation for $\omega^a$.
\begin{definition}[$\RCAo$]
    Let $L$ be a linear ordering.
    We define $\omega^L$ as the following linear ordering. The field $|\omega^{L}|$ is defined as 
    $|\omega^L| = \{\langle \rangle\} \cup \{\langle (x_0,n_0),\ldots,(x_k,n_k) \rangle : x_0 >_L x_1 >_L \cdots >_L x_k \land n_0,\ldots,n_k> 0\}$.
    The empty sequence $\langle \rangle$ is the minimum element.
    For $\langle (x_0,n_0),\ldots,(x_k,n_k) \rangle$ and $\langle (y_0,m_0),\ldots,(y_l,m_l) \rangle \in |\omega^L|$,
    we write 
    $$\langle (x_0,n_0),\ldots,(x_k,n_k) \rangle <_{\omega^L} \langle (y_0,m_0),\ldots,(y_l,m_l) \rangle$$ if one of the following holds.
    \begin{itemize}
      \item $\langle (x_0,n_0),\ldots,(x_k,n_k) \rangle$ is a proper initial segment of $\langle (y_0,m_0),\ldots,(y_l,m_l) \rangle$,
      \item 
    $\exists i \leq \min\{k,l\}(\forall j < i (x_j = y_j \land n_j = m_j) \land ((x_i = y_i \land n_i < m_i) \lor (x_i <_L y_i)))$.
    \end{itemize}
\end{definition}

It is well-known that the mapping $W \mapsto \omega^W$ preserves the well-foundedness of $W$ in $\ACAo$ \cite[p.299]{Girard}. 

Intuitively, an element $\langle (x_0,n_0),\ldots,(x_k,n_k)\rangle$ denotes the ordinal $$
\omega^{x_0}\cdot n_0 + \cdots + \omega^{x_k}\cdot n_k
$$
written in the Cantor normal form. We can naturally define addition on $|\omega^L|$ based on this intuition.

\begin{definition}
  Let $L$ be a linear order, and let $\langle \K_x \rangle_{x \in L}$ be a sequence of $\beta_0$-models.
  \begin{itemize}
    \item We define the initial segment $\K_{<x}$ for $x \in L$ as $\langle \K_y \rangle_{y < x}$. (See also Definition 3.1.)
    \item We define the union $\K_L = \bigcup_{x \in L} \K_x$ as the sequence $\langle (\K_x)_i : x \in L, i \in \N \rangle$. Here, $(\K_x)_i$ is the $i$-th element of $\K_x$. $\bigcup_{y <_L x} \K_{y}$ is defined as $\K_{L\mathord{\upharpoonright_x}}$.
  \end{itemize}
\end{definition}
We note that if $\K_{<x} \in \K'$ for a $\beta_0$-model $\K'$, then $\bigcup_{y <_L x} \K_{y} \in \K'$.

\begin{lemma}[$\ACAo$] \label{Lem union of beta}
  Let $L$ be a nonempty linear order, $\langle \K_x \rangle_{x \in L}$ a sequence of $\beta_n$-models such that 
  $\forall x,y \in L(x <_L y \to \K_x \subseteq \K_y)$.
  Then, $\K_L$ is also a $\beta_n$-model.
  In addition, if $\sigma$ is a $\Pi^1_{n+2}$-formula and $X$ is a set such that $\forall x \in L (\K_x \models \sigma(X))$, then $\K_L \models \sigma(X)$.
\end{lemma}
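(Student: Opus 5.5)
We need to show that $\K_L$ is a $\beta_n$-model and that it satisfies $\Pi^1_{n+2}$-formulas true in every $\K_x$.

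Since the $\K_x$ increase and $L$ is linear, $\K_L$ is a directed union. Every finite collection of sets in $\K_L$ lies in a single $\K_x$, namely the one for the $L$-largest index among them.

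\textbf{$\beta_0$-part.} First we check that $\K_L$ is a $\beta_0$-model, i.e.\ a coded $\omega$-model of $\ACAo$. Closure under $\oplus$ and Turing reducibility is immediate from directedness. Closure under arithmetical comprehension holds because, for parameters $\vec Y \in \K_x$, the set $\{m : \K_x \models \theta(m,\vec Y)\}$ for arithmetical $\theta$ equals the true set and lies in $\K_x \subseteq \K_L$.

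\textbf{Absoluteness claim.} For $n>0$, we prove the following by induction on $k \le n$. For every $\Sigma^1_k$-formula $\varphi$, every $x \in L$ and every $\vec Y \in \K_x$,
\[
\varphi(\vec Y) \leftrightarrow \K_x \models \varphi(\vec Y) \leftrightarrow \K_L \models \varphi(\vec Y).
\]
The first equivalence is the definition of $\beta_n$ for $\K_x$. For the second, write $\varphi = \exists \U \psi$ with $\psi \in \Pi^1_{k-1}$.
\begin{itemize}
\item If $\K_x \models \varphi$, pick a witness $Z \in \K_x$. By induction, $\K_L \models \psi(Z)$.
\item Conversely, if $\K_L \models \psi(Z,\vec Y)$, then $Z \in \K_y$ for some $y \ge x$. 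By induction (applied to $\K_y$ and $\K_L$), $\psi(Z,\vec Y)$ is true. Hence $\varphi(\vec Y)$ is true, and so $\K_x \models \varphi(\vec Y)$.
\end{itemize}
The case $k=n$ gives that $\K_L$ is a $\beta_n$-model. Formally this is a meta-induction producing, for each fixed $n$, one proof in $\ACAo$.

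The only delicate point is that ``$\K_L \models$'' for formulas of unbounded complexity is not definable. We avoid this because $n$ and $\sigma$ are fixed, so only finitely many satisfaction predicates are used. Each of these exists in $\ACAo$ as a Tarski-style satisfaction relation relative to a coded $\omega$-model, or we can treat ``$\K_L \models \varphi$'' as the relativized formula $\varphi^{\K_L}$.

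\textbf{$\Pi^1_{n+2}$-part.} Write $\sigma(X) = \forall \U_0 \exists \U_1 \theta(X,\U_0,\U_1)$ with $\theta \in \Pi^1_n$. Let $Z \in \K_L$, and take $x$ with $X, Z \in \K_x$. Since $\K_x \models \sigma(X)$, there is $Z' \in \K_x$ with $\K_x \models \theta(X,Z,Z')$. By the absoluteness claim for $\Pi^1_n$-formulas (negations of $\Sigma^1_n$), we get $\K_L \models \theta(X,Z,Z')$. Hence $\K_L \models \sigma(X)$.

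The case $n=0$ needs care, since $\theta$ is then arithmetical. Absoluteness of arithmetical formulas between $\omega$-models is immediate, so the same argument works.

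The main obstacle is the bookkeeping of relativized satisfaction in $\ACAo$. The mathematical content is just the upward absoluteness of $\Pi^1_n$ and the directedness of the union.
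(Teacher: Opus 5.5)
Your proof is correct and follows essentially the same argument as the paper. Directedness of the union puts all parameters and witnesses into a single $\K_x$, absoluteness is propagated inductively, and the $\Pi^1_{n+2}$ part is handled exactly as in the paper. The only difference is bookkeeping: you fix $n$ and induct on the complexity $k \le n$ with a three-way equivalence, whereas the paper inducts on $n$ and applies the hypothesis to the same family regarded as $\beta_n$-models.
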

\begin{proof}
  We prove by meta-induction on $n$.

  [The initial step] It is enough to show that $\K_L$ is closed under Turing sum, Turing reduction and Turing jump. This is trivial.

  [induction step] Let $L$ be a linear order, $\langle \K_x \rangle_{x \in L}$ a sequence of $\beta_{n+1}$-models such that 
  $\forall x,y \in L(x <_L y \to \K_x \subseteq \K_y)$.
  Then, by the induction hypothesis, $\K_L$ is a $\beta_n$-model.

  Let $\theta(\vec{\V},\U)$ be a $\Pi^1_{n}$-formula and $\vec{X} \in \K_L$ be such that $\exists Y \theta(\vec{X},Y)$ holds. We will show that $\K_L \models \exists Y \theta(\vec{X},Y)$.
  Now, for sufficiently large $x \in L$, $\vec{X} \in \K_x $.
  Since $\K_x$ is a $\beta_{n+1}$-model, $\exists Y \in \K_x (\K_x \models \theta(\vec{X},Y))$.
  For such a $Y$, $\theta(\vec{X},Y)$ actually holds because $\K_x$ is a $\beta_{n+1}$-model. Since $\vec{X},Y \in \K_L$ and $\K_L$ is a $\beta_n$-model, $\K_L \models \theta(\vec{X},Y)$.

  For the additional part, write $\sigma(\V) \equiv \forall \U_0 \exists \U_1 \theta(\V,\U_0,\U_1)$.
  Let $Y \in \K_L$. Then, $Y \in \K_x$ for some $x \in L$.
  Since $\K_x \models \sigma(X)$, there exists $Z \in \K_x (\K_x \models \theta(X,Y,Z))$. Since both $\K_x$ and $\K_L$ are $\beta_n$-models, $\K_L \models \theta(X,Y,Z)$. Thus, $\K_L \models \forall \U_0 \exists \U_1 \theta(X,\U_0,\U_1)$.
\end{proof}

\begin{lemma}[$\ATRo$] \label{lem seq of beta makes betaON}
    Let $\sigma(\V)$ be a $\Pi^1_{n+2}$-formula, $W$ a well-ordering and $X$ a set.
    Let $\langle \K_x \rangle_{x \in \omega^{W}}$ be a sequence of coded $\beta_n$-models satisfying 
    $$\forall x \in \omega^W (X,\K_{<x} \in \K_x \land \K_x \models \sigma(X)).$$
    Put $\M= \bigcup_{x \in \omega^W} \K_x$.
    Let $P(a)$ be the following condition for $a \in W$.
    \begin{align*}
        P(a) \equiv \forall x \in \omega^W (\bigcup_{y < x +\omega^a} \K_{y} \in H^{\sigma(X)}_{a}).
    \end{align*}
    Then, $\forall a \in W P(a)$. In addition, $\M$ is a $\beta_n$-model of $\beta_n\RFN^{W}(\sigma(X))$.
\end{lemma}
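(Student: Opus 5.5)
We prove $\forall a \in W\, P(a)$ by arithmetical transfinite induction along $W$, available in $\ATRo$ since $H^{\sigma(X)}$ exists as a set and $P(a)$ is arithmetical in $H^{\sigma(X)}$ and $\langle \K_x\rangle_x$. Then we read off the statement about $\M$.

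Two preliminary facts will be used throughout.
\begin{itemize}
\item Each union $\bigcup_{y<z}\K_y$ is literally one of the sets $M_i$ of $\M$. Since $\K_{<z}\in\K_z$, the remark after the definition of $\K_L$ gives $\bigcup_{y<z}\K_y \in \K_z \subseteq \M$.
\item Membership in $H^{\sigma(X)}_a$ is evaluated with $\M$'s notion of $\beta_n$-ness, so we must check $\M\models\beta_n(\bigcup_{y<z}\K_y)$. By Lemma~\ref{Lem union of beta}, both $\M$ and every $\bigcup_{y<z}\K_y$ (for $z$ non-minimal) are genuine $\beta_n$-models satisfying $\sigma(X)$. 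The chain is increasing because $\K_{<x}\in\K_x$ and $\K_x$ is a $\beta_0$-model. Since $\M$ is a $\beta_n$-model, $\M$ evaluates $\beta_n(\cdot)$, a $\Pi^1_n$ property, correctly on its elements. Thus $i\in I_X$ for each such union, and likewise $\M\models[M_k\models\sigma(X)]$ for these models, since satisfaction in a coded model is arithmetical.
\end{itemize}

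For the induction step, fix $a$, assume $P(b)$ for all $b<_W a$, and fix $x$. Put $N=\bigcup_{y<x+\omega^a}\K_y$. We must show that for every $b<a$ and every $M_j\in N$ there is $M_k\in H_b$ with $M_j\in M_k\in N$ and $M_k\models\sigma(X)$.
\begin{itemize}
\item Since $M_j\in N$, we have $M_j\in\K_y$ for some $y<x+\omega^a$.
\item Because $b<a$, ordinal arithmetic in $\omega^W$ gives $y+1+\omega^b \le x+\omega^a$. Concretely, $y<x+\omega^a$ means $y<x+\omega^{c}\cdot m$ for some $c<a$ and finite $m$ (or $y<x$), and $\omega^a$ is closed under such additions. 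This Cantor-normal-form computation I will state as a small auxiliary fact about $\omega^W$.
\item Take $z=y+1$ and $M_k=\bigcup_{u<z+\omega^b}\K_u$. By $P(b)$, $M_k\in H_b$. It contains $\K_y\ni M_j$, so $M_j\in M_k$.
\item $M_k\in N$: if $z+\omega^b<x+\omega^a$, then $M_k\in\K_{z+\omega^b}\subseteq N$. If equality can occur (only when $a$ is a successor step of $b$ with exact sum), I choose $z$ slightly differently, e.g.\ $z=y+2$, or use that $x+\omega^a$ is a limit so there is room; the margin argument handles this.
\item $M_k\models\sigma(X)$ by the preliminary fact.
\end{itemize}
Together with $N\in I_X$, this gives $N\in H_a$, so $P(a)$ holds.

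For the final claim, $\M$ is a $\beta_n$-model by Lemma~\ref{Lem union of beta}. For $\M\models\beta_n\RFN^W(\sigma(X))$, take $a\in W$ and $Y\in\M$. Then $Y\in\K_x$ for some $x$, and $\bigcup_{y<x+\omega^a}\K_y\in H_a$ contains $Y$ by $P(a)$.

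The main obstacle is the ordinal bookkeeping: guaranteeing $y+1+\omega^b<x+\omega^a$ strictly, so that the witness lies in $N$ rather than merely being a subset of it. The case $x$ minimal with $a$ minimal also needs care, as $H_0$ is then just $I_X$ and trivial. I would first prove the absorption lemma $u<x+\omega^a,\ b<a\Rightarrow u+\omega^b<x+\omega^a$ directly from the definition of $<_{\omega^W}$.
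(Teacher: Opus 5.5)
Your proposal is correct and follows essentially the same route as the paper. It runs an induction on $a$ with witness $\bigcup_{u<z+\omega^b}\K_u$ (the paper takes $z=y$; your $z=y+1$ is harmless), uses Lemma~\ref{Lem union of beta} for $\beta_n$-ness and $\sigma(X)$, and reads the final claim off $P(a)$. Your worry about equality is unnecessary, since $b<a$ makes $a$ non-minimal, so $x+\omega^a$ is a limit and your absorption lemma gives $y+1+\omega^b<x+\omega^a$ strictly, which is exactly the fact the paper uses without comment.
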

\begin{proof}
    We prove by induction on $a \in W$. Since $P(a)$ is $\Delta^1_1$ in $\ATRo$, this induction works.
    Assume that $P(b)$ holds for all $b < a$.
    In what follows, let $\L_x$ denote $\bigcup_{y < x} \K_y$ for each $x \in \omega^W$. We note that since $\K_{<x} \in \K_x$, $\L_{x} \in \K_x$ for all $x \in \omega^W$.

    [Case: $a = 0$]
    We note that 
    $\NN \in H^{\sigma(X)}_0$ if and only if 
    $\M \models \beta_n(\NN) \land X \in \NN$.
    Since $\M$ is a $\beta_n$-model, the latter is the same as $\beta_n(\NN) \land X \in \NN$.
    Let $x \in \omega^W$. Then, $X \in \L_{x + \omega^a}$.
    Since $\beta_n(\K_y)$ for all $y < x + \omega^a$, $\beta_n(\L_{x + \omega^a})$ by Lemma~\ref{Lem union of beta}. Thus, $\L_{x + \omega^a} \in H^{\sigma(X)}_0$.

    [Case $a > 0$]
    Let $x \in \omega^W$. For the same reason as in the case $a = 0$, $\M \models \beta_n(\L_{x + \omega^a}) \land X \in \L_{x + \omega^a} $.
Thus, to prove $\L_{x + \omega^a} \in H^{\sigma(X)}_a$, it is enough to show that 
    \begin{align*}
      \forall b < a \forall Y \in \L_{x + \omega^a} \exists \NN \in H^{\sigma(X)}_b (Y \in \NN \in \L_{x + \omega^a} \land \NN \models \sigma(X)).
    \end{align*}
    Let $b < a$ and $Y \in \L_{x + \omega^a}$.
    Take a $y < x + \omega^a$ such that $Y \in \K_y$.
We show that $\NN = \L_{y + \omega^b}$ satisfies the desired conditions.

    \begin{itemize}
      \item We prove that $Y \in \L_{y + \omega^b} \in \L_{x + \omega^a}$. Now $y < y + \omega^b < x + \omega^a$. Thus, $Y \in \L_{y + \omega^b}$ and $\L_{y + \omega^b} \in \K_{y + \omega^b}$ and $\K_{y+ \omega^b} \in \K_{y + \omega^b + 1} \subseteq  \L_{x + \omega^a}$. Hence  $\L_{y+ \omega^b} \in \L_{x + \omega^a}$. 
      \item By the induction hypothesis, $\L_{y + \omega^b} \in H^{\sigma(X)}_b$.
      \item Since $\K_{z} \models \sigma(X)$ for all $z < y + \omega^b$, $\L_{y + \omega^b} \models \sigma(X)$ by Lemma~\ref{Lem union of beta}.
    \end{itemize}
    This completes the induction.
    
Finally, we show that $\beta_n(\M)$ and $\M \models \beta_n\RFN^W(\sigma(X))$. 
$\beta_n(\M)$ follows from Lemma~\ref{Lem union of beta}.
To see $\M \models \beta_n\RFN^W(\sigma(X))$, it is enough to show that $\forall a \in W \forall Y \in \M \exists \K \in H^{\sigma(X)}_a (Y \in \K)$.
Take arbitrary $a \in W$ and $Y \in \M$. Then, $Y \in \L_{x + \omega^a}$ for some $x \in \omega^W$. $\K = \L_{x + \omega^a}$ satisfies the desired condition.
\end{proof}

\begin{theorem}
  $\Sigma^1_{n+1}\hyp\TDC_0$ proves $\Pi^1_{n+2}\hyp\beta_n\RFN^{\ON}$.
\end{theorem}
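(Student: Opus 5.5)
We work in $\Sigma^1_{n+1}\hyp\TDC_0$. The plan is to produce a long chain of reflecting models by Theorem~\ref{thm tdc vs trfn}, and then apply Lemma~\ref{lem seq of beta makes betaON} to its union. Fix $\sigma(\V)\in\Pi^1_{n+2}$, a set $X$ with $\sigma(X)$, a well-order $W$ and a set $U$.

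First we check that the ambient definitions make sense. The definition of $\beta_n\RFN^W$ is stated over $\ATRo$. Theorem~\ref{thm TRFN vs TR}(1) gives $\beta_0\TRFN(\ACAo)\to\Pi^1_0\hyp\TR$, and $\beta_0\TRFN(\ACAo)$ follows from $\Pi^1_{n+2}\hyp\beta_n\TRFN$, because a $\beta_n$-model is in particular a $\beta_0$-model. Hence $\Sigma^1_{n+1}\hyp\TDC_0\vdash\ATRo$. Alternatively, one can note directly that $\Sigma^1_1\hyp\TDC$ yields $\ATRo$.

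Next, by Theorem~\ref{thm tdc vs trfn} we have $\Pi^1_{n+2}\hyp\beta_n\TRFN$. We apply it to the $\Pi^1_{n+2}$-formula $\sigma'(\V)\equiv\sigma(\V)$ with parameter $X\oplus U$, so that $U$ also lands in the models. We use the well-order $\omega^W$, which is well-founded provably in $\ACAo$. This gives a sequence $\langle\K_x\rangle_{x\in\omega^W}$ of coded $\beta_n$-models with
$$X,U,\K_{<x}\in\K_x \quad\text{and}\quad \K_x\models\sigma(X)$$
for all $x\in\omega^W$. This is exactly the hypothesis of Lemma~\ref{lem seq of beta makes betaON}.

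Set $\M=\bigcup_{x\in\omega^W}\K_x$. By Lemma~\ref{lem seq of beta makes betaON}, $\M$ is a $\beta_n$-model and $\M\models\beta_n\RFN^W(\sigma(X))$. Clearly $X,U\in\M$, since they belong to each $\K_x$. This is the required witness $\M$, so the instance of $\Pi^1_{n+2}\hyp\beta_n\RFN^{\ON}$ for $\sigma$ holds.

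The main point of care is a small edge case in applying Lemma~\ref{lem seq of beta makes betaON}. The lemma requires $X\in\K_x$, which we have arranged. When $W$ is empty, $\omega^W$ is the one-point order, and the conclusion $\beta_n\RFN^W$ holds vacuously.

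The other check is that $\beta_n\TRFN$ produces a sequence indexed by the well-order $\omega^W$ itself, with $\K_{<x}\in\K_x$ for every $x$. This matches the definition of $\beta_n\TRFN$ exactly.

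Everything else is supplied by the preceding lemmas.
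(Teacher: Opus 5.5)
Your proof is correct and is essentially the paper's. The paper applies $\Sigma^1_{n+1}\hyp\TDC$ along $\omega^W$ directly to the $\Pi^1_n$-condition $\V,X\in\K\land\beta_n(\K)\land\K\models\sigma(X)$, whose totality comes from $\Pi^1_{n+2}\hyp\beta_n\RFN$, and then invokes Lemma~\ref{lem seq of beta makes betaON}; you obtain the same sequence by citing Theorem~\ref{thm tdc vs trfn}, whose proof is exactly that $\TDC$ application. Your explicit derivation of $\ATRo$, which the lemma requires, fills in a detail the paper leaves implicit, and the $X\oplus U$ device is unnecessary because $\beta_n\TRFN$ already quantifies over $\U$.
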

\begin{proof}
  We work in $\Sigma^1_{n+1}\hyp\TDC_0$.
  Let $\sigma(\V)$ be a $\Pi^1_{n+2}$-formula, $W$ a well-order, and $X$ a set such that $\sigma(X)$ holds.
  We prove that there exists a $\beta_n$-model $\M$ such that $X \in \M \land \M \models \beta_n\RFN^W(\sigma(X))$.
By Lemma~\ref{lem seq of beta makes betaON}, it is enough to show that there exists $\langle \K_x \rangle_{x \in \omega^W}$ such that 
\begin{align*}
  \forall x \in \omega^W(\beta_n(\K_x) \land X,\K_{<x} \in \K_x \land \K_x \models \sigma(X)).
\end{align*}
Since $\Sigma^1_{n+1}\hyp\TDC_0$ includes $\Sigma^1_{n+1}\hyp\DC_0$, which is equivalent to $\Pi^1_{n+2}\hyp\beta_n\RFN$,
we obtain $\forall \V \exists \K(\V,X \in \K \land \beta_n(\K) \land \K \models \sigma(X))$.
In particular, there exists $\L$ such that $X \in \L \land \beta_n(\L) \land \L \models \sigma(X)$.

Applying transfinite dependent choice to the formula $\V,X \in \K \land \beta_n(\K) \land \K \models \sigma(X)$, 
we obtain 
a sequence $\langle \K_x \rangle_{x \in \omega^W}$ such that $\K_0 = \L$ and $\forall x \in \omega^W(X,\K_{<x} \in \K_x \land \beta_n(\K_x) \land \K_x \models \sigma(X))$.
\end{proof}

We next show that the converse holds.

\begin{definition}
For a linear order $L$, $a,b \in L$ such that $a \leq_L b$ and sequences $X = \langle X_{a'}\rangle_{a' <_L a}, Y = \langle Y_{b'} \rangle_{b' <_L b}$,
    we write $X \preceq Y$ if $\forall a' <_ L a (X_{a'} = Y_{a'})$.
\end{definition}

\begin{lemma}[$\ATRo$] \label{lem absoluteness of H}
    Let $\sigma(\V)$ be a formula, $X$ a set and $W$ a well-order.
    Let $\K \in \K' \in \M$ be coded $\beta_n$-models such that 
    $X \in \K \land W \in \K' \land \K' \models \ATRo$.
    Then, $\forall a \in W \forall \L \in \K(\L \in (H^{\sigma(X)}_a(\K))^{\K'} \leftrightarrow \L \in H^{\sigma(X)}_a(\M))$.
\end{lemma}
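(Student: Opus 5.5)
We argue by transfinite induction on $a \in W$, comparing the two hierarchies $H^{\sigma(X)}(\K)$ (computed inside $\K'$) and $H^{\sigma(X)}(\M)$ (computed in the ground model $\ATRo$) for members $\L$ of $\K$. First, since $\K' \models \ATRo$ and $\K, W \in \K'$, the sequence $H^{\sigma(X)}(\K)$ exists in $\K'$; by the remark following the definition it is $\Delta^{1,\K \oplus W}_1$. Because $\K'$ is a $\beta_n$-model with $n \geq 0$, it is at least a $\beta_0$-model. If $n \geq 1$, the $\Delta^1_1$-definition of $H$ is absolute between $\K'$ and the ground model, so $(H^{\sigma(X)}(\K))^{\K'} = H^{\sigma(X)}(\K)$. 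If $n = 0$, we instead verify directly that the object $\K'$ computes satisfies the arithmetical recursion clauses, and conclude by uniqueness of the hierarchy along the well-order $W$, which $\ATRo$ provides. This reduces the claim to: for all $a \in W$ and all $\L \in \K$, $\L \in H_a(\K) \leftrightarrow \L \in H_a(\M)$.

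We then prove this reduced statement by induction on $a$. Since both hierarchies are $\Delta^1_1$-definable from the parameters, the induction formula is $\Delta^1_1$ and the induction is available in $\ATRo$. We compare the defining clauses term by term.

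\emph{Base clause.} $\L \in I_X$ computed in $\K$ means $X \in \L \land \K \models \beta_n(\L)$; computed in $\M$ it means $X \in \L \land \M \models \beta_n(\L)$. Since $\K$ and $\M$ are both $\beta_n$-models containing $\L$ (note $\L \in \K \subseteq \M$, using $\K \in \K' \in \M$ and transitivity of membership of coded models inside a $\beta_0$-model), and $\beta_n(\L)$ is $\Pi^1_n$, both conditions are equivalent to the true statement $\beta_n(\L)$.

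\emph{Recursive clause.} The clause quantifies over $\NN \in \L$ (i.e.\ $M_j \in M_i$) and asks for some $\K'' \in H_b$ with $\NN \in \K'' \in \L$ and $\K'' \models \sigma(X)$. Such $\K''$ range over members of $\L$, hence over members of $\K$, whether they are enumerated in $\K$ or in $\M$. The condition $\K'' \models \sigma(X)$ is arithmetical in $\K''$, so it is absolute. By the induction hypothesis for $b < a$, membership of these $\K''$ in $H_b$ agrees. Hence the two clauses agree.

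The main obstacle is the bookkeeping in the recursive clause. The definition indexes models by positions $i$ in the coding sequence, and the same model $\L$ may appear at different indices in $\K$ and in $\M$. We must check that the definition depends only on the coded set, which holds because the conditions are extensional: they involve only ``$\in$'', $\beta_n$, and satisfaction. Once that is established, the comparison above goes through. A secondary point to verify is that every $\K'' \in \L$ is indeed in $\K$, so that the induction hypothesis applies to it.
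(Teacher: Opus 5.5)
Your proof is correct and is essentially the paper's argument: the same induction on $a \in W$, with the base case settled by the agreement of $\K$ and $\M$ on $\beta_n(\L)$ for $\L \in \K$, and the recursive clause settled by absoluteness of its arithmetical ingredients (membership, $\models \sigma(X)$, and quantification over members of $\L$, all of which lie in $\K$) together with the induction hypothesis. The only difference is organizational. You first identify $\K'$'s internal hierarchy with the ground-model hierarchy $H^{\sigma(X)}(\K)$; your argument via arithmetical absoluteness and uniqueness of hierarchies works for every $n$, so the case split on $n$ is unnecessary. The paper skips this intermediate object and pushes ``$\K' \models$'' directly through the recursion clause inside a single induction.
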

\begin{proof}
    We note that $\forall \L \in \K(\beta_n(\L) \leftrightarrow \K \models \beta_n(\L) \leftrightarrow \M \models \beta_n(\L))$ because both $\K$ and $\M$ are $\beta_n$-models.
    Thus, to consider $\L \in (H^{\sigma(X)}_a(\K))^{\K'}$ or $\L \in H^{\sigma(X)}_a(\M)$, we need only consider  $\beta_n$-models $\L$ such that $X \in \L \in \K$.

    We will prove by induction on $a$.
    Assume the desired condition holds for all $b <_W a$.
    Then, for any $\L \in \K$ such that $X \in \L \land \beta_n(\L)$,
    \begin{align*}
        \K' \models \L \in H^{\sigma(X)}_a(\K) &\leftrightarrow 
        \K' \models \forall b < a \forall Y \in \L \exists \L' \in \L(Y \in \L'  \land \L' \models \sigma(X) \land \L' \in H^{\sigma(X)}_b(\K) ) \\
        & \leftrightarrow \forall b < a \forall Y \in \L \exists \L' \in \L(Y \in \L' \land \L' \models \sigma(X)  \land \K' \models \L' \in H^{\sigma(X)}_b(\K) ) \\
        & \leftrightarrow \forall b < a \forall Y \in \L \exists \L' \in \L(Y \in \L' \land \L' \models \sigma(X)  \land \L' \in H^{\sigma(X)}_b(\M) ) \\
        &\leftrightarrow \L \in H^{\sigma(X)}_a(\M).
      \end{align*}
      This completes the proof.
\end{proof}

\begin{lemma}[$\ATRo$] \label{lem H and beta}
  Let $\sigma(\V)$ be a $\Pi^1_{n+2}$-formula, $\M$ a coded $\beta_n$-model such that $X \in \M$ and $W$ a well-order.
If $\K \in H^{\sigma(X)}_a(\M)$ for some $a > 0$, then $\K \models \sigma(X)$.
\end{lemma}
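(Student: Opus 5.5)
The plan is to unfold the definition of $H^{\sigma(X)}_a(\M)$ at the level $a>0$ using only the least element $0 <_W a$. Then transfer truth of $\sigma(X)$ from a cofinal family of submodels of $\K$ up to $\K$ itself. The transfer argument is the same one used in the additional part of Lemma~\ref{Lem union of beta}, and it relies on $\sigma \in \Pi^1_{n+2}$.

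Suppose $\K \in H^{\sigma(X)}_a(\M)$ with $a > 0$, so $0 <_W a$. By definition, $\K \in I_X$, hence $X \in \K$ and $\M \models \beta_n(\K)$. Since $\M$ is a $\beta_n$-model and $\beta_n(\cdot)$ is $\Pi^1_n$, $\K$ really is a $\beta_n$-model; for $n=0$ this is immediate from arithmetical absoluteness. Moreover, for every $\NN = M_j \in \K$ there is $\K'' \in H^{\sigma(X)}_0(\M)$ with $\NN \in \K'' \in \K$ and $\K'' \models \sigma(X)$. As in the proof of Lemma~\ref{lem RFNW and RFN}, I will convert this into the statement that for every $Y \in \K$ some submodel $\K'' \in \K$ satisfies $Y \in \K''$ and $\K'' \models \sigma(X)$. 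Every $Y \in \K$ lies in some coded model in $\K$, because $\K$ is closed under jump, so the model of sets recursive in $Y^{(\omega)}$, or any model coded by a set built from $Y$, is in $\K$. Each such $\K''$ is in $H^{\sigma(X)}_0(\M)$, hence is a genuine $\beta_n$-model by the same absoluteness argument.

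Next I write $\sigma(X) \equiv \forall \U_0 \exists \U_1 \theta(X,\U_0,\U_1)$ with $\theta \in \Pi^1_n$. Given $Y \in \K$, I pick $\K'' \in \K$ as above with $Y \in \K''$ and $\K'' \models \sigma(X)$. This gives $Z \in \K''$ with $\K'' \models \theta(X,Y,Z)$. Since $\K''$ and $\K$ are both $\beta_n$-models containing $X,Y,Z$, the $\Pi^1_n$-formula $\theta$ is absolute between $\K''$ and the ground model, and between the ground model and $\K$. Therefore $\K \models \theta(X,Y,Z)$, and hence $\K \models \forall \U_0 \exists \U_1 \theta(X,\U_0,\U_1)$, i.e.\ $\K \models \sigma(X)$. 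For $n=0$, $\theta$ is arithmetical and absoluteness between $\omega$-models is trivial.

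The main obstacle is the bookkeeping of which statements are evaluated in $\M$ and which are real. In particular, I must confirm that the clause ``$M_k \models \sigma(X)$'' in the definition of $H$ is the genuine satisfaction relation for the coded model $M_k$, which exists in $\ATRo$ and is computed from $M_k$, and not a relativization to $\M$. I must also confirm that $\beta_n$-ness of $\K$ and $\K''$ transfers out of $\M$ because $\M$ is a $\beta_n$-model. Once these are checked, the remaining steps are routine.
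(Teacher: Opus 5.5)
Your proof is correct and is essentially the paper's argument. The paper unfolds $\K \in H^{\sigma(X)}_1(\M)$ to get $\K \models \beta_n\RFN(\sigma(X))$, and then uses that $\beta_n$-model reflection yields $\sigma(X)$ for $\Pi^1_{n+2}$-formulas; that last step is exactly the $\Pi^1_n$-absoluteness computation you carry out externally with $b = 0$.

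One step should be fixed. The detour through a coded model containing $Y$ is unnecessary, and the $Y^{(\omega)}$ justification fails for $n=0$, because $\omega$-models of $\ACAo$ need not be closed under $\omega$-jumps. Instead, every $Y \in \K$ is itself some $M_j$, so the defining clause of $H^{\sigma(X)}_a$ gives $Y \in \K''$ directly.
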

\begin{proof}
  Since $a \geq 1$, $\K \in H^{\sigma(X)}_1(\M)$.
  By the definition of $H^{\sigma(X)}_1$, $\forall Y \in \K \exists \L \in \K(Y \in \L \land \L \models \sigma(X) \land \L \in H^{\sigma(X)}_0)$.
  Since both $\M$ and $\K$ are $\beta_n$-models, we obtain 
  \begin{align*}
    \L \in H^{\sigma(X)}_0 &\leftrightarrow X \in \L \land \M \models \beta_n(\L) \\
  &\leftrightarrow X \in \L \land \K \models \beta_n(\L).
  \end{align*}
  Therefore, $\K$ is a $\beta_n$-model of $\beta_n\RFN(\sigma(X))$. This implies $\K \models \sigma(X)$.
\end{proof}

Recall that $\Hier^{\theta}(W,a,A)$ is the formula stating that $A = \langle A_b \rangle_{b<_W a}$ is a sequence obtained by transfinite dependent choice for $\theta$ along $W\mathord{\upharpoonright}_a$.

\begin{lemma}
    Let $\sigma(\V) \equiv \forall \U_0 \exists \U_1 \theta (\V,\U_0,\U_1)$ be a $\Pi^1_{n+2}$-formula such that 
    $\ACAo \vdash \forall \V(\sigma(\V) \to \ATRo)$. 
    Then, the following assertion is provable in $\ATRo$.

    Let $W$ be a well-order, 
    $\M$ a coded $\beta_n$-model and $X \in \M$.
    Then, for any $a \in W$, the following holds:
    Let $\K,\K' \in \M$ be coded $\beta_n$-models of $\sigma(X)$ such that $W \in \K \in \K' \land \K \in H^{\sigma(X)}_a(\M)$.
    Then, for any $x \leq_W a$ and $A = \langle A_{y} \rangle_{y <_W x} \in \K$, if $\Hier^{\theta}(W,x,A)$ then 
    there exists $B = \langle B_{y} \rangle_{y <_W a} \in \K'$ such that $A \preceq B \land \Hier^{\theta}(W,a,B)$.
\end{lemma}
\begin{proof}
    We prove by induction on $a$. Since the desired condition for $a$ is $\Delta^{1,\M \oplus H^{\sigma(X)}(\M)}_1$, this induction works. In particular, since $\M$ is a $\beta_n$-model, $\Hier^{\theta}(W,x,A)$ can be read as $\M \models \Hier^{\theta}(W,x,A)$.

    Assume the desired condition holds for all $b <_W a$.
    Let $\K,\K' \in \M$ be coded $\beta_n$-models of $\sigma(X)$ such that $W \in \K \in \K' \land \K \in H^{\sigma(X)}_a$.
    Let $x \leq_W a$.
    Let $A = \langle A_{y} \rangle_{y <_W x} \in \K$ be such that $\Hier^{\theta}(W,x,A)$.
    If $x = a$, then $A$ itself is the desired $B$. Thus, we may assume that $x < a$.

    We note that all of $\M,\K,\K'$ are $\beta_n$-models and $\theta$ is $\Pi^1_{n}$. Thus,
    $\Hier^{\theta}$ is absolute for these models.

    [Case: $a = a' + 1$.] By the induction hypothesis, 
    we can take $B = \langle B_{y} \rangle_{y < a'} \in \K'$ such that $A \preceq B \land \Hier^{\theta}(W,a',B)$.
    Since $\K' \models \sigma(X)$, there exists $B_{a'} \in \K'$ such that $\K' \models \theta(X,B,B_{a'})$.
    Then, the concatenation $B' = B \ast \langle B_{a'} \rangle$ is also in $\K'$ and satisfies $A \preceq B' \land \Hier^{\theta}(W,a,B')$.

    [Case: $a$ is a limit.] Let $\langle a[i] \rangle_{i \in \N} \in \K$ be a fundamental sequence of $a$ such that $a[0] > x$.
    We can take such a sequence because $W \in \K$ and $\K \models \ACAo$.
    Since $\K \in H^{\sigma(X)}_a$, $\forall i \in \N \forall Y \in \K \exists \L \in \K (Y \in \L \land \L \in H^{\sigma(X)}_{a[i]}(\M))$.
    Since $\K' \models \ATRo$, we can make $H^{\sigma(X)}(\K)$ within $\K'$. By Lemma~\ref{lem absoluteness of H}, $\L \in (H^{\sigma}_a(\K))^{\K'} \leftrightarrow \L \in H^{\sigma(X)}_a(\M)$. Thus, we obtain 
    \begin{align*}
      \K' \models \forall i \in \N \forall Y \in \K \exists \L \in \K (Y \in \L \land \L \in H^{\sigma(X)}_{a[i]}(\K)).
    \end{align*}
    
    Then, we can find a sequence $\langle \K_i \rangle_{i \in \N} \in \K'$ such that 
    $W,A \in \K_0 \land \forall i(\K_i \in \K_{i+1} \in \K \land \K_i \in H^{\sigma(X)}_{a[i]})$.

    We note that $\K_i \models \sigma(X)$ for all $i$ by Lemma~\ref{lem H and beta} because $a[i] > x \geq 0$.
    Applying the induction hypothesis to $\K_0 \in \K_1$, we obtain
     $B^0 = \langle B^0_{y} \rangle_{y < a[0]} \in \K_1$ such that $A \preceq B^0 \land \Hier^{\theta}(W,a[0],B^0)$.
    Similarly, we can take $B^i \in \K_{i+1}$ for each $i \geq 0$ such that 
    $B^{i} \preceq B^{i+1} \land \Hier^{\theta}(W,a[i+1],B^{i+1})$.
    Since the sequence $\langle B^i \rangle_{i \in \N}$ can be chosen to be arithmetically definable from $\langle \K_i \rangle_{i \in \N}$, $\langle B^i \rangle_{i \in \N} \in \K'$.
    Put $C = \bigcup_{i \in \N} B^i$. Then $C \in \K' \land A \preceq C \land \Hier^{\theta}(W,a,C)$.
\end{proof}

\begin{theorem}
    Over $\ATRo$, $\Pi^1_{n+2}\hyp\beta_n\RFN^{\ON}$ implies $\Sigma^1_{n+1}\hyp\TDC$.
\end{theorem}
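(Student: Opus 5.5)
We work in $\ATRo + \Pi^1_{n+2}\hyp\beta_n\RFN^{\ON}$. Fix a $\Pi^1_n$-formula $\theta(\U,\V)$ with $\forall \U\exists \V\theta(\U,\V)$, a well-order $W$ and a set $X$. We aim for $H=\langle H_a\rangle_{a\in W}$ with $H_0=X$ and $\Hier^{\theta}(W,a,H)$ for every $a \in W$. By the second remark after the definition of $\TDC$, it suffices to produce, for each $a\in W$, a sequence along $W\mathord{\upharpoonright}_a$ starting with $X$. Passing to a well-order such as $W+1$ (or $W\cdot 2+1$), it is even enough to produce one sequence along a slightly longer order and restrict it. We may also treat $X$ as a parameter of $\theta$ by coding it into the starting set.

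First, put $\sigma(\V)\equiv \forall\U\exists\V'\theta(\U,\V')\land \ATRo$, where $\ATRo$ is written as a single $\Pi^1_2$ sentence. Then $\sigma\in\Pi^1_{n+2}$ (for $n=0$ we use $\Pi^1_2$), it holds, and trivially $\ACAo\vdash\sigma\to\ATRo$, as the last lemma requires. We also put $W$ and $X$ into the parameter of $\sigma$, using the remark that extra parameters may be absorbed. Applying $\Pi^1_{n+2}\hyp\beta_n\RFN^{\ON}$ to $\sigma$ with the well-order $W' = W+2$ (so that $W'$ has an element above $a^*$, the top of $W+1$) gives a coded $\beta_n$-model $\M\ni X,W$ with $\M\models\beta_n\RFN^{W'}(\sigma(X))$.

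Next we extract the models needed by the last lemma. From $\M\models\beta_n\RFN^{W'}(\sigma(X))$ at level $a^*$, we pick $\K\in H^{\sigma(X)}_{a^*}(\M)$ with $W\in\K$. Since $a^*>0$, Lemma~\ref{lem H and beta} gives $\K\models\sigma(X)$. Then, using level $a^*+1$ or the definition of $\M\models\beta_n\RFN$ through Lemma~\ref{lem RFNW and RFN}, we pick a $\beta_n$-model $\K'\in\M$ with $\K\in\K'$ and $\K'\models\sigma(X)$. Because $\M$ is a $\beta_n$-model and $\beta_n(\cdot)$ is $\Pi^1_n$, the relations ``$\M\models\beta_n(\K)$'' and ``$\beta_n(\K)$'' agree, so $\K$ and $\K'$ are genuine coded $\beta_n$-models.

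Now we apply the preceding lemma with $a=a^*$, $x=1$ (the second element) and $A=\langle X\rangle$. This $A$ lies in $\K$ and $\Hier^{\theta}(W,1,A)$ holds vacuously. The lemma yields $B\in\K'$ with $B_0=X$ and $\Hier^{\theta}(W+1,a^*,B)$, which is a sequence indexed by all of $W$. Since $\K'$ is a $\beta_n$-model and $\theta\in\Pi^1_n$, $\Hier^\theta$ is absolute, so $B$ really witnesses $\TDC$. The main obstacle is the bookkeeping in this application. We must choose the order fed to $\RFN^{\ON}$ so that the lemma's hypotheses hold exactly: $W\in\K\in\K'$, $\K\in H_{a^*}(\M)$, and $\sigma$ implying $\ATRo$, which is what licenses Lemma~\ref{lem absoluteness of H} inside $\K'$. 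We also have to make sure the degenerate cases ($W$ of length $1$, and $n=0$) are handled. I would fix these by always working with $W+2$ and conjoining $\ATRo$ into $\sigma$.
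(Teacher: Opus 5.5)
Your proof is correct and follows the paper's argument. You apply $\Pi^1_{n+2}\hyp\beta_n\RFN^{\ON}$ to $\ATRo\land\forall\U_0\exists\U_1\theta$, take $\K\in H^{\sigma(X)}_a(\M)$ and a $\beta_n$-model $\K'\ni\K$ of $\sigma(X)$ via Lemma~\ref{lem RFNW and RFN}, then apply the preceding lemma to $\K\in\K'$ with $A=\langle X\rangle$; the only differences from the paper (padding $W$ to $W+2$ to get one sequence along all of $W$ instead of one per $W\mathord{\upharpoonright}_a$, and folding the starting set into the reflection parameter instead of choosing $\K\ni\langle Y\rangle$) are cosmetic and harmless, since $W+2\in\K$ is computable from $W$.
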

\begin{proof}
    We work in $\ATRo + \Pi^1_{n+2}\hyp\beta_n\RFN^{\ON}$.
    Let $W$ be a well-order,  $\theta(\V,\U_0,\U_1)$ a $\Pi^1_n$-formula and $X$ a set such that 
    $\forall \U_0 \exists \U_1 \theta(X,\U_0,\U_1)$. We may assume that $W$ has at least two elements.

    Let $Y$ be a set. We show that $\forall a \in W \exists A (\langle Y \rangle \preceq A \land \Hier^{\theta}(W,a,A))$.
    By $\Pi^1_{n+2}\hyp\beta_n\RFN^{\ON}$, there exists a coded $\beta_n$-model $\M$ such that 
    $X,Y,W \in \M \land \M \models \beta_n\RFN^W(\ATRo + \forall \U_0 \exists \U_1 \theta(X,\U_0,\U_1))$.
    Then, for any $a \in W^+$, there exists $\K \in \M$ such that $\langle Y \rangle, W \in \K \in H^{(\ATR_0 \land \forall \U_0 \exists \U_1 \theta(X))}_a(\M)$.
    In addition, there exists a $\beta_n$-model $\K' \in \M( \K \in \K' \land \K' \models \ATRo + \forall \U_0 \exists \U_1 \theta(X,\U_0,\U_1))$ because $\M \models \beta_n\RFN(\ATRo + \forall \U_0 \exists \U_1 \theta(X,\U_0,\U_1))$ by Lemma~\ref{lem RFNW and RFN}.
    Applying the previous lemma to $\K$ and $\K'$, we obtain $A$ satisfying the desired condition.
\end{proof}

\begin{corollary}
  Over $\ACAo$, $\ATRo + \Pi^1_{n+2}\hyp\beta_n\RFN^{\ON} \leftrightarrow \Sigma^1_{n+1}\hyp\TDC$.
\end{corollary}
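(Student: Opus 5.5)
The corollary asserts two implications over $\ACAo$, and I would prove them separately. Neither direction needs new constructions; each combines the two preceding theorems with the fact that $\Sigma^1_{n+1}\hyp\TDC_0$ already proves $\ATRo$.

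\emph{Right-to-left.} Work in $\ACAo + \Sigma^1_{n+1}\hyp\TDC$. I first derive $\ATRo$.
\begin{itemize}
\item Since $n \geq 0$, the scheme $\Sigma^1_{n+1}\hyp\TDC$ contains $\Sigma^1_1\hyp\TDC$, so it suffices to get $\ATRo$ from the latter.
\item For a direct derivation, take an arithmetical $\theta(\U,\V)$ with $\forall \U \exists ! \V\, \theta(\U,\V)$, where $\V$ is the arithmetical operator applied to the previous stages.
\item Apply $\TDC$ along the given well-order $W$. This produces a hierarchy $\langle H_a \rangle_{a \in W}$ satisfying $\theta(H_{<a},H_a)$ for $a \in W^{+}$, which is exactly the hierarchy required by arithmetical transfinite recursion.
\item Alternatively, by Theorem~\ref{thm tdc vs trfn} we have $\beta_n\TRFN(\ACAo)$. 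This yields $\beta_0$-models along any well-order, hence $\Pi^0_\omega\hyp\TR$ by Theorem~\ref{thm TRFN vs TR} applied with $n = 0$.
\end{itemize}
With $\ATRo$ in hand, the earlier theorem stating that $\Sigma^1_{n+1}\hyp\TDC_0$ proves $\Pi^1_{n+2}\hyp\beta_n\RFN^{\ON}$ completes this direction. The notions $H^{\sigma}$ and $\beta_n\RFN^W$ were defined in $\ATRo$, so the statement is meaningful once $\ATRo$ is available.

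\emph{Left-to-right.} This is the immediately preceding theorem: over $\ATRo$, $\Pi^1_{n+2}\hyp\beta_n\RFN^{\ON}$ implies $\Sigma^1_{n+1}\hyp\TDC$.

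\emph{Main obstacle.} The only delicate point is the derivation of $\ATRo$ from $\Sigma^1_{n+1}\hyp\TDC$ over $\ACAo$, in particular checking that the uniqueness requirement in $\ATRo$ is met. I handle this by letting $\theta$ state that $\V$ equals the arithmetical operator applied to $\U$. This relation is total and functional, so every hierarchy obtained from $\TDC$ is the intended one.
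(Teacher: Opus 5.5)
Your proposal is correct and follows the paper's route. The paper states this corollary without separate proof, as the combination of the two preceding theorems, and the one implicit ingredient, $\Sigma^1_{n+1}\hyp\TDC_0 \vdash \ATRo$, is handled correctly by your direct application of $\TDC$ to the arithmetical, total, functional relation that defines the $\ATRo$ hierarchy, with $H_0$ chosen as the stage-$0$ value. Your worry about uniqueness is unnecessary, since $\ATRo$ only asks for the hierarchy to exist.
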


We next show that Beklemishev-Pakhomov type conservation holds for $\beta_n\RFN^{\ON}$.

\begin{definition}
  For a formula $\sigma(\V)$, define $\beta_n\RFN^{\ON}(\sigma(\V))$ as 
  \begin{align*}
    \beta_n\RFN^{\ON}(\sigma(\V)) \equiv \forall W,\U(\WO(W) \to \exists \M(\V,\U \in \M \land \beta_n(\M) \land \M \models \beta_n\RFN^W(\sigma(\V)))).
  \end{align*}
  We also define $\Gamma\hyp\beta_n\RFN^{\ON}\hyp\Rule$ as $\displaystyle\frac{\sigma(\V)}{\beta_n\RFN^{\ON}(\sigma(\V))}$ for $\sigma \in \Gamma$.
\end{definition}
We note that $\Pi^1_{n+2}\hyp\beta_n\RFN^{\ON}$ can be written as $\{\forall \V(\sigma(\V) \to \beta_n\RFN^{\ON}(\sigma(\V))) : \sigma \in \Pi^1_{n+2}\}$.

\begin{lemma}
  Let $n \in \omega$. 
  \begin{enumerate}
    \item $\ATRo \vdash \beta_n\RFN^{\ON}(\sigma \lor \tau) \to \sigma \lor \beta_n\RFN^{\ON}(\tau)$ for $\sigma \in \Pi^1_{n+2}$.
    \item For any $\sigma(\V)$, $\beta_n\RFN^{\ON}(\sigma(\V))$ is $\Pi^1_{n+2}$ in $\ATRo$. (Hence, it is $\Pi^1_{n+3}$.)
  \end{enumerate}
\end{lemma}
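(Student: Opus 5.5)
The plan is to mirror the proof of Lemma~\ref{lem BP can be applied to beta_nRFN}. The only new ingredient is the complexity of the inner condition $\M \models \beta_n\RFN^W(\sigma(\V))$. For that we use the earlier Remark: over $\ATRo$ the sequence $H^{\sigma(X)}(\M)$ is $\Delta^{1,\M\oplus W}_1$, and so $\M \models \beta_n\RFN^W(\sigma(X))$ is $\Delta^1_1$ in $\M \oplus W$.

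For (2), we unfold
\[
\beta_n\RFN^{\ON}(\sigma(\V)) \equiv \forall W,\U\bigl(\WO(W) \to \exists \M(\V,\U \in \M \land \beta_n(\M) \land \M \models \beta_n\RFN^W(\sigma(\V)))\bigr).
\]
Here $\WO(W)$ is $\Pi^1_1$, so its negation is $\Sigma^1_1$. The formula $\beta_n(\M)$ is $\Pi^1_n$, and the satisfaction clause is $\Delta^1_1$ in $\ATRo$. When $n \geq 1$, the matrix $\V,\U\in\M \land \beta_n(\M)\land \M\models\beta_n\RFN^W(\sigma)$ is $\Pi^1_n$. When $n=0$, it is $\Delta^1_1$. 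Prefixing $\exists\M$ gives $\Sigma^1_{n+1}$ for $n \geq 1$ and $\Sigma^1_1$ for $n=0$. The disjunction with $\lnot\WO(W)$ remains $\Sigma^1_{n+1}$, and $\forall W,\U$ then yields $\Pi^1_{n+2}$. Hence $\lnot\beta_n\RFN^{\ON}(\sigma(\V_0))$ is equivalent over $\ATRo$ to $\exists \V_1\varphi_\sigma(\V_0,\V_1)$ with $\varphi_\sigma \in \Pi^1_{n+2}$, which is what Theorem~\ref{BPconservation} (2) requires. The one point to verify is that the $\Delta^1_1$ equivalence in the Remark is provable uniformly in $\ATRo$. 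This uses that $\ATRo$ proves the existence and uniqueness of the hierarchy $H^{\sigma(X)}(\M)$ along $W$ (for $\WO(W)$).

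For (1), we work in $\ATRo$ and let $\sigma\in\Pi^1_{n+2}$. Assume $\beta_n\RFN^{\ON}(\sigma(X)\lor\tau(X))$ and $\lnot\sigma(X)$, and fix a well-order $W$ and a set $Y$. Write $\lnot\sigma(X)$ as $\theta(X,Z)$ with $\theta\in\Pi^1_{n+1}$ for some $Z$. Applying the hypothesis to $W$ and $Y\oplus Z$ gives a $\beta_n$-model $\M\ni X,Y,Z$ with $\M\models\beta_n\RFN^W(\sigma(X)\lor\tau(X))$. We show $\M\models\beta_n\RFN^W(\tau(X))$.

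The main step is to show $H^{\sigma\lor\tau}_a(\M)=H^{\tau}_a(\M)$ for all $a\in W$, by arithmetical transfinite induction along $W$. The clauses defining the two hierarchies differ only in the condition $M_k\models\sigma(X)\lor\tau(X)$ versus $M_k\models\tau(X)$. For every relevant $M_k$ this condition concerns a $\beta_n$-model $M_k\in\M$ containing $X$. We need such $M_k$ to also contain $Z$, so that $\theta(X,Z)$, being $\Pi^1_{n+1}$ and hence downward absolute to $\beta_n$-models, gives $M_k\models\lnot\sigma(X)$.

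Containing $Z$ is not automatic, so we fix this by relativizing. We replace the formula by $\sigma'(X\oplus Z)\equiv(\sigma(X)\lor\tau(X))\land Z=Z$, following the Remark after the definition of $\Pi^1_{n+2}\hyp\beta_n\RFN^{\ON}$. That is, we apply the hypothesis to the parameter $X\oplus Z$ and use the hierarchy $H^{\cdot(X\oplus Z)}$, whose models all contain $Z$. Then $M_k\models\sigma(X)\lor\tau(X)$ is equivalent to $M_k\models\tau(X)$ for every model entering the recursion. The induction then gives equality of the two hierarchies, and the covering condition transfers to $\tau(X\oplus Z)$. We finally pass from parameter $X\oplus Z$ to $X$: $H^{\tau(X\oplus Z)}_a\subseteq H^{\tau(X)}_a$, by an easy induction on $a$ using the monotone form of the defining clause.

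I expect this parameter bookkeeping to be the main obstacle. Specifically, one must check that the generalized reflection operator convention ($\FV$ and substitution) permits absorbing $Z$ into the parameter. If it does not, I would instead build $Z$ into the definition by noting that $\beta_n\RFN^{\ON}(\sigma\lor\tau)$ applied to $\U=Y\oplus Z$ still produces the models required.
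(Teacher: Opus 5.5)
\textbf{There is a genuine gap in (1), at exactly the point you flagged.} Your treatment of (2) is fine; the paper simply says it is immediate from the definition.

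The hypothesis is the single instance $\beta_n\RFN^{\ON}(\sigma(X)\lor\tau(X))$. $Z$ is not a free variable of $\sigma\lor\tau$; it is an existential witness produced inside the proof. So neither the substitution convention for generalized reflection operators nor the Remark about absorbing $\U$ lets you ``apply the hypothesis to the parameter $X\oplus Z$''. With $\U=Y\oplus Z$, the hypothesis gives a $\beta_n$-model $\M\ni X,Y,Z,W$ with $\M\models\beta_n\RFN^W(\sigma(X)\lor\tau(X))$. That is covering by the hierarchy $H^{\sigma(X)\lor\tau(X)}(\M)$, whose members need only contain $X$. Your fallback puts $Z$ into the outer model $\M$ but not into the members of the hierarchy. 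As you note, the equality $H^{\sigma\lor\tau}_a(\M)=H^{\tau}_a(\M)$ cannot be expected without restricting to models containing $Z$. So the claim $\M\models\beta_n\RFN^W(\sigma'(X\oplus Z))$ is asserted without argument, and it is precisely the nontrivial content of (1).

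\textbf{The missing idea} is that the members of the hierarchy containing $Z$ form a sub-hierarchy. The defining clause for $\K\in H_a$ quantifies over all sets in $\K$. Applying it to $A\oplus Z\in\K$ therefore yields a lower-level witness $\K'\in H_b$ that again contains $Z$. The paper proves directly, by arithmetical transfinite induction on $a$,
\[
\forall a\in W\,\forall\K\in\M\,\bigl(Z\in\K\in H^{\sigma(X)\lor\tau(X)}_a\to\K\in H^{\tau(X)}_a\bigr).
\]
Given such $\K$, $b<a$ and $A\in\K$, pick $\K'\in H^{\sigma(X)\lor\tau(X)}_b$ with $A,Z\in\K'\in\K$ and $\K'\models\sigma(X)\lor\tau(X)$. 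The induction hypothesis gives $\K'\in H^{\tau(X)}_b$. Since $\theta(X,Z)$ holds, $Z\in\K'$ and $\K'$ is a $\beta_n$-model, we get $\K'\models\lnot\sigma(X)$, hence $\K'\models\tau(X)$. Covering for $\tau(X)$ then follows by applying the covering for $\sigma\lor\tau$ to $Y'\oplus Z$ for each $Y'\in\M$. This same induction is what you would need to justify your passage to the $X\oplus Z$-hierarchy. Once you have it, the detour through relativized hierarchies and the final comparison $H^{\tau(X\oplus Z)}_a\subseteq H^{\tau(X)}_a$ are unnecessary.
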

\begin{proof}
(1) Let $\sigma(\V) \in \Pi^1_{n+2}$. Write $\lnot \sigma \equiv \exists \U \theta(\V,\U) $. 

We work in $\ATRo$. Let $X$ be a set such that 
$\beta_n\RFN^{\ON}(\sigma(X) \lor \tau(X)) \land \lnot \sigma(X)$. We prove that 
$\beta_n\RFN^{\ON}(\tau(X))$ holds. 

Let $Y$ be a set, $W$ a well-order. Since $\lnot \sigma(X)$, there exists $Z$ such that $\theta(X,Z)$. Take such a $Z$.
Then, there exists a coded $\beta_n$-model $\M$ such that $X,Y,Z,W \in \M$ and $\M \models \beta_n\RFN^{W}(\sigma(X) \lor \tau(X))$. 

We prove that $\M \models \beta_n\RFN^{W}(\tau(X))$. It is sufficient to show that 
$\forall a \in W\forall \K \in \M(Z \in \K \in H^{\sigma(X) \lor \tau(X)}_a \to \K \in H^{\tau(X)}_a )$. We will prove by induction on $a$.
Assume the desired condition holds for all $b < a$. Let $\K \in \M$ be such that $Z \in \K \in H^{\sigma(X)\lor \tau(X)}_a$. Since $X \in \K$ and $\beta_n(\K)$, it is enough to show that 
$\forall b < a \forall A \in \K \exists \K' \in \K (A \in \K' \in H^{\tau(X)}_b \land \K' \models \tau(X)) $ to prove $\K \in H^{\tau(X)}_a$.
Let $b < a$ and $A \in \K$. Then, there exists $\K' \in \K$ such that $A,Z \in \K' \in H^{\sigma (X) \lor \tau(X)}_b \land \K' \models (\sigma(X) \lor \tau(X))$. By the induction hypothesis, $\K' \in H^{\tau(X)}_b$.
Since $\K'$ is a $\beta_n$-model and $Z \in \K'$, $\K' \models \lnot \sigma(X)$. Thus, $\K' \models \tau(X)$.

(2) This is immediate from the definition.
\end{proof}

\begin{theorem}
  $\Sigma^1_{n+1}\hyp\TDC_0$ is $\Pi^1_{n+2}$-conservative over $\ATRo + \Pi^1_{n+2}\hyp\beta_n\RFN^{\ON}\hyp\Rule$.
\end{theorem}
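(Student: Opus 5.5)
We apply Theorem~\ref{BPconservation} with $R = \beta_n\RFN^{\ON}$, $\Gamma = \Pi^1_{n+2}$ and base theory $T = \ATRo$. Since $\ATRo$ is axiomatized by a single $\Pi^1_2$-sentence, $T \subseteq \Pi^1_{n+3}$ and $T$ includes $\ACAo$, so the framework fits. Note that here $\Gamma = \Pi^1_{n+2}$ plays the role of $\Pi^1_{n}$ in Theorem~\ref{BPconservation}, and the requirement there is $T \subseteq \Pi^1_{(n+2)+1}$, which holds.

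\textbf{Step 1: the hypotheses of Theorem~\ref{BPconservation}.}
\begin{itemize}
\item Hypothesis (1) is exactly part (1) of the previous lemma.
\item For hypothesis (2), part (2) of the previous lemma says that $\beta_n\RFN^{\ON}(\sigma(\V))$ is $\ATRo$-provably equivalent to a $\Pi^1_{n+2}$-formula. Its negation is therefore $\ATRo$-equivalent to a $\Sigma^1_{n+2}$-formula, which we write as $\exists \V_1 \varphi_\sigma(\V_0,\V_1)$ with $\varphi_\sigma \in \Pi^1_{n+1} \subseteq \Pi^1_{n+2}$.
\end{itemize}
One point to check is that $R$ is a generalized reflection operator. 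This holds because $\beta_n\RFN^{\ON}(\sigma(\V))$ has exactly the free variables $\V$ and commutes with substitution. The reason is that $H^{\sigma(X)}$ and the clause $\M \models \beta_n\RFN^W(\sigma(X))$ are defined uniformly in the parameter $X$.

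\textbf{Step 2: conclusion for the schema.} Theorem~\ref{BPconservation} yields that $\ATRo + \Pi^1_{n+2}\hyp\beta_n\RFN^{\ON}$ is $\Pi^1_{n+2}$-conservative over $\ATRo + \Pi^1_{n+2}\hyp\beta_n\RFN^{\ON}\hyp\Rule$.

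\textbf{Step 3: transfer to $\TDC$.} By the preceding corollary, over $\ACAo$ the theory $\ATRo + \Pi^1_{n+2}\hyp\beta_n\RFN^{\ON}$ is equivalent to $\Sigma^1_{n+1}\hyp\TDC$. Hence $\Sigma^1_{n+1}\hyp\TDC_0$ and $\ATRo + \Pi^1_{n+2}\hyp\beta_n\RFN^{\ON}$ prove the same sentences. For this we need $\Sigma^1_{n+1}\hyp\TDC_0 \vdash \ATRo$. That holds because $\Sigma^1_{n+1}\hyp\TDC_0$ contains $\Sigma^1_1\hyp\TDC_0$, which proves transfinite arithmetical recursion: apply $\TDC$ to the arithmetical recursion step, which has unique solutions. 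Combining this with Step 2 gives the theorem.

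\textbf{Main obstacle.} The only delicate point is the complexity bookkeeping in Step 1. In Theorem~\ref{BPconservation} the conditions are stated relative to $T$, and the $\Pi^1_{n+2}$-form of $\beta_n\RFN^{\ON}$ uses $\ATRo$: by the remark above, $H^{\sigma}$ is $\Delta^1_1$ only in the presence of $\ATRo$.

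The proof of Theorem~\ref{BPconservation} uses $T$ only at two points:
\begin{itemize}
\item to obtain the finite set $\tau_0,\ldots,\tau_m$, whose conjunction has negation $\exists \V\rho$ with $\rho \in \Gamma$; this is fine since $\ATRo$ is $\Pi^1_2$;
\item in Case 2, to use the equivalences from (1) and (2); these are available since $T = \ATRo$ is part of the base theory.
\end{itemize}
So the argument goes through verbatim with $T = \ATRo$.
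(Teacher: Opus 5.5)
Your proposal is correct and matches the paper's proof. The paper simply applies Theorem~\ref{BPconservation} with $T = \ATRo$ and $R = \beta_n\RFN^{\ON}$, using the preceding lemma, and transfers the result to $\Sigma^1_{n+1}\hyp\TDC_0$ via the corollary $\ATRo + \Pi^1_{n+2}\hyp\beta_n\RFN^{\ON} \leftrightarrow \Sigma^1_{n+1}\hyp\TDC$. You have only made the bookkeeping explicit: the complexity bound on $T$, the form of $\varphi_\sigma$, and $\Sigma^1_{n+1}\hyp\TDC_0 \vdash \ATRo$.
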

\begin{proof}
  We can apply Theorem~\ref{BPconservation}. 
\end{proof}

\begin{definition}
  Let $\sigma(\V)$ be a formula such that $\ACAo \vdash \forall \V(\sigma(\V) \to \ATRo)$.
  For $r > 0$, define $(\beta_n\RFN^{\ON})^r(\sigma(\V))$ as follows.
  \begin{align*}
    &(\beta_n\RFN^{\ON})^1(\sigma(\V)) \equiv \beta_n\RFN^{\ON}(\sigma(\V)), \\ &(\beta_n\RFN^{\ON})^{r+1}(\sigma(\V)) \equiv \beta_n\RFN^{\ON}(\sigma(\V) + (\beta_n\RFN^{\ON})^r(\sigma(\V))).
  \end{align*}
\end{definition}

As usual, we obtain the following conservation result.
\begin{corollary}
  $\Th_{\Pi^1_{n+2}}(\Sigma^1_{n+1}\hyp\TDC_0) \equiv \ATRo + \{(\beta_n\RFN^{\ON})^r(\ATRo) : r > 0\}$.
\end{corollary}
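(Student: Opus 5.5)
The plan follows the pattern of Lemma~\ref{beta-ref satisfies BP}(2) and Theorem~\ref{Thm Cantini rephrase}. By the preceding theorem, $\Sigma^1_{n+1}\hyp\TDC_0$ is $\Pi^1_{n+2}$-conservative over $\ATRo + \Pi^1_{n+2}\hyp\beta_n\RFN^{\ON}\hyp\Rule$. Conversely, $\Sigma^1_{n+1}\hyp\TDC_0$ proves $\ATRo$ and the schema $\Pi^1_{n+2}\hyp\beta_n\RFN^{\ON}$ (by the corollary above). Hence its $\Pi^1_{n+2}$-theory coincides with the $\Pi^1_{n+2}$-theory of $\ATRo + \Pi^1_{n+2}\hyp\beta_n\RFN^{\ON}\hyp\Rule$. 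Every sentence of that rule theory is then a consequence of its $\Pi^1_{n+2}$ part, since each $(\beta_n\RFN^{\ON})^r(\ATRo)$ is $\Pi^1_{n+2}$ over $\ATRo$ and the rule only produces such formulas (after closing under $\forall\V$). So it suffices to show that $\ATRo + \Pi^1_{n+2}\hyp\beta_n\RFN^{\ON}\hyp\Rule$ and $\ATRo + \{(\beta_n\RFN^{\ON})^r(\ATRo) : r>0\}$ prove the same sentences.

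\emph{Rule theory proves each iterate.} I argue by induction on $r$. For $r=1$, apply the rule to the $\Pi^1_{n+2}$-axiom $\ATRo$ (finitely axiomatized, $\Pi^1_2$) to get $\beta_n\RFN^{\ON}(\ATRo)$. For the step, assume the rule theory proves $(\beta_n\RFN^{\ON})^r(\ATRo)$. By the last lemma part (2), this sentence is equivalent over $\ATRo$ to a $\Pi^1_{n+2}$-sentence, so $\ATRo \wedge (\beta_n\RFN^{\ON})^r(\ATRo)$ is a provable $\Pi^1_{n+2}$-sentence. Applying the rule yields $(\beta_n\RFN^{\ON})^{r+1}(\ATRo)$. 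There is one small point to check: replacing $\sigma$ by an $\ATRo$-provably equivalent formula inside $\M \models \beta_n\RFN^W(\cdot)$ is harmless. It suffices to choose the $\Pi^1_{n+2}$ normal form so that the equivalence holds in every $\beta_n$-model of $\ATRo$, which is true because such models are models of $\ATRo$.

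\emph{Iterate theory is closed under the rule.} Suppose $\ATRo + (\beta_n\RFN^{\ON})^r(\ATRo) \vdash \forall\V\sigma(\V)$ with $\sigma\in\Pi^1_{n+2}$. Working in $\ATRo + (\beta_n\RFN^{\ON})^{r+1}(\ATRo)$, fix $X$, $Y$ and a well-order $W$. The hypothesis gives a $\beta_n$-model $\M\ni X,Y,W$ with $\M\models\beta_n\RFN^W(\ATRo+(\beta_n\RFN^{\ON})^r(\ATRo))$. We need $\M\models\beta_n\RFN^W(\sigma(X))$. For this I prove, by induction on $a\in W$ as in part (1) of the last lemma, that $H^{\tau}_a(\M)\subseteq H^{\sigma(X)}_a(\M)$, where $\tau$ is $\ATRo+(\beta_n\RFN^{\ON})^r(\ATRo)$. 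The induction is legitimate because these sets are $\Delta^1_1$ in $\M\oplus W$. The inductive step needs only that every $\K$ with $X\in\K$ and $\K\models\tau$ satisfies $\K\models\sigma(X)$. This holds because $\K$ is an $\omega$-model of $\tau$ and $\tau\vdash\forall\V\sigma$, so soundness of coded $\omega$-models for the finitely many axioms used gives $\K\models\forall\V\sigma(\V)$. Hence $\beta_n\RFN^{\ON}(\sigma(X))$ holds for all $X$.

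The main obstacle is this last soundness step. Proofs from $\tau$ use full first-order logic, while we only know that $\K$ is a coded $\beta_n$-model satisfying the finitely many axioms of $\tau$. Over $\ACAo$, the statement ``$\K\models\tau$ implies $\K\models\varphi$'' holds for each fixed standard proof $\tau\vdash\varphi$, by meta-induction on the proof using the arithmetical satisfaction relation for the coded $\omega$-model. I would cite this standard fact as it was used implicitly in Lemma~\ref{beta-ref satisfies BP}(2), and treat it as routine.
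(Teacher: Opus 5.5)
Your proposal is correct and matches the paper's route, which it leaves as ``as usual'': the Beklemishev--Pakhomov conservation over $\ATRo + \Pi^1_{n+2}\hyp\beta_n\RFN^{\ON}\hyp\Rule$, followed by the analogue of Lemma~\ref{beta-ref satisfies BP}(2) showing that the rule theory and $\ATRo + \{(\beta_n\RFN^{\ON})^r(\ATRo) : r>0\}$ prove the same sentences. Your induction showing that $X \in \K \in H^{\tau}_a$ implies $\K \in H^{\sigma(X)}_a$, with witnesses chosen to contain $Z\oplus X$ as in part (1) of the preceding lemma, is exactly the detail the transfinite version needs. Applying the rule to $\ATRo\land\rho$ rather than to the bare normal form $\rho$ is what makes the equivalence hold in every coded model, as you intend.
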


By the same argument as in the proof of Theorem~\ref{thm Boolean conservatioin}, we obtain the following result.
\begin{theorem}
  $\Th_{\B(\Pi^1_{n+2})}(\Sigma^1_{n+1}\hyp\TDC_0) \equiv \ATRo + (\Pi^1_{n+2})^{-}\hyp\beta_n\RFN^{\ON}$.
\end{theorem}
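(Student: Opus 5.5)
We argue exactly as in the proof of Theorem~\ref{thm Boolean conservatioin}, with $\beta_n\RFN^{\ON}$ in place of $\beta_n\RFN$ and $\ATRo$ as base theory.

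\textbf{Easy inclusion.} Each instance $\sigma \to \beta_n\RFN^{\ON}(\sigma)$ with $\sigma \in (\Pi^1_{n+2})^{-}$ is a $\B(\Pi^1_{n+2})$-sentence. Its consequent is $\Pi^1_{n+2}$ over $\ATRo$ by the preceding lemma, and hence equivalent there to a genuine $\Pi^1_{n+2}$-sentence. The instance is provable in $\Sigma^1_{n+1}\hyp\TDC_0$ because, by the corollary above, this theory proves $\ATRo + \Pi^1_{n+2}\hyp\beta_n\RFN^{\ON}$. We also need that $\ATRo$ itself is a $\Pi^1_2$-consequence of $\Sigma^1_{n+1}\hyp\TDC_0$; this holds since $\beta_0\RFN(\Sigma^1_1\hyp\DC)$ is available.

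\textbf{Reduction of the converse.} It suffices to treat a sentence $\sigma \lor \pi$ with $\sigma \in \Sigma^1_{n+2}$, $\pi \in \Pi^1_{n+2}$ and $\Sigma^1_{n+1}\hyp\TDC_0 \vdash \sigma \lor \pi$. Put $\tau \equiv \lnot\sigma \land \ATRo$, a $\Pi^1_{n+2}$-sentence that implies $\ATRo$. Then $\Sigma^1_{n+1}\hyp\TDC_0 + \lnot\sigma \vdash \pi$. By the corollary, $\Sigma^1_{n+1}\hyp\TDC_0 + \lnot\sigma$ is equivalent to $\ATRo + \Pi^1_{n+2}\hyp\beta_n\RFN^{\ON} + \lnot\sigma$, and we want to replace this with the relativized schema $\ATRo + \Pi^1_{n+2}\hyp\beta_n\RFN^{\ON}(\tau)$, that is, $\{\forall\V(\rho(\V)\to\beta_n\RFN^{\ON}(\tau\land\rho(\V))):\rho\in\Pi^1_{n+2}\}$, together with $\tau$. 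The two agree because $\tau \land \rho$ is again $\Pi^1_{n+2}$, so given $\tau$ each instance of one schema follows from an instance of the other, just as in the corollary restating Theorem~\ref{Thm ref of DC and ref omega}.

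\textbf{Conservation step.} The operator $R(\rho) = \beta_n\RFN^{\ON}(\tau \land \rho)$ is a generalized reflection operator. Conditions (1) and (2) of Theorem~\ref{BPconservation} hold over $\ATRo$ by the preceding lemma, applied to $\tau\land\rho$; for (1) we use $\tau \land (\sigma' \lor \rho) \equiv \sigma' \lor (\tau\land\rho)$ up to the harmless conjunct. Hence $\ATRo + \Pi^1_{n+2}\hyp\beta_n\RFN^{\ON}(\tau)$ is $\Pi^1_{n+2}$-conservative over $\ATRo + \Pi^1_{n+2}\hyp\beta_n\RFN^{\ON}(\tau)\hyp\Rule$. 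As in Lemma~\ref{beta-ref satisfies BP}(2), the latter proves the same sentences as $\ATRo + \{(\beta_n\RFN^{\ON})^r(\tau) : r>0\}$. For that equivalence we must check that $(\beta_n\RFN^{\ON})^{r+1}(\tau)$ is obtained from $(\beta_n\RFN^{\ON})^r(\tau)$ by the rule, which uses that these sentences are $\Pi^1_{n+2}$ over $\ATRo$. Closure under the rule follows because a $\beta_n$-model of $\beta_n\RFN^W(\tau + (\beta_n\RFN^{\ON})^r(\tau))$ yields the required models, via Lemma~\ref{lem H and beta}. Therefore $\ATRo + \{(\beta_n\RFN^{\ON})^r(\tau)\}_r \vdash \pi$, and by compactness already $\ATRo + (\beta_n\RFN^{\ON})^r(\tau) \vdash \pi$ for some single $r$.

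\textbf{Conclusion and main obstacle.} It remains to show, by induction on $r$, that $\ATRo + (\Pi^1_{n+2})^{-}\hyp\beta_n\RFN^{\ON} + \lnot\sigma \vdash (\beta_n\RFN^{\ON})^r(\tau)$. The base case applies the sentence schema to $\tau$. For the step we apply it to the $\Pi^1_{n+2}$-sentence $\tau \land (\beta_n\RFN^{\ON})^r(\tau)$, which holds by the induction hypothesis. This gives $\pi$ under $\lnot\sigma$, hence $\sigma\lor\pi$. The delicate point I expect is this iteration-equivalence step for the $\ON$-operator. The iterates must be genuinely $\Pi^1_{n+2}$ over $\ATRo$, which holds because $H^{\sigma}$ is $\Delta^1_1$ in $\ATRo$. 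The inner models must also satisfy enough of $\ATRo$ for $\beta_n\RFN^W$ to be meaningful there, which is why $\tau$ includes $\ATRo$.
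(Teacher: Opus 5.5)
Your proposal is correct and follows the route the paper intends. It transfers the proof of Theorem~\ref{thm Boolean conservatioin} to $\beta_n\RFN^{\ON}$ over $\ATRo$ via Theorem~\ref{BPconservation}, relativized to the $\Pi^1_{n+2}$-sentence $\lnot\sigma\land\ATRo$, and then unfolds the rule into the iterates $(\beta_n\RFN^{\ON})^r$. The one ingredient you should state explicitly is monotonicity of $H^{\sigma(X)}_a$ under implications valid in the inner models, proved by arithmetical induction on $a$; this, rather than Lemma~\ref{lem H and beta}, is what actually justifies both closure under the rule and your ``harmless conjunct'' step.
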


By the same arguments as in the proofs of
Theorems~\ref{thm lower cons for pi},
\ref{thm lower cons for Sigma} and
\ref{thm lower cons for B},
we obtain the following results.
\begin{theorem}
  We have the following conservation results.
  \begin{itemize}
    \item For $m < n$,
    \begin{align*}
      \Th_{\Pi^1_{m+2}}
        (\Sigma^1_{n+1}\hyp\TDC_0)
      \equiv \ATRo +
      \bigl\{
        \beta_m\RFN
        (\ATRo + (\beta_n\RFN^{\ON})^r(\ATRo))
        : r > 0
      \bigr\}.
    \end{align*}

    \item For $m < n$,
    \begin{align*}
      &\Th_{\B(\Pi^1_{m+2})}
        (\Sigma^1_{n+1}\hyp\TDC_0)\\
      \equiv\quad &\ATRo +
      \bigl\{
        \sigma \to \beta_m\RFN
        (\ATRo + (\beta_n\RFN^{\ON})^r(\ATRo+\sigma))
        :\\
      &\hspace{8em}
        \sigma \in (\Pi^1_{m+2})^{-},\ r > 0
      \bigr\}.
    \end{align*}

    \item For $m \leq n$,
    \begin{align*}
      &\Th_{\Sigma^1_{m+2}}
        (\Sigma^1_{n+1}\hyp\TDC_0)\\
      \equiv\quad &\ATRo +
      \bigl\{
        \sigma \to \exists \M
        \bigl(
          \beta_m(\M) \land
          \M \models
          \ATRo + (\beta_n\RFN^{\ON})^r(\ATRo+\sigma)
        \bigr)
        :\\
      &\hspace{8em}
        \sigma \in (\Pi^1_{m+2})^{-},\ r > 0
      \bigr\}.
    \end{align*}
  \end{itemize}
\end{theorem}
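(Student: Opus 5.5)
The plan is to treat the three items separately. In each case I will transfer the argument from Section~2 (Theorems~\ref{thm lower cons for pi}, \ref{thm lower cons for Sigma} and \ref{thm lower cons for B}), replacing $\ACAo+\beta_n\RFN^{\omega}(\ACAo)$ by $\ATRo+\{(\beta_n\RFN^{\ON})^r(\ATRo) : r>0\}$. The inputs are the preceding corollary, $\Th_{\Pi^1_{n+2}}(\Sigma^1_{n+1}\hyp\TDC_0) \equiv \ATRo + \{(\beta_n\RFN^{\ON})^r(\ATRo)\}$, and the $\B(\Pi^1_{n+2})$-characterization just proved.

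\emph{$\Pi^1_{m+2}$ case.} For soundness, $\Sigma^1_{n+1}\hyp\TDC_0$ proves $\ATRo$ and every $(\beta_n\RFN^{\ON})^{r+1}(\ATRo)$. The latter gives, for any $X$, a $\beta_n$-model $\M \ni X$ with $\M\models \beta_n\RFN^W(\ATRo+(\beta_n\RFN^{\ON})^r(\ATRo))$ for a two-element $W$. By Lemma~\ref{lem RFNW and RFN}, $\M$ reflects to a $\beta_n$-model $\K\ni X$ of $\ATRo+(\beta_n\RFN^{\ON})^r(\ATRo)$. Since $m<n$, $\K$ is also a $\beta_m$-model, so $\beta_m\RFN(\ATRo+(\beta_n\RFN^{\ON})^r(\ATRo))$ holds; this sentence is $\Pi^1_{m+2}$.

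For completeness, suppose $\Sigma^1_{n+1}\hyp\TDC_0\vdash\forall\U\exists\V\theta$ with $\theta\in\Pi^1_m$. Then $\ATRo+(\beta_n\RFN^{\ON})^r(\ATRo)$ proves it for some $r$. Given $X$, take a $\beta_m$-model $\M\ni X$ of that theory. Then $\M\models\exists\V\theta(X,\V)$, and this $\Sigma^1_{m+1}$ statement is upward absolute from a $\beta_m$-model, exactly as in Theorem~\ref{thm lower cons for pi}.

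\emph{$\Sigma^1_{m+2}$ and $\B(\Pi^1_{m+2})$ cases.} Let $\sigma\in(\Pi^1_{m+2})^-$. Completeness reduces to showing that whenever $\Sigma^1_{n+1}\hyp\TDC_0\vdash\lnot\sigma\lor\pi$, the target theory plus $\sigma$ proves $\pi$. Here $\pi$ is $\Sigma^1_{m+2}$, or $\Pi^1_{m+2}$ in the Boolean case; a general $\B(\Pi^1_{m+2})$ sentence is reduced to conjunctive normal form, as in Theorem~\ref{thm Boolean conservatioin}.

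\begin{enumerate}
\item Relativize the preceding corollary to $\sigma$. Since $\sigma\in\Pi^1_{n+2}$ is a sentence, $\Sigma^1_{n+1}\hyp\TDC_0+\sigma$ is equivalent to $\ATRo+\Pi^1_{n+2}\hyp\beta_n\RFN^{\ON}$ restricted to formulas containing the conjunct $\sigma$. This uses the equivalence of $\TDC$ with $\ATRo+\Pi^1_{n+2}\hyp\beta_n\RFN^{\ON}$ and the fact that $\sigma\land\tau\in\Pi^1_{n+2}$.
\item Apply the BP-conservation theorem to the operator $\beta_n\RFN^{\ON}(\ATRo+\sigma+\cdot)$. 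Its hypotheses (1) and (2) hold by the lemma just proved, since adding the fixed sentence $\sigma$ changes nothing. Iterating the rule as in Lemma~\ref{beta-ref satisfies BP}(2) yields
\[
\Th_{\Pi^1_{n+2}}(\Sigma^1_{n+1}\hyp\TDC_0+\sigma)\equiv\ATRo+\{(\beta_n\RFN^{\ON})^r(\ATRo+\sigma)\}.
\]
\item Since $\pi\in\Pi^1_{n+2}$ (because $m\le n$), we get $\ATRo+(\beta_n\RFN^{\ON})^r(\ATRo+\sigma)\vdash\pi$ for some $r$.
\item Work in the target theory plus $\sigma$. The axiom gives a $\beta_m$-model $\M$ of $\ATRo+(\beta_n\RFN^{\ON})^r(\ATRo+\sigma)$, and in the $\Pi$-case we may put any given $X$ into $\M$. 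Then $\M\models\pi$, and we lift $\pi$ to the ground model. In the $\Sigma^1_{m+2}$ case, $\pi=\exists\U\forall\V\eta$ with $\eta\in\Sigma^1_m$: a witness in $\M$ works, because a $\Pi^1_{m+1}$ statement true in a $\beta_m$-model is true. In the $\Pi^1_{m+2}$ case, we argue for each $X$ as in the first item.
\end{enumerate}

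Soundness of the axioms again follows from Lemma~\ref{lem RFNW and RFN} applied inside $\Sigma^1_{n+1}\hyp\TDC_0+\sigma$. One detail: I must check that $\sigma\to(\beta_n\RFN^{\ON})^r(\ATRo+\sigma)$ is provable there by induction on $r$.

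The main obstacle is step~2: the relativized BP iteration for the $\ON$-operator. One must verify that $(\beta_n\RFN^{\ON})^r(\ATRo+\sigma)$ is $\Pi^1_{n+2}$ over $\ATRo$, so that the rule can be reapplied. One must also check that the rule-closed theory is exactly the union of these iterates: a consequence proved from iterate $r$ holds inside the $\beta_n$-models supplied by iterate $r+1$. For the $\ON$-version, this needs $H$-membership at every level $a\in W$, not just one reflection step. I would try the $\Delta^1_1$ induction on $a$ used in the $\RFN^{\ON}$ disjunction lemma, now showing that $\K\in H^{\tau}_a$ implies $\K\in H^{\tau'}_a$ whenever $\K\models\tau\to\tau'$ for every relevant $\K$.
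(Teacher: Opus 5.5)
Your route is the paper's: relativize the Beklemishev--Pakhomov conservation for $\beta_n\RFN^{\ON}$ to the sentence $\sigma$ (the analogue of Theorem~\ref{Cor conservation for RFNT schema and RFNomega} as used in Theorem~\ref{thm Boolean conservatioin}), then read the conclusion off a $\beta_m$-model. The $\Pi^1_{m+2}$ and $\B(\Pi^1_{m+2})$ items are fine. There you only ever lift $\Sigma^1_{m+1}$ facts $\exists\V\theta(X,\V)$ out of a $\beta_m$-model containing a prescribed $X$. Your level-by-level monotonicity argument for $H^{\tau}_a$ also supplies a detail the paper leaves implicit.

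The $\Sigma^1_{m+2}$ item does not work as written. In step~4 you assert that a $\Pi^1_{m+1}$ statement true in a $\beta_m$-model is true, but absoluteness goes the other way. A $\beta_m$-model reflects $\Sigma^1_{m+1}$ statements upward and $\Pi^1_{m+1}$ statements only downward. Already for $m=0$: the arithmetical sets form an $\omega$-model of $\ACAo$ that believes a recursive tree with paths but no hyperarithmetical path has no path. So a $\Sigma^1_{m+2}$ sentence $\pi$ holding in $\M$ tells you nothing about the ground model. Step~3 also breaks for $m=n$: a $\Sigma^1_{n+2}$ sentence is not $\Pi^1_{n+2}$, so the relativized $\Pi^1_{n+2}$-conservation does not apply to it.

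The fix is not to split off a $\pi$ at all. Every $\Sigma^1_{m+2}$ theorem of $\Sigma^1_{n+1}\hyp\TDC_0$ has the form $\lnot\sigma$ with $\sigma\in(\Pi^1_{m+2})^{-}$, so take $\pi=\bot$. Then $\Sigma^1_{n+1}\hyp\TDC_0+\sigma$ is inconsistent, and the relativized conservation gives $\ATRo+(\beta_n\RFN^{\ON})^r(\ATRo+\sigma)\vdash\bot$ for some $r$. By formalized soundness of that fixed derivation, $\ACAo$ proves that no coded $\omega$-model satisfies $\ATRo+(\beta_n\RFN^{\ON})^r(\ATRo+\sigma)$. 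The axiom $\sigma\to\exists\M(\beta_m(\M)\land\M\models\ATRo+(\beta_n\RFN^{\ON})^r(\ATRo+\sigma))$ therefore yields $\lnot\sigma$. This is why that axiom asks only for one $\beta_m$-model and no parameter: it is used to refute $\sigma$, not to transfer a formula.
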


\subsection{Transfinitely iterated reflection by Pakhomov and Walsh}

In \cite{PW}, Pakhomov and Walsh introduced another definition of transfinitely iterated reflection based on G\"odel's fixed-point theorem.
In what follows, we show the equivalence between our definition and the definition in \cite{PW}.

We begin by introducing the definition in \cite{PW}.
\begin{definition}
    Let $\{\varphi_e(\U,\V)\}_{e \in \omega}$ be an effective enumeration of $L_2$-formulas whose free variables are exactly $\U$ and $\V$.
    For $n \in \omega$, put 
    \begin{align*}
        \Psi_n(e,T,L) \equiv \forall \V \exists \M(\V,T,L \in \M \land \beta_n(\M) \land  \M \models T \land \forall b \in L (\M \models \varphi_e(T,L\mathord{\upharpoonright}_b))).
    \end{align*}
    Then, we define $\beta_n\RFN^{\mathsf{it}}(T,L)$ as a fixed point of $\Psi_n$. That is,
    \begin{align*}
        \ACAo \vdash \beta_n\RFN^{\mathsf{it}}(T,L) \leftrightarrow \Psi_n(\GN{\beta_n\RFN^{\mathsf{it}}},T,L).
    \end{align*}
\end{definition} 

\begin{remark}
    In general, $\sigma \in T$ or $\varphi_e$ may be nonstandard when we work in a non-$\omega$-model.
    Therefore, $\M \models T$ means that for each $\sigma \in T$, there exists an evaluation for $(\M,\sigma)$ that ensures $\M \models \sigma$.
    Throughout this subsection, we assume that our base theory includes $\ACA_0^+$. Therefore,
    we may assume that every coded $\omega$-model has a total evaluation.
\end{remark}

\begin{definition}
  For a formula $\sigma(\V)$, we define 
  $\beta_n\RFN^{\ON}_{\PW}$ as 
  \begin{align*}
    \beta_n\RFN^{\ON}_{\PW}(\sigma(\V)) \equiv \forall W(\WO(W) \to \beta_n\RFN^{\mathsf{it}}(\sigma(\V),W)).
  \end{align*}
    We also define $\Gamma\hyp\beta_n\RFN^{\ON}_{\PW} = \{\forall \V(\sigma(\V) \to \beta_n\RFN^{\ON}_{\PW}(\sigma(\V))): \sigma \in \Gamma\}$.
\end{definition}

\begin{remark}
We note that, in \cite{PW}, coded $\omega$-model reflection of $\sigma$ is defined as 
`if every coded $\omega$-model satisfies $\sigma$, then $\sigma$ is true'. 
However, we adopt its dual `if $\sigma$ is true, then it holds in some coded $\omega$-model' to align our definition 
with those of Simpson \cite{Simpson} and R\"{u}ede \cite{RuedeThesis,Ruede}.

We also note that, in our definition, every $\beta_n$-model is a model of $\ACAo$, even for $n = 0$.
Thus, $\beta_n\RFN^{\mathsf{it}}(\sigma(\V),W)$ is equivalent to $\beta_n\RFN^{\mathsf{it}}(\ACAo + \sigma(\V),W)$.
\end{remark}

\begin{lemma}
    Over $\ACAo$, $\beta_n\RFN^{\ON}_{\PW}(\ACAo)$ implies $\ATRo$.
\end{lemma}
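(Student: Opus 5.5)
We prove $\ATRo$ in the form of arithmetical transfinite recursion along well-orders. Equivalently, for every well-order $W$ and every $X$, we show that the $W$-iterated jump hierarchy $\langle X^{(a)}\rangle_{a\in W}$ exists. Over $\ACAo$ this suffices, since any arithmetical transfinite recursion along $W$ can be read off from the jump hierarchy along a slightly larger well-order (say $\omega\cdot W$, or $W$ itself up to a uniform arithmetical translation). Fix a well-order $W$ and a set $X$. Let $L = 1+W$, or more generously $L=\omega^W$ if a longer iteration makes the bookkeeping easier. Apply $\beta_n\RFN^{\ON}_{\PW}(\ACAo)$ to $L$ and unfold the fixed point with $\V = X\oplus W$. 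This yields a coded $\beta_n$-model $\M$ (in particular an $\omega$-model of $\ACAo$) with $X,W\in\M$ such that $\M\models \beta_n\RFN^{\mathsf{it}}(\ACAo,L\mathord{\upharpoonright}_b)$ for every $b\in L$.

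The core step is an external induction on $b\in L$. For each $b$, we show that $\M$ contains the jump hierarchy along $W\mathord{\upharpoonright}_b$ starting from $X$, or at least that this holds in every set-parameter-closed $\omega$-submodel that $\M$ believes reflects $\beta_n\RFN^{\mathsf{it}}(\ACAo,L\mathord{\upharpoonright}_b)$. Unfolding the fixed point inside $\M$ gives, for each $c<b$ and each $Y\in\M$, an $\omega$-model $\NN\in\M$ of $\ACAo$ with $Y\in\NN$ and $\NN\models\beta_n\RFN^{\mathsf{it}}(\ACAo,L\mathord{\upharpoonright}_c)$.

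\emph{Successor step.} Given the hierarchy $H$ up to $c$ in $\M$, closure of $\M$ under jump (being a model of $\ACAo$) gives one more level.

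\emph{Limit step.} The hierarchies along the initial segments $W\mathord{\upharpoonright}_c$ are unique where they exist, because jump hierarchies along well-orders are unique; this is provable in $\ACAo$ by arithmetical transfinite induction. Hence their union is arithmetically definable inside a reflecting model $\NN\ni H$ for $c$, provided we have already proved inside $\NN$ that each level exists. This is where the reflection is really used: we apply the induction hypothesis to the model $\NN$ instead of $\M$, and this is legitimate because the hypothesis is stated uniformly over all models reflecting the appropriate iterate.

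The main obstacle is making the induction on $b$ legitimate in $\ACAo$, which only has arithmetical transfinite induction. The statement ``every $\omega$-model $\NN$ (coded in $\M$) satisfying the $b$-th iterate contains the hierarchy along $W\mathord{\upharpoonright}_b$'' quantifies over the codes $\NN$ in $\M$ and over evaluations. It is therefore arithmetical in $\M\oplus W\oplus X$ (given total evaluations, as assumed via $\ACA_0^+$ in the remark; otherwise we restrict to formulas of bounded complexity). So the induction runs as arithmetical transfinite induction along $L$, which is available in $\ACAo$.

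Uniqueness makes ``$Y$ is the hierarchy up to $c$'' arithmetical as well. Once the whole hierarchy along $W$ lies in $\M$, it is a genuine set, and by arithmetical absoluteness for $\omega$-models it is the true hierarchy. This yields $\ATRo$. If the arithmetical-induction bookkeeping in the limit case proves delicate, the first fallback is to take $L=\omega^W$ and follow the pattern of Lemma~\ref{lem seq of beta makes betaON} in reverse, extracting a chain of models of length $W$ whose union carries the hierarchy.
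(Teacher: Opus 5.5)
Your argument is correct in outline, but it takes a different route from the paper. The paper gives no direct proof. It cites Pakhomov--Walsh (their Corollary~2.13) for $n=0$, and handles $n>0$ by remarking that $\beta_n\RFN^{\ON}_{\PW}(\ACAo)$ implies $\beta_0\RFN^{\ON}_{\PW}(\ACAo)$.

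You instead give a self-contained proof, uniform in $n$. It uses only the fact that $\beta_n$-models are coded $\omega$-models of $\ACAo$. The key device is to let the induction hypothesis range over codes of models inside one reflecting model $\M$. Then the statement $P(b)$ (every code $\NN$ in $\M$ with $X,W\in\NN$ and $\NN\models\beta_n\RFN^{\mathsf{it}}(\ACAo,L\upharpoonright_b)$ contains the jump hierarchy along the corresponding initial segment of $W$) is arithmetical in $\M\oplus L\oplus W\oplus X$. Arithmetical transfinite induction along $L$, available in $\ACAo$, therefore suffices.

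No external evaluations are needed for this. $\beta_n\RFN^{\mathsf{it}}$ is one fixed standard formula and $b$ enters only as a parameter, so your detour through $\ACA_0^+$ is unnecessary.

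The paper's route buys brevity: it outsources the $n=0$ case and treats the comparison of the two distinct fixed points $\beta_n\RFN^{\mathsf{it}}$ and $\beta_0\RFN^{\mathsf{it}}$ as routine. Yours buys a direct argument. It also shows that one reflection step per jump level suffices, so $L=W+1$ already works and $\omega^W$ is never needed.

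Two points of bookkeeping need care when you write it out.

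First, in the limit step you need a single inner model that satisfies $\beta_n\RFN^{\mathsf{it}}(\ACAo,L\upharpoonright_c)$ for all $c<b$ at once. The fixed point does provide this, since the quantifier over $c$ sits inside the existential over the model. Your phrasing ``for each $c<b$ and each $Y$, an $\NN$'' makes the model depend on $c$. With a separate model for each $c$, the union of the hierarchies would not be arithmetical in any single element.

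Second, because $P(b)$ quantifies over codes in $\M$, it says nothing about $\M$ itself. Finish by unfolding once more at the top element of $L=W+1$: this gives an inner model $\K\in\M$ that contains all the pieces, so the full hierarchy is arithmetical in $\K$ and lies in $\M$. Alternatively, the union of the hierarchies found in models coded in $\M$ is arithmetical in $\M$ and so exists in the ground model.
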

\begin{proof}
  The case $n = 0$ is {\cite[Corollary 2.13]{PW}}.
  For $n > 0$, it is easy to see that $\beta_n\RFN^{\ON}_{\PW}(\ACAo)$ implies $\beta_0\RFN^{\ON}_{\PW}(\ACAo)$.
\end{proof}

\begin{theorem}[$\ACAo$]
    The following are equivalent.
    \begin{enumerate}
        \item $\ATRo + \Pi^1_{n+2}\hyp\beta_n\RFN^{\ON}$.
        \item $\Pi^1_{n+2}\hyp\beta_n\RFN^{\ON}_{\PW}$.
    \end{enumerate}
\end{theorem}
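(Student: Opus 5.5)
The plan is to prove that, for $\sigma \in \Pi^1_{n+2}$ and a well-order $W$, the two reflection notions $\beta_n\RFN^{\ON}(\sigma(X))$ and $\beta_n\RFN^{\ON}_{\PW}(\sigma(X))$ are equivalent over $\ATRo$. The equivalence of the schemata (1) and (2) then follows at once. For (2)$\Rightarrow$(1), the preceding lemma already gives $\ATRo$ from $\beta_n\RFN^{\ON}_{\PW}(\ACAo)$, and the instance $\sigma \equiv \ACAo$ is available in (2). Throughout we work in $\ATRo$, which includes $\ACA_0^+$, so every coded model has a total evaluation and the truth definition for nonstandard formulas is harmless.

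\textbf{Direction (2)$\Rightarrow$(1).} Assume $\beta_n\RFN^{\mathsf{it}}(\sigma(X),W')$ for all well-orders $W'$. Given $W$, we show $\forall a\in W\,\forall Y\,\exists \K(Y,X\in\K\land\beta_n(\K)\land \K\models\beta_n\RFN^{\mathsf{it}}(\sigma(X),W\mathord{\upharpoonright}_a))$, using the fixed-point equation to unfold $\beta_n\RFN^{\mathsf{it}}(\sigma(X),W)$. Next we compare $H^{\sigma(X)}_a(\M)$ with the set of $\K\in\M$ satisfying
\[
\M\models\beta_n(\K)\land \K\models\sigma(X)+\forall b<a\,\beta_n\RFN^{\mathsf{it}}(\sigma(X),W\mathord{\upharpoonright}_b).
\]
The comparison is an induction on $a\in W$: we show that every such $\K$ lies in $H^{\sigma(X)}_a$. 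The induction is $\Delta^1_1$ in $\M$ and $W$, so it is available in $\ATRo$. Whenever $\M\models\beta_n\RFN^{\mathsf{it}}(\sigma(X),W)$, unfolding the fixed point inside $\M$ then yields $\M\models\beta_n\RFN^W(\sigma(X))$. Here the $\beta_n$-ness of $\M$ ensures that `$\K$ is a $\beta_n$-model' is absolute between $\M$ and the ground model, exactly as in Lemma~\ref{lem absoluteness of H}. Applying the outer fixed point once more produces a $\beta_n$-model $\M\ni X,Y,W$ satisfying $\M\models\beta_n\RFN^W(\sigma(X))$. This is $\beta_n\RFN^{\ON}(\sigma(X))$.

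\textbf{Direction (1)$\Rightarrow$(2).} Here we must verify the fixed-point formula, which is the main obstacle, because $\beta_n\RFN^{\mathsf{it}}$ is defined only implicitly. We prove $\forall W'\le W\,\beta_n\RFN^{\mathsf{it}}(\sigma(X),W')$ by transfinite induction on the initial segments of $W$. Under $\ATRo$ plus the hypothesis, each instance is $\Pi^1_{n+2}$-ish, so we instead carry out the induction inside the model. Using (1) applied to the formula $\sigma$ together with the well-order $\omega^{W}$ (or $W+1$), we obtain a $\beta_n$-model $\M\models\beta_n\RFN^{W+1}(\sigma(X))$. Then we show by induction on $a$ that for every $\K\in H^{\sigma(X)}_{a+1}(\M)$ we have $\K\models\forall b<a\,\beta_n\RFN^{\mathsf{it}}(\sigma(X),W\mathord{\upharpoonright}_b)$, with $\K\models\sigma(X)$ supplied by Lemma~\ref{lem H and beta}. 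The key point in this induction is that an element of $H_{a+1}$ contains, cofinally, elements of $H_b$ for $b<a+1$ that themselves satisfy $\sigma$. This is exactly the right-hand side of $\Psi_n$ relativized to $\K$, so the fixed-point equivalence, which is provable in $\ACAo$ and hence true in $\K$, gives the claim inside $\K$. As before, this is a $\Delta^1_1$ induction in $\M\oplus H$ and is legitimate in $\ATRo$. Finally we pass from ``true in $\M$'' to ``true'': the ground-model instance $\Psi_n(\ldots,\sigma(X),W)$ is witnessed by the models $\M$ obtained from (1) for each $Y$. This gives $\beta_n\RFN^{\mathsf{it}}(\sigma(X),W)$. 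The delicate point is bookkeeping $T=\sigma(X)$ as a possibly nonstandard code, which is handled by the total evaluations.
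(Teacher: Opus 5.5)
Your proposal is correct and takes essentially the paper's route. You take a coded $\beta_n$-model $\M$ from the hypothesis and compare membership in $H^{\sigma(X)}_a(\M)$ with $\K\models\forall b<a\,\beta_n\RFN^{\mathsf{it}}(\sigma(X),W\mathord{\upharpoonright}_b)$ by induction on $a\in W$, using the fixed-point equivalence inside the absolute $\beta_n$-submodels, and then unfold once more at the top level. The paper proves this comparison as a single biconditional for $H_a$ and uses it in both directions, whereas you prove only the half needed in each direction (with a harmless shift to $H_{a+1}$ and $W+1$ in (1)$\Rightarrow$(2)); when you write it up, restrict to $\K$ with $X,W\in\K$, as the paper does, so that $W\mathord{\upharpoonright}_b$ is available inside $\K$.
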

\begin{proof}
    $(1 \to 2)$
    Assume $\ATRo + \Pi^1_{n+2}\hyp\beta_n\RFN^{\ON}$.
    Let $\sigma(\V)$ be a $\Pi^1_{n+2}$-formula.
    Let $A$ be a set such that $\sigma(A)$ holds.
    Let $W$ be a well-order that has at least two elements.

    We will show that $\beta_n\RFN^{\mathsf{it}}(\sigma(A),W)$ holds.
    Let $X$ be a set.
    By $\Pi^1_{n+2}\hyp\beta_n\RFN^{\ON}$, take a coded $\beta_n$-model $\M$ such that $A,X,W \in \M \land \M \models \beta_n\RFN^{W}(\sigma(A))$. That is,
    $$\forall a \in W \forall Y \in \M \exists \K \in \M (A,X,Y,W \in \K \land \beta_n(\K) \land  \K  \in H^{\sigma(A)}_{a} \land \K \models  \sigma(A)).$$
    We will prove that the following holds for any $a \in W$: If $\K \in \M$ and $A,W \in \K \land \beta_n(\K)$, then 
    \begin{align*}
        \K \in H^{\sigma(A)}_{a} \leftrightarrow \forall b <_W a (\K \models \beta_n\RFN^{\mathsf{it}}(\sigma(A), W\mathord{\upharpoonright}_b)).
    \end{align*}
    Assume that the desired condition holds for any $b<_W a$. 
    Let $\K \in \M$ be such that $A,W \in \K$.

    $(\to)$ Assume $\K \in H^{\sigma(A)}_{a}$.
    Then, by definition of $H^{\sigma(A)}_a$, we obtain 
    $$\forall b <_W a \forall Y \in \K \exists \K' \in \K(A,W,Y \in \K' \land \K' \in H^{\sigma(A)}_{b} \land \K' \models \sigma(A) ).$$
    Let $b <_W a$. We will prove that $\K \models \beta_n\RFN^{\mathsf{it}}(\sigma(A),W\mathord{\upharpoonright}_b)$, which is equivalent to 
    \begin{align*}
        \K \models \forall Y \exists \K' (A,W,Y \in \K' \land \beta_n(\K') \land \K' \models \sigma(A)  \land \forall c <_W b(\K' \models \beta_n\RFN^{\mathsf{it}}(\sigma(A),(W\mathord{\upharpoonright}_b)\mathord{\upharpoonright}_{c}) )).
    \end{align*}
    Let $Y \in \K$. Then, there exists $\K' \in \K$ such that $A,W,Y \in \K' \in H^{\sigma(A)}_{b} \land \K' \models \sigma(A)$.
    By the induction hypothesis, $\forall c <_W b (\K' \models \beta_n\RFN^{\mathsf{it}}(\sigma(A),W\mathord{\upharpoonright}_{c} ))$.
    This yields the desired condition because $W\mathord{\upharpoonright}_{c} = (W\mathord{\upharpoonright}_b)\mathord{\upharpoonright}_{c}$.

    $(\leftarrow)$ Assume $\forall b <_W a (\K \models \beta_n\RFN^{\mathsf{it}}(\sigma(A), W\mathord{\upharpoonright}_b))$.
    Let $b <_W a$. We will prove that $$\forall Y \in \K \exists \K' \in \K(A,Y,W \in \K' \land \beta_n(\K') \land \K' \in H^{\sigma(A)}_{b} \land \K' \models \sigma(A)).$$
    Since $\K \models \beta_n\RFN^{\mathsf{it}}(\sigma(A),W\mathord{\upharpoonright}_b)$, we obtain 
    $$
    \forall Y \in \K \exists \K' \in \K (A,W,Y \in \K' \land \beta_n(\K') \land \K'\models \sigma(A) \land \forall c {<_W} b\, \K' \models  \beta_n\RFN^{\mathsf{it}}(\sigma(A),(W\mathord{\upharpoonright}_b)\mathord{\upharpoonright}_{c})),
    $$
    which, by the induction hypothesis and the equality $W\mathord{\upharpoonright}_{c} = (W\mathord{\upharpoonright}_b )\mathord{\upharpoonright}_{c}$, is equivalent to
    $$
    \forall Y \in \K \exists \K' \in \K (A,W,Y \in \K' \land \K \models \beta_n(\K') \land \K' \models \sigma(A) \land \K' \in H^{\sigma(A)}_b).
    $$

    ($2 \to 1$) Essentially the same argument as for $(1 \to 2)$ works.
\end{proof}

\section{Model-theoretic proof}
In the previous sections, we have proved the following conservation results by a proof-theoretic method.
\begin{alignat*}{2}
  &\Th_{\Pi^1_{n+2}}(\Sigma^1_{{n+1}}\hyp\DC_0) &&\equiv \ACAo + \beta_n\RFN^{\omega}(\ACAo), \\
  &\Th_{\Pi^1_{n+2}}(\Sigma^1_{{n+1}}\hyp\TDC_0) &&\equiv \ACAo + \beta_n\TRFN^{\omega}(\ACAo), \\
  &\Th_{\B(\Pi^1_{n+2})}(\Sigma^1_{n+1}\hyp\DC_0) &&\equiv (\Pi^1_{n+2})^{-}\hyp\beta_n\RFN_0, \\
  &\Th_{\B(\Pi^1_{n+2})}(\Sigma^1_{n+1}\hyp\TDC_0) &&\equiv (\Pi^1_{n+2})^{-}\hyp\beta_n\TRFN_0.
\end{alignat*}
In this section, we give a model-theoretic proof of these.

\begin{definition}
  In this definition, we consider models that are not necessarily coded.
  Let $\M,\M'$ be models of $\ACAo$. We say $\M$ is a $\beta_n$-submodel of $\M'$ if $\M$ is a submodel of $\M'$ having the same first-order part as $\M'$, and $\M$ and $\M'$ agree on the truth of $\Pi^1_{n}$-formulas with parameters from $\M$. We write $\M \subseteq_{\beta_n} \M'$ to mean that $\M$ is a $\beta_n$-submodel of $\M'$.
\end{definition}

The following lemma is standard in the model theory of arithmetic.
\begin{lemma}\label{lem conservation general}
  Let $T$ and $T'$ be $L_2$-theories including $\ACAo$. Assume that for any countable model $\M' \models T'$ and second-order parameters $\vec{X} \in \M'$, there exists $\M \subseteq_{\beta_n} \M'$ such that $\vec{X} \in \M \land \M \models T$. 
  Then, any $\Pi^1_{n+2}$-sentence provable in $T$ is provable in $T'$. 

  In addition, we can restrict $\M'$ to recursively saturated models.
\end{lemma}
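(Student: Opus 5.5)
We argue by contraposition, using the completeness theorem. Suppose $\varphi$ is a $\Pi^1_{n+2}$-sentence with $T \vdash \varphi$ but $T' \nvdash \varphi$. Then $T' + \lnot\varphi$ is consistent. By the downward L\"owenheim--Skolem theorem it has a countable model $\M'$. For the additional claim, we instead use the standard fact that every consistent countable theory has a countable recursively saturated model, and choose $\M'$ recursively saturated.

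Write $\varphi \equiv \forall \U \exists \V \theta(\U,\V)$ with $\theta \in \Pi^1_n$. Since $\M' \models \lnot\varphi$, there is $X \in \M'$ with $\M' \models \forall \V \lnot\theta(X,\V)$. Apply the hypothesis to $\M'$ with parameter $X$. This gives $\M \subseteq_{\beta_n} \M'$ such that $X \in \M$ and $\M \models T$, and hence $\M \models \varphi$. So there is $Y \in \M$ with $\M \models \theta(X,Y)$.

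Now $\theta(X,Y)$ is $\Pi^1_n$ with parameters in $\M$. Since $\M$ is a $\beta_n$-submodel of $\M'$, the two models agree on it, so $\M' \models \theta(X,Y)$. As $Y \in \M'$, this contradicts $\M' \models \forall \V\lnot\theta(X,\V)$. Hence $T' \vdash \varphi$.

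The only point requiring care is the transfer step. It uses exactly the downward absoluteness of $\Sigma^1_{n+1}$-formulas from $\M$ to $\M'$, which follows from agreement on $\Pi^1_n$-formulas with parameters in $\M$ together with $\M$ being a submodel with the same first-order part. For the case $n=0$, we read $\Pi^1_0$ as arithmetical, and the same-first-order-part clause makes arithmetical truth agree. Everything else is routine; the recursively saturated refinement costs nothing, because we only ever need one countermodel to $\varphi$.
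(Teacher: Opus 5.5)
Your proof is correct and is essentially the paper's argument: the paper transfers $\forall \V\lnot\theta(X,\V)$ down from $\M'$ to $\M$ to obtain a model of $T+\lnot\varphi$, while you transfer the witness $\theta(X,Y)$ up from $\M$ to $\M'$, which is the same step read contrapositively. For the recursively saturated refinement, the paper passes to a countable recursively saturated elementary extension of $\M'$, which is the same standard fact you invoke. One small wording point: what you call ``downward absoluteness from $\M$ to $\M'$'' is really upward (submodel to supermodel) transfer, but the direction you actually use is the right one.
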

\begin{proof}
  Let $\forall \V \exists \U \theta(\V,\U)$ be a $\Pi^1_{n+2}$-sentence such that $T' \not \vdash \forall \V \exists \U \theta(\V,\U)$. We will show that $T \not \vdash \forall \V \exists \U \theta(\V,\U)$.

  Since $T' + \exists \V \forall \U \lnot \theta(\V,\U)$ is consistent, there is a countable $\M' \models T' + \exists \V \forall  \U \lnot \theta(\V,\U)$.
  Take an $X \in \M'$ such that $\M' \models \forall \U \lnot \theta(X,\U)$. By assumption, 
  take $\M \subseteq_{\beta_n} \M'$ such that $X \in \M$ and $\M \models T$. 
  Then, $\M \models \forall \U \lnot \theta(X,\U)$ and hence $\M \models \lnot \forall \V \exists \U \theta(\V,\U)$. Thus, $T + \lnot \forall \V \exists \U \theta(\V,\U)$ has a model.

  For the additional part, we note that $\M'$ has a countable recursively saturated elementary extension $\M''$. Using $\M''$ instead of $\M'$, the same proof works.
\end{proof}

\begin{lemma}\label{Lem Shrink models}
  Let $\M$ be a recursively saturated model of $\ACAo + \beta_n\RFN^{\omega}(\ACAo)$.
  Then, for any $X \in \M$
  there exists $\langle \NN_i \rangle_{i \in \N^{\M}} \in \M$ with the following property:
  $\M \models \forall i \in \N(X \in \NN_i \in \NN_{i+1} \land \beta_n(\NN_i) \land \NN_i \models \beta_n\RFN^{r}(\ACAo))$ for all $r \in \omega, r > 0$.
\end{lemma}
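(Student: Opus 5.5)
Fix $X \in \M$. The plan is to build, inside $\M$, a descending-in-index sequence of partial sequences whose lengths grow with $r$, and then use recursive saturation to obtain a single sequence of length $\N^{\M}$ that works for every standard $r$ at once.

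First I define a formula $\psi_r(X, s)$ stating: $s$ is a code for a finite or $\N$-indexed sequence $\langle \NN_i \rangle_{i \in \N}$ with $X \in \NN_0$, $\NN_i \in \NN_{i+1}$, $\beta_n(\NN_i)$, and $\NN_i \models \beta_n\RFN^{r}(\ACAo)$ for all $i$. This is a single second-order formula about $s$, and for standard $r$ it is uniformly recursive in $r$. Here I quantify over sets $s$; to apply recursive saturation, which concerns types over first-order or second-order parameters, I should view the type in the variable $s$ as $p(s) = \{\psi_r(X,s) : r > 0\}$. This type is recursive with parameter $X$.

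Next I show that $p$ is finitely satisfiable in $\M$. Since $\beta_n\RFN^{r+1}(\ACAo)$ implies $\beta_n\RFN^{r}(\ACAo)$ over $\ACAo$, and this implication holds inside any $\beta_n$-model of $\ACAo$, it suffices to realize each single $\psi_r$. Fix $r$. In $\M$ we have $\beta_n\RFN^{r+1}(\ACAo)$. That is, for every $Y$ there is a $\beta_n$-model $\K \ni Y$ with $\K \models \ACAo + \beta_n\RFN^{r}(\ACAo)$. The relation ``$\K$ is a $\beta_n$-model containing $Y$ satisfying $\beta_n\RFN^{r}(\ACAo)$'' is not obviously of low complexity, so dependent choice is unavailable in $\ACAo$. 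Instead I build the chain inside one model: take a $\beta_n$-model $\K^\ast \ni X$ with $\K^\ast \models \beta_n\RFN^{r+1}(\ACAo)$, using $\beta_n\RFN^{r+2}(\ACAo)$. Within $\K^\ast$, every set lies in some $\beta_n$-model of $\beta_n\RFN^r(\ACAo)$. Since $\K^\ast$ is coded, its elements are enumerated, so I can define the sequence arithmetically in $\K^\ast$. Set $\NN_0$ to be the least-indexed element of $\K^\ast$ that $\K^\ast$ thinks is a $\beta_n$-model containing $X$ and satisfying $\beta_n\RFN^r(\ACAo)$. Set $\NN_{i+1}$ to be the least-indexed such element containing $\NN_i$. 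This is arithmetic recursion in $\K^\ast$, hence exists by $\ACAo$.

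Because $\K^\ast$ is a $\beta_n$-model, ``$\K^\ast \models \beta_n(\NN)$'' agrees with $\beta_n(\NN)$, since $\beta_n$ is $\Pi^1_n$. The satisfaction statement $\NN \models \beta_n\RFN^r(\ACAo)$ is arithmetic in $\NN$ and absolute. So the resulting $s$ satisfies $\psi_r(X,s)$ in $\M$.

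Finally, recursive saturation of $\M$ (for the two-sorted language, with set-sort types) realizes $p$, giving one $\langle \NN_i\rangle_{i\in\N^\M}$ that works for all standard $r$. I expect the main obstacle to be the justification of the absoluteness of $\beta_n(\NN_i)$ between $\K^\ast$ and $\M$, and confirming that the saturation notion allows set-variable types with a set parameter. For the absoluteness I will cite the standard fact that coded $\beta_n$-models reflect $\Pi^1_n$ truth about their members.
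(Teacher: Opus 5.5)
Your proof is correct and is essentially the paper's argument: use $\beta_n\RFN^{r+2}(\ACAo)$ to get a single $\beta_n$-model $\K \ni X$ of $\beta_n\RFN^{r+1}(\ACAo)$, build the chain arithmetically inside $\K$, transfer $\beta_n(\NN_i)$ and $\NN_i \models \beta_n\RFN^r(\ACAo)$ by $\beta_n$-absoluteness, and realize the recursive type by recursive saturation, with finite satisfiability reduced to a single $r$ by monotonicity in $r$. Two cosmetic points do not affect correctness: your opening sentence about partial sequences of growing length does not describe what you actually do, and your $\psi_r$ only demands $X \in \NN_0$ (though $X \in \NN_i$ for all $i$ follows by induction, since each $\NN_{i+1}$ is closed under Turing reducibility and contains $\NN_i$).
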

\begin{proof}
  Let $X \in \M$.
  We will show that $\{ \forall i \in \N(X \in \NN_i \in \NN_{i+1} \land \beta_n(\NN_i) \land \NN_i \models \beta_n\RFN^{r}(\ACAo)) : r > 0\}$ is a recursive type over $(\M,X)$.
  It is enough to show that $\M \models \exists \langle \NN_i \rangle_{i}  \forall i \in \N(X \in \NN_i \in \NN_{i+1} \land \beta_n(\NN_i) \land \NN_i \models \beta_n\RFN^{r}(\ACAo)) $ for all $r > 0$.

  Fix an $r > 0$. We work in $\M$. There exists $\K$ such that $\beta_n(\K) \land X \in \K \land \K \models \beta_n\RFN^{r+1}(\ACAo)$. 
  By $\beta_n\RFN^{r+1}(\ACAo)$ in $\K$, there exists $\NN_0 \in \K$ such that $\K \models ( X \in \NN_0 \land \beta_n(\NN_0) \land \NN_0 \models \beta_n\RFN^r(\ACAo))$. Since $\K$ is a $\beta_n$-model, $X \in \NN_0 \land \beta_n(\NN_0) \land \NN_0 \models \beta_n\RFN^r(\ACAo)$ actually holds.
  Again, by $\beta_n\RFN^{r+1}(\ACAo)$ in $\K$, we obtain $\NN_1 \in \K$ such that 
  $\NN_0 \in \NN_1 \land \beta_n(\NN_1) \land \NN_1 \models \beta_n\RFN^{r}(\ACAo)$.
  Repeating this argument, we obtain the desired sequence $\langle \NN_i \rangle_{i \in \N}$.
  We note that $\langle \NN_i \rangle_{i \in \N}$ is arithmetically definable from $\K$.
\end{proof}

\begin{lemma}
  Let $\M$ be a recursively saturated model of $\ACAo$ and $\K \in \M$ be a coded $\beta_n$-model.
  Then $\K$ is recursively saturated.
\end{lemma}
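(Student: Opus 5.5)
The plan is to show that every recursive type over $(\K, \vec{A})$ with finitely many parameters $\vec{A} \in \K$ is realized in $\K$. Here $\K$ is regarded as the $L_2$-structure whose first-order part is $\N^{\M}$ and whose second-order part is $\{(\K)_i : i \in \N^{\M}\}$. The idea is to transfer such a type to a recursive type over $\M$ itself, using the code of $\K$ as an extra parameter, and then invoke recursive saturation of $\M$.

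First I fix the translation. Since $\M \models \ACAo$ and $\K$ is a coded $\omega$-model, for each standard formula $\varphi(\vec{x},\vec{\V})$ there is a formula $\varphi^{\K}$, obtained by relativizing set quantifiers to $\exists i\,(\K)_i$ and $\forall i\,(\K)_i$. Then for all $\vec{a}\in\N^{\M}$ and indices $\vec{j}$ we have
\[
\K \models \varphi(\vec{a},(\K)_{\vec{j}}) \iff \M \models \varphi^{\K}(\vec{a},\vec{j},\K).
\]
This equivalence is proved by meta-induction on the standard formula $\varphi$, and it needs no evaluation or satisfaction predicate. The map $\varphi \mapsto \varphi^{\K}$ is recursive. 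I would treat number-variable types and set-variable types uniformly by letting a set variable range over indices $i$.

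Second comes the transfer itself. Let $p(x)$ or $p(\V)$ be a recursive type over $(\K,\vec{A})$ that is finitely satisfiable in $\K$, with $\vec{A} = (\K)_{\vec{j}}$. Put $q(i) = \{\varphi^{\K}(i,\vec{j},\K) : \varphi \in p\}$. For set variables I also add the recursive clause ``$i$ is an index'', which is trivial here. Then $q$ is a recursive type over $\M$ with parameters $\vec{j}$ and $\K$. It is finitely satisfiable in $\M$: for a finite $p_0 \subseteq p$, take a realization $(\K)_i$ in $\K$ of $\bigwedge p_0$, and by the translation equivalence the index $i$ satisfies $\bigwedge q_0$ in $\M$. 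By recursive saturation of $\M$, $q$ is realized by some $i^{*} \in \N^{\M}$, and then $(\K)_{i^{*}}$ realizes $p$ in $\K$.

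The main obstacle is making sure this translation really is uniform and purely syntactic, so that $q$ stays recursive. This is the point of relativizing quantifiers rather than using the formalized satisfaction relation, which might misbehave on nonstandard formulas or lack an evaluation. The $\beta_n$ hypothesis is not needed beyond $\K$ being an $\omega$-model of $\ACAo$ coded in $\M$. Nonstandard indices $i$ cause no problem, since they are just elements of $\N^{\M}$.
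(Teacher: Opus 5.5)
Your proposal is correct and follows essentially the same route as the paper: you translate the recursive type over $(\K,\vec{A})$ into a recursive type over $\M$ with the code of $\K$ as an extra parameter, check finite satisfiability, and realize it using the recursive saturation of $\M$. The differences are only presentational. You quantify over indices $i$ rather than over set variables restricted by $\V \in \K$, and you state explicitly that, for standard $\varphi$, ``$\K \models \varphi$'' is just the syntactic relativization $\varphi^{\K}$, which the paper uses implicitly.
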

\begin{proof}
  Let $X \in \K$. Let $p(\V)$ be a recursive type over $(\K,X)$. 
  Let $\varphi_0(\V,X),\ldots,\varphi_n(\V,X) \in p(\V)$.
  Then $\K \models \exists \V(\varphi_0(\V,X) \land \cdots \land \varphi_n(\V,X))$.
  Thus, $\M \models \exists \V \in \K [\K \models \varphi_0(\V,X) \land \cdots \land \varphi_n(\V,X)]$.
  Hence, $q(\V) = \{\V \in \K \land \K \models \varphi(\V,X) : \varphi \in p(\V)\}$ is a recursive type over $(\M,X,\K)$. Since $\M$ is recursively saturated, $q(\V)$ is realizable by some $Y \in \M$.
  For such a $Y$, $\M \models [Y \in \K \land \K \models \varphi(Y,X)]$ for all $\varphi \in p(\V)$. 
  Hence $\K \models \varphi(Y,X)$ for all $\varphi \in p(\V)$. 
  Thus, $p(\V)$ is realized in $\K$.
\end{proof}

\begin{theorem}\label{thm compact consv}
  Any $\Pi^1_{n+2}$-formula provable in $\Sigma^1_{n+1}\hyp\DC_0$ is provable in $\ACAo + \beta_n\RFN^{\omega}(\ACAo)$.
\end{theorem}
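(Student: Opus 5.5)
We apply Lemma~\ref{lem conservation general} with $T = \Sigma^1_{n+1}\hyp\DC_0$ and $T' = \ACAo + \beta_n\RFN^{\omega}(\ACAo)$. By the additional part of that lemma, it suffices to show the following. For every countable recursively saturated $\M' \models \ACAo + \beta_n\RFN^{\omega}(\ACAo)$ and every $X \in \M'$, there is $\M \subseteq_{\beta_n} \M'$ with $X \in \M$ and $\M \models \Sigma^1_{n+1}\hyp\DC_0$. Since $\Sigma^1_{n+1}\hyp\DC_0$ is equivalent to $\Pi^1_{n+2}\hyp\beta_n\RFN_0$ (Lemma~\ref{beta-ref satisfies BP}(1)), we will verify the latter schema in $\M$.

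First apply Lemma~\ref{Lem Shrink models} to $X$. This gives a sequence $\langle \NN_i \rangle_{i \in \N^{\M'}} \in \M'$ such that $\M'$ believes $X \in \NN_i \in \NN_{i+1}$, $\beta_n(\NN_i)$, and $\NN_i \models \beta_n\RFN^r(\ACAo)$, for every standard $r>0$.

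We take $\M$ to be the union over \emph{standard} $i \in \omega$ of the (external) second-order parts of the $\NN_i$. More precisely, $\M$ has the first-order part of $\M'$ and second-order part $\bigcup_{i\in\omega}\{(\NN_i)_j : j \in \N^{\M'}\}$. Each $\NN_i$, viewed as an $L_2$-structure, is a $\beta_n$-submodel of $\M'$, because $\M' \models \beta_n(\NN_i)$. The chain is increasing, since $\NN_i \in \NN_{i+1}$ and $\NN_{i+1}$ is a model of $\ACAo$ closed under the elements coded by a set it contains. The external analogue of Lemma~\ref{Lem union of beta} then shows $\M \subseteq_{\beta_n} \M'$ and $\M \models \ACAo$: every finite tuple of parameters lies in some single $\NN_i$, and the $\Sigma^1_{k+1}$ witness argument transfers along the chain. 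The same argument shows that each $\NN_i \subseteq_{\beta_n} \M$.

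It remains to check $\Pi^1_{n+2}\hyp\beta_n\RFN$ in $\M$. Let $\sigma(\V) \in \Pi^1_{n+2}$, and let $A, Y \in \M$ with $\M \models \sigma(A)$. We need a coded $\beta_n$-model $\K \in \M$ with $A, Y \in \K$ and $\K \models \sigma(A)$. Choose a standard $i$ with $A, Y \in \NN_i$. Here lies the main obstacle: $\sigma(A)$ holds in $\M$, but it is $\Pi^1_{n+2}$, so a priori it is not transferred down to $\NN_i$, nor to a model coded in $\NN_{i+1}$. My plan to overcome this uses the lemma that coded $\beta_n$-models inside a recursively saturated model are recursively saturated, together with the full schema $\NN_{i+1}\models\beta_n\RFN^{r}(\ACAo)$ for all standard $r$.

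Suppose, toward a contradiction, that every $\beta_n$-model $\K \in \NN_{i+1}$ containing $A, Y$ satisfies $\lnot\sigma(A)$, that is, $\exists \U\,\theta(A,\U)$ with $\theta \in \Pi^1_{n+1}$. We then argue that some witness $Z$ lies in $\M$ with $\theta(A,Z)$ true in $\M$. Since $\Pi^1_{n+1}$ truth is not absolute for $\beta_n$-models, the plan is to pass to the $\Sigma^1_{n+1}$ form inside a model with more reflection. Using $\beta_n\RFN^{r}$ for large $r$ and recursive saturation, we pick one model $\K \in \NN_{i+1}$ that is simultaneously a $\beta_n$-model satisfying the $\beta_n\RFN^{r}(\ACAo)$ for all standard $r$. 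In $\K$, the failure of $\sigma(A)$ yields $Z\in\K$ with $\K\models\theta(A,Z)$. Then one shows $\theta(A,Z)$ holds in $\M$ by pushing the witness through the chain $\NN_{i+1},\NN_{i+2},\dots$, each of which satisfies the iterated reflection. This contradicts $\M\models\sigma(A)$.

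If this direct route fails, I would instead build the chain more carefully. Using recursive saturation, I would choose the chain so that each $\NN_{i+1}$ is also $\Pi^1_{n+2}$-elementary for the relevant formulas, that is, a type-realizing choice of $\NN_{i+1}$ reflecting all $\Pi^1_{n+2}$ facts true in $\M'$ about parameters in $\NN_i$. I expect this elementarity step to be where the real work lies.
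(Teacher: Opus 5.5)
There is a genuine gap. Your reduction to Lemma~\ref{lem conservation general} is fine, and so is your check that $\M=\bigcup_{i\in\omega}\NN_i$ is a $\beta_n$-submodel of $\M'$ satisfying $\ACAo$. The argument breaks exactly at the step you flag, and the proposed repair does not work.

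From $\K\models\theta(A,Z)$ with $\theta\in\Pi^1_{n+1}$ and $\K\subseteq_{\beta_n}\M$, nothing follows about $\M$. $\Pi^1_{n+1}$ statements pass only downward, from a model to its $\beta_n$-submodels. The later $\NN_{i+1},\NN_{i+2},\ldots$ may contain sets refuting $\theta(A,Z)$, and ``pushing the witness through the chain'' has no mechanism behind it. Also, assuming that only the models in $\NN_{i+1}$ fail $\sigma(A)$ is not the negation of what you must prove, which concerns all coded models in $\M$.

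In fact, the union of a chain given by Lemma~\ref{Lem Shrink models} need not satisfy $\Sigma^1_{n+1}\hyp\DC_0$. Here is a counterexample.
\begin{enumerate}
\item Let $\varphi(x)$ formalize $\beta_n\RFN^x(\ACAo)$ as in Section~5, chosen monotone in $x$.
\item Since $\ACAo+\beta_n\RFN^{s+1}(\ACAo)\nvdash\beta_n\RFN^{s+2}(\ACAo)$ for each $s$, compactness gives a countable recursively saturated $\M'\models\ACAo+\beta_n\RFN^{\omega}(\ACAo)$ and a nonstandard $h$ with $\M'\models\varphi(h+1)\land\lnot\varphi(h+2)$.
\item Fix $A$ such that no $\beta_n$-model in $\M'$ containing $A$ satisfies $\varphi(h+1)$.
\item Recursive saturation yields an internal chain $\langle\NN_i\rangle$ satisfying the conclusion of Lemma~\ref{Lem Shrink models} for $X=A$, and in addition $\NN_k\models\varphi(h)$ for every standard $k$. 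For finite satisfiability, use $\varphi(h+1)$ for finitely many initial levels and the proof of Lemma~\ref{Lem Shrink models} for the tail.
\item Then $\M\models\varphi(h+1)$: for a set in $\NN_i$, the model $\NN_i\in\NN_{i+1}\subseteq\M$ is a witness.
\item But $\M\not\models\varphi(h+2)$: a witness for $A$ in $\M$ would also be one in $\M'$.
\item Yet $\Sigma^1_{n+1}\hyp\DC_0\vdash\forall x(\varphi(x)\to\varphi(x+1))$, so $\M$ is not a model of $\Sigma^1_{n+1}\hyp\DC_0$.
\end{enumerate}
The same example defeats your fallback. $\varphi(h+1)$ is a $\Pi^1_{n+2}$ fact of $\M'$ that no coded $\beta_n$-model containing $A$ can reflect. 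In any case, the facts that need reflecting are those of the final $\M$, not those of $\M'$.

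The missing idea is to build the final model adaptively while shrinking the ambient model. The paper maintains two sequences with $\L_s\in\K_s$:
\begin{enumerate}
\item an increasing sequence $\L_s$, whose union will be $\M$;
\item a decreasing sequence of recursively saturated $\beta_n$-models $\K_s\models\beta_n\RFN^{\omega}(\ACAo)$.
\end{enumerate}
Recursive saturation is inherited by coded $\beta_n$-models, which lets Lemma~\ref{Lem Shrink models} be reapplied inside each $\K_s$. The $\Sigma^1_{n+1}\hyp\DC$ instances are handled one at a time by bookkeeping. For the current instance, the construction tries to build $H$ inside $\K_s$ by least-index choices along a chain $\langle\NN_x\rangle$:
\begin{enumerate}
\item If this succeeds, $H$ is put into $\L_{s+1}$.
\item If it fails at stage $x$, then $\NN_x\models\forall\U\lnot\theta_e(H_x,\U,B)$. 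One puts $H_x$ into $\L_{s+1}$ and sets $\K_{s+1}=\NN_x$.
\end{enumerate}
The final $\M=\bigcup_s\L_s$ is a $\beta_n$-submodel of every $\K_s$. So this $\Pi^1_{n+1}$ failure descends to $\M$, and the failure case occurs only when the hypothesis of the instance is already false in $\M$. This is also why it pays to verify $\DC$ directly rather than $\Pi^1_{n+2}\hyp\beta_n\RFN$: a failed attempt then yields a downward-persistent $\Pi^1_{n+1}$ counterexample about a single set.
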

\begin{proof}
  Let $\M'$ be a countable recursively saturated model of $\ACAo + \beta_n\RFN^{\omega}(\ACAo)$.
  We will show that for any $P \in \M'$, there exists $\M \subseteq_{\beta_n} \M'$ such that 
  $P \in \M$ and $\M \models \Sigma^1_{n+1}\hyp\DC_0$.

  Let $P \in \M'$. We construct $\beta_n$-submodels $\L_i$ and $\K_i$ of $\M'$ as follows.
  In what follows, let the second-order part of $\L_i$ be enumerated as $\{(\L_i)_m : m \in \omega\}$, and let $\{\theta_e : e \in \omega\}$ be an enumeration of $\Pi^1_{n}$-formulas with exactly three free set variables.
  \begin{itemize}
    \item $\L_i \subseteq \L_{i+1},\;\K_{i+1} \subseteq \K_{i},\; \L_i \in \K_i$ for all $i \in \omega$.
    \item $P \in \L_0$.
    \item $\K_i \models \beta_n\RFN^{\omega}(\ACAo)$ and $\K_i$ is recursively saturated for all $i \in \omega$.
    \item For $s = \langle e,i,m,m' \rangle$, either $\L_{s+1} \models \exists H(H_0 = (\L_i)_m \land \forall x \theta_e(H_x,H_{x+1},(\L_{i})_{m'})  )$ or $\K_{s+1} \models \forall \U \lnot \theta_e(X,\U,(\L_i)_{m'})$ for some $X \in \L_{s+1}$.
  \end{itemize}

  First, put $\K_0 = \M'$. Since $P \in \M'$ and $\M' \models \beta_n\RFN(\ACAo)$, there exists $\L_0 \in \M'$ such that $P \in \L_0$ and $\L_0 \models \ACAo$ and $\M' \models (\beta_n(\L_0))$.

  Assume that $s = \langle e,i,m,m' \rangle$ and  $\L_{0},\ldots,\L_{s}, \K_0,\ldots,\K_{s}$ are already defined. Let $\langle \NN_x \rangle_{x} \in \K_s$ be such that $\K_s \models \forall x [\L_s \in \NN_x \in \NN_{x+1} \land \beta_n(\NN_{x}) \land \NN_{x} \models \beta_n\RFN^{r}(\ACAo)]$ for all $r >0$.
  Within $\K_s$, each $\NN_x$ is regarded as the sequence $\langle (\NN_x)_y \rangle_{y \in \N}$.
  We try to make a sequence $\langle H_x \rangle_{x \in \N} \in \K_s$ such that 
  \begin{itemize}
    \item $H_0 = (\L_i)_m$,
    \item $\forall x \in \N$,  $H_{x+1} = (\NN_{x})_y$ where $y = \mu z. \NN_{x} \models \theta_e(H_x,(\NN_{x})_z,(\L_i)_{m'})$.
  \end{itemize}
  
  [Case 1: $H$ exists.]
  In this case, $H$ satisfies $\K_s \models \forall x \theta_e(H_x,H_{x+1},(\L_i)_{m'})$ because each $\NN_{x}$ is a $\beta_n$-model. 
  We define $\L_{s+1}$ and $\K_{s+1}$ as 
  \begin{itemize}
    \item $\L_s, H \in \L_{s+1} \in \K_s$ such that $\K_s \models \beta_n(\L_{s+1})$.
    \item $\K_{s+1} = \K_s$.
  \end{itemize}
  Since $\K_s \models \beta_n\RFN^{\omega}(\ACAo)$, such an $\L_{s+1}$ exists. 

  \vspace*{0.5em}

  [Case 2: The construction of $H$ fails.]
  Let $x \in \N$ be the smallest one such that 
  $$\NN_x \models \lnot \exists \U \theta_e(H_x,\U,(\L_i)_{m'}).$$
  Put $\K_{s+1} = \NN_x$. Then, take $\L_{s+1} \in \K_{s+1}$ such that 
  $\L_{s},H_x \in \L_{s+1}$ and $\K_{s+1} \models \beta_n(\L_{s+1})$. Since $\K_{s+1} \models \beta_n\RFN^{\omega}(\ACAo)$, such an $\L_{s+1}$ exists.
 
  In both cases, it is easy to see that $\L_{s+1}$ and $\K_{s+1}$ satisfy the desired conditions.

  We define $\M = \bigcup_{i \in \omega} \L_i$.
  Then, $P \in \M$. Since $\L_i \subseteq_{\beta_n} \M'$ and $\L_i \subseteq \L_{i+1}$ for all $i$, $\M \subseteq_{\beta_n} \M'$.
  To prove $\M \models \Sigma^1_{n+1}\hyp\DC_0$, let $\theta_e$ be a $\Pi^1_{n}$-formula and $B \in \M$ be such that $\M \models \forall \V \exists \U \theta_e(\V,\U,B)$.
  We show that $\M \models \forall A \exists H (H_0 = A \land \forall x \theta_e(H_x,H_{x+1},B))$.
  
  Let $A \in \M$.
  Then, there exist $i,m,m' \in \omega$ such that 
  $A = (\L_i)_m,B = (\L_i)_{m'}$. Put $s = \langle e,i,m,m' \rangle$. 
  By the construction of $\L_{s+1}$, either $\L_{s+1} \models \exists H(H_0 = (\L_i)_m \land \forall x \theta_e(H_x,H_{x+1},(\L_{i})_{m'})  )$ or $\K_{s+1} \models \forall \U \lnot \theta_e(X,\U,(\L_i)_{m'})$ for some $X \in \L_{s+1}$.
  We show that the latter never happens.
  Since $\K_{s+1} \subseteq_{\beta_n} \M'$ and $\M \subseteq_{\beta_n} \M'$
  and $\K_{s+1} \supseteq \L_{j}$ for all $j \in \omega$, we obtain $\M \subseteq_{\beta_n} \K_{s+1}$.
  Thus, if $\K_{s+1} \models \forall \U \lnot \theta_e(X,\U,(\L_i)_{m'})$ for an $X \in \L_{s+1}$, then 
  $\M \models \forall \U \lnot \theta_e(X,\U,(\L_i)_{m'})$. However, $\M \models \forall \V \exists \U\theta_e (\V,\U,(\L_i)_{m'})$.

  Now $\L_{s+1} \models \exists H(H_0 = A \land \forall x \theta_e(H_x,H_{x+1},B))$. Since $\L_{s+1} \subseteq_{\beta_n}\M$, $\M \models \exists H(H_0 = A \land \forall x \theta_e(H_x,H_{x+1},B))$.
  This completes the proof.
\end{proof}

We next show the conservation for $\Sigma^1_{n+1}\hyp\TDC_0$.
The proof is essentially the same as for $\Sigma^1_{n+1}\hyp\DC_0$, but we need a slight modification.
First, we prepare a new lemma corresponding to Lemma~\ref{Lem Shrink models}.

\begin{lemma}\label{lem shrink trfn}
  Let $\M$ be a recursively saturated model of $\ACAo + \beta_n\TRFN^{\omega}(\ACAo)$ and $W \in \M$ be such that $\M \models \WO(W)$.
  Then, for any $X \in \M$, there exists $\langle \NN_a \rangle_{a \in W} \in \M$ with the following property:
  $\M \models \forall a \in W(X, \NN_{<a} \in \NN_{a} \land \beta_n(\NN_a) \land \NN_a \models \beta_n\TRFN^{r}(\ACAo))$ for all $r \in \omega, r > 0$.
\end{lemma}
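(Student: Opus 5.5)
The proof follows that of Lemma~\ref{Lem Shrink models}, using recursive saturation. Fix $X$ and $W$ with $\M \models \WO(W)$. Consider the set of formulas
$$p(\langle \NN_a\rangle_{a}) = \{\forall a \in W(X,\NN_{<a} \in \NN_a \land \beta_n(\NN_a) \land \NN_a \models \beta_n\TRFN^{r}(\ACAo)) : r > 0\}$$
with parameters $X,W$. It is recursive. The conjunction of finitely many members is implied by the member with the largest $r$, because $\ACAo \vdash \beta_n\TRFN^{r}(\ACAo) \to \beta_n\TRFN^{r'}(\ACAo)$ for $r' < r$, and this implication holds inside each $\beta_n$-model $\NN_a$. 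So by recursive saturation it suffices to show, for each fixed standard $r>0$,
$$\M \models \exists \langle \NN_a \rangle_{a \in W}\, \forall a \in W(X,\NN_{<a} \in \NN_a \land \beta_n(\NN_a) \land \NN_a \models \beta_n\TRFN^{r}(\ACAo)).$$

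Fix $r$ and work inside $\M$, which satisfies $\beta_n\TRFN^{r+1}(\ACAo) \equiv \beta_n\TRFN(\ACAo + \beta_n\TRFN^r(\ACAo))$. Apply this sentence directly to the well-order $W$ and to $\U := X$ (no parameter $\V$ is needed, since the reflected theory is a sentence). Its definition gives a sequence $\langle \NN_a\rangle_{a\in W}$ with
$$\forall a \in W\,(X,\NN_{<a} \in \NN_a \land \beta_n(\NN_a) \land \NN_a \models \ACAo + \beta_n\TRFN^r(\ACAo)).$$
This is exactly the required condition. Unlike the $\omega$-chain case of Lemma~\ref{Lem Shrink models}, we do not need to first pass to an intermediate model $\K$ and build the chain inside it. The transfinite reflection already produces the whole $W$-indexed chain with $\NN_{<a}\in\NN_a$.

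The only point needing care is the meaning of ``$\NN_a \models \beta_n\TRFN^{r}(\ACAo)$'' for nonstandard-free but syntactically complex sentences. Since $r$ is standard, this is a fixed standard sentence, so satisfaction in a coded model is arithmetical in $\NN_a$ and no evaluation issues arise. Also, the conjunction over all $r$ is not a single formula; this is precisely why recursive saturation, rather than a direct argument in $\M$, is used to obtain one sequence that works for all $r$ simultaneously.

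The main (mild) obstacle is checking that the type is finitely satisfiable uniformly: it must be realized by a single sequence variable, with the quantifier over $a\in W$ inside each formula. This is handled by the monotonicity in $r$ noted above.
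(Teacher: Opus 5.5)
Your proposal is correct and is essentially the paper's proof: the conditions indexed by $r$ form a recursive type over $(\M,X,W)$, and each finite part is realized in $\M$ by applying $\beta_n\TRFN^{r+1}(\ACAo)$ directly to $W$ and $\U := X$. You also make explicit the monotonicity $\ACAo \vdash \beta_n\TRFN^{r}(\ACAo) \to \beta_n\TRFN^{r'}(\ACAo)$ for $r' < r$, which the paper uses only implicitly to reduce finite satisfiability to a single $r$.
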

\begin{proof}
  Let $\M$ be a recursively saturated model of $\ACAo + \beta_n\TRFN^{\omega}(\ACAo)$ and $X,W \in \M$ be such that $\M \models \WO(W)$.
  We will show that $\{\forall a \in W(X,\NN_{<a} \in \NN_{a} \land \beta_n(\NN_a) \land \NN_a \models \beta_n\TRFN^{r}(\ACAo)) : r >0\}$ is a recursive type over $(\M,X,W)$.

  Fix an $r > 0$. Since $\M \models \beta_n\TRFN^{r+1}(\ACAo)$, 
  $\forall a \in W(X, \NN_{<a} \in \NN_{a} \land \beta_n(\NN_a) \land \NN_a \models \beta_n\TRFN^{r}(\ACAo))$ is realizable in $\M$. This completes the proof.
\end{proof}

\begin{theorem}
  Any $\Pi^1_{n+2}$-formula provable in $\Sigma^1_{n+1}\hyp\TDC_0$ is provable in $\ACAo + \beta_n\TRFN^{\omega}(\ACAo)$.
\end{theorem}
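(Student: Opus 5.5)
We follow the proof of Theorem~\ref{thm compact consv}, with Lemma~\ref{lem shrink trfn} replacing Lemma~\ref{Lem Shrink models}. By Lemma~\ref{lem conservation general}, it suffices to take a countable recursively saturated $\M' \models \ACAo + \beta_n\TRFN^{\omega}(\ACAo)$ and $P \in \M'$, and to find $\M \subseteq_{\beta_n} \M'$ with $P \in \M$ and $\M \models \Sigma^1_{n+1}\hyp\TDC_0$.

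We build an increasing sequence $\L_i$ and a decreasing sequence $\K_i$ of $\beta_n$-submodels of $\M'$. They satisfy $\L_i \in \K_i$, $P \in \L_0$, and each $\K_i$ is recursively saturated with $\K_i \models \beta_n\TRFN^{\omega}(\ACAo)$. Here $\K_{s+1}$ is either $\K_s$ or a coded $\beta_n$-model inside $\K_s$. Recursive saturation passes down by the lemma on coded $\beta_n$-submodels. Since $\TRFN^{r+1}$ implies $\RFN$, we can always find $\L_{s+1} \in \K_{s+1}$ with $\L_s$ and any given set as members.

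The requirements are now indexed by $s = \langle e,i,m,m',m''\rangle$, where $(\L_i)_{m''} = W$ is a candidate well-order. At stage $s$ we first check whether $\K_s \models \WO(W)$. If not, then $W$ is ill-founded in every later $\K_t$; to ensure it is ill-founded in the final $\M$, we put a descending sequence witnessing this into $\L_{s+1}$. A descending sequence is arithmetic, so it is absolute. If $\K_s \models \WO(W)$, we apply Lemma~\ref{lem shrink trfn} inside $\K_s$ to get $\langle \NN_a\rangle_{a\in W} \in \K_s$ with $\L_s, \NN_{<a} \in \NN_a$, $\beta_n(\NN_a)$ and $\NN_a \models \beta_n\TRFN^r(\ACAo)$ for all standard $r$. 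We then attempt arithmetical transfinite recursion along $W$ inside $\K_s$, which is available there because $\beta_n\TRFN(\ACAo)$ implies $\ATRo$ by Theorem~\ref{thm TRFN vs TR}. The recursion sets $H_0 = (\L_i)_m$ and lets $H_a$ be the least element $(\NN_a)_z$ with $\NN_a \models \theta_e(H_{<a},(\NN_a)_z,(\L_i)_{m'})$. This works because $H_{<a}$ is arithmetic in $\NN_{<a}$ and so lies in $\NN_a$.

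There are two cases. In the first, the recursion succeeds along all of $W$. Then $\beta_n$-absoluteness gives $\Hier^{\theta_e}$, and we put $H$ into $\L_{s+1}$ with $\K_{s+1}=\K_s$. In the second, the recursion fails at some least $a \in W$, which exists because $W$ is well-ordered in $\K_s$. Then $\NN_a \models \forall\U\lnot\theta_e(H_{<a},\U,(\L_i)_{m'})$, and we set $\K_{s+1} = \NN_a$ and put $H_{<a}$ into $\L_{s+1}$. In this case $\K_{s+1}$ must be a model of $\beta_n\TRFN^\omega(\ACAo)$ for all standard $r$ at once, and this is exactly what the recursive-saturation lemma supplies.

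Finally let $\M = \bigcup_i \L_i$. As before, $\M \subseteq_{\beta_n} \K_t \subseteq_{\beta_n} \M'$ for every $t$. If $\M \models \WO(W)$, then every $\K_t \models \WO(W)$: an ill-founded $W$ would have had its descending sequence put into $\M$ at stage $s$. The second case is excluded as in the $\DC$ proof, because $\M \models \forall\V\exists\U\theta_e$ and $\forall\U\lnot\theta_e(H_{<a},\U,B)$ is $\Pi^1_{n+1}$ and would transfer down to $\M$. So the first case occurred, and its witness lies in $\M$ and is correct by $\beta_n$-absoluteness.

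The \emph{main obstacle} is the well-foundedness bookkeeping. $\WO$ is $\Pi^1_1$, so it transfers only from $\M'$ down to $\M$, not upward. The descending-sequence step, together with $\K_t$ sharing second-order truth with $\M'$ up to $\Pi^1_n$, must guarantee that every $W$ well-founded in $\M$ is well-founded in the relevant $\K_s$. For $n=0$ this needs care, since $\beta_0$-submodels do not preserve $\Pi^1_1$ truth. We expect to handle it by the dichotomy at stage $s$, using that $\Sigma^1_1$ ill-foundedness witnesses can always be absorbed into $\L_{s+1}$.
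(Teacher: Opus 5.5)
\textbf{Gap: why $H_{<a}\in\NN_a$.} Your construction follows the paper's: the same $\L_i$/$\K_i$ chains, the same requirements indexed by $\langle e,i,m,m',m''\rangle$, Lemma~\ref{lem shrink trfn} applied inside $\K_s$, the same two cases, and the same final argument. However, the one genuinely technical step is justified incorrectly. You claim $H_{<a}\in\NN_a$ ``because $H_{<a}$ is arithmetic in $\NN_{<a}$.'' That is not justified. Each $H_b$ is chosen by evaluating $\theta_e$ at the whole initial segment $H_{<b}$, so $H_{<a}$ is the result of a transfinite recursion along $W\mathord{\upharpoonright}_a$, not of a fixed finite number of jumps. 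There is no reason for it to be arithmetic in $\NN_{<a}$ once $W$ is long.

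This membership cannot be skipped. Without it, ``$\NN_a\models\theta_e(H_{<a},\ldots)$'' is meaningless, and in Case~2 you cannot put $H_{<a}$ into an $\L_{s+1}\in\NN_a=\K_{s+1}$. The paper secures it as follows. It notes that $\NN_a\models\Pi^1_n\hyp\TR_0$, since $\NN_a\models\beta_n\TRFN(\ACAo)$ and Theorem~\ref{thm TRFN vs TR} applies. It then builds the segment $\langle H^a_b\rangle_{b<a}$ \emph{inside} $\NN_a$ and glues these coherent segments via $H_a=H^{a+1}_a$. That gluing needs $a+1\in W$, which is why the paper treats well-orders with a maximum separately.

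\textbf{Repair.} Your version can be fixed with the same idea, run inside $\NN_a$:
\begin{enumerate}
\item $\NN_a\models\ATRo$.
\item $\NN_a\models\WO(W\mathord{\upharpoonright}_a)$, by downward absoluteness from $\K_s$.
\item $W,\NN_{<a},(\L_i)_m,(\L_i)_{m'}\in\NN_a$, since they lie in $\L_s\in\NN_a$.
\end{enumerate}
Hence $\NN_a$ contains a sequence satisfying your recursion clause along $W\mathord{\upharpoonright}_a$. That clause only evaluates satisfaction in the coded models $\NN_b$ for $b<a$, so it is arithmetical and therefore absolute to $\K_s$. Arithmetical transfinite induction along $W$ in $\K_s$ (where $\WO(W)$ holds) then shows this sequence equals $H_{<a}$.

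With that fix your single recursion in $\K_s$ works. Because you pick $H_a$ from $\NN_a$ itself, you also avoid the paper's separate maximum-element case.

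\textbf{Your ``main obstacle.''} The well-foundedness issue you flag is not a real obstacle. Your own dichotomy at stage $s$ already gives $\M\models\WO(W)\Rightarrow\K_s\models\WO(W)$, and that is all the final step uses. Your stronger claim that every $\K_t$ satisfies $\WO(W)$ is unnecessary, and it need not hold for $t<s$ when $n=0$.
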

\begin{proof}
  Let $\M'$ be a countable recursively saturated model of $\ACAo + \beta_n\TRFN^{\omega}(\ACAo)$. We will show that for any $P \in \M'$, there exists $\M \subseteq_{\beta_n} \M'$ such that $P \in \M$ and $\M \models \Sigma^1_{n+1}\hyp\TDC_0$.

  In what follows, let the second-order part of $\L_i$ be enumerated as $\{(\L_i)_m : m \in \omega\}$, and let $\{\theta_e : e \in \omega\}$ be an enumeration of $\Pi^1_{n}$-formulas with exactly three free set variables.
  \begin{itemize}
    \item $\L_i \subseteq \L_{i+1}, \K_{i+1} \subseteq \K_i, \L_i \in \K_i$ for all $i \in \omega$.
    \item $P\in \L_0$.
    \item $\K_i \models \beta_n\TRFN^{\omega}(\ACAo)$ and $\K_i$ is recursively saturated for all $i \in \omega$.
    \item For $s = \langle e,i,m,m',m'' \rangle$ one of the following holds.
    \begin{enumerate}
      \item $\L_{s+1} \models \lnot \WO((\L_i)_{m''})$.
      \item $\L_{s+1}$ believes that $(\L_i)_{m''}$ is a well-order with a maximum element.
      \item $\L_{s+1} \models \WO((\L_i)_{m''}) \land \exists H(H_0 = (\L_i)_m \land \forall a \in ((\L_i)_{m''})^{+} (\theta_e(H_{<a},H_a,(\L_i)_{m'})) )$.
      \item $\K_{s+1} \models \forall \U \lnot \theta_e(X,\U,(\L_i)_{m'}) $ for some $X \in \L_{s+1}$.
    \end{enumerate}
  \end{itemize}
  First, put $\K_0 = \M'$. Then, take $\L_0 \in \M'$ such that 
  $\M' \models P \in \L_0 \land \L_0 \models \ACAo \land \beta_n(\L_0)$.

  Assume that $s = \langle e,i,m,m',m'' \rangle$ and $\L_0,\ldots,\L_s,\K_0,\ldots,\K_s$ are already defined.
  
  If $\K_s$ believes that $(\L_i)_{m''}$ is not a linear order, then we put $\K_{s+1} = \K_s$ and $\L_{s+1} = \L_s$.

  If $\K_s$ believes that $(\L_i)_{m''}$  is an ill-founded linear order, then
take a descending sequence $f$ of $(\L_i)_{m''}$. Put $\K_{s+1} = \K_s$, and choose
  $\L_{s+1} \in \K_{s+1}$ such that $\L_{s},f \in \L_{s+1}$ and $\K_{s+1} \models \beta_n(\L_{s+1})$. 

  If $\K_s$ believes $(\L_i)_{m''}$ is a well-order with a maximum element, then we put $\L_{s+1} = \L_s$ and $\K_{s+1} = \K_s$.
  
  Assume $\K_s$ believes $(\L_i)_{m''}$ is a well-order having no maximum. Let $W= (\L_i)_{m''}$.
  By Lemma~\ref{lem shrink trfn}, take $\langle \NN_a \rangle_{a \in W}$ such that 
  \begin{align*}
    \K_s \models \forall a \in W(\L_{s},\NN_{<a} \in \NN_a \land \beta_n(\NN_a) \land \NN_a \models \beta_n\TRFN^{r}(\ACAo) ) \text{ for all } r \in \omega, r >0.
  \end{align*}
  We note that $\NN_a \models \Pi^1_{n}\hyp\TR_0$ for all $a \in W$. This follows from $\NN_a \models \beta_n\TRFN^{\omega}(\ACAo)$ by Theorem~\ref{thm TRFN vs TR}.
  We define a double-indexed sequence $\langle \langle H^a_b \rangle_{b < a} : a \in W \rangle \in \K_s$.
  For each $a \in W^+$, we define $\langle H^a_{b} \rangle_{b < a} \in \NN_a$ as follows.
  \begin{itemize}
    \item $H^a_0 = (\L_i)_m$.
    \item for $0 < b < a$, $$H^a_{b} = \begin{cases}
      (\NN_b)_y \text{ where } y = \min \{z  : \NN_a \models \theta_e(H^a_{<b},(\NN_b)_z ,(\L_i)_{m'})\} & \text{ if $y$ exists} \\
      \varnothing & \text{otherwise}
      \end{cases}.$$
  \end{itemize}
  Since $H^a_b$ is $\A(\Pi^1_{n})$-definable\footnote{Here, $\A(\Gamma)$ denotes the set of formulas generated by $\forall x \in \N, \land, \lnot$ from $\Gamma$.} from $H^a_{<b}$ and $\NN_b \in \NN_a$, this definition works.
  Then, $\K_s \models \forall a \in W(\forall b,a' < a (b < a' \to H^a_b = H^{a'}_b ))$ by induction on $a$.
  Define $\langle H_a \rangle_{a \in W} \in \K_{s}$ by $H_a = H^{a+1}_a$.
  Then, there are two possible cases.

  [Case 1 $\K_s \models \forall a \in W^+(\theta_e(H_{<a},H_a,(\L_i)_{m'}))$.]
  In this case, put $\K_{s+1} = \K_s$, and choose $\L_{s+1} \in \K_{s}$ such that 
  $\L_s, \langle H_a \rangle_{a \in W} \in \L_{s+1} \land \K_s \models \beta_n(\L_{s+1})$.

  [Case 2 $\K_s \models \exists a \in W^+ \lnot \theta_e(H_{<a},H_a,(\L_i)_{m'})$.]
  Since $\{a \in W^+ : \lnot \theta_e(H_{<a},H_a, (\L_i)_{m'})\}$ is $\Sigma^1_{n}$-definable, it exists in $\K_{s}$. Then, there exists a minimum element $a \in W^+$ such that 
  $\K_s \models \lnot \theta_e(H_{<a},H_a,(\L_i)_{m'})$.
  Put $\K_{s+1} = \NN_a$. Then, by the construction of $H_a$, $\NN_a \models \lnot \exists \U \theta_{e}(H_{<a},\U,(\L_i)_{m'})$. Let $\L_{s+1} \in \NN_{a}$ be such that 
  $\L_{s},H_{<a} \in \L_{s+1}$ and $\NN_a \models \beta_n(\L_{s+1})$.
  This completes the construction of $\L_{s+1}$ and $\K_{s+1}$.

  Put $\M = \bigcup_{i \in \omega} \L_i$. Then, for the same reason as in Theorem~\ref{thm compact consv}, $\M \models \Sigma^1_{n+1}\hyp\TDC_0$. This completes the proof.
\end{proof}

We next see that the same proof works for $\B(\Pi^1_{n+2})$-conservation.

\begin{lemma}\label{lem consv for boole}
  Let $T$ and $T'$ be $L_2$-theories including $\ACAo$. Assume that for any countable recursively saturated model $\M' \models T'$, second-order parameters $\vec{X} \in \M'$ and $\Pi^1_{n+2}$-sentence $\eta$ such that $\M' \models \eta$, there exists $\M \subseteq_{\beta_n} \M'$ such that $\vec{X} \in \M \land \M \models T + \eta$. 
  Then, any $\B(\Pi^1_{n+2})$-sentence provable in $T$ is provable in $T'$. 
\end{lemma}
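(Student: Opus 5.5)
We argue contrapositively, as in the proof of Lemma~\ref{lem conservation general}, after a normal-form reduction. Every $\B(\Pi^1_{n+2})$-sentence is logically equivalent to a conjunction of disjunctions of the form $\sigma \lor \pi$ with $\sigma \in \Sigma^1_{n+2}$ and $\pi \in \Pi^1_{n+2}$. Each of $\sigma$ and $\pi$ is a sentence, so a disjunction of several $\Pi^1_{n+2}$-sentences collapses to one $\Pi^1_{n+2}$-sentence, and similarly for $\Sigma^1_{n+2}$. A conjunction is provable exactly when each conjunct is. Hence it suffices to show: if $T \vdash \sigma \lor \pi$ with $\sigma \in \Sigma^1_{n+2}$ and $\pi \in \Pi^1_{n+2}$ sentences, then $T' \vdash \sigma \lor \pi$.

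Suppose $T' \nvdash \sigma \lor \pi$. Then $T' + \lnot\sigma + \lnot\pi$ is consistent and has a countable model. Passing to a countable recursively saturated elementary extension $\M'$, as in the additional part of Lemma~\ref{lem conservation general}, we have $\M' \models T' + \lnot\sigma + \lnot\pi$, and $\M'$ is still countable and recursively saturated. Put $\eta := \lnot\sigma$; this is a $\Pi^1_{n+2}$-sentence true in $\M'$.

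Write $\lnot\pi$ as $\exists \V \forall \U \theta(\V,\U)$ with $\theta \in \Sigma^1_{n}$, i.e.\ $\forall\U\theta$ is $\Pi^1_{n+1}$. Choose $X \in \M'$ with $\M' \models \forall \U \theta(X,\U)$. By the hypothesis applied to $\M'$, the parameter $X$ and $\eta$, there is $\M \subseteq_{\beta_n} \M'$ with $X \in \M$ and $\M \models T + \eta$.

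It remains to transfer $\lnot\pi$ down to $\M$. The formula $\theta(X,Y)$ is $\Sigma^1_n$, hence $\B(\Pi^1_n)$ with parameters in $\M$. Since $\M \subseteq_{\beta_n} \M'$, the two models agree on it for all $Y \in \M$. Because $\M'$ satisfies $\theta(X,Y)$ for every $Y \in \M'$, in particular for every $Y \in \M$, we get $\M \models \forall \U \theta(X,\U)$, so $\M \models \lnot\pi$. Together with $\M \models \eta = \lnot\sigma$, this gives $\M \models T + \lnot(\sigma \lor \pi)$. Thus $T \nvdash \sigma \lor \pi$, a contradiction.

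The only delicate points are, first, that $\beta_n$-submodels preserve $\Pi^1_{n+1}$ statements downward. This holds because universal set quantification over the smaller domain is weaker and the $\Sigma^1_n$ matrix is absolute. Second, the recursively saturated elementary extension must still satisfy $\lnot\sigma$ and $\lnot\pi$, which holds because the extension is elementary.
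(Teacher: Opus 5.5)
Your proof is correct and follows the same route as the paper. You reduce to disjunctions $\sigma \lor \pi$ with $\sigma \in \Sigma^1_{n+2}$ and $\pi \in \Pi^1_{n+2}$, take a countable recursively saturated model of $T' + \lnot\sigma + \lnot\pi$, apply the hypothesis with $\eta = \lnot\sigma$ and a witness for $\lnot\pi$ as parameter, and transfer $\lnot\pi$ down the $\beta_n$-submodel as in Lemma~\ref{lem conservation general}; along the way you spell out the normal-form reduction and the choice of $\eta$, which the paper leaves implicit.
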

\begin{proof}
  To show $\B(\Pi^1_{n+2})$-conservation, it is enough to show that 
  for any $\sigma \in \Sigma^1_{n+2}$ and $\pi \in \Pi^1_{n+2}$, if 
  $T \vdash \sigma \lor \pi$, then $T' \vdash \sigma \lor \pi$.

  Assume $T' \not \vdash \sigma \lor \pi$.
  Then $T' + \lnot \sigma + \lnot \pi$ has a recursively saturated model $\M'$.
  By the same argument as in Lemma~\ref{lem conservation general}, we obtain a model $\M$ of $T + \lnot \sigma + \lnot \pi$. Thus, $T \not \vdash \sigma \lor \pi$. This completes the proof. 
\end{proof}

We mimic the proof of Theorem~\ref{thm compact consv} for $\Sigma^1_{n+1}\hyp\DC_0$ and $\ACAo + (\Pi^1_{n+2})^{-}\hyp\beta_n\RFN$.
The following lemma corresponds to Lemma~\ref{Lem Shrink models}.

\begin{lemma}
  Let $\eta$ be a $\Pi^1_{n+2}$-sentence.
  Let $\M$ be a recursively saturated model of $\ACAo + \beta_n\RFN^{\omega}(\eta)$. Then, for any $X \in \M$, there exists $\langle \NN_x \rangle_{x \in \N^{\M}} \in \M$ with the following property:
  $$
  \M \models \forall x (X \in \NN_x \in \NN_{x+1} \land \beta_n(\NN_x) \land \NN_x \models \eta + \beta_n\RFN^r(\eta)) \text{ for all } r > 0.
  $$
\end{lemma}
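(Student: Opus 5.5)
We argue exactly as in Lemma~\ref{Lem Shrink models}, via recursive saturation. Fix $X \in \M$ and consider the set of formulas
$$
p(\langle \NN_x \rangle_x) = \{\forall x (X \in \NN_x \in \NN_{x+1} \land \beta_n(\NN_x) \land \NN_x \models \eta + \beta_n\RFN^r(\eta)) : r > 0\}.
$$
This is a recursive type in the single set variable $\langle \NN_x \rangle_x$ with parameter $X$, since $\eta$ is a fixed standard sentence and $r \mapsto \beta_n\RFN^r(\eta)$ is recursive. By recursive saturation of $\M$, it suffices to show that each finite subset of $p$ is realized in $\M$. The instances are monotone in $r$: $\ACAo \vdash \beta_n\RFN^{r}(\eta) \to \beta_n\RFN^{r'}(\eta)$ for $r' < r$, and this implication holds inside any $\beta_n$-model $\NN_x$, which satisfies $\ACAo$. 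Hence it is enough to realize a single instance for each $r$.

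Fix $r > 0$ and work in $\M$. We use $\beta_n\RFN^{r+2}(\eta) \equiv \beta_n\RFN(\eta + \beta_n\RFN^{r+1}(\eta))$ to obtain $\K$ with $X \in \K$, $\beta_n(\K)$ and $\K \models \eta + \beta_n\RFN^{r+1}(\eta)$. Inside $\K$, the axiom $\beta_n\RFN^{r+1}(\eta) = \beta_n\RFN(\eta + \beta_n\RFN^r(\eta))$ provides, for each $Y \in \K$, some $\NN \in \K$ with $Y \in \NN$, $\K \models \beta_n(\NN)$ and $\NN \models \eta + \beta_n\RFN^r(\eta)$. Since $\K$ is a $\beta_n$-model, $\K \models \beta_n(\NN)$ is equivalent to $\beta_n(\NN)$: the formula is $\Pi^1_n$, and this absoluteness is what the proof of Lemma~\ref{Lem Shrink models} already uses. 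The satisfaction clause $\NN \models \cdots$ is arithmetical in $\NN$ and hence absolute as well. We iterate, taking $\NN_0 \ni X$ and $\NN_{x+1} \ni \NN_x$ at each stage by choosing the least index in $\K$ that works. This gives a sequence $\langle \NN_x \rangle_x$ arithmetically definable from $\K$, which therefore exists by $\ACAo$ in $\M$.

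The only point requiring care is the use of $r+2$ rather than $r+1$. This is needed so that $\K$ itself satisfies $\eta$ together with a reflection strong enough to produce models of $\eta + \beta_n\RFN^r(\eta)$, rather than only models of $\beta_n\RFN^r(\eta)$; otherwise the extra conjunct $\eta$ would not be available in the $\NN_x$. Alternatively, one could note that $\beta_n\RFN^{r+1}(\eta)$ already yields models of $\eta + \beta_n\RFN^r(\eta)$ directly, so $r+1$ suffices. Either choice works, and the rest is routine.
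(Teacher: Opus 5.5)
Your argument is correct and follows the paper's proof, which reruns the proof of Lemma~\ref{Lem Shrink models}: finite realizability of the recursive type, using $\beta_n\RFN^{r+2}(\eta)$ in $\M$ to get a coded $\beta_n$-model $\K \ni X$ of $\eta + \beta_n\RFN^{r+1}(\eta)$, then building the chain $\langle \NN_x \rangle_x$ arithmetically inside $\K$. Drop your closing remark that ``$r+1$ suffices'': if $\K$ were obtained from $\beta_n\RFN^{r+1}(\eta)$ in $\M$, the models it reflects to would satisfy only $\eta + \beta_n\RFN^{r-1}(\eta)$, so the shift to $r+2$ is forced by passing into $\K$ (not by the availability of $\eta$, which is built into the definition of the iteration), and $r+1$ is enough only for the level used inside $\K$.
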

\begin{proof}
  Essentially the same as for Lemma~\ref{Lem Shrink models}.
\end{proof}
To apply this lemma, we use the following property of $(\Pi^1_{n+2})^{-}\hyp\beta_n\RFN$.
\begin{lemma}
  Let $\eta$ be a $\Pi^1_{n+2}$-sentence. Then, $\ACAo + (\Pi^1_{n+2})^{-}\hyp\beta_n\RFN$ proves $\eta \to \beta_n\RFN^r(\eta)$ for all $r > 0$.
\end{lemma}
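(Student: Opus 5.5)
The plan is to argue by meta-induction on $r > 0$, working inside $\ACAo + (\Pi^1_{n+2})^{-}\hyp\beta_n\RFN$ and assuming $\eta$. The engine of the induction is the following observation: whenever $\beta_n\RFN^r(\eta)$ is itself a $\Pi^1_{n+2}$-sentence, the conjunction $\eta \land \beta_n\RFN^r(\eta)$ is again a $\Pi^1_{n+2}$-sentence. It is therefore an admissible instance of the sentence-restricted schema $(\Pi^1_{n+2})^{-}\hyp\beta_n\RFN$.

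So the first step is to check the complexity claim. For a sentence $\sigma$, the formula $\beta_n\RFN(\sigma) \equiv \forall \U \exists \M(\U \in \M \land \beta_n(\M) \land \M \models \sigma)$ is $\Pi^1_{n+2}$. Here $\beta_n(\M)$ is $\Pi^1_n$, and $\M \models \sigma$ is arithmetical in $\M$, since $\sigma$ is a fixed standard sentence and the satisfaction relation is available in $\ACAo$. This is the same observation used in Lemma~\ref{lem BP can be applied to beta_nRFN}(2) and in the proof of Lemma~\ref{beta-ref satisfies BP}(2). By a trivial induction on $r$, each $\beta_n\RFN^r(\eta)$ is then a $\Pi^1_{n+2}$-sentence, and hence so is $\eta \land \beta_n\RFN^r(\eta)$, up to provable equivalence in $\ACAo$.

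Next comes the base case $r = 1$. Since $\eta \in (\Pi^1_{n+2})^{-}$, the corresponding instance of the schema reads $\eta \to \beta_n\RFN(\eta)$, which is exactly $\eta \to \beta_n\RFN^1(\eta)$.

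For the induction step, assume $\eta \to \beta_n\RFN^r(\eta)$ is provable. Given $\eta$, we obtain $\eta \land \beta_n\RFN^r(\eta)$. This conjunction is a $\Pi^1_{n+2}$-sentence, so the schema instance for it yields $\beta_n\RFN(\eta + \beta_n\RFN^r(\eta))$, which is $\beta_n\RFN^{r+1}(\eta)$ by definition.

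The only point needing care is whether the schema is stated for formulas literally in $\Pi^1_{n+2}$ or up to provable equivalence. If literal membership is required, I would replace $\eta \land \beta_n\RFN^r(\eta)$ by an $\ACAo$-provably equivalent prenex $\Pi^1_{n+2}$-sentence $\eta'$. I would then use that $\ACAo$ proves $\beta_n$-models respect this equivalence: every $\beta_n$-model satisfies $\ACAo$, so $\M \models \eta'$ if and only if $\M \models \eta \land \beta_n\RFN^r(\eta)$, provably in $\ACAo$. This transfers $\beta_n\RFN(\eta')$ to $\beta_n\RFN^{r+1}(\eta)$. I expect this bookkeeping to be the main obstacle, but it is routine.
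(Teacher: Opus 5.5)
Your proposal is correct and is essentially the paper's proof. Both argue by induction on $r$, using that $\beta_n\RFN^r(\eta)$, and hence $\eta \land \beta_n\RFN^r(\eta)$, is a $\Pi^1_{n+2}$-sentence, so the sentence-restricted schema applies to it at each step and yields $\beta_n\RFN^{r+1}(\eta)$. The paper leaves the prenex-form bookkeeping you flag implicit, and it is indeed routine.
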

\begin{proof}
  We note that $\beta_n\RFN^r(\eta)$ is again a $\Pi^1_{n+2}$-sentence. 
  Thus, we obtain $\ACAo + (\Pi^1_{n+2})^{-}\hyp\beta_n\RFN \vdash \eta \to \beta_n\RFN^r(\eta)$ by induction on $r$.
\end{proof}

\begin{theorem}
  Any $\B(\Pi^1_{n+2})$-sentence provable in $\Sigma^1_{n+1}\hyp\DC_0$ is already provable in $\ACAo + (\Pi^1_{n+2})^{-}\hyp\beta_n\RFN$.
\end{theorem}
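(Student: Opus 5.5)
We apply Lemma~\ref{lem consv for boole} with $T = \Sigma^1_{n+1}\hyp\DC_0$ and $T' = \ACAo + (\Pi^1_{n+2})^{-}\hyp\beta_n\RFN$. Fix a countable recursively saturated $\M' \models T'$, a parameter $P \in \M'$, and a $\Pi^1_{n+2}$-sentence $\eta$ with $\M' \models \eta$. We must find $\M \subseteq_{\beta_n} \M'$ with $P \in \M$ and $\M \models \Sigma^1_{n+1}\hyp\DC_0 + \eta$.

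By the previous lemma, $\M' \models \beta_n\RFN^r(\eta)$ for every $r>0$, so $\M' \models \ACAo + \beta_n\RFN^{\omega}(\eta)$. Hence the analogue of Lemma~\ref{Lem Shrink models} for $\eta$ is available in $\M'$ and in every recursively saturated coded $\beta_n$-model of $\beta_n\RFN^{\omega}(\eta)$. We then copy the construction in the proof of Theorem~\ref{thm compact consv} with the following invariants:
\begin{itemize}
\item $\K_i \models \eta + \beta_n\RFN^{\omega}(\eta)$, and each $\K_i$ is recursively saturated;
\item each $\L_i$ is a coded $\beta_n$-model in $\K_i$ satisfying $\L_i \models \eta$.
\end{itemize}
At the start, $\K_0 = \M'$, and $\L_0$ is obtained from $\beta_n\RFN(\eta)$ in $\M'$ with $P \in \L_0$.

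At stage $s = \langle e,i,m,m'\rangle$, take $\langle \NN_x\rangle_x \in \K_s$ with $\L_s \in \NN_0$ and $\NN_x \models \eta + \beta_n\RFN^r(\eta)$ for all standard $r$, and run the same attempted construction of $H$.
\begin{itemize}
\item In Case 1, keep $\K_{s+1} = \K_s$ and choose $\L_{s+1} \ni \L_s, H$ using $\beta_n\RFN(\eta)$ in $\K_s$, so that $\L_{s+1} \models \eta$.
\item In Case 2, set $\K_{s+1} = \NN_x$. It is a $\beta_n$-model of $\eta + \beta_n\RFN^{\omega}(\eta)$, and it is recursively saturated by the earlier lemma on coded $\beta_n$-submodels of recursively saturated models. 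Then choose $\L_{s+1} \in \K_{s+1}$ containing $\L_s, H_x$ with $\L_{s+1} \models \eta$, again by $\beta_n\RFN(\eta)$ in $\K_{s+1}$.
\end{itemize}

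Put $\M = \bigcup_i \L_i$. Exactly as in Theorem~\ref{thm compact consv}, $\M \subseteq_{\beta_n} \M'$, $P \in \M$, and $\M \models \Sigma^1_{n+1}\hyp\DC_0$. In particular, Case 2 never witnesses a failure, because $\M \subseteq_{\beta_n} \K_{s+1}$.

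The main point to check is $\M \models \eta$. Each $\L_i$ is a $\beta_n$-model (relative to $\M'$) with $\L_i \models \eta$, the $\L_i$ are increasing, and $\L_i \subseteq_{\beta_n} \M$. So the argument of Lemma~\ref{Lem union of beta} transfers to the non-coded union: write $\eta \equiv \forall \U_0 \exists \U_1 \theta$ with $\theta \in \Pi^1_n$. Given $Y \in \M$, pick $i$ with $Y \in \L_i$ and take a witness $Z \in \L_i$; then $\theta(Y,Z)$ transfers from $\L_i$ to $\M$ by $\beta_n$-agreement. Hence $\M \models \eta$.

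The other point needing care is that the $\beta_n$-models produced inside $\K_s$ or $\NN_x$ really are $\beta_n$-submodels of $\M'$. This holds because all the $\K_i$ are $\beta_n$-submodels of $\M'$, and "coded $\beta_n$-model" is absolute between them. Lemma~\ref{lem consv for boole} then gives the theorem.
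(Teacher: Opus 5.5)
Your proposal is correct and follows the paper's proof essentially verbatim. You apply Lemma~\ref{lem consv for boole}, use the local schema to get $\M' \models \beta_n\RFN^{\omega}(\eta)$, rerun the construction of Theorem~\ref{thm compact consv} with the extra invariants $\L_i \models \eta$ and $\K_i \models \eta + \beta_n\RFN^{\omega}(\eta)$, and conclude $\M \models \eta$ from the increasing chain of $\beta_n$-submodels; the points you spell out that the paper leaves implicit (recursive saturation of $\K_{s+1}$ and the witness-transfer argument for $\eta$ in the non-coded union) are handled correctly.
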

\begin{proof}
  The proof is essentially the same as for Theorem~\ref{thm compact consv}.
  To use Lemma~\ref{lem consv for boole}, we first pick a $\Pi^1_{n+2}$-sentence $\eta$ that is true in $\M'$.
  Then, we add a new condition for the definition of $\L_{i}$ and $\K_{i}$ so that $\L_{i} \models \eta$ and $\K_{i} \models \eta + \beta_n\RFN^{r}(\eta)$ for all $r > 0$. This construction works because of the above lemmas.
Since $\eta$ is $\Pi^1_{n+2}$ and $\L_i$ are $\beta_n$-submodels of $\M'$ such that $\L_0 \subseteq \L_1 \subseteq \L_2 \subseteq \cdots$, the constructed model $\M = \bigcup_{i \in \omega} \L_i \models \eta$.
\end{proof}

\begin{remark}
We also note that the conservation result
$$\Th_{\B(\Pi^1_{n+2})}(\Sigma^1_{n+1}\hyp\TDC_0) \equiv  (\Pi^1_{n+2})^{-}\hyp\beta_n\TRFN_0$$ is provable in the same way as the previous theorem.
\end{remark}

\section{Summary and speed-up}
We summarize the main conservation results proved in this paper.
Each item below specifies a theory $T$, a formula class $\Gamma$,
a base theory $B$, and a reflection axiom schema $R$ such that
\begin{equation*}
  \Th_{\Gamma}(T) \equiv B+R.
\end{equation*}
Items~(\ref{sum:dc:first})--(\ref{sum:dc:last}) were proved in
Section~2 for $T = \Sigma^1_{n+1}\hyp\DC_0$, and
items~(\ref{sum:tdc:first})--(\ref{sum:tdc:last}) were proved in
Section~3 for $T = \Sigma^1_{n+1}\hyp\TDC_0$.

\begin{enumerate}[label=(\arabic*),ref=\arabic*]
  \item\label{sum:dc:first}
    $\Gamma=\Pi^1_{n+2}$, and $B=\ACAo$, with
    \[
      R = \beta_n\RFN^{\omega}(\ACAo).
    \]

  \item For $m<n$, 
    $\Gamma=\Pi^1_{m+2}$, and $B=\ACAo$, with
    \[
      R = \bigl\{\beta_m\RFN(\beta_n\RFN^r(\ACAo)):r>0\bigr\}.
    \]

  \item 
    $\Gamma=\B(\Pi^1_{n+2})$, and $B=\ACAo$, with
    \[
      R = (\Pi^1_{n+2})^{-}\hyp\beta_n\RFN.
    \]

  \item For $m<n$, 
    $\Gamma=\B(\Pi^1_{m+2})$, and $B=\ACAo$, with
    \[
      R = \bigl\{
        \sigma\to\beta_m\RFN(\beta_n\RFN^r(\sigma)):
        \sigma\in(\Pi^1_{m+2})^{-},\ r>0
      \bigr\}.
    \]

  \item For $m\leq n$,
    $\Gamma=\Sigma^1_{m+2}$, and $B=\ACAo$, with
    \[
      R = \bigl\{
        \sigma\to\exists\M
        (\beta_m(\M)\land\M\models\beta_n\RFN^r(\sigma)):
        \sigma\in(\Pi^1_{m+2})^{-},\ r>0
      \bigr\}.
    \]

  \item\label{sum:dc:last} For $m\leq n$,
    $\Gamma=\Sigma^1_{m+1}$, and $B=\ACAo$, with
    \[
      R = \bigl\{
        \exists\M(\beta_m(\M)\land
        \M\models\beta_n\RFN^r(\ACAo)):r>0
      \bigr\}.
    \]

  \item\label{sum:tdc:first}
    $\Gamma=\Pi^1_{n+2}$, and $B=\ACAo$, with
    \[
      R = \beta_n\TRFN^{\omega}(\ACAo).
    \]

  \item For $m<n$, 
    $\Gamma=\Pi^1_{m+2}$, and $B=\ACAo$, with
    \[
      R = \bigl\{\beta_m\RFN(\beta_n\TRFN^r(\ACAo)):r>0\bigr\}.
    \]

  \item 
    $\Gamma=\B(\Pi^1_{n+2})$, and $B=\ACAo$, with
    \[
      R = (\Pi^1_{n+2})^{-}\hyp\beta_n\TRFN.
    \]

  \item For $m<n$, 
    $\Gamma=\B(\Pi^1_{m+2})$, and $B=\ACAo$, with
    \[
      R = \bigl\{
        \sigma\to\beta_m\RFN(\beta_n\TRFN^r(\sigma)):
        \sigma\in(\Pi^1_{m+2})^{-},\ r>0
      \bigr\}.
    \]

  \item For $m\leq n$,
    $\Gamma=\Sigma^1_{m+2}$, and $B=\ACAo$, with
    \[
      R = \bigl\{
        \sigma\to\exists\M
        (\beta_m(\M)\land\M\models\beta_n\TRFN^r(\sigma)):
        \sigma\in(\Pi^1_{m+2})^{-},\ r>0
      \bigr\}.
    \]

  \item 
    $\Gamma=\Pi^1_{n+2}$, and $B=\ATRo$, with
    \[
      R = \bigl\{(\beta_n\RFN^{\ON})^r(\ATRo):r>0\bigr\}.
    \]

  \item For $m<n$, 
    $\Gamma=\Pi^1_{m+2}$, and $B=\ATRo$, with
    \[
      R = \bigl\{
        \beta_m\RFN(\ATRo+
        (\beta_n\RFN^{\ON})^r(\ATRo)):r>0
      \bigr\}.
    \]

  \item 
    $\Gamma=\B(\Pi^1_{n+2})$, and $B=\ATRo$, with
    \[
      R = (\Pi^1_{n+2})^{-}\hyp\beta_n\RFN^{\ON}.
    \]

  \item For $m<n$, 
    $\Gamma=\B(\Pi^1_{m+2})$, and $B=\ATRo$, with
    \[
      R = \begin{aligned}[t]
        \bigl\{
        \sigma\to\beta_m\RFN
        (\ATRo+(\beta_n\RFN^{\ON})^r(\ATRo+\sigma)):\qquad\\
        \sigma\in(\Pi^1_{m+2})^{-},\ r>0
        \bigr\}.
      \end{aligned}
    \]

  \item\label{sum:tdc:last} For $m\leq n$,
    $\Gamma=\Sigma^1_{m+2}$, and $B=\ATRo$, with
    \[
      R = \begin{aligned}[t]
        \bigl\{
        \sigma\to\exists\M
        (\beta_m(\M)\land
        \M\models\ATRo+
        (\beta_n\RFN^{\ON})^r(\ATRo+\sigma)):\qquad\\
        \sigma\in(\Pi^1_{m+2})^{-},\ r>0
        \bigr\}.
      \end{aligned}
    \]

\end{enumerate}

In each case, $T$ has iterated exponential speed-up over $B+R$.
In what follows, we give the details.
We explain the speed-up in cases~(1), (3) and
(5); the remaining cases are treated similarly.

We first consider case~(1).
We note that $\beta_n\RFN^r(\ACAo)$ is defined by primitive recursion.
Thus, there is a $\Pi^1_{n+2}$-formula $\varphi(x)$ such that 
$\ACAo \vdash \varphi(\overline{r}) \leftrightarrow \beta_n\RFN^r(\ACAo)$ for all $r \in \omega, r > 0$. Here, $\overline{r}$ denotes the numeral for $r$.
Then, it is easy to see that $\Sigma^1_{n+1}\hyp\DC_0$ proves that $\varphi(x)$ is inductive:
$\varphi(1) \land \forall x(\varphi(x) \to \varphi(x+1))$.
Then, by Solovay's shortening cut method, there is a polynomial $f$ such that 
$\Sigma^1_{n+1}\hyp\DC_0$ has a proof of $\varphi(2_k)$ with size\footnote{Here, $2_k$ is expressed by using a function symbol corresponding to (or a formula defining) the super-exponential function. } $f(k)$ for all $k > 0$.
Here, $2_k$ denotes the super-exponential function; $2_0 = 1$ and $2_{k+1} = 2^{2_k}$. 
For the details, see \cite[Theorem~3.4.1]{Pudlak}. See also \cite[the paragraph following Theorem~3.9]{suzuki2025pi}.
On the other hand, if $\ACAo + \beta_n\RFN^{\omega}(\ACAo)$ is defined as $\ACAo + \{\varphi(\overline{r}) : r > 0\}$, then it
has only proofs of $\varphi(2_k)$ whose size is greater than $2_k$. This is because 
$\ACAo + \beta_n\RFN^r(\ACAo) \vdash \Con(\ACAo + \beta_n\RFN^{r'}(\ACAo))$ for all $0 < r' < r$. Hence, to prove $ \varphi(2_k)$ within $\ACAo + \beta_n\RFN^{\omega}(\ACAo)$, we need to use 
$\varphi(\overline{r})$ for some $r \geq 2_k$.
Clearly, the size of such a proof is greater than $2_k$.
By the same argument as above, we obtain the super-exponential speed-up for cases
(2), (4), (6)--(8), (10), (12), (13), (15).

We next consider case~(3).
We prove that every proof of $\beta_n\RFN^{2_k}(\ACAo)$ over $\ACAo + (\Pi^1_{n+2})^{-}\hyp\beta_n\RFN$ has size greater than 
$2_k$. As we have proved that $\Sigma^1_{n+1}\hyp\DC_0$ has a proof with $\mathrm{poly}(k)$ size, this shows the super-exponential speed-up.

\begin{lemma}\label{lem:local-reflection-finite-stage}
  Let $\sigma_0,\ldots, \sigma_{r -1}$ and $\pi$ be $\Pi^1_{n+2}$-sentences.
  If $\ACAo + \{\sigma_i \to \beta_n\RFN(\sigma_i) : i < r\} \vdash \pi$, then 
  $\ACAo + \beta_n\RFN^{r}(\ACAo) \vdash \pi$.
\end{lemma}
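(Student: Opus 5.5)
We argue model-theoretically, by contraposition: assuming $\ACAo + \beta_n\RFN^{r}(\ACAo) \not\vdash \pi$, we build a model of $\ACAo + \{\sigma_i \to \beta_n\RFN(\sigma_i) : i < r\} + \lnot\pi$. Take a model $\M_r \models \ACAo + \beta_n\RFN^r(\ACAo) + \lnot \pi$. The plan is to walk down a chain of $\beta_n$-submodels $\M_r \supseteq \M_{r-1} \supseteq \cdots$, where $\M_{j}$ satisfies $\beta_n\RFN^{j}(\ACAo)$ (with $\beta_n\RFN^0(\ACAo)$ read as $\ACAo$). At each step we spend one level of iterated reflection to fix the truth value of one more $\sigma_i$ in such a way that the instance $\sigma_i \to \beta_n\RFN(\sigma_i)$ becomes true. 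Since $\lnot\pi$ is $\Sigma^1_{n+2}$, it is preserved downward to $\beta_n$-submodels that contain a witness, as in the proof of Lemma~\ref{lem conservation general}. So we must also keep a witness $X$ for $\lnot\pi$, a set with $\forall \U\lnot\theta(X,\U)$ where $\theta\in\Pi^1_n$, inside every model of the chain.

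The key step is the passage from $j$ to $j-1$. Let $\M_j \models \ACAo + \beta_n\RFN^{j}(\ACAo)$ with $X \in \M_j$, and let $S$ be the set of indices $i$ still unsettled. Pick any unsettled $i$.
\begin{itemize}
  \item If $\M_j \models \lnot\sigma_i$, then $\lnot\sigma_i$ is $\Sigma^1_{n+2}$, so we fix a witness $Z_i$ for it and add $Z_i$ to the list of parameters to be kept. Every later $\beta_n$-submodel containing $Z_i$ then satisfies $\lnot\sigma_i$, so the instance for $\sigma_i$ holds there trivially.
  \item If $\M_j \models \sigma_i$, we want $\beta_n\RFN(\sigma_i)$ to hold in the final model.
\end{itemize}
To organize the second case, we process the $\sigma_i$ in rounds and compare truth values between $\M_j$ and a submodel. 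Pass from $\M_j$ to a coded $\beta_n$-model $\NN \in \M_j$ containing all current parameters with $\NN\models\beta_n\RFN^{j-1}(\ACAo)$; this uses exactly one application of $\beta_n\RFN^{j}(\ACAo)$, read inside $\M_j$. Either $\NN$ agrees with $\M_j$ on all unsettled $\sigma_i$, or some $\sigma_i$ true in $\M_j$ fails in $\NN$. Truth can only go from true to false here, because $\Pi^1_{n+2}$ sentences that are false with a witness in $\NN$ stay false upward in $\M_j$. In the failing case we settle that $\sigma_i$ as false, record its witness, and continue with $\NN$ at level $j-1$.

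Since there are only $r$ sentences, after at most $r$ such steps no flip occurs. At that point we reach a model $\M_j$ with $j \geq 0$ and a submodel $\NN$ whose level of reflection is below it. Every $\sigma_i$ true in $\M_j$ is then true in the $\beta_n$-models supplied by $\beta_n\RFN$, and we must ensure this for all $Y$, not just one $\NN$. This is the main obstacle: reflection has to be available in $\M_j$ for every parameter $Y$. We handle it by defining the stopping condition uniformly. Consider the least $k$ such that for all $Y$ there is a $\beta_n$-model containing $Y$ and the fixed witnesses, satisfying $\beta_n\RFN^{k}(\ACAo)$ and agreeing on all $\sigma_i$ true in the current model. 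If such models fail to exist for some $Y$, we get a $\beta_n$-model where some $\sigma_i$ flips to false, and we descend into it.

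The count works as follows. Each descent consumes one level and settles one sentence as false. So with $r$ levels we can afford $r$ flips, and when no flip is possible the current model $\M$ satisfies $\sigma_i \to \beta_n\RFN(\sigma_i)$ for all $i$, together with $\ACAo$ and $\lnot\pi$. This yields the required model and hence the contrapositive. The delicate point I expect to verify carefully is that the flip-free condition at the last model really gives $\beta_n\RFN$ of the conjunction of the remaining true $\sigma_i$ with arbitrary parameters. For this I would reflect the single $\Pi^1_{n+2}$ sentence $\bigwedge_{i\in S'}\sigma_i$, where $S'$ is the set of indices true in the current model, and use that $\beta_n\RFN$ of a conjunction implies $\beta_n\RFN$ of each conjunct.
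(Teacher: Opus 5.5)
Your proof is correct. It is a model-theoretic, contrapositive rendering of the mechanism the paper uses syntactically.

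The paper proves $\ACAo+\beta_n\RFN^{r-|I|}(\ACAo)\vdash \pi\lor\bigvee_{i\in I}\sigma_i$ for every $I\subseteq\{0,\dots,r-1\}$ by induction on $r-|I|$. In the induction step it turns $\beta_n\RFN(\delta_I\lor\sigma_j)$ into $\delta_I\lor\beta_n\RFN(\sigma_j)$ via Lemma~\ref{lem BP can be applied to beta_nRFN}(1). The correspondence with your argument is:
\begin{itemize}
  \item your set of indices settled false plays the role of $I$;
  \item your retention of the witnesses for $\lnot\pi$ and the $\lnot\sigma_k$ inside each reflected model is exactly the proof of that lemma;
  \item your invariant ``at most $j$ of the $\sigma_i$ are true at level $j$'' is the induction on $r-|I|$.
\end{itemize}
The difference is bookkeeping. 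The countermodel lets you follow one adaptively chosen descending path, reflecting $\bigwedge_{i\in S}\sigma_i$ at once. The paper has no model to consult, so it must handle every subset $I$, i.e.\ every branch. The paper's version is a direct derivation with no appeal to completeness. Yours makes more visible why each level of iteration pays for exactly one falsified $\sigma_i$.

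Two small repairs are needed.

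First, your justification of ``truth can only go from true to false'' has the transfer backwards. A $\Pi^1_{n+2}$ sentence false in a $\beta_n$-submodel $\NN$ need not be false in $\M_j$: $\M_j$ may contain sets outside $\NN$ that witness the inner existential. What you need, and what your first bullet already provides, is downward transfer: $\lnot\sigma_k$ holds in $\M_j$ with witness $Z_k\in\NN$, hence it holds in $\NN$.

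Second, the ``least $k$'' phrasing of the stopping condition is superfluous. At level $j$ the dichotomy is simply whether the current model satisfies $\beta_n\RFN(\bigwedge_{i\in S}\sigma_i)$. If it does not, fix an offending $Y$ and apply $\beta_n\RFN^{j}(\ACAo)$ to $Y$ together with the retained witnesses. This yields a coded $\beta_n$-model of $\beta_n\RFN^{j-1}(\ACAo)$ that must falsify some $\sigma_i$ with $i\in S$. The invariant guarantees $j\geq 1$ whenever $S\neq\varnothing$. When $S=\varnothing$, stop at once rather than asking for $\beta_n\RFN(\top)$, which a level-$0$ model of $\ACAo$ need not satisfy.
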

\begin{proof}
   For each subset $I \subseteq \{0,\ldots,r-1\}$, put $\delta_I = \pi \lor \bigvee_{i \in I} \sigma_i$.
   We prove $\ACAo + \beta_n\RFN^{r -|I|}(\ACAo) \vdash \delta_I$ for all $I \subseteq \{0,\ldots,r-1\}$ by induction on $r - |I|$. Here, we regard $\beta_n\RFN^{0}(\ACAo)$ as $\top$.

   Assume $r - |I| = 0$, that is, $I = \{0,\ldots,r-1\}$.
   We work in a model of $\ACAo$. If $\lnot \bigvee_{i \in I} \sigma_i$ holds in this model, then $\bigwedge_{i \in I} (\sigma_i \to \beta_n\RFN(\sigma_i))$ holds. Then, this model satisfies $\ACAo + \{\sigma_i \to \beta_n\RFN(\sigma_i) : i < r\}$. Thus, $\pi$ holds in this model.

   We next consider the induction step.
   Let $I \subseteq \{0,\ldots,r-1\}$. By the induction hypothesis, we may assume that 
   $$ 
   \ACAo + \beta_n\RFN^{r - |I| -1}(\ACAo) \vdash \delta_{I} \lor \sigma_j \text{ for all } j \in I^c.
   $$
   We work in (a model of) $\ACAo + \beta_n\RFN^{r - |I|}(\ACAo)$. We will show that $\delta_I$ holds.
   By $\beta_n\RFN^{r- |I|}$, we obtain 
   $\forall X \exists \M(X \in \M \land \beta_n(\M) \land \M \models \beta_n\RFN^{r - |I| -1}(\ACAo) )$.
   Thus, by the assumption, 
   $$
   \forall X \exists \M(X \in \M \land \beta_n(\M) \land \M \models \delta_{I} \lor \sigma_j) \text{ for all } j \in I^c.
   $$
   Thus, we obtain $\beta_n\RFN(\delta_I \lor \sigma_j)$ for all $j \in I^c$.
   By Lemma~\ref{lem BP can be applied to beta_nRFN}, $\delta_I \lor \beta_n\RFN(\sigma_j)$ holds for all $j \in I^c$.
   If $\delta_I$ failed, then we would obtain 
   $\lnot \pi \land \bigwedge_{i \in I} \lnot \sigma_i \land \bigwedge_{j \in I^c} \beta_n\RFN(\sigma_j)$.
   In particular, $\bigwedge_{i < r} (\sigma_i \to \beta_n\RFN(\sigma_i))$ would hold.
   Thus, $\pi$ would hold, a contradiction. Thus, $\delta_I$ holds as desired.

   Now we have $\ACAo + \beta_n\RFN^{r - |\varnothing|}(\ACAo) \vdash \delta_{\varnothing}$.
   This means that $\ACAo + \beta_n\RFN^{r}(\ACAo) \vdash \pi$.
\end{proof}
\begin{theorem}
  Every proof of $\beta_n\RFN^r(\ACAo)$ over $(\Pi^1_{n+2})^{-}\hyp\beta_n\RFN_0$ uses at least $r$ instances of $(\Pi^1_{n+2})^{-}\hyp\beta_n\RFN$. In particular, such a proof has size greater than $r$.
\end{theorem}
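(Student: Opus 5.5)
The plan is to argue by contradiction, combining Lemma~\ref{lem:local-reflection-finite-stage} with the strictness of the iteration hierarchy noted in the Remark after the definition of $\beta_n\RFN$. Suppose a proof of $\beta_n\RFN^r(\ACAo)$ over $(\Pi^1_{n+2})^{-}\hyp\beta_n\RFN_0$ uses only the instances $\sigma_i \to \beta_n\RFN(\sigma_i)$ for $i < r'$, with $r' < r$ and each $\sigma_i \in (\Pi^1_{n+2})^{-}$. Then
\[
\ACAo + \{\sigma_i \to \beta_n\RFN(\sigma_i) : i < r'\} \vdash \beta_n\RFN^r(\ACAo).
\]
The conclusion $\pi := \beta_n\RFN^r(\ACAo)$ is a $\Pi^1_{n+2}$-sentence, as observed in the proof of Lemma~\ref{beta-ref satisfies BP}(2). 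So Lemma~\ref{lem:local-reflection-finite-stage}, applied with $r'$ in place of $r$, gives $\ACAo + \beta_n\RFN^{r'}(\ACAo) \vdash \beta_n\RFN^r(\ACAo)$. If $r'=0$, the lemma's convention $\beta_n\RFN^0(\ACAo)=\top$ gives $\ACAo \vdash \beta_n\RFN^r(\ACAo)$.

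Next I derive a contradiction with G\"odel's second incompleteness theorem. By the Remark, $\ACAo + \beta_n\RFN^r(\ACAo) \vdash \Con(\ACAo + \beta_n\RFN^{r'}(\ACAo))$ whenever $0<r'<r$. The reason is that a coded $\omega$-model of $\ACAo + \beta_n\RFN^{r'}(\ACAo)$ exists, and $\ACAo$ proves that a theory with a coded $\omega$-model is consistent. For $r'=0$, the same reasoning shows that $\beta_n\RFN^r(\ACAo)$ proves $\Con(\ACAo)$. In either case we get $\ACAo + \beta_n\RFN^{r'}(\ACAo) \vdash \Con(\ACAo + \beta_n\RFN^{r'}(\ACAo))$, so this theory is inconsistent by G\"odel.

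This requires the standing assumption that these theories are consistent. That is true, since they are all sound: their $\Pi^1_{n+2}$-theorems lie in $\Sigma^1_{n+1}\hyp\DC_0$, which holds in the standard model. So $r' \ge r$.

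For the size bound, distinct instances are distinct formulas, and each occupies at least one symbol in the proof. Hence the size exceeds $r$ (for $r \ge 1$ some extra symbols of the conclusion are present as well).

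The main obstacle is checking that the hypotheses of Lemma~\ref{lem:local-reflection-finite-stage} fit exactly. The lemma needs the same number $r'$ of instances as the iteration depth, and the $\sigma_i$ may be arbitrary, possibly repeated, $\Pi^1_{n+2}$-sentences; this is harmless, since repetition only lowers $r'$. I also need to confirm that the formalized consistency statement for the $\omega$-model is provable in $\ACAo$. This is the standard soundness argument using a full satisfaction relation on coded $\omega$-models, which exists by arithmetical comprehension relative to the code.
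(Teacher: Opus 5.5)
Your argument is the paper's: Lemma~\ref{lem:local-reflection-finite-stage} turns $r'$ used instances into $\ACAo + \beta_n\RFN^{r'}(\ACAo) \vdash \beta_n\RFN^r(\ACAo)$, and the strictness fact $\ACAo + \beta_n\RFN^r(\ACAo) \vdash \Con(\ACAo + \beta_n\RFN^{r'}(\ACAo))$ for $r'<r$ then forces $r' \geq r$; your explicit handling of $r'=0$ and of consistency (via soundness) fills in what the paper leaves implicit. One small correction to your justification of ``a coded $\omega$-model implies consistency'': the full satisfaction relation of a coded $\omega$-model is not arithmetical in its code (it needs the $\omega$-jump, i.e.\ $\ACA_0^+$), so in $\ACAo$ you should instead use partial satisfaction for the finitely many standard axioms, e.g.\ after cut elimination, which suffices for the fact recorded in the Remark after the definition of $\beta_n\RFN$.
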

\begin{proof}
  Let $\sigma_0,\ldots, \sigma_{l -1}$ be $\Pi^1_{n+2}$-sentences such that 
  $\ACAo + \{\sigma_i \to \beta_n\RFN(\sigma_i) : i < l\} \vdash \beta_n\RFN^r(\ACAo)$.
  Then, $\ACAo + \beta_n\RFN^l(\ACAo) \vdash \beta_n\RFN^r(\ACAo)$. Thus, $l \geq r$ as discussed in the paragraph preceding Lemma~\ref{lem:local-reflection-finite-stage}.
\end{proof}
As we have mentioned in the paragraph before Lemma~\ref{lem:local-reflection-finite-stage}, this implies the super-exponential speed-up.
By the same argument as above, we obtain the super-exponential speed-up for cases
(9) and (14).

We finally consider case~(5).
Let $\psi(x)$ be a $\Sigma^1_{m+2}$-formula such that 
$\ACAo \vdash \psi(\overline{r}) \leftrightarrow \exists \M(\beta_m(\M) \land \M \models \beta_n\RFN^{r}(\ACAo))$ for $r \geq 0$. Here, we regard $\beta_n\RFN^0(\ACAo)$ as $\top$. 
Then, $\Sigma^1_{n+1}\hyp\DC_0$ has a poly($k$) size proof of $\psi(2_k)$ as in case~(1).
We will show that every proof of $\psi(2_k)$ over
$\ACAo + \{\sigma \to \exists \M(\beta_m(\M) \land \M \models \beta_n\RFN^r(\sigma)) : \sigma \in (\Pi^1_{m+2})^{-}, r > 0\}$ has size greater than $2_k$.
\begin{lemma}
  Let $m \leq n$ and $r > 0$. 
  Let $\sigma$ be a $\Pi^1_{m+2}$-sentence.
  Then,
  there exist $\Pi^1_{n+2}$-sentences
  $\tau_0,\ldots,\tau_{r}$ such that 
  $\ACAo + \{\tau_i \to \beta_n\RFN(\tau_i) : i \leq r \} \vdash  \sigma \to \exists \M(\beta_m(\M) \land \M \models \beta_n\RFN^r(\sigma))$.
\end{lemma}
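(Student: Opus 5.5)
The plan is to climb the iteration one step at a time. Each local reflection instance produces the next level $\beta_n\RFN^{i+1}(\sigma)$. The last instance produces a $\beta_n$-model of $\sigma+\beta_n\RFN^r(\sigma)$, which is in particular a $\beta_m$-model.

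\textbf{Choice of the sentences.} Put
\[
\tau_0 \equiv \sigma, \qquad \tau_i \equiv \sigma \land \beta_n\RFN^i(\sigma) \quad (1 \leq i \leq r).
\]
These are $\Pi^1_{n+2}$-sentences. Indeed, $\sigma \in \Pi^1_{m+2} \subseteq \Pi^1_{n+2}$ since $m \leq n$. Also $\beta_n\RFN^i(\sigma)$ is a sentence because $\sigma$ is, and it is $\Pi^1_{n+2}$ over $\ACAo$ by Lemma~\ref{lem BP can be applied to beta_nRFN}(2). The definition of iterated reflection is arranged so that
\[
\beta_n\RFN(\tau_i) \equiv \beta_n\RFN^{i+1}(\sigma)
\]
holds literally for $i \geq 1$, with $\tau_i$ playing the role of $\sigma + \beta_n\RFN^i(\sigma)$. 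For $i=0$ we simply have $\beta_n\RFN(\tau_0) = \beta_n\RFN^1(\sigma)$.

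\textbf{The derivation.} Work in $\ACAo + \{\tau_i \to \beta_n\RFN(\tau_i) : i \leq r\}$ and assume $\sigma$.
\begin{enumerate}
\item From $\tau_0 = \sigma$ we get $\beta_n\RFN^1(\sigma)$, so $\tau_1$ holds.
\item Inductively, for $i = 1,\ldots,r$: if $\tau_i$ holds, the axiom for $\tau_i$ gives $\beta_n\RFN^{i+1}(\sigma)$. Together with $\sigma$ this yields $\tau_{i+1}$ when $i<r$. This is a meta-level induction over finitely many steps, so it needs no internal induction.
\item At $i=r$ we obtain $\beta_n\RFN^{r+1}(\sigma) \equiv \beta_n\RFN(\sigma + \beta_n\RFN^r(\sigma))$.
\item Instantiate the universal set variable, say with $\U = \varnothing$. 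This gives a coded $\M$ with $\beta_n(\M)$ and $\M \models \sigma + \beta_n\RFN^r(\sigma)$, so in particular $\M \models \beta_n\RFN^r(\sigma)$.
\end{enumerate}
In total we use exactly the $r+1$ sentences $\tau_0,\ldots,\tau_r$, matching the count in the statement.

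\textbf{The remaining check.} The only delicate point is that $\beta_n(\M)$ implies $\beta_m(\M)$ for $m \leq n$, provably in $\ACAo$. This is immediate from the definitions. A $\beta_n$-model is absolute for $\Sigma^1_n$-formulas, hence for $\Sigma^1_m$-formulas. For $m = 0$ both notions include being an $\omega$-model of $\ACAo$. Hence $\exists \M(\beta_m(\M) \land \M \models \beta_n\RFN^r(\sigma))$ follows.

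I do not expect a real obstacle. The steps needing care are bookkeeping: verifying that each $\tau_i$ is (equivalent over $\ACAo$ to) a $\Pi^1_{n+2}$-sentence, so the instances are legitimate, and that the syntactic identification $\beta_n\RFN(\tau_i) = \beta_n\RFN^{i+1}(\sigma)$ matches the definition, if necessary up to $\ACAo$-provable equivalence.
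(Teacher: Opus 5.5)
Your proposal is correct and matches the paper's proof. It uses the same sentences $\tau_0 \equiv \sigma$ and $\tau_i \equiv \sigma \land \beta_n\RFN^i(\sigma)$, chains the $r+1$ local reflection instances up to $\beta_n\RFN^{r+1}(\sigma)$, extracts a $\beta_n$-model of $\beta_n\RFN^r(\sigma)$, and weakens $\beta_n$ to $\beta_m$; you just spell out the bookkeeping (the complexity of the $\tau_i$ and the identification $\beta_n\RFN(\tau_i) = \beta_n\RFN^{i+1}(\sigma)$) that the paper leaves implicit.
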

\begin{proof}
  Let $m \leq n$ and $r > 0$.
  Put $\tau_0 \equiv \sigma$ and $\tau_{i+1} \equiv \sigma \land \beta_n\RFN^{i+1}(\sigma)$ for each $i+1 \leq r$, and 
  $T = \ACAo + \{\tau_i \to \beta_n\RFN(\tau_i) : i \leq r\}$.
  Then, $T \vdash \sigma \to \beta_n\RFN^{r+1}(\sigma)$.
  In particular, $T \vdash \sigma \to \exists \M (\beta_n(\M) \land \M \models \beta_n\RFN^r(\sigma))$, and hence 
  $T \vdash \sigma \to \exists \M(\beta_m(\M) \land \M \models \beta_n\RFN^r(\sigma))$.
\end{proof}

\begin{theorem}
   Every proof of $\exists \M(\beta_m(\M) \land \M \models \beta_n\RFN^r(\ACAo))$ over $\ACAo + \{\sigma \to \exists \M(\beta_m(\M) \land \M \models \beta_n\RFN^s(\sigma)) : \sigma \in (\Pi^1_{m+2})^{-}, s > 0 \}$ has size greater than $r$.
\end{theorem}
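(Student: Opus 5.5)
The plan is to reduce to the previous lemma together with Lemma~\ref{lem:local-reflection-finite-stage} and Theorem~\ref{thm lower cons for Sigma}-style absoluteness. Suppose a proof of $\exists \M(\beta_m(\M)\land \M\models\beta_n\RFN^r(\ACAo))$ over the given theory uses only the instances $\sigma_j \to \exists \M(\beta_m(\M)\land\M\models\beta_n\RFN^{s_j}(\sigma_j))$ for $j<l$. The size of the proof bounds both $l$ and each $s_j$ (each $s_j$ is written as a numeral inside the instance), so the size is greater than $\sum_{j<l}(s_j+1)$. It therefore suffices to show that $\sum_{j<l}(s_j+1) > r$, or at least $\geq r$ up to an additive constant absorbed by the proof size.

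By the previous lemma, each instance $\sigma_j \to \exists\M(\beta_m(\M)\land\M\models\beta_n\RFN^{s_j}(\sigma_j))$ is provable from $s_j+1$ local reflection instances $\tau^j_i\to\beta_n\RFN(\tau^j_i)$ with $\tau^j_i$ a $\Pi^1_{n+2}$-sentence. Replacing each instance this way, we get that $\ACAo$ plus $L=\sum_{j<l}(s_j+1)$ instances of $(\Pi^1_{n+2})^{-}\hyp\beta_n\RFN$ proves $\psi(\overline r)\equiv\exists\M(\beta_m(\M)\land\M\models\beta_n\RFN^r(\ACAo))$. This target is $\Sigma^1_{m+2}$, not $\Pi^1_{n+2}$ in general, so Lemma~\ref{lem:local-reflection-finite-stage} does not apply directly. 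Instead I would apply it with $\pi$ chosen as a $\Pi^1_{n+2}$ consequence, or better, rerun its proof with $\pi$ an arbitrary sentence. The induction in that lemma only used that the $\sigma_i$ are $\Pi^1_{n+2}$, via Lemma~\ref{lem BP can be applied to beta_nRFN}(1), and that the base-case reasoning is done in an arbitrary model; $\pi$ appears only inside $\M\models\delta_I\lor\sigma_j$ and in $\beta_n\RFN(\delta_I\lor\sigma_j)$. The step $\beta_n\RFN(\delta_I\lor\sigma_j)\to\delta_I\lor\beta_n\RFN(\sigma_j)$ needs $\delta_I$ (not $\sigma_j$) to be $\Pi^1_{n+2}$. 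Since $m\le n$, a $\Sigma^1_{m+2}$ formula is $\Pi^1_{n+2}$ when $m<n$ but not when $m=n$, which is the delicate point. I would first try to swap roles: write $\delta_I\lor\sigma_j$ and use the lemma with the $\Pi^1_{n+2}$ disjunct $\bigvee_{i\in I}\sigma_i$, keeping $\pi$ inside. Alternatively, I would pass through models: $\beta_n\RFN(\pi\lor\cdots)$ with $\pi$ $\Sigma^1_{n+2}$ true in a $\beta_n$-submodel transfers upward to the ground model, since $\Sigma^1_{n+2}$ formulas true in a $\beta_n$-model containing the witnesses hold outright. The upward direction is exactly what is needed to conclude $\pi$ or continue. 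Either way we obtain $\ACAo+\beta_n\RFN^{L}(\ACAo)\vdash\psi(\overline r)$.

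It remains to see that $\ACAo+\beta_n\RFN^L(\ACAo)\vdash\psi(\overline r)$ forces $L\ge r$. The sentence $\psi(\overline r)$ implies the existence of an $\omega$-model of $\beta_n\RFN^r(\ACAo)$, hence $\Con(\ACAo+\beta_n\RFN^r(\ACAo))$. If $L<r$, then $\beta_n\RFN^r(\ACAo)$ proves $\beta_n\RFN^{L+1}(\ACAo)$, which proves $\Con(\ACAo+\beta_n\RFN^L(\ACAo))$. Then $\ACAo+\beta_n\RFN^L(\ACAo)$ would prove its own consistency, contradicting G\"odel's second incompleteness theorem, given that the theory is consistent since it holds in the intended model. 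So $L\ge r$, and the proof size exceeds $r$. The main obstacle is the generalization of Lemma~\ref{lem:local-reflection-finite-stage} to a non-$\Pi^1_{n+2}$ conclusion in the case $m=n$. If it resists, I would instead take $\pi\equiv\Con(\ACAo+\beta_n\RFN^{r-1}(\ACAo))$, an arithmetical sentence implied by $\psi(\overline r)$, and apply the lemma verbatim; this yields $L\ge r$ directly.
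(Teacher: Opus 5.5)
Your argument goes through via your fallback, and it is essentially the paper's route. You trade each used instance for $s_j+1$ instances of $(\Pi^1_{n+2})^{-}\hyp\beta_n\RFN$ using the preceding lemma, apply Lemma~\ref{lem:local-reflection-finite-stage}, and finish with G\"odel's second incompleteness theorem.

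The ``main obstacle'' you spend most of the proposal on does not exist. $\beta_m(\M)$ is $\Pi^1_m$, and $\M\models\beta_n\RFN^r(\ACAo)$ is arithmetical in $\M$. So the target $\exists\M(\beta_m(\M)\land\M\models\beta_n\RFN^r(\ACAo))$ is $\Sigma^1_{m+1}$, not merely $\Sigma^1_{m+2}$. Hence it lies in $\Pi^1_{m+2}\subseteq\Pi^1_{n+2}$ for every $m\le n$, including $m=n$. Lemma~\ref{lem:local-reflection-finite-stage} therefore applies verbatim with $\pi$ the target itself, and this is exactly what the paper does. It then reads off $\ACAo+\beta_n\RFN^{s}(\ACAo)\vdash\Con(\ACAo+\beta_n\RFN^r(\ACAo))$ for $s=\sum_i(r_i+1)$, which gives $s>r$. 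That is slightly sharper than your $L\ge r$, though either suffices since the proof size exceeds $L$. Your fallback with an arithmetical consistency sentence as $\pi$ is correct but unnecessary.

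You should delete the two intermediate workarounds rather than leave them as alternatives. Swapping roles only yields $\bigvee_{i\in I}\sigma_i\lor\beta_n\RFN(\pi\lor\sigma_j)$, which leaves $\pi$ trapped inside the reflection.

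The model-theoretic variant rests on a false claim. A $\Sigma^1_{n+2}$ sentence true in a $\beta_n$-model need not hold outright; upward transfer is only guaranteed for $\Sigma^1_{n+1}$ sentences. For example, with $n=0$, the $\omega$-model $\mathrm{HYP}$ of $\ACAo$ satisfies the false $\Pi^1_1$ sentence asserting that the Harrison order is well-founded. That variant would happen to work here only because the target is $\Sigma^1_{m+1}$, which is the same observation again.
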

\begin{proof}
  We fix a proof of $\exists \M(\beta_m(\M) \land \M \models \beta_n\RFN^r(\ACAo))$ over $\ACAo + \{\sigma \to \exists \M(\beta_m(\M) \land \M \models \beta_n\RFN^s(\sigma)) : \sigma \in (\Pi^1_{m+2})^{-}, s > 0 \}$.
  Let $\sigma_0,\ldots,\sigma_{I-1} \in (\Pi^1_{m+2})^{-}$ be such that the proof contains 
  $\sigma_i \to \exists \M(\beta_m(\M) \land \M \models \beta_n\RFN^{r_i}(\sigma_i))$ for all $i < I$.
  Then, for each $i < I$, there exist $\tau_0^{i},\ldots,\tau_{r_i}^i \in (\Pi^1_{n+2})^{-}$ such that 
  $$
  \ACAo + \{\tau^i_j \to \beta_n\RFN(\tau^i_j) : j \leq r_i \} \vdash \sigma_i \to \exists \M(\beta_m(\M) \land \M \models \beta_n\RFN^{r_i}(\sigma_i)).
  $$
  Then, 
  $$
  \ACAo + \{\tau^i_j \to \beta_n\RFN(\tau^i_j) : j \leq r_i, i < I \} \vdash \exists \M(\beta_m(\M) \land \M \models \beta_n\RFN^{r}(\ACAo)).
  $$
  Since $\exists \M(\beta_m(\M) \land \M \models \beta_n\RFN^{r}(\ACAo))$ is $\Sigma^1_{m+1}$, it is $\Pi^1_{m+2}$.
  Hence, by Lemma~\ref{lem:local-reflection-finite-stage}, $\ACAo + \beta_n\RFN^{s}(\ACAo) \vdash \exists \M(\beta_m(\M) \land \M \models \beta_n\RFN^{r}(\ACAo))$ for $s = \sum_{i < I} (r_i+1)$.
  In particular, $\ACAo + \beta_n\RFN^s(\ACAo) \vdash \Con(\ACAo + \beta_n\RFN^r(\ACAo))$.
  Thus, $s > r$.
  Since the size of the proof is greater than $s$, this completes the proof.
\end{proof}

\bibliographystyle{plain}
\bibliography{references}

\end{document}